\definecolor{myblue}{RGB}{0,29,119}
\newtheorem{theorem}{Theorem}[subsection]
\newtheorem{proposition}[theorem]{Proposition}
\newtheorem{corollary}[theorem]{Corollary}
\newtheorem{lemma}[theorem]{Lemma}
\newtheorem{lemma-definition}[theorem]{Lemma-Definition}
\newtheorem{proposition-definition}[theorem]{Proposition-Definition}
\theoremstyle{definition}
\newtheorem{definition}[theorem]{Definition}
\newtheorem{example}[theorem]{Example}
\newtheorem{remark}[theorem]{Remark}
\newtheorem{notation}[theorem]{Notation}
\newtheorem{claim}{Claim}[theorem]
\newtheorem*{theorem*}{Theorem}
\newcommand{\Aa}{{\mathbb A}}
\newcommand{\filt}{Filt}
\newcommand{\op}{\operatorname{op}}
\DeclareMathOperator{\End}{End}
\DeclareMathOperator{\coker}{coker}
\DeclareMathOperator{\im}{Im}
\DeclareMathOperator{\findim}{fin.\! dim}
\DeclareMathOperator{\gordim}{gor.\! dim}
\DeclareMathOperator{\gldim}{gl\,\! dim}
\DeclareMathOperator{\domdim}{dom.\! dim}
\DeclareMathOperator{\codomdim}{co-dom.\! dim}%
\DeclareMathOperator{\pdim}{p\!\dim}
\DeclareMathOperator{\indim}{in.\!dim}
\DeclareMathOperator{\Hom}{Hom}
\DeclareMathOperator{\defect}{def}
\DeclareMathOperator{\del}{del}
\DeclareMathOperator{\rank}{rank}
\DeclareMathOperator{\rel}{rel}
\DeclareMathOperator{\soc}{soc}
\DeclareMathOperator{\topp}{top}
\DeclareMathOperator{\rad}{rad}
\DeclareMathOperator{\Top}{top}
\newcommand{\gt}[1]{{\textcolor{PineGreen}{GT:}\,\textcolor{PineGreen}{#1}}}
\newcommand{\co}[1]{\textcolor{blue}{Emre}\,\textcolor{BrickRed}{#1}}
\newcommand{\ho}[1]{\Hom_{\Lambda}(\cP,{#1})}
\DeclareMathOperator{\add}{add} 
 \DeclareMathOperator{\modd}{\textendash mod}%
 \DeclareMathOperator{\moddd}{mod-\!}
\DeclareMathOperator{\Gen}{Gen}%
\DeclareMathOperator{\Cogen}{Cogen}%
\DeclareMathOperator{\dist}{dist}%
\newenvironment{customthm}[1]
  {\innercustomthm}{\endinnercustomthm}
\newcommand{\cB}{{\mathcal B}}
\newcommand{\cF}{{\mathcal F}}
\newcommand{\cI}{{\mathcal I}}
\newcommand{\cK}{{\mathcal K}}
\newcommand{\cM}{{\mathcal M}}
\newcommand{\cN}{{\mathcal N}}
\newcommand{\cP}{{\mathcal P}}
\newcommand{\cQ}{{\mathcal Q}}
\newcommand{\cS}{{\mathcal S}}
\newcommand{\cT}{{\mathcal T}}
\newcommand{\cU}{{\mathcal U}}
\newcommand{\cV}{{\mathcal V}}
\newcommand{\cX}{{\mathcal X}}
\newcommand{\cY}{{\mathcal Y}}
\providecommand{\AMS}{$\mathcal{A}$\kern-.1667em%
\lower.25em\hbox{$\mathcal{M}$}\kern-.125em$\mathcal{S}$}
\begin{document}


\author{Emre SEN}
\author{Gordana Todorov}
\author{Shijie Zhu}

\title{defect invariant nakayama algebras}
\begin{abstract} We show that for a given Nakayama algebra $\Theta$, there exist countably many cyclic Nakayama algebras $\Lambda_i$, where $i \in \mathbb{N}$, such that the syzygy filtered algebra of $\Lambda_i$ is isomorphic to $\Theta$ and we describe those algebras $\Lambda_i$. We show, among these algebras, there exists a unique algebra $\Lambda$ where the defects, representing the number of indecomposable injective but not projective modules, remain invariant for both $\Theta$ and $\Lambda$. As an application, we achieve the classification of cyclic Nakayama algebras that are minimal Auslander-Gorenstein and dominant Auslander-regular algebras of global dimension three. Specifically, by using the Auslander-Iyama correspondence, we obtain cluster-tilting objects for certain Nakayama algebras. Additionally, we introduce cosyzygy filteed algebras and show that it is dual of syzygy filtered algebra.
\end{abstract}
\maketitle

\tableofcontents

\let\thefootnote\relax\footnotetext{Keywords: Nakayama algebras, higher Auslander algebras,  minimal Auslander-Gorenstein algebras, dominant Auslander-Gorenstein algebras, dominant Auslander-regular algebras, dominant dimension, global dimension,  Gorenstein dimension, wide subcategories.\\
MSC 2020: 16E05, 16E10,16G20}

\setcounter{section}{-1}

\section{Introduction}

M. Auslander's seminal result, known as the Auslander correspondence, brought new perspectives to the representation theory of algebras: there exists a bijection between the set of
Morita-equivalence classes of representation-finite Artin algebras $A$ and that
of Artin algebras $\Gamma$ with $\gldim\Gamma\leq 2\leq\domdim\Gamma$ where $\gldim\Gamma$, $\domdim\Gamma$ denote global and dominant dimensions of $\Gamma$ respectively. The bijection is given by $A\rightarrow \Gamma:=\End_{A}(M)$ for an additive generator $M$ of $\moddd A$. Hence, representation theory of $A$ can be carried into homological theory of $\Gamma$.

O. Iyama introduced the \emph{higher Auslander correspondence} in \cite{iyama} such that an Artin algebra $\Gamma$ satisfying the conditions
\begin{align}\label{equation of higher auslander algebras}
\gldim\Gamma\leq d+1\leq\domdim \Gamma
\end{align}
for $d\geq 1$ can be realized bijectively as $\Gamma:=\End_{A}(\cM)$, where $\cM$ is a $d$-cluster-tilting object in the category of finitely generated $A$-modules for an algebra $A$. Since then, it has become an active research field in the representation theory of Artin algebras. One problem in this field is the classification of $d$-cluster-tilting modules for a given class of algebras, or the construction and characterization of higher Auslander algebras, i.e., algebras satisfying (\ref{equation of higher auslander algebras}), within a given class of algebras.

We will focus the attention to Nakayama algebras: that is all indecomposable modules are uniserial. Let $\Lambda$ be a cyclic Nakayama algebra and $\cS(\Lambda)$ denote the complete set of representatives of isomorphism classes of socles of indecomposable projective $\Lambda$-modules. The syzygy filtered algebra $\bm\varepsilon(\Lambda)$, introduced in \cite{sen19}, is the endomorphism algebra of projective covers of Auslander-Reiten translates of elements of $\cS(\Lambda)$, i.e.,
\begin{align}\label{definition of epsilon}
\bm\varepsilon(\Lambda):=\End_{\Lambda}\cP\quad \text{where}\quad \cP=\bigoplus\limits_{S\in\cS(\Lambda)} P(\tau S).
\end{align}

As highlighted by CM. Ringel in \cite{rin2}[Appendices B,C] and in \cite{ringel2022linear}, this provides a strong tool to study the homological dimensions of $\Lambda$. Since the syzygy filtered algebra is again a Nakayama algebra, it enables us to relate the homological dimensions of the original algebra and syzygy filtered algebras inductively.  Moreover, the syzygy filtered algebra played a crucial role in constructing cyclic Nakayama algebras that are higher Auslander algebras in \cite{sen21}. In this work, we establish the reverse of \ref{definition of epsilon} by using \emph{defect} of an algebra $\Lambda$, denoted by $\defect\Lambda$,  which represents the count of indecomposable non-isomorphic injective but non-projective modules \cite{rene}.

We state our first result.
\begin{customthm}{A}\label{emre theorem main}
Let $\Theta$ be a cyclic or linear (not necessarily connected) Nakayama algebra of rank $n$,  $\left\{P_1,\ldots,P_n\right\}$ be the complete ordered set of representatives of indecomposable projective $\Theta$-modules. Let $w=(w_1,\ldots,w_n)$ be any integer sequence such that $w_i\geq \defect P_i$, where $\defect P_i$ denotes the number of proper injective quotients of $P_i$. Then,
\begin{enumerate}
    \item\label{emre thm main part 1} if $\sum w_i>\defect\Theta$, then there are countably many cyclic Nakayama algebras $\Lambda_j$ $j\geq 1$ such that $\bm\varepsilon(\Lambda_j)\cong\Theta$ and $\defect\Lambda_j=\sum w_i$,
    \item\label{emre thm main part 2} if $\sum w_i=\defect\Theta$, then $\Lambda$ is unique.
\end{enumerate}
\end{customthm}

Using Theorem \ref{emre theorem main}, we prove the following theorem about dominant dimensions of Nakayama algebras.

\begin{customthm}{B}\label{emre thm domdim}
  Let $\Lambda$, $\Theta$ be Nakayama algebras such that $\bm\varepsilon(\Lambda)\cong\Theta$. Suppose that $\defect\Theta\neq 0$, then,
  \begin{enumerate}
      \item\label{emre thm domdim p1} if $\defect\Lambda>\defect\Theta$, then $\domdim\Lambda\leq 2$.
      \item\label{emre thm domdim p2} if $\defect\Lambda=\defect\Theta$, then $\domdim\Lambda=\domdim\Theta+2$.
  \end{enumerate}
\end{customthm}

\begin{figure}[h]
\begin{center}
$\xymatrixcolsep{5pt}\xymatrixrowsep{10pt}\xymatrix{\cdots\ar@/^1pc/[r]^{\varepsilon}&\Lambda \ar@/^1pc/[rr]^{\varepsilon}\ar@/^1pc/[l]^{\varepsilon^{-1}}\ar[dd] && \Lambda'\ar@/^1pc/[rr]^{\varepsilon}\ar@/^1pc/[ll]^{\varepsilon^{-1}}\ar[dd] && \Lambda''\ar@/^1pc/[r]^{\varepsilon}\ar@/^1pc/[ll]^{\varepsilon^{-1}}\ar@/^1pc/[ll]^{\varepsilon^{-1}}\ar[dd]&\cdots\ar@/^1pc/[l]^{\varepsilon^{-1}} \\\\
\text{Rank of Algebra:}&\rank \Lambda&&\rank \Lambda-\defect\Lambda&&\rank\Lambda-2\defect\Lambda\\
\text{Dominant Dimension:}&k&&k-2&&k-4}$
\end{center}
\caption{Defect Invariant Nakayama Algebras}
\label{figure 1}
\end{figure}
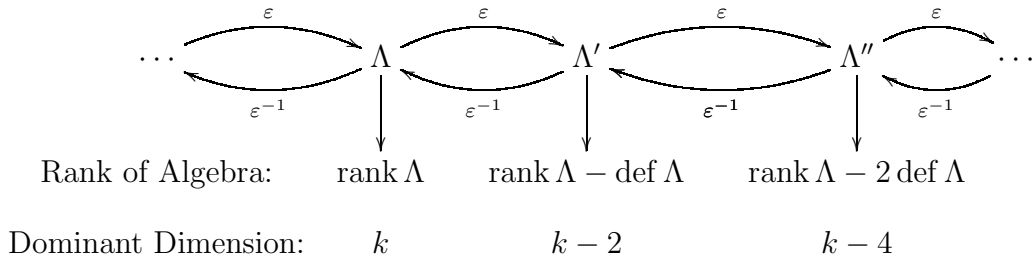

A direct consequence of Theorem \ref{emre theorem main} \ref{emre thm main part 2} and Theorem \ref{emre thm domdim} \ref{emre thm domdim p2} is: for a given Nakayama algebra $\Theta$ with nonzero defect, there exists unique cyclic Nakayama algebra $\Lambda$ such that $\domdim \Lambda=\domdim\Theta+2$ (Figure \ref{figure 1}). Hence, this enables us to construct Nakayama algebras characterized by their dominant dimension. 
For instance, a finite-dimensional algebra is called minimal Auslander-Gorenstein if its Gorenstein dimension is bounded by its dominant dimension \cite{iyamasolberg}. In the same vein, dominant Auslander-Gorenstein algebras were introduced in \cite{CIM}, where the algebra is Gorenstein and the injective dimension of each indecomposable projective module is bounded by its dominant dimension. If it has finite global dimension, it is called  dominant Auslander-regular algebra. We illustrate them in the following chart \ref{figue 2}.

\begin{figure}[h]
\begin{center}
\xymatrixcolsep{5pt}\xymatrixrowsep{2pt}\xymatrix{\text{Higher Auslander Algebras}\ar@{^{(}->}[rr]&& \text{Minimal Auslander-Gorenstein Algebras}\\
\gldim A\leq\domdim A\ar@{^{(}->}[dd] && \indim A\leq\domdim A \ar@{^{(}->}[dd] \\\\
\text{Dominant Auslander-Regular Algebras}\ar@{^{(}->}[rr] && \text{Dominant Auslander-Gorenstein Algebras}\\
\indim P_i\leq\domdim P_i,\,\indim P_i<\infty && \indim P_i\leq\domdim P_i \\
\forall P_i\,\,\text{indecomposable projective} &&\forall P_i\,\,\text{indecomposable projective}}
\end{center}
\caption{Nakayama algebras characterized by dominant dimension}
\label{figue 2}
\end{figure}
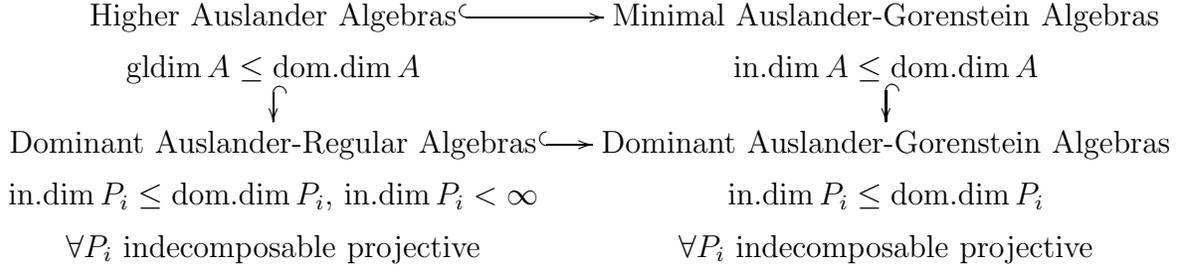

We prove the following:

\begin{customthm}{C}\label{emre thm minimal aus. gor}
    Let $\Lambda,\Theta$ be Nakayama algebras such that $\bm\varepsilon(\Lambda)\cong\Theta$ and $\defect\Lambda=\defect\Theta$. If $\Theta$ is a dominant Auslander-Gorenstein algebra, $\Lambda$ is as well.
\end{customthm}

To provide a complete classification of Nakayama algebras with infinite global dimension that are minimal Auslander-Gorenstein, it suffices to construct $2$-Auslander-Gorenstein algebras as described in Section \ref{subsection minimal Auslande-Gorenstein}. 
Additionally, we provide a complete classification of cyclic Nakayama algebras with global dimensions $3$ and $4$ which are higher Auslander. Through this classification, we determine which cyclic Nakayama algebras have $2$ and $3$ cluster tilting modules.

However, even for Nakayama algebras to classify all dominant Auslander-Gorenstein algebras that are not minimal Auslander-Gorenstein, it turns out that Theorem \ref{emre thm domdim} \ref{emre thm domdim p2} alone is not sufficient. Therefore, we focus on certain classes of dominant Auslander-Gorenstein and dominant Auslander-regular algebras. Based on Theorem \ref{emre thm domdim}, we classify all cyclic dominant Auslander-regular algebras of global dimension three in Section \ref{subsection dominant Auslander}. On the other hand, we give a construction of dominant Auslander-regular algebras of dominant dimension one and two in Section \ref{subsection dominant Auslander}. Additionally, by Theorem \ref{emre thm domdim} \ref{emre thm domdim p1}, we show that the dominant dimension of dominant Auslander-Gorenstein algebras of infinite global dimension is even.

\subsection{Organization of the paper}
In Section \ref{section prelim}, we recall the basic properties of Nakayama algebras. In Section \ref{section_epsilon}, we provide details of the syzygy filtered algebra construction also referred to as $\bm\varepsilon$-construction and study its functorial and module theoretical properties. Then, in Section \ref{section reverse}, we consider the reverse of the $\bm\varepsilon$-construction and prove Theorem \ref{emre theorem main}. In Section \ref{section higher homological}, we study the homological properties of algebras, prove Theorems \ref{emre thm domdim}, \ref{emre thm minimal aus. gor}, and provide a complete classification of certain classes of algebras. In Section \ref{sec:duality}, we consider cosyzygy filtered algebras and demonstrate that they are dual to syzygy filtered algebras.

\section{Preliminaries on Nakayama algebras}\label{section prelim}

We recall basic  properties of Nakayama algebras and the $\bm\varepsilon$-construction. This is a process of constructing so called ``syzygy filtered algebra'' which is closely related to the original algebra,
with respect to homological properties.

\subsection{Nakayama algebras}
Nakayama algebras are finite dimensional algebras for which  all indecomposable projective and  injective modules are uniserial.  The category of right finitely generated modules over a Nakayama algebra $\Lambda$ will be denoted by $\moddd\Lambda$. Throughout the paper $\tau$ will denote Auslander-Reiten translate.

\begin{remark}\label{emre remark labels may25} Let $\Lambda$ be a Nakayama algebra. Then $\Lambda\cong \mathbb{K}Q/I$, where $Q$ is a quiver, cyclic or linear, with $n$ vertices, $\mathbb{K}Q$ is the path algebra and $I$ is an admissible ideal of relations. We label the vertices of quiver $Q$ by indices $1,2\cdots, n$, such that
 the simple modules satisfy $S_{i+1}\cong \tau S_i$ for $1\leq i\leq n-1$, and $\tau S_{n}=\begin{cases} 0 &\text{if $\Lambda$ is linear} \\ S_1 &\text{  if $\Lambda$ is cyclic.}\end{cases}$
\end{remark}

{\color{black}
In fact, all the indecomposable modules over a Nakayama algebra are uniserial. Hence the projective cover and injective envelope of any indecomposable module is indecomposable. We will heavily use the following lemma about indecomposable modules over Nakayama algebras. Of particular importance for homological dimensions of Nakayama algebras are  minimal projective and minimal injective modules as defined below. All of these results are either already proved in \cite{sen18} and \cite{sen19}, or are not hard to check.
\begin{definition}\label{minimal}  An indecomposable projective module $P$ is called \emph{minimal projective}  if $\rad P$ is not projective.
An indecomposable injective module $I$ is called \emph{minimal injective}  if $I/\soc I$ is not injective.
\end{definition}

\begin{lemma}\label{lem:uniseriality}[Uniseriality]
Let $\Lambda$ be a Nakayama algebra and let $M$ and $N$ be indecomposable $\Lambda$-modules. 
\begin{enumerate}
\item If there exists $L\neq 0$ that is a submodule of both $M$ and $N$, then either $M$ is a submodule of $N$ or $N$ is a submodule of $M$.
\item If there exists $L\neq 0$ that is a quotient module of both $M$ and $N$, then either $N$ is a quotient module of $M$ or $M$ is a quotient module of $N$.
\end{enumerate}
\end{lemma}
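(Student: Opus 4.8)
\textbf{Proof plan for Lemma~\ref{lem:uniseriality} (Uniseriality).}
The plan is to reduce everything to the fact that over a Nakayama algebra every indecomposable module is uniserial, i.e.\ has a unique composition series, and in particular a unique maximal submodule and a unique simple submodule (socle). I will prove part~(1) and then observe that part~(2) is the dual statement, obtained by passing to the opposite algebra $\Lambda^{\op}$, which is again a Nakayama algebra, with submodules and quotient modules interchanged.

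For part~(1), suppose $L\neq 0$ is a submodule of both $M$ and $N$. Since $L$ is nonzero it contains a simple submodule, and because $M$ is uniserial its socle $\soc M$ is the unique simple submodule of $M$; similarly for $N$. Hence $\soc L=\soc M=\soc N=:S$ for a single simple module $S$. Now I would set up an induction on $\ell(M)+\ell(N)$, where $\ell$ denotes composition length. If either $M$ or $N$ is simple, it equals $S$ and is therefore a submodule of the other, so we are done. Otherwise both have length $\geq 2$, so I pass to $\rad$-cofiltration from the bottom: concretely, consider the radical series or, more conveniently, the structure map $P(S)\twoheadrightarrow$ (something). The cleanest route: let $\pi\colon P\to M$ and $\pi'\colon P\to N$ be projective covers; since $M,N$ are uniserial quotients of indecomposable projectives with the same socle $S$, the uniseriality of $P(S)$-related projectives forces $M$ and $N$ to both be quotients of a single indecomposable projective $P$ by submodules $X\supseteq Y$ or $Y\supseteq X$ lying in the unique composition series of $P$; comparing lengths then exhibits one of $M,N$ as a submodule of the other. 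Alternatively, and perhaps most elementarily, one uses that every indecomposable $\Lambda$-module is isomorphic to $P_i/\rad^k P_i$ for some vertex $i$ and some $k$, and a submodule of such a module is again of the form $\rad^j P_i/\rad^k P_i$; two such modules sharing a common nonzero submodule must have the same $i$, and then the inclusion relation between them is governed by comparing the indices $j$, which are totally ordered.

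I would write part~(2) as a one-line dual: $\Lambda^{\op}$ is Nakayama, an indecomposable $\Lambda$-module $M$ corresponds to an indecomposable $\Lambda^{\op}$-module $DM$ (where $D$ is the standard $\Kk$-duality), submodules of $M$ correspond to quotients of $DM$ and vice versa, so a common nonzero quotient of $M$ and $N$ becomes a common nonzero submodule of $DM$ and $DN$; apply part~(1) and dualize back.

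I do not expect any serious obstacle here; this is a foundational lemma whose content is essentially the definition of uniseriality plus the classification of indecomposables as $P_i/\rad^k P_i$. The only point requiring a little care is making the induction or the length-comparison argument precise enough that it genuinely uses \emph{indecomposability} of $M$ and $N$ (the statement is false for decomposable modules), and ensuring the reduction in part~(2) correctly tracks that $D$ is a contravariant exact equivalence turning sub into quotient; but both are routine.
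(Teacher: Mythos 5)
The paper does not actually prove this lemma; it is listed among facts that ``are either already proved in \cite{sen18} and \cite{sen19}, or are not hard to check,'' so I can only judge your argument on its own terms. Your overall strategy (unique socle, total ordering of submodules of a uniserial module, duality for part (2)) is the right one, but the two concrete routes you sketch for part (1) both contain a wrong step. First, two uniserial modules with the same \emph{socle} are not in general quotients of ``a single indecomposable projective $P$'': projective covers are governed by tops, not socles, so $M$ and $N$ typically have different projective covers. What they do share is a common \emph{injective envelope}: since $\soc M\cong\soc N\cong S$, both $M$ and $N$ embed into the indecomposable (hence uniserial) module $I(S)$, whose submodules form a chain determined by length; taking $\ell(M)\leq\ell(N)$ exhibits $M$ as a submodule of $N$. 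Second, your ``most elementary'' route asserts that two indecomposables $P_i/\rad^kP_i$ and $P_{i'}/\rad^{k'}P_{i'}$ sharing a nonzero submodule must have the same $i$; this is false --- any proper nonzero submodule $N$ of an indecomposable $M$ already gives a counterexample, since $\topp N\ncong\topp M$ in general (e.g.\ $P_1$ of length $2$ and its simple socle over linear $\mathbb{A}_2$).

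Both defects are repaired by the one-line injective-envelope argument above, which genuinely uses indecomposability (it is what makes $\soc M$ simple and $I(\soc M)$ indecomposable). Your reduction of part (2) to part (1) via the duality $D=\Hom_{\mathbb{K}}(-,\mathbb{K})$ and the fact that $\Lambda^{\op}$ is again Nakayama is correct as stated.
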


%

   \begin{proposition}\label{prop:quadrilateral}
Let $\Lambda$ be a Nakayama algebra. 
\begin{enumerate}[label=\arabic*)]
\item Let  $0\to L\to M\to N\to 0$ be a non-split exact sequence of indecomposable modules. Then $\Top L\cong \tau \soc N$.
\item Let $S$ be a simple module. The indecomposable projective module $P(S)$ is non-injective if and only if $P(S)\cong \rad P(\tau^-S)$.
\item Let $S$ be a simple module. The indecomposable injective module $I(S)$ is non-projective if and only if $I(S)\cong I(\tau S)/\tau S$.
 \item Let $S$ be a simple $\Lambda$-module. Then $P(\tau S)$ is a projective-injective  $\Lambda$-module if and only if $P(S)$ is a minimal projective module.
  \item Let $S$ be a simple $\Lambda$-module. Then $I(S)$ is a projective-injective  $\Lambda$-module if and only if $I(\tau S)$ is a minimal injective module.
\end{enumerate}
\end{proposition}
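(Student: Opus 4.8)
The plan is to prove Proposition~\ref{prop:quadrilateral} part by part, using uniseriality (Lemma~\ref{lem:uniseriality}) and the structure of the Auslander--Reiten quiver of a Nakayama algebra as the main tools. For part~1), I would start with the non-split exact sequence $0\to L\to M\to N\to 0$ of indecomposable modules. Uniseriality forces $L$ to be a submodule of $M$ and $N\cong M/L$, so all three modules share a composition series. Writing $M$ with top $\Top M$ and composition length $\ell$, the submodule $L$ sits at the ``bottom'' and $N$ is the quotient at the ``top''. The claim $\Top L\cong\tau\soc N$ then becomes a statement about how the labels of simples shift along a uniserial module: if $\soc N$ is the simple $S_j$ then, reading down the composition series of $M$, the top of $L$ is the simple immediately below the part killed in passing to $N$, which by the labelling convention of Remark~\ref{emre remark labels may25} (where $S_{i+1}\cong\tau S_i$) is exactly $\tau S_j=\tau\soc N$. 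The only subtlety is checking the boundary/wrap-around behavior in the cyclic case, but the convention $\tau S_n=S_1$ handles it uniformly.

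For parts 2) and 3), which are dual to each other, I would argue as follows for 2). If $P(S)$ is non-injective, consider the indecomposable module $M$ obtained by adding one composition factor on top of $P(S)$, i.e. the indecomposable with $\soc$-series extending $P(S)$ by the simple $\tau^- S$ on top; one must check such a module exists and is projective precisely as $P(\tau^- S)$ with $\rad P(\tau^- S)\cong P(S)$. The non-injectivity of $P(S)$ is what guarantees $P(\tau^-S)$ has strictly larger length than $P(S)$, i.e. that $P(S)\subsetneq P(\tau^-S)$ as the radical. Conversely, if $P(S)\cong\rad P(\tau^- S)$, then $P(S)$ is a proper submodule of a projective, hence cannot be injective (an injective uniserial module that is a proper submodule of an indecomposable would violate uniseriality/injectivity). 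Part 3) follows by the injective--projective duality $D=\Hom_{\Kk}(-,\Kk)$ applied to part 2) over the opposite algebra, noting $D$ exchanges projectives and injectives and intertwines $\tau$ with $\tau^-$.

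For parts 4) and 5), again dual to each other, I would prove 4) by combining 2) and the definition of minimal projective. Suppose $P(S)$ is minimal projective, so $\rad P(S)$ is not projective. I want to show $P(\tau S)$ is projective--injective. It is projective by definition. For injectivity: consider the module $P(\tau S)$; its radical is a submodule, and the relation $S_{i+1}\cong\tau S_i$ together with part~2) applied at the simple $\tau S$ (noting $\tau^-(\tau S)=S$) shows $\rad P(\tau S)\cong P(S)$ would hold if $P(\tau S)$ were non-injective, but then $\rad P(S)=\rad\rad P(\tau S)$ would need to be projective, contradicting minimality; so $P(\tau S)$ is injective. Conversely if $P(\tau S)$ is projective--injective then it is not of the form $\rad P(\tau^- (\tau S))=\rad P(S)$'s ``parent'' in a way that... more precisely, reversing the implication: $P(\tau S)$ injective means by part~2) that $P(\tau S)\not\cong\rad P(S)$, which forces $\rad P(S)$ to be non-projective (since if $\rad P(S)$ were projective, uniseriality would identify it with $P(\tau S)$), i.e. $P(S)$ is minimal projective. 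Part 5) is the $D$-dual of part 4).

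I expect the main obstacle to be part~1), and specifically getting the label bookkeeping airtight in the cyclic case: one has to be careful that the composition factors of $M$ really do form a ``consecutive'' block under the cyclic ordering and that $\tau$ acts on these simple labels exactly by the shift $S_i\mapsto S_{i+1}$ with the wrap $S_n\mapsto S_1$, including when $M$ has length exceeding $n$. The cleanest route is probably to reduce to the universal cover / linear case or to track socles rather than tops: $\soc L\cong\soc M$, $\Top M\cong\Top N$ up to the shift, and then $\Top L$ is pinned down by the length of $L$ relative to $M$. Once part~1) is in place, parts~2)--5) are short deductions from it together with uniseriality and duality, so the bulk of the care goes into the first item.
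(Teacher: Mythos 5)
The paper itself offers no proof of Proposition~\ref{prop:quadrilateral} (it is stated with the remark that these facts are proved in \cite{sen18}, \cite{sen19} or are easy to check), so your argument stands on its own. Your overall route is sound: part~1) by reading labels along the composition series of the uniserial module $M$ using the convention $S_{i+1}\cong\tau S_i$ of Remark~\ref{emre remark labels may25}, and parts~2)--5) as short deductions from 1) together with uniseriality and the duality $D$. If you want part~1) to be visibly non-circular (the compatibility of the Kupisch ordering of composition factors with the $\tau$-labelling is exactly what is at stake), the cleanest justification is to pass to the length-two subquotient of $M$ whose factors are $\Top L$ and $\soc N$: it is a non-split extension of two simples, and Auslander--Reiten duality (or the almost split sequence ending at $\soc N$) forces the lower factor to be $\tau$ of the upper one; the wrap-around worries then disappear.

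Two concrete soft spots remain. First, in the forward direction of 2) you defer precisely the step carrying the content: having produced the indecomposable $M\supsetneq P(S)$ of length $\ell(P(S))+1$ (which exists because $P(S)$ is not injective), you must still show $M$ is projective; non-injectivity of $P(S)$ does not by itself give ``$P(\tau^-S)$ has strictly larger length.'' The quick fix: by part~1), $\Top M\cong\tau^-S$, so the projective cover $P(\tau^-S)\twoheadrightarrow M$ restricts to an epimorphism $\rad P(\tau^-S)\twoheadrightarrow \rad M\cong P(S)$, which splits since $P(S)$ is projective; as $\rad P(\tau^-S)$ is uniserial, hence indecomposable, this is an isomorphism, and comparing lengths gives $M\cong P(\tau^-S)$ (equivalently, invoke the Kupisch inequality $c_{i-1}\le c_i+1$ from Lemma~\ref{kupisch how to verify} and Remark~\ref{kupisch properties}). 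Second, in the forward direction of 4) you quote part~2) with the isomorphism reversed: applied at the simple $\tau S$ it says $P(\tau S)$ is non-injective if and only if $P(\tau S)\cong\rad P(\tau^-(\tau S))=\rad P(S)$, not $\rad P(\tau S)\cong P(S)$, and the contradiction you draw (``$\rad P(S)=\rad\rad P(\tau S)$ would need to be projective'') does not follow from the statement as you wrote it. With the correct form the contradiction is immediate: if $P(\tau S)$ were non-injective, then $\rad P(S)\cong P(\tau S)$ would be projective, contradicting minimality of $P(S)$ in the sense of Definition~\ref{minimal}. Both repairs are one or two lines, and with them your proof is correct.
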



\begin{proposition}\label{lem_defect_general_1}
Let $\Lambda$ be a cyclic Nakayama algebra. Then:
\begin{enumerate}[label=\arabic*)]
 \item $M\cong \rad P$ for some minimal projective module $P$ if and only if $M\cong I/\soc I$ for some minimal injective module $I$.
  \item Let $S$ be a simple $\Lambda$-module. If $P(S)$ is projective-injective, and $E$ is minimal injective quotient of $P(S)$ then
 $E/\soc E\cong \rad(P(\tau^{-1}S))$.
 \end{enumerate}
\end{proposition}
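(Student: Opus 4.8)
The plan is to prove the two statements separately, each by exploiting the characterizations of minimal projective/minimal injective modules from Proposition~\ref{prop:quadrilateral}, parts 4) and 5), together with the uniseriality Lemma~\ref{lem:uniseriality} and the ``quadrilateral'' relation $\Top L\cong\tau\soc N$ for non-split short exact sequences.

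For part 1), I would argue by a symmetric chain of equivalences. Suppose $M\cong\rad P$ for a minimal projective $P=P(S)$, where $S=\Top P$. By Proposition~\ref{prop:quadrilateral} 4), $P(S)$ minimal projective is equivalent to $P(\tau S)$ being projective-injective; say $P(\tau S)=I(T)$ for the appropriate simple $T$. Now I want to identify $\rad P(S)$ as $I/\soc I$ for a suitable minimal injective $I$. The natural candidate is $I=I(\tau^{-1}S)$: one checks (this is exactly the content of Proposition~\ref{lem_defect_general_1} 2), which we are allowed to assume as an earlier statement, or can re-derive) that since $P(S)$ is a proper submodule situation, the injective envelope $I(\tau^{-1}S)$ has $I(\tau^{-1}S)/\soc\cong\rad P(S)$; and $I(\tau^{-1}S)$ is minimal injective precisely because, via Proposition~\ref{prop:quadrilateral} 5), its ``predecessor'' $I$ corresponds to a projective-injective, matching the minimal-projective hypothesis on $P(S)$. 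The reverse implication is the dual argument, reading Proposition~\ref{prop:quadrilateral} 4) and 5) in the other direction; because $\Lambda$ is cyclic, $\tau$ and $\tau^{-1}$ are genuine inverse bijections on simples (Remark~\ref{emre remark labels may25}), so no boundary effects intervene and the equivalence closes up. The key bookkeeping is to track which simple indexes each module and to verify that ``minimal projective'' on the $P$-side transfers to ``minimal injective'' on the $I$-side through the shared projective-injective module.

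For part 2), assume $P(S)$ is projective-injective and let $E$ be its minimal injective quotient, i.e.\ $E$ is minimal injective and there is a surjection $P(S)\twoheadrightarrow E$. Since $P(S)$ is injective, $P(S)=I(S')$ for some simple $S'$, and the uniserial quotients of $I(S')$ are exactly $I(S')/(\text{submodules})$; the condition that $E$ be the \emph{minimal} injective quotient pins down $E$ among these. I would then apply Proposition~\ref{prop:quadrilateral} 5): $E$ minimal injective says $I(\tau E$-socle$)$ — more precisely the relevant simple — makes $E$'s ``successor'' injective projective-injective, which by part 1) (just proved) or directly by Proposition~\ref{lem_defect_general_1} 1) identifies $E/\soc E$ with $\rad$ of a minimal projective. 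It remains to check that minimal projective is $P(\tau^{-1}S)$: this follows because $P(S)$ being projective-injective forces, via Proposition~\ref{prop:quadrilateral} 4) read with $\tau^{-1}S$ in place of $S$, that $P(\tau^{-1}S)$ is minimal projective (as $P(\tau\cdot\tau^{-1}S)=P(S)$ is projective-injective), and then the non-split sequence $0\to\rad P(\tau^{-1}S)\to P(\tau^{-1}S)\to S\to 0$ together with the quadrilateral rule identifies the top of $\rad P(\tau^{-1}S)$ correctly; matching lengths (using that $E$ is the \emph{minimal} such quotient) gives $E/\soc E\cong\rad P(\tau^{-1}S)$.

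The main obstacle I anticipate is not any single deep step but the careful length/index bookkeeping: making sure that the module produced on one side has exactly the right Loewy length to be isomorphic (not merely to have the same top or socle) to the module on the other side. Uniseriality reduces ``isomorphic'' to ``same top and same length,'' so the real work is a length computation — showing, for instance, that $\ell(E)-1=\ell(P(S))-\ell(\text{kernel})$ equals $\ell(\rad P(\tau^{-1}S))=\ell(P(\tau^{-1}S))-1$ — which should fall out of the projective-injective hypothesis and the defining property of ``minimal'' injective quotient, but requires writing the uniserial composition series explicitly.
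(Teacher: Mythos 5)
Your overall plan (transfer ``minimal'' between the projective and injective sides via Proposition~\ref{prop:quadrilateral} 4)--5), match tops with the quadrilateral rule, then match lengths using uniseriality) is workable, and since the paper states this proposition without proof there is no in-text argument to compare against; but as written your argument has three concrete problems. First, it is circular: part 1) is justified by appeal to part 2) (``this is exactly the content of Proposition~\ref{lem_defect_general_1} 2)''), while part 2) is justified ``by part 1) (just proved)''; the parenthetical offers to ``re-derive'' or argue ``directly'' are never cashed in, so neither statement is actually established. Second, the candidate minimal injective in part 1) is the wrong module. If $P=P(S)$ is minimal projective with $W=\soc P(S)=\tau^{\ell(P(S))-1}S$, then $\rad P(S)$ is uniserial with top $\tau S$ and socle $W$, so the injective $I$ you need must satisfy $\soc I=\tau W$; the correct candidate is $I(\tau W)$ (equivalently, the minimal injective quotient of the projective-injective $P(\tau S)$), not $I(\tau^{-1}S)$, whose quotient by its socle has socle $\tau^{-2}S$ and cannot be isomorphic to $\rad P(S)$ except when $\ell(P(S))+1\equiv 0$ modulo the rank. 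With the right candidate minimality also comes out cleanly: $W\in\cS(\Lambda)$ forces $I(W)$ to be projective-injective by Remark~\ref{socle of projective-injective}(1), so $I(\tau W)$ is minimal injective by Proposition~\ref{prop:quadrilateral} 5); your $I(\tau^{-1}S)$ admits no such argument.

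Third, the length computation you correctly flag as ``the real work'' is exactly the missing content, and it does not simply fall out. The efficient order is to prove 2) first. With $P(S)$ projective-injective, both $E/\soc E$ and $\rad P(\tau^{-1}S)$ are quotients of $P(S)$ with top $S$, so by uniseriality it suffices to show $\ell(E)=\ell(P(\tau^{-1}S))$. The proper injective quotients of $P(S)$ have consecutive lengths: if the length-$j$ quotient $N_j$ is injective and proper, it is not projective, so Proposition~\ref{prop:quadrilateral} 3) gives $N_j\cong I(\tau^{j}S)/\tau^{j}S$, whence $N_{j+1}=I(\tau^j S)$ is injective too. Thus there are exactly $\ell(P(S))-\ell(E)$ proper injective quotients, i.e.\ $\defect P(S)=\ell(P(S))-\ell(E)$, and Remark~\ref{kupisch properties} gives $\defect P(S)=\ell(P(S))-\ell(P(\tau^{-1}S))$, so $\ell(E)=\ell(P(\tau^{-1}S))$. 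Part 1) then follows: the forward direction is part 2) applied to $P(\tau S)$, and the converse follows from injectivity of $P\mapsto E_P$ together with the equal counts of minimal projectives and minimal injectives in Proposition~\ref{numerical}, rather than from a second ad hoc construction.
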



\subsubsection{Kupish Series $\&$ Gustafson Function}
We recall Kupisch series and Gustafson function. For integers $x$ outside the range 
 $\{1,\dots,n\}$ we will use symbol $[x]$ to denote 
 the integer $x$ (mod $n$) within the range $\{1,\dots,n\}$.
\begin{definition}\label{def:Gustafson}
Let $\Lambda\cong \mathbb{K}Q/I$ be Nakayama algebra  where $Q$ is either cyclic or connected linear quiver with vertices: 
 $\{1,\dots,n\}$. The Gustafson function is defined as: $g: \{1,\dots,n\}\to\{1,\dots,n\}$,  
 $g(i):=[i+\ell(P_i)-1]$ where $\ell(P_i)$  is the length of the projective cover of the simple $S_i$.
\end{definition}

\begin{lemma}
Let $\Lambda$ be cyclic Nakayama algebra. Then:\\
(a) Gustafson map defines map on simple $\Lambda$-modules as $\overline g(S)= \soc(P(S))$.  \\
(b) Let $P$ and $P'$ be indecomposable projective-injective $\Lambda$-modules. Then: \\
$(P\cong P')\iff (\topp P\cong \topp P'))\iff (\soc P\cong \soc P')$. 
\end{lemma}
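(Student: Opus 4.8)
The statement to be proved is the Lemma on Gustafson maps and projective-injective modules: part (a) that $\overline g(S)=\soc(P(S))$, and part (b) that for indecomposable projective-injective modules $P,P'$ the conditions $P\cong P'$, $\topp P\cong\topp P'$, $\soc P\cong\soc P'$ are all equivalent. The first thing I would do is unwind the definitions. If $S\cong S_i$ then $P(S)=P_i$ has length $\ell(P_i)$, so its composition factors are $S_i, \tau^{-1}S_i,\dots$ reading from top to socle, using the labeling convention of Remark \ref{emre remark labels may25} that $S_{i+1}\cong\tau S_i$ (equivalently $\tau^{-1}S_i\cong S_{i-1}$ in the cyclic indexing, or rather $S_{j}$ for $j$ decreasing). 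Concretely, since the radical layers of $P_i$ descend through $S_i, S_{i-1},\ldots$, the socle of $P_i$ is the simple module $S_{[i+\ell(P_i)-1]}$ when indices are read in the cyclic direction matching $\tau$; this is exactly $S_{g(i)}$. So part (a) is really just the observation that $g$ was \emph{defined} to track the socle of the projective, combined with the indexing convention; I would spell out the one-line induction on radical layers (or cite that this is immediate from the Kupisch series / \cite{sen18}).

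For part (b), the implication $P\cong P'\Rightarrow\topp P\cong\topp P'$ and $P\cong P'\Rightarrow\soc P\cong\soc P'$ are trivial. For the converses I would argue as follows. Suppose $\topp P\cong\topp P'$. Since $P,P'$ are indecomposable projective (being projective-injective in particular projective), and over a Nakayama algebra the top of an indecomposable projective determines it — $P(S)$ is the projective cover of $S$, unique up to isomorphism — we immediately get $P\cong P'$. That direction does not even need the injectivity hypothesis. The substantive direction is $\soc P\cong\soc P'\Rightarrow P\cong P'$: here I would use the dual fact that an indecomposable injective module is determined by its socle, namely $P\cong I(\soc P)$ and $P'\cong I(\soc P')$ because $P,P'$ are injective; hence $\soc P\cong\soc P'$ forces $P\cong P'$. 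Thus the injectivity is exactly what is needed to run the socle-to-module direction, and projectivity handles the top-to-module direction. Uniseriality (Lemma \ref{lem:uniseriality}) and the fact that projective covers / injective envelopes of indecomposables are indecomposable are the structural inputs.

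The main (very mild) obstacle is bookkeeping with the indexing convention in part (a): one must be careful that the Gustafson function $g(i)=[i+\ell(P_i)-1]$ lands on the index of the socle and not off by one, and that the cyclic reduction $[\,\cdot\,]$ interacts correctly with the direction in which $\tau$ shifts simples. I would resolve this by checking the base case $\ell(P_i)=1$ (then $P_i=S_i$ is simple, $\soc P_i=S_i$, and $g(i)=[i]=i$) and the step $\ell(P_i)\mapsto\ell(P_i)+1$ via the short exact sequence $0\to\rad P_i\to P_i\to S_i\to 0$ together with part 1) of Proposition \ref{prop:quadrilateral}, which pins down tops of submodules in terms of $\tau$ of socles. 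Everything else is formal from uniseriality and the projective-cover / injective-envelope characterizations.
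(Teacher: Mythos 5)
Your proof is correct; the paper gives no proof of this lemma (it is among the facts declared ``not hard to check''), and your argument -- identifying the composition factors of $P_i$ from the top down via Proposition~\ref{prop:quadrilateral}(1) applied to $0\to\rad P_i\to P_i\to S_i\to 0$, so that $\soc P_i\cong S_{[i+\ell(P_i)-1]}=S_{g(i)}$, and then for (b) invoking uniqueness of the projective cover of a simple top and of the injective envelope of a simple socle -- is exactly the intended routine verification. One small caution: your phrase that the radical layers ``descend through $S_i,S_{i-1},\dots$'' runs the indices the wrong way under the convention $S_{i+1}\cong\tau S_i$ (they increase: $S_i,S_{i+1},\dots$), but your actual inductive step, which gives $\topp(\rad P_i)\cong\tau S_i=S_{i+1}$, corrects this and lands on the stated formula, so the slip is only in the prose.
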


}

\begin{definition}
    Let $\Lambda=\mathbb{K}Q/I$ be a connected Nakayama algebra where $Q$ is either cyclic or linear quiver with $n$ vertices, labeled as in Definition \ref{def:Gustafson}. Let $\{P_1,\ldots,P_n\}$ be the representatives of indecomposable projectives. The associated Kupisch series for $\Lambda$ is $(c_1,\ldots,c_n)$ where $c_i=\ell(P_i)$.
\end{definition}

\begin{lemma}\label{kupisch how to verify}
    Let $(c_1,\ldots,c_n)$ be sequence of positive integers. Then, this is an admissible sequence  for a Kupisch series of a Nakayama algebra, if there is a cyclic permutations of these integers such that
    \begin{enumerate}
        \item for each $1\leq i\leq n-1$, either $c_i=c_{i+1}$ or $c_i\leq c_{i+1}+1$.
        \item If $c_n\neq 1$, then either $c_1=c_n+1$ or $c_n\leq c_1$. If $c_n=1$, then algebra is linear and there is no additional condition.
    \end{enumerate}
\end{lemma}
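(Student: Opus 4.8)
\textbf{Proof plan for Lemma \ref{kupisch how to verify}.}

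The plan is to translate the combinatorial conditions on the sequence $(c_1,\dots,c_n)$ into the structural requirements on the quiver $Q$ and relations $I$ that make $\Lambda = \mathbb{K}Q/I$ a well-defined Nakayama algebra, using the characterization of indecomposable projectives as uniserial modules. First I would recall that for a Nakayama algebra with cyclic quiver on vertices $\{1,\dots,n\}$ (ordered as in Definition \ref{def:Gustafson}), the indecomposable projective $P_i$ has top $S_i$ and radical series descending through $S_{i+1}, S_{i+2}, \dots$ (indices mod $n$), so $\rad P_i$ is the uniserial module of length $c_i - 1$ with top $S_{i+1}$. The key observation is that $\rad P_i$ must be a quotient of $P_{i+1}$: since $\rad P_i$ is uniserial with top $S_{i+1}$, it is a quotient of the projective cover $P_{i+1}$ of $S_{i+1}$, which forces $\ell(\rad P_i) \le \ell(P_{i+1})$, i.e. $c_i - 1 \le c_{i+1}$, equivalently $c_i \le c_{i+1} + 1$. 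This gives the inequality in condition (1). I would then argue that the case $c_i = c_{i+1}$ in condition (1) is subsumed by $c_i \le c_{i+1}+1$ except that it is listed separately to flag the situation where $\rad P_i \cong P_{i+1}$ (no relation is imposed at vertex $i+1$); here no genuine extra constraint is needed, so condition (1) as stated is simply the admissibility inequality $c_i \le c_{i+1} + 1$ together with the trivial possibility of equality.

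Next I would handle the wrap-around at vertex $n$, which is condition (2). If $c_n = 1$ then $P_n = S_n$ is simple, meaning there is no arrow out of vertex $n$; deleting this arrow from the cycle yields a linear quiver, and the remaining conditions on $(c_1,\dots,c_{n-1})$ are exactly the linear-Nakayama admissibility conditions, so no further requirement at the wrap is needed — this is the "algebra is linear" clause. If $c_n \ge 2$, then as above $\rad P_n$ is uniserial of length $c_n - 1 \ge 1$ with top $S_1$, hence a quotient of $P_1$, giving $c_n - 1 \le c_1$, i.e. $c_n \le c_1$, or the boundary case $c_1 = c_n + 1$ (corresponding to $\rad P_n \cong P_1$). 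This is precisely condition (2). For the converse direction, given a sequence satisfying (1) and (2) (after a suitable cyclic rotation), I would construct $Q$ as the cyclic quiver with $n$ vertices, define the ideal $I$ so that the path of length $c_i$ starting at vertex $i$ is zero, and verify that with these relations the projective $P_i$ indeed has length $c_i$; the inequalities guarantee the ideal is admissible (contained in $\rad^2$ and containing some power of $\rad$) and that the relations are mutually consistent along the cycle.

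The main obstacle — and the part that requires genuine care rather than routine bookkeeping — is the cyclic-permutation clause: the lemma does not require $(c_1,\dots,c_n)$ itself to satisfy the listed inequalities, only that \emph{some} cyclic rotation does. I would address this by noting that the labelling of vertices of a cyclic Nakayama algebra is only determined up to rotation (the condition $S_{i+1} \cong \tau S_i$ in Remark \ref{emre remark labels may25} fixes the cyclic order but not the starting point), so the Kupisch series is really a necessklace rather than a sequence; one must check that at least one starting point makes all $n$ local conditions hold simultaneously. The cleanest way is to observe that in a genuine cyclic Nakayama algebra there is always a minimal projective $P_i$ (one with $\rad P_i$ non-projective, equivalently $c_{i-1} > c_i$ fails to continue indefinitely around the cycle), and rotating so that such an index sits at a controlled position makes the inequality chain propagate correctly; conversely, when building the algebra from the sequence one simply uses the rotation that is hypothesized to exist. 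I expect the forward direction (necessity of the conditions) to be immediate from uniseriality and Lemma \ref{lem:uniseriality}, while the reverse direction (sufficiency) needs the explicit construction of $I$ and the verification that it is admissible, which is where I would spend most of the written proof.
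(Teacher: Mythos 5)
The paper gives no proof of this lemma at all --- it is listed among the preliminary facts that are ``either already proved in \cite{sen18} and \cite{sen19}, or are not hard to check'' --- so there is no argument of the authors' to compare yours against, and I can only judge the plan on its own terms. Your route is the classical one and is essentially sound: uniseriality forces $\rad P_i$ to be a quotient of the projective cover $P_{i+1}$ of its top, giving $c_i\le c_{i+1}+1$, and conversely one builds $\mathbb{K}Q/I$ on the cyclic (or, when $c_n=1$, linear) quiver with $I$ generated by the path of length $c_i$ starting at each vertex $i$. Two things need repair. First, the lemma only asserts \emph{sufficiency}, and that is precisely the half you leave as a gesture: the substantive check is that no generator of $I$ is a proper subpath of the length-$(c_i-1)$ path starting at $i+1$, which is where the iterated inequality $c_{[i+k]}\ge c_i-k$ (obtained by chaining condition (1)) is actually used to conclude $\ell(P_i)=c_i$ exactly rather than something smaller; saying the relations are ``mutually consistent'' is not a proof of this. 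Second, your arithmetic at the wrap-around is transposed: $c_n-1\le c_1$ is $c_n\le c_1+1$, not $c_n\le c_1$, and $\rad P_n\cong P_1$ corresponds to $c_n=c_1+1$, not $c_1=c_n+1$. (The lemma's clause (2) as printed appears to carry the same transposition --- literally it collapses to $c_n\le c_1$, which is stronger than the true admissibility bound but still a correct sufficient criterion once a cyclic rotation is permitted, since any genuine cyclic Kupisch series can be rotated so that a minimal projective sits at the wrap, exactly as you observe.) With the verification written out and the inequality corrected, your plan goes through.
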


\subsection{The defect, socle-set, base-set, and filtered projectives}
We now recall the properties of some of the notions which are fundamental for the rest of the paper.



\begin{definition}\label{defect} Let $P$ be an indecomposable projective module. Then \emph{defect of $P$}, denoted by $\defect(P)$ is the number of injective proper quotients of $P$. \emph{Defect of $\Lambda$}, denoted by $\defect(\Lambda)$  is the sum $\Sigma \defect(P)$ over a complete set of representatives of isomorphism classes of indecomposable projective $\Lambda$-modules.
\end{definition}

\begin{definition}\label{socle set} Let $\Lambda$ be a cyclic Nakayama algebra. Define:\\
$\cS(\Lambda)$ is a complete set of representatives of isomorphism classes of socles of indecomposable projective $\Lambda$-modules
and call it the \emph{socle-set}.\\
$\cS'(\Lambda):=\{\tau S\ | \ S\in \mathcal S(\Lambda)\}$
which is called the \emph{top-set} and the \emph{base-set} is defined as:
$\mathcal B(\Lambda):=\{B\ | \  \soc B\in \mathcal S(\Lambda), \topp B\in \mathcal S'(\Lambda), B \text{ is of minimal length}\}$.\\
$\mathcal P(\Lambda):=\{P(S')\ | \ \text{projective cover of } S'\in \cS'(\Lambda)\}$=$\{\text{filtered projectives}\}$ (Lemma \ref{projective}).\\
$\cS_f(\Lambda):=\{S\in \mathcal S(\Lambda)\ | \  S\cong \soc(P) \text{ for some filtered projective } P\in \cP(\Lambda) \}$ \\
$\cS_{nf}(\Lambda):=\{S\in \mathcal S(\Lambda)\ | \  S\ncong \soc(P) \text{ for any filtered projective } P\in \cP(\Lambda) \}$
\end{definition}

\begin{remark}\label{socle of projective-injective} Let $\Lambda$ be a cyclic Nakayama algebra. Then:\\
(1) A simple module $S$ is in the socle-set  $\cS(\Lambda)$ if and only if $S\cong\soc Q$ for some indecomposable projective-injective module $Q$.\\
(2) A simple module $S$ is in the top-set $\cS'(\Lambda)$ if and only if $S\cong \soc I$ for some minimal injective module $I$.\\
(3) Let $B \in \mathcal B(\Lambda)$. Then $B\neq 0$ and $B$ is a proper submodule of a projective-injective.\\
(4) Let $E$ be a minimal injective. Then $\soc E\in \cS'(\Lambda)$.
\end{remark}

\begin{definition}\label{rel}
Let $\Lambda\cong \mathbb{K}Q/I$ be cyclic Nakayama algebra, where $I$ is an admissible ideal. Let $\#\rel\Lambda=\#\{\text{irredundant set of relations which generate } I\}$.
\end{definition}
\begin{proposition}\label{numerical}\label{numerical_rel}
Let $\Lambda$ be a cyclic Nakayama algebra. Then:
\begin{enumerate}[label=\arabic*)]
\item $|\cS(\Lambda)|=|\cS'(\Lambda)|=|\cB(\Lambda)|=|\cP(\Lambda)|=\#\rel\Lambda$.
\item $\#\{\text{isomorphism classes of projective-injectives}\}=\#\rel\Lambda$
\item $\#\{\text{isomorphism classes of minimal projectives}\}=\#\rel\Lambda$
\item $\#\{\text{isomorphism classes of minimal injectives}\}=\#\rel\Lambda$
\item $\#\{\text{isomorphism classes of projectives filtered by } \cB(\Lambda)\}=\#\rel\Lambda$.
\end{enumerate}
\end{proposition}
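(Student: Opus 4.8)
\textbf{Proof proposal for Proposition \ref{numerical_rel}.}

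The plan is to prove the chain of equalities in part (1) by exhibiting explicit bijections between consecutive sets, and then to derive parts (2)--(5) from part (1) together with the structural propositions recalled above, especially Proposition \ref{prop:quadrilateral}. First I would establish $|\cS(\Lambda)| = |\cS'(\Lambda)|$: since $\tau$ is a bijection on the simple modules $S_1,\dots,S_n$ of a cyclic Nakayama algebra (Remark \ref{emre remark labels may25}), the assignment $S \mapsto \tau S$ restricts to a bijection $\cS(\Lambda) \to \cS'(\Lambda)$ by the very definition of the top-set. Next, for $|\cS'(\Lambda)| = |\cB(\Lambda)|$, the map sending $B \in \cB(\Lambda)$ to $\topp B \in \cS'(\Lambda)$ is surjective by construction of $\cB(\Lambda)$; injectivity follows from uniseriality (Lemma \ref{lem:uniseriality}): two modules in $\cB(\Lambda)$ with the same top are comparable as quotients, hence (being of minimal length with prescribed socle in $\cS(\Lambda)$) must coincide. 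Similarly $|\cB(\Lambda)| = |\cP(\Lambda)|$ via $P(S') \mapsto S'$, which is a bijection $\cP(\Lambda) \to \cS'(\Lambda)$ essentially by definition.

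For the identification with $\#\rel\Lambda$, I would use the standard description of the admissible ideal $I$ of a cyclic Nakayama algebra: the minimal relations correspond bijectively to the vertices $i$ at which the Kupisch series "drops", equivalently (via Proposition \ref{prop:quadrilateral}(2)) to the indecomposable projectives $P$ with $\rad P$ non-projective, i.e. the minimal projective modules. Thus $\#\rel\Lambda = \#\{\text{minimal projectives}\}$, which is part (3). To connect this to $|\cS(\Lambda)|$, I would invoke Proposition \ref{prop:quadrilateral}(4): $P(S)$ is minimal projective if and only if $P(\tau S)$ is projective-injective. Since distinct minimal projectives $P(S), P(S')$ yield distinct $\tau S, \tau S'$, hence distinct projective-injectives $P(\tau S), P(\tau S')$, and conversely every projective-injective arises this way, we get $\#\{\text{minimal projectives}\} = \#\{\text{projective-injectives}\}$, which is part (2). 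Finally, by Remark \ref{socle of projective-injective}(1), $S \in \cS(\Lambda)$ iff $S \cong \soc Q$ for some projective-injective $Q$, and by the Gustafson lemma part (b), a projective-injective is determined up to isomorphism by its socle; hence $|\cS(\Lambda)| = \#\{\text{projective-injectives}\}$, closing the loop in part (1) and simultaneously giving (2).

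For part (4), the dual of Proposition \ref{prop:quadrilateral}(5) gives that minimal injectives are in bijection with projective-injectives via $I(S) \mapsto$ (the projective-injective $I(\tau S)$ or its relation to $\soc$), and one can alternatively use Remark \ref{socle of projective-injective}(2) together with the bijectivity of $\tau$ on $\cS'(\Lambda)$; either way $\#\{\text{minimal injectives}\} = \#\rel\Lambda$. Part (5) follows from Lemma \ref{projective} (the filtered projectives are exactly $\cP(\Lambda)$) combined with part (1). The main obstacle I anticipate is the injectivity arguments for the maps $B \mapsto \topp B$ and $S \mapsto P(\tau S)$ on socle-sets versus top-sets: one must be careful that "minimal length" in the definition of $\cB(\Lambda)$ genuinely pins down a unique module, and that passing between a simple in $\cS(\Lambda)$, the corresponding projective-injective, and the simple $\tau S$ in $\cS'(\Lambda)$ does not lose multiplicity — this is exactly where the Gustafson lemma (socle determines the projective-injective) and uniseriality are doing the real work. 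Everything else is bookkeeping with the Kupisch series.
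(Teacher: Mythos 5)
Your proposal is correct. Note that the paper itself gives no proof of this proposition (it is listed among the facts deferred to \cite{sen18}, \cite{sen19} or declared ``not hard to check''), so there is nothing to compare against; but your chain of bijections is exactly the natural verification, and every ingredient you invoke ($\tau$ being a cyclic permutation of the simples, uniseriality, Proposition \ref{prop:quadrilateral}(2),(4),(5), Remark \ref{socle of projective-injective}, the Gustafson lemma (b), and the Kupisch-series criterion for irredundant relations) is available in the paper's toolkit. Two very minor points. First, the surjectivity of $B\mapsto\topp B$ onto $\cS'(\Lambda)$ is not literally ``by construction'': given $S'=\tau S_0\in\cS'(\Lambda)$ one must produce $B$ with $\topp B\cong S'$ by taking the least $\ell>0$ with $\tau^{\ell}S_0\in\cS(\Lambda)$ and letting $B$ be the uniserial module of length $\ell$ with socle $\tau^{\ell}S_0$; this is Proposition \ref{base-set}(2) and is a one-line check, but it deserves to be said (your $B\mapsto\soc B$ direction, or the argument via Proposition \ref{prop:B_eqv_def}(v), is cleaner). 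Second, part (4) is Proposition \ref{prop:quadrilateral}(5) applied directly (the assignment $I(S)\mapsto I(\tau S)$ restricted to projective-injectives), not its dual. Neither point affects correctness.
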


\begin{proposition}\label{rank_rel_def}
$\rank\Lambda=\#\rel\Lambda+\defect{(\Lambda})$.
\end{proposition}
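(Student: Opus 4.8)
The plan is to establish the identity $\rank\Lambda = \#\rel\Lambda + \defect(\Lambda)$ by partitioning the indecomposable projective $\Lambda$-modules according to whether or not they are injective, and then counting the contributions to $\defect(\Lambda)$ more carefully. First I would recall that $\rank\Lambda = n$ equals the number of isomorphism classes of indecomposable projective modules $P_1,\dots,P_n$. By Proposition \ref{numerical_rel}(2), $\#\rel\Lambda$ equals the number of isomorphism classes of projective-injective modules. So the statement is equivalent to showing that the number of indecomposable projectives that are \emph{not} injective equals $\defect(\Lambda) = \sum_i \defect(P_i)$.

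The key observation is the following bijective reformulation of $\defect$. By definition $\defect(P_i)$ counts the proper injective quotients of $P_i$; since $P_i$ is uniserial, its quotients form a chain $P_i \twoheadrightarrow P_i/\rad^{1}P_i \twoheadrightarrow \cdots$, so $\defect(P_i)$ is the number of injectives among the proper quotients $P_i/\rad^k P_i$ with $k\geq 1$. I would then set up a map from the set
\[
\cD = \{\, (i,k) : k\geq 1,\ P_i/\rad^k P_i \text{ is injective} \,\}
\]
to the set of indecomposable non-injective projectives, and show it is a bijection; then $|\cD| = \defect(\Lambda)$ on the one hand and $|\cD| = n - \#\rel\Lambda$ on the other, which is exactly the claim. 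For the map: given an injective proper quotient $E = P_i/\rad^k P_i$, it is not projective (a proper quotient of an indecomposable projective over a Nakayama algebra, being shorter, cannot be projective-injective with the same top unless… — here I would invoke uniseriality and the fact that $E$ injective and projective would force $E \cong P_i$, contradicting properness; more precisely use Proposition \ref{prop:quadrilateral} and the Kupisch conditions of Lemma \ref{kupisch how to verify}). Conversely, the candidate association should send such an $(i,k)$ to a canonically determined non-injective projective, and vice versa: given a non-injective indecomposable projective $P(S)$, Proposition \ref{prop:quadrilateral}(2) gives $P(S)\cong\rad P(\tau^- S)$, and dually an injective non-projective $I(S)$ satisfies $I(S)\cong I(\tau S)/\tau S$ by Proposition \ref{prop:quadrilateral}(3); these identifications let one pass between non-injective projectives and minimal injective quotients, and Proposition \ref{numerical_rel}(3)--(4) already tell us the count of minimal injectives equals $\#\rel\Lambda$.

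Concretely, I would argue: each minimal injective $I$ contributes a chain of injective quotients of a fixed projective, and each non-projective injective module appears exactly once as a proper quotient of exactly one indecomposable projective (using Lemma \ref{lem:uniseriality} on quotients to get uniqueness of the projective cover's position), so $\defect(\Lambda)$ counts precisely the non-projective injective modules. But the number of non-projective injectives equals $n$ minus the number of projective-injectives (every injective is indecomposable uniserial, and there are $n$ of them, matching the $n$ projectives under $P_i \leftrightarrow$ its injective envelope only when projective-injective), i.e.\ $n - \#\rel\Lambda$. Rearranging gives $\rank\Lambda = n = \#\rel\Lambda + \defect(\Lambda)$.

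The main obstacle I anticipate is the bookkeeping in the bijection: one must be careful that $\defect(P_i)$ can be greater than $1$ (a single projective can have several injective proper quotients), so the correspondence is genuinely between the \emph{multiset} of injective proper quotients and the set of non-projective injectives, and one needs uniseriality (Lemma \ref{lem:uniseriality}(2)) to guarantee that a given non-projective injective $E$ is a quotient of a \emph{unique} indecomposable projective, with a unique exponent $k$. Once that uniqueness is pinned down, the count $\defect(\Lambda) = \#\{\text{non-projective indecomposable injectives}\} = n - \#\rel\Lambda$ follows, and the proposition is immediate.
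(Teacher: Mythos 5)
Your proof is correct and follows essentially the same route as the paper: the paper's one-line argument counts the $\rank\Lambda$ indecomposable injectives as (projective-injectives) $+$ (injective non-projectives) $=\#\rel\Lambda+\defect(\Lambda)$, which is exactly your final count. The only difference is that you explicitly verify, via the bijection between proper injective quotients of projectives and injective non-projective modules (using uniqueness of the projective cover), the identification $\defect(\Lambda)=\#\{\text{injective non-projective indecomposables}\}$ that the paper simply asserts.
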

\begin{proof}
The number of isoclasses of indecomposable injective $\Lambda$-modules is equal to $\rank(\Lambda)$. The number of isoclasses of projective-injectives is equal to $\#\rel$. The number of isoclasses of indecomposable injectives which are not projective is $\defect(\Lambda)$.
\end{proof}

\begin{corollary} \label{emre defect zero implies selfinjective or semisimple}
    A Nakayama algebra $\Lambda$ has zero defect if and only 
    $\Lambda$ is selfinjective.
\end{corollary}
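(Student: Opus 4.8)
The plan is to unwind the definition of $\defect\Lambda$ and reduce everything to the elementary bookkeeping identity $\#\{\text{iso classes of indecomposable projectives}\}=\#\{\text{iso classes of indecomposable injectives}\}=\rank\Lambda$, valid for any (possibly disconnected) Nakayama algebra.

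\textbf{Easy direction.} Suppose $\Lambda$ is selfinjective. Then every indecomposable projective $\Lambda$-module is injective; since there are $\rank\Lambda$ iso classes of each and distinct indecomposables are non-isomorphic, the two collections of iso classes coincide. Now if $P$ is an indecomposable projective and $Q$ is a nonzero injective quotient of $P$, then $Q$ is projective, so the epimorphism $P\twoheadrightarrow Q$ splits; indecomposability of $P$ forces $Q\cong P$, so $Q$ is not a \emph{proper} quotient. Hence $\defect(P)=0$ for every indecomposable projective $P$, and therefore $\defect\Lambda=\sum_P\defect(P)=0$ by Definition \ref{defect}.

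\textbf{Hard direction.} Conversely, assume $\defect\Lambda=0$. As recorded in the proof of Proposition \ref{rank_rel_def} (combining the count of indecomposable injectives with Proposition \ref{numerical_rel}(2), which identifies the projective-injectives among them with $\#\rel\Lambda$), the number $\defect\Lambda$ equals the number of iso classes of indecomposable injective $\Lambda$-modules that are not projective. Thus $\defect\Lambda=0$ forces every indecomposable injective $\Lambda$-module to be projective; comparing cardinalities exactly as above, the iso classes of indecomposable injectives and of indecomposable projectives coincide, so $\Lambda_\Lambda$ is injective, i.e.\ $\Lambda$ is selfinjective. (For the cyclic case one can also argue directly: if $\Lambda$ is not selfinjective, pick a simple $S$ with $I(S)$ non-projective; by Proposition \ref{prop:quadrilateral}(3), $I(S)\cong I(\tau S)/\tau S$, so $I(\tau S)$ has the nonzero injective proper quotient $I(S)$ and $\ell(I(\tau S))=\ell(I(S))+1$. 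Were $I(\tau S)$ projective, this would give $\defect(I(\tau S))\ge 1$, contradicting $\defect\Lambda=0$; otherwise repeat with $S$ replaced by $\tau S$, producing a strictly increasing sequence $\ell(I(\tau^k S))$, impossible since $\{\tau^k S\}_{k\ge 0}$ is finite. For linear connected components the statement is immediate, as a selfinjective algebra of finite global dimension is semisimple, hence of rank one.)

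\textbf{Main obstacle.} There is essentially no difficulty here: the only point deserving care is the justification of the identity ``$\defect\Lambda$ equals the number of indecomposable injective non-projective modules,'' which is already contained in Proposition \ref{rank_rel_def} together with Proposition \ref{numerical_rel}, and the routine passage between connected components (where $\defect$ is additive and selfinjectivity is checked factorwise). I expect to present the counting argument as the proof, optionally keeping the length-iteration argument as a remark.
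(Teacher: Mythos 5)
Your proposal is correct and follows essentially the same route as the paper: both directions are settled by the bookkeeping identity $\rank\Lambda=\#\rel\Lambda+\defect\Lambda$ together with the identification of $\defect\Lambda$ with the count of indecomposable injective non-projective modules (Propositions \ref{numerical_rel} and \ref{rank_rel_def}). Your extra details (the splitting argument for the easy direction and the optional $\tau$-iteration remark) only spell out what the paper's shorter proof leaves implicit.
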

\begin{proof}
If $\Lambda$ is either selfinjective or semisimple, every projective $\Lambda$-module is injective. Hence $\defect\Lambda=0$ by Definition \ref{defect}. Assume $\defect\Lambda=0$, then, 
\begin{align*}
    \defect\Lambda=0 &\implies \rank\Lambda=\#\rel\Lambda\\
    &\implies \rank\Lambda=\#\{\text{isomorphism classes of projective-injectives}\}
\end{align*}
by Propositions \ref{rank_rel_def} and \ref{numerical}. So every projective $\Lambda$-module is injective which makes $\Lambda$ a selfinjective algebra.
  \end{proof}

{
As we will see in Section \ref{section_epsilon}, the base-set corresponds to the set of simple modules of the syzygy filtered algebras. We give some equivalent conditions for modules being in the base-set $\cB(\Lambda)$:
 \begin{proposition}\label{prop:B_eqv_def}
Let $\Lambda$ be a cyclic Nakayama algebra and $B$ an indecomposable module with $\soc B\in \cS(\Lambda)$. Then the following are equivalent:
\begin{enumerate}
\item [(i)]$B\in\cB(\Lambda)$.
\item [(ii)]$I(B)/B$ is non-injective and for each proper submodule $M\subseteq B$, $I(B)/M$ is injective.
\item [(iii)]$I(B)/\rad B$ is minimal injective.
\item [(iv)] $\ell (B)=\defect I(B)+1$.
\item [(v)]$\ell(B)$ equals the smallest natural number $n>0$ such that $\tau^{-n}\soc B\in\cS(\Lambda)$.
\end{enumerate}
\end{proposition}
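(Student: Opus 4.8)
The plan is to prove the chain of equivalences by establishing $(i)\Leftrightarrow(iii)$, $(iii)\Leftrightarrow(ii)$, $(ii)\Leftrightarrow(iv)$, and finally $(iv)\Leftrightarrow(v)$, using throughout the uniseriality Lemma~\ref{lem:uniseriality} and the ``quadrilateral'' Proposition~\ref{prop:quadrilateral}. The conceptual heart is that $B\in\cB(\Lambda)$ means $B$ has $\soc B\in\cS(\Lambda)$, $\topp B\in\cS'(\Lambda)$, and $B$ is of minimal length among such modules; since all modules are uniserial and $B\subseteq I(B)$, controlling $B$ is the same as controlling the uniserial quotients of the injective envelope $I(B)$, and this is where the defect enters.

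\textbf{First I would do $(i)\Leftrightarrow(iii)$.} Fix $S=\soc B\in\cS(\Lambda)$; by Remark~\ref{socle of projective-injective}(1), $S\cong\soc Q$ for an indecomposable projective-injective $Q$, and $I(B)=I(S)=Q$ is this projective-injective. Among all submodules $B'$ of $Q$ with $\topp B'\in\cS'(\Lambda)$, uniseriality (Lemma~\ref{lem:uniseriality}) totally orders them by inclusion, so ``$B$ of minimal length with $\topp B\in\cS'(\Lambda)$'' picks out the \emph{smallest} such submodule. Now $\topp B\in\cS'(\Lambda)$ iff $\topp B\cong\soc E$ for a minimal injective $E$ (Remark~\ref{socle of projective-injective}(2)); and for a submodule $B\subseteq Q$, saying $I(B)/\rad B$ is a minimal injective whose socle is $\topp B$ is exactly the condition that $\topp B$ be the socle of a minimal injective (Proposition~\ref{prop:quadrilateral}(5) links minimal injectives $I(\tau S)$ to projective-injectives $I(S)$). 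Minimality of $\ell(B)$ translates to $I(B)/\rad B$ being the \emph{first} (shortest-cosocle) minimal injective quotient of $Q$ as we descend, which is precisely condition~(iii); I would write this out carefully as the one genuinely delicate step.

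\textbf{Next, $(iii)\Leftrightarrow(ii)$ and $(ii)\Leftrightarrow(iv)$.} Given (iii), $I(B)/\rad B$ minimal injective means, by Proposition~\ref{prop:quadrilateral}(3) applied contrapositively, that $I(B)/B$ (the quotient by the next-larger submodule) fails to be injective, giving the first half of (ii); conversely if $I(B)/\rad B$ is injective but not minimal injective then $I(B)/B$ is still injective, contradicting (ii), and if $I(B)/\rad B$ is not injective at all then some proper $M\subseteq B$ already has $I(B)/M$ non-injective. For each proper $M\subsetneq B$: $I(B)/M$ being injective for all such $M$ says $B$ is the smallest submodule $M'$ with $I(B)/M'$ non-injective, i.e. exactly $\ell(B)-1$ of the proper quotients $I(B)/M$ ($M\subsetneq B$) are injective and $I(B)/B$ is the first non-injective one. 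Counting: the injective proper quotients of $Q=I(B)$ are totally ordered; those containing $\topp B$ in their socle region correspond bijectively to submodules $M\subseteq B$, and there are $\ell(B)$ submodules $M\subseteq B$ (including $0$ and $B$), of which $I(B)/M$ is injective for the $\ell(B)$ values $M$ with $M\subsetneq$ something — so $\defect I(B) = \ell(B)-1$, i.e. (iv). I would make this bijective count explicit.

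\textbf{Finally $(iv)\Leftrightarrow(v)$,} which I expect to be routine given the Gustafson-function machinery: $\tau^{-n}\soc B\in\cS(\Lambda)$ for the smallest $n>0$ is, by Remark~\ref{socle of projective-injective}(1) and the Gustafson map $\overline g(S)=\soc P(S)$, equivalent to saying that walking up from $\soc B$ by $\tau^{-1}$ steps, the first time we again hit a socle of a projective-injective is after $n$ steps; but the length of the uniserial ``stretch'' of $Q=I(B)$ between consecutive projective-injective socles is controlled by how many injective non-projective quotients sit in between, which is $\defect I(B)+1$. So $(v)$'s $n$ equals $\defect I(B)+1 = \ell(B)$ by $(iv)$. \textbf{The main obstacle} is the careful bookkeeping in $(i)\Leftrightarrow(iii)$: one must verify that ``minimal length $B$ with prescribed socle and top'' genuinely coincides with ``$I(B)/\rad B$ is \emph{minimal} injective'' rather than merely injective, and this requires invoking parts (3) and (5) of Proposition~\ref{prop:quadrilateral} in the right order together with the fact (Proposition~\ref{lem_defect_general_1}) relating $\rad$ of minimal projectives to $I/\soc I$ of minimal injectives.
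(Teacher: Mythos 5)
Your overall route (a re-ordered cycle of equivalences using the same toolkit) could be made to work, but as written it has two genuine gaps. The first is in your step $(i)\Leftrightarrow(iii)$: you assert that, for $B\subseteq Q=I(B)$, ``$I(B)/\rad B$ is a minimal injective with socle $\topp B$'' is \emph{exactly} the condition that $\topp B$ be the socle of a minimal injective, i.e. $\topp B\in\cS'(\Lambda)$. Only one direction of this is true. Since $\soc(I(B)/\rad B)\cong\topp B$, the statement ``$\topp B$ is the socle of a minimal injective'' says nothing about whether the particular quotient $I(B)/\rad B$ is injective at all: for a self-injective cyclic Nakayama algebra every simple lies in $\cS'(\Lambda)$, yet no proper quotient of a projective-injective is injective, so $(iii)$ fails for every $B$ of length $\geq 2$. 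Thus $(iii)$ is strictly stronger than ``$\topp B\in\cS'(\Lambda)$'', and the minimality of $\ell(B)$ must be used to prove injectivity of $I(B)/\rad B$ (indeed of $I(B)/M$ for every proper $M\subsetneq B$). That is exactly the nontrivial content of the paper's implication $1)\Rightarrow 2)$: take $M\subsetneq B$ of minimal length with $I(B)/M$ non-injective, note that then $I(B)/\rad M$ is minimal injective, hence $\topp M\in\cS'(\Lambda)$, contradicting the minimality of $B$. You flag this as ``the delicate step'', but the sketch you give for it rests on the incorrect biconditional, so the key argument is missing; relatedly, your $(iii)\Rightarrow(ii)$ only produces the clause ``$I(B)/B$ is non-injective'' and never derives ``$I(B)/M$ is injective for every proper $M\subseteq B$''.

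The second gap is in the count for $(ii)\Leftrightarrow(iv)$, and it also underlies your $(iv)\Leftrightarrow(v)$. Besides the off-by-one ($B$ has $\ell(B)+1$ submodules, of which $\ell(B)-1$ are nonzero and proper), to get $\defect I(B)=\ell(B)-1$ from $(ii)$ you must exclude injective quotients $I(B)/M$ with $M\supsetneq B$; equivalently, you need that the injective proper quotients of the projective-injective $I(B)$ are exactly the consecutive ones $I(\tau^{-i}\soc B)$ for $1\leq i\leq \defect I(B)$, with $I(\tau^{-(d+1)}\soc B)$ projective. This follows from Proposition~\ref{prop:quadrilateral}(3) by a downward induction (an injective proper quotient is non-projective, hence of the form $I(\tau\,\soc)/\soc$ of the next larger quotient, forcing that one to be injective too), and it is precisely what the paper uses in its implication $4)\Rightarrow 1)$ and in $1)\Rightarrow 5)$. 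Your appeal to the quotients being ``totally ordered'' is automatic from uniseriality and not sufficient. Once these two points are supplied, your ordering of implications does work, but it then amounts to a rearrangement of the paper's cycle $1)\Rightarrow 2)\Rightarrow 3)\Rightarrow 4)\Rightarrow 1)$ together with $1)\Leftrightarrow 5)$ rather than a genuinely different argument.
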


\begin{proof} 
Denote by $S=\soc B$ and $I=I(B)$. \\
$1)\implies 2)$ Let $B\in\cB(\Lambda)$. Then $B\neq 0$ by Remark\ref{socle of projective-injective}(3). Since  $\topp B\in\mathcal S'(\Lambda)$ by Definition\ref{socle set} it follows that  $\soc(I(B))/B=\tau^{-}(\topp (B)$ by Uniseriality lemma \ref{lem:uniseriality}. Therefore $\soc (I(B)/B)\in \mathcal S(\Lambda)$. Therefore $I(B)/B$ is a submodule of a projective-injective module. Then $I(B)/B$ cannot be injective since that would imply that $I(B)/B$ is also projective injective. As a projective it would be a summand of $I(B)$ and therefore isomorphis to $I(B)$. This would imply that $B=0$ which is contradiction to Remark \ref{socle of projective-injective}. Therefore $I(B)/B$ is not injective. \\
Now, We want to show that $I(B)/M$ is injective for all  proper submodules $M$ of $B$. Let  $M\subsetneq B$ be of minimal length such that $I(B)/M$ is not injective. Then $I(B)/\rad M$ is a minimal injective. Then $\soc I(B)/\rad M\in \mathcal S'(\Lambda)$. Therefore $\topp M = \tau^{-}(\topp \rad M)=\soc (I(B)/\rad M) \in \mathcal S'(\Lambda)$. This is contradiction to the fact that $M$ is proper submodule of $B$. Therefore $I(B)/M$ is injective.
\\
$2)\implies 3)\implies 4)$ are straightforward.\\
$4)\implies 1)$ Assume $\defect (I)=d$. Then $I(\tau^{-i}S)$, for $0\leq i\leq d$,  are all the injective quotients of $I$. That is, $I(\tau^{-i}S)$ are non-projective for $1\leq i\leq d$ and $I(\tau^{-d-1}S)$ is projective-injective. Hence $\tau^{-i}S\not\in\cS'(\Lambda)$ for $0\leq i<d$ and $\tau^{-d}S\in\cS'(\Lambda)$. Clearly, $B$ having $\tau^i(S)$, $0\leq i\leq d$, as composition factors is a module in $\cB(\Lambda)$.\\
5)$\implies$ 1) By Definition \ref{socle set}.\\
1)$\implies$ 5) Assume $B\in\cB(\Lambda)$. So, $\soc B\in\cS(\Lambda)$, $\topp B\in\cS'(\Lambda)$ and $\topp B\cong \tau^{\ell(B)-1}\soc B$. $B\in\cB(\Lambda)$ implies all simple composition factors of $B/\soc B$ are not an element of $\cS(\Lambda)$. Hence $\tau^{-1}\topp B\cong\tau^{\ell(B)}\soc B\in\cS(\Lambda)$ makes $\ell(B)$ minimal. 
\end{proof}


\begin{proposition} \label{base-set}\label{prop_P(T)/B}
Let $\Lambda$ be a cyclic Nakayama algebra. Then:
\begin{enumerate}[label=\arabic*)]
\item\label{item1} Any element of the socle set $\mathcal S(\Lambda)$ is the socle of exactly one element of $\cB(\Lambda)$.
\item Any element of the top set $\mathcal S'(\Lambda)$ is the top of exactly one element of $\cB(\Lambda)$.
\item\label{item3} The simple composition factors of distinct $B\in \mathcal B(\Lambda)$ are disjoint.
\item Distinct elements of the base-set are Hom-orthogonal i.e. $\Hom_{\Lambda}\left(B,B'\right)\cong 0$ when $B\ncong B'$ and $\Hom_{\Lambda}(B,B)\cong\mathbb{K}$.
\item Each $B\in \cB(\Lambda)$ is a submodule of exactly one indecomposable projective-injective.
\item $B\in \cB(\Lambda)$ is simple $\Lambda$-module if and only if $B\in\cS'(\Lambda)\cap\cS(\Lambda)$.
\item $\sum_{B\in\cB(\Lambda)}\ell(B)=\rank\Lambda$.
\end{enumerate}
\end{proposition}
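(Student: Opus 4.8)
The plan is to reduce parts 2)--7) to part 1) together with uniseriality, and to prove part 1) by realising each prospective base-set module explicitly inside a projective-injective. Throughout I use the labelling of Remark~\ref{emre remark labels may25}: the simple modules are $S_1,\dots,S_n$ with $\tau S_i\cong S_{i+1}$ (indices mod $n$), so a uniserial module of length $\ell$ with top $S_t$ has layers $S_t,S_{t+1},\dots,S_{t+\ell-1}$ from top to bottom, socle $S_{t+\ell-1}$, and pairwise distinct composition factors as soon as $\ell\le n$. Since $\Lambda$ is cyclic, $\#\rel\Lambda\ge1$, hence $\cS(\Lambda)\ne\varnothing$; I write $\cS(\Lambda)=\{S_{a_1},\dots,S_{a_r}\}$ with $a_1<\dots<a_r$ and put $a_0:=a_r-n$, so the cyclic gaps $a_j-a_{j-1}$ (for $1\le j\le r$) are positive and sum to $n$.

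For part 1), uniqueness is immediate: any $B\in\cB(\Lambda)$ with $\soc B=S_{a_j}$ embeds into the uniserial module $I(S_{a_j})$, so is determined by its length, and by Proposition~\ref{prop:B_eqv_def}(v) that length is the least $m>0$ with $\tau^{-m}S_{a_j}\in\cS(\Lambda)$, namely $a_j-a_{j-1}$. The real content is existence: I must check the uniserial module $B$ with $\soc B=S_{a_j}$ and $\ell(B)=a_j-a_{j-1}$ actually exists, i.e. $a_j-a_{j-1}\le\ell(I(S_{a_j}))$. By Remark~\ref{socle of projective-injective}(1), $I(S_{a_j})$ is projective-injective; write $I(S_{a_j})=P(T)$ with $T=\topp I(S_{a_j})$. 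Since $T=\tau(\tau^{-1}T)$, Proposition~\ref{prop:quadrilateral}(4) gives that $P(\tau^{-1}T)$ is minimal projective, so $\rad P(\tau^{-1}T)$ is nonzero and non-projective; by Proposition~\ref{prop:quadrilateral}(1) its top is $\tau(\tau^{-1}T)=T$, so it is a quotient of $P(T)=I(S_{a_j})$, and a \emph{proper} one since it is not projective. Its socle equals $\soc P(\tau^{-1}T)$, which lies in $\cS(\Lambda)$ (socles of projectives do, by Definition~\ref{socle set}), and being the socle of a proper quotient of $I(S_{a_j})$ it equals $S_{a_j-k}$ for some $1\le k\le\ell(I(S_{a_j}))-1$. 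Hence the predecessor of $a_j$ in $\cS(\Lambda)$ satisfies $a_{j-1}\ge a_j-k$, so $a_j-a_{j-1}\le k<\ell(I(S_{a_j}))$ and $B$ exists (a proper submodule of $I(S_{a_j})$, recovering Remark~\ref{socle of projective-injective}(3)). Thus $B\mapsto\soc B$ is a bijection $\cB(\Lambda)\to\cS(\Lambda)$; writing $B_{a_j}$ for the element with socle $S_{a_j}$, it has $\ell(B_{a_j})=a_j-a_{j-1}$, composition factors exactly $\{S_{a_{j-1}+1},\dots,S_{a_j}\}$, and top $S_{a_{j-1}+1}$.

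Everything else is bookkeeping. Part 2): as $j$ runs over $1,\dots,r$ the tops $S_{a_{j-1}+1}$ run bijectively over $\{\tau S : S\in\cS(\Lambda)\}=\cS'(\Lambda)$, so $B\mapsto\topp B$ is a bijection $\cB(\Lambda)\to\cS'(\Lambda)$. Part 7): $\sum_{j}\ell(B_{a_j})=\sum_{j}(a_j-a_{j-1})=a_r-a_0=n=\rank\Lambda$. Part 3): the intervals $(a_{j-1},a_j]$ partition $\mathbb Z/n\mathbb Z$, so distinct base-set modules have disjoint composition factors; in particular the only composition factor of $B\in\cB(\Lambda)$ lying in $\cS(\Lambda)$ is $\soc B$. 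Part 4): each $B\in\cB(\Lambda)$ is uniserial with distinct composition factors, and a nonzero endomorphism $f$ induces a scalar $c$ on $\topp B$, so $f-c\cdot\mathrm{id}_B$ has image inside $\rad B$; comparing tops of submodules of $B$ this forces $f=c\cdot\mathrm{id}_B$, hence $\Hom_\Lambda(B,B)\cong\mathbb K$. For $B\not\cong B'$ in $\cB(\Lambda)$, a nonzero $f\colon B\to B'$ would have image whose socle is $\soc B'\in\cS(\Lambda)$ and is a composition factor of $B$, forcing $\soc B=\soc B'$ and hence $B\cong B'$ by part 1), a contradiction. Part 5): $\soc B\in\cS(\Lambda)$ gives (Remark~\ref{socle of projective-injective}(1)) a projective-injective $Q$ with $\soc Q\cong\soc B$, so $Q\cong I(\soc B)$; any projective-injective containing $B$ has socle $\soc B$, hence is $\cong I(\soc B)$, while $B\hookrightarrow I(\soc B)$ exhibits one. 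Part 6): if $B\in\cB(\Lambda)$ is simple then $B=\soc B\in\cS(\Lambda)$ and $B=\topp B\in\cS'(\Lambda)$; conversely elements of $\cS(\Lambda)$ are simple modules, so $B\in\cS(\Lambda)\cap\cS'(\Lambda)$ is simple (and lies in $\cB(\Lambda)$, since $\tau^{-1}B\in\cS(\Lambda)$ forces its associated length to be $1$).

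The one genuinely delicate point is the existence half of part 1) --- showing every $S\in\cS(\Lambda)$ really is the socle of a base-set module, i.e. that this module fits inside $I(S)$ --- and that is precisely where Proposition~\ref{prop:quadrilateral}(4), relating projective-injectivity to minimal projectivity, is needed. The remaining parts are then a routine combination of uniseriality with the cyclic arrangement of the simples.
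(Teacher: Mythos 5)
Your proof is correct. The paper itself does not prove Proposition~\ref{base-set}: it is stated among the preliminaries with a blanket reference to \cite{sen18}, \cite{sen19} (``or are not hard to check''), so there is no in-text argument to compare against. What you supply is a complete, self-contained derivation, and you correctly isolate the one point with genuine content, namely the existence half of part~1) --- that the candidate module of length $a_j-a_{j-1}$ actually fits inside $I(S_{a_j})$, which is exactly the unproved Remark~\ref{socle of projective-injective}(3). Your route there (via Proposition~\ref{prop:quadrilateral}(4): $I(S_{a_j})=P(T)$ projective-injective forces $P(\tau^{-1}T)$ minimal projective, so $\rad P(\tau^{-1}T)$ is a nonzero non-projective, hence proper, quotient of $I(S_{a_j})$ whose socle lies in $\cS(\Lambda)$) is sound, and the reduction of parts 2)--7) to the explicit description $\{B_{a_j}\}$ with composition-factor intervals $(a_{j-1},a_j]$ partitioning $\mathbb{Z}/n\mathbb{Z}$ is clean and matches the toolkit the paper itself uses elsewhere (Proposition~\ref{prop:B_eqv_def}(v) for the length, uniseriality for Hom-orthogonality). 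Two very minor remarks: in part~4) the identification $\End_\Lambda(\topp B)\cong\mathbb{K}$ tacitly uses that $\Lambda\cong\mathbb{K}Q/I$ so simples are one-dimensional (which the paper assumes in Remark~\ref{emre remark labels may25}); and in part~6) your parenthetical is what actually carries the nontrivial direction, since $S\in\cS(\Lambda)\cap\cS'(\Lambda)$ gives $\tau^{-1}S\in\cS(\Lambda)$ and hence minimal length $1$ --- it is worth stating that explicitly rather than in an aside. Neither affects correctness.
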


\begin{definition}  \label{Filt}
Let $\Lambda$ be cyclic Nakayama algebra and let $\cB(\Lambda)$ be the base-set as in Definition \ref{socle set}. Let $Filt(\cB(\Lambda)$ be the full subcategory of $\moddd\Lambda$ consisting of all modules which have filtration by modules from $\cB(\Lambda)$. 
\end{definition}

\begin{remark}\label{remark-Filt} Let $\Lambda$ be cyclic Nakayama algebra. Then:\\
(a) $Filt(\cB(\Lambda))$ is a wide subcategory of $\moddd\Lambda$ with modules in $\cB(\Lambda)$ as simple objects.\\
(b) Let $M\neq 0$ be an indecomposable $\Lambda$-module. Then $M$ is in $\mathcal B(\Lambda)$ if and only if $\topp M\in \mathcal S'(\Lambda)$ and $\soc M \in \mathcal S(\Lambda)$.
\end{remark}


\begin{lemma} \label{projective}Let $\Lambda$ be cyclic Nakayama algebra and let $P$ be a projective $\Lambda$-module. The following characterizations of $P$ are equivalent:
\begin{enumerate}[label=\arabic*)]
\item $P\in \cP(\Lambda)$ (Definition \ref{socle set}), i.e. $P$ is filtered projective
\item $P=P(S)$, i.e. $P$ is projective cover for some $S\in\cS'(\Lambda)$
\item $P=P(B)$, i.e. $P$ is projective cover  for some $B\in\cB(\Lambda)$
\item $P$ is filtered by $\cB(\Lambda)$.
\item $P$ is projective object in the wide subcategory $Filt(\cB(\Lambda))$
\end{enumerate}
\end{lemma}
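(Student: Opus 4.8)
The plan is to prove the five characterizations equivalent via a cycle, say $1)\Leftrightarrow 2)$, then $2)\Rightarrow 3)\Rightarrow 4)\Rightarrow 5)\Rightarrow 1)$, leaning on Proposition \ref{prop:B_eqv_def}, Remark \ref{remark-Filt}, and the numerical coincidences in Proposition \ref{numerical}. The equivalence $1)\Leftrightarrow 2)$ is essentially a matter of unwinding Definition \ref{socle set}: the set $\cP(\Lambda)$ is literally defined as $\{P(S')\mid S'\in\cS'(\Lambda)\}$, so a projective $P$ lies in $\cP(\Lambda)$ if and only if $\topp P\in\cS'(\Lambda)$, which is the content of $2)$. (One should note in passing that distinct $S'\in\cS'(\Lambda)$ give non-isomorphic projective covers, which follows since the tops are distinct simples.)

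For $2)\Leftrightarrow 3)$, I would use Proposition \ref{base-set}\ref{item1}--2): every $S'\in\cS'(\Lambda)$ is the top of exactly one $B\in\cB(\Lambda)$, and conversely every $B\in\cB(\Lambda)$ has $\topp B\in\cS'(\Lambda)$. Since $\topp B=\topp P(B)$, the projective cover of such an $S'$ coincides with the projective cover of the corresponding $B$; hence the classes of projectives arising as $P(S')$ and as $P(B)$ agree. The implication $3)\Rightarrow 4)$ is the substantive module-theoretic step: given $P=P(B)$ with $B\in\cB(\Lambda)$, I want to build a filtration of $P$ by base-set modules. The idea is to peel off $B$ from the top — $B$ is a quotient of $P(B)$, say with kernel $K$ — and then show $K$ is again filtered by $\cB(\Lambda)$; by uniseriality $K$ is indecomposable (or zero), its top is $\tau^{-1}\soc B$, and by Proposition \ref{prop:B_eqv_def}(v) the length $\ell(B)$ was chosen precisely so that $\topp K=\tau^{-\ell(B)}\soc B\in\cS(\Lambda)$. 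I then need to argue that $\topp K$ actually lies in $\cS'(\Lambda)$ as well, or more carefully, that $K$ decomposes uniserially into a stack of base-set pieces; iterating, the process terminates because lengths strictly drop. The cleanest route is probably to invoke Remark \ref{remark-Filt}(b) together with a counting argument: $P$ has a composition series, and one shows by descending induction on submodules that each ``layer'' determined by successive socles lying in $\cS(\Lambda)$ is a copy of a base-set module.

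Then $4)\Rightarrow 5)$ and $5)\Rightarrow 1)$: by Remark \ref{remark-Filt}(a), $Filt(\cB(\Lambda))$ is a wide subcategory with the $B$'s as simple objects, so ``$P$ is filtered by $\cB(\Lambda)$'' says exactly ``$P\in Filt(\cB(\Lambda))$''; I must then check $P$ is a \emph{projective} object there, i.e. $\Ext^1_{Filt}(P,-)=0$, which for a module that is projective in $\moddd\Lambda$ follows because extensions in the wide subcategory are computed as extensions in $\moddd\Lambda$. For $5)\Rightarrow 1)$ I would use the numerical input: by Proposition \ref{numerical}(5) the number of isoclasses of projectives filtered by $\cB(\Lambda)$ equals $\#\rel\Lambda=|\cP(\Lambda)|$, and since $1)\Rightarrow 5)$ is already known (a filtered projective is filtered by $\cB(\Lambda)$, which is part of $3)\Rightarrow 4)$ applied in the direction we have), the inclusion $\cP(\Lambda)\subseteq\{\text{projectives in }Filt(\cB(\Lambda))\}$ is an equality by cardinality. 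I expect the main obstacle to be the filtration-building in $3)\Rightarrow 4)$ — specifically, verifying that after removing the top $B$ the radical-type kernel again has its top in $\cS'(\Lambda)$ so that the inductive step applies; this is where one genuinely needs the minimality built into the definition of $\cB(\Lambda)$ via Proposition \ref{prop:B_eqv_def}(v), rather than just formal nonsense about wide subcategories.
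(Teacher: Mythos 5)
The paper gives no proof of this lemma (it falls under the blanket remark that the preliminary facts are ``either already proved in \cite{sen18} and \cite{sen19}, or are not hard to check''), so there is nothing to compare against; judged on its own, your cycle is sound and is surely the intended argument. Two small points. First, in $3)\Rightarrow 4)$ your description of $\topp K$ wobbles: you write both $\topp K=\tau^{-1}\soc B$ and $\topp K=\tau^{-\ell(B)}\soc B$, which cannot both hold. The clean statement is that Proposition \ref{prop:quadrilateral}(1) applied to $0\to K\to P\to B\to 0$ gives $\topp K\cong\tau\soc B$, which lies in $\cS'(\Lambda)=\tau\cS(\Lambda)$ precisely because $\soc B\in\cS(\Lambda)$; the minimality in Proposition \ref{prop:B_eqv_def}(v) is then what guarantees, at each stage of the peeling, that the unique base-set module with the prescribed top is short enough to be a quotient of the remaining kernel (here one also needs $\soc K=\soc P\in\cS(\Lambda)$, which you should say explicitly, though it is immediate from the definition of the socle-set). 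Your instinct that this is the only substantive step is right. Second, your $5)\Rightarrow 1)$ via the cardinality count in Proposition \ref{numerical}(5) works, but it risks circularity if that count is itself established using this lemma, and it is anyway heavier than needed: since $P$ is assumed projective in $\moddd\Lambda$, condition $5)$ already forces $P\in Filt(\cB(\Lambda))$, and then the top layer of any $\cB(\Lambda)$-filtration of $P$ is some $B\in\cB(\Lambda)$ with $\topp P=\topp B\in\cS'(\Lambda)$, which is condition $2)$ directly.
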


We can interpret Propositions \ref{prop:quadrilateral} and \ref{prop:B_eqv_def} in terms of Kupisch series.
\begin{remark}\label{kupisch properties} 
Let $(c_1, c_2,\ldots c_n)$ be Kupisch series for Nakayama algebra $\Lambda$. Then,
\begin{enumerate}
    \item for each $1\leq i\leq n-1$
    \begin{enumerate}
        \item $c_i\geq 2$
        \item $c_i>c_{i+1}$ $\iff$ $P_i\supset P_{i+1}$ $\iff$$c_i=c_{i+1}+1$
        \item $c_i\leq c_{i+1}$ $\iff$ $P_i$ is minimal projective and $P_{i+1}$ is projective-injective with $\defect(P_{i+1})=c_{i+1}-c_{i}$
    \end{enumerate}
    \item $c_n=1$ $\iff$ $\Lambda$ is linear
\end{enumerate}
\end{remark}

\begin{lemma}\label{sunday lemma lengths of delta and cardinality} Let $(c_1,\ldots,c_n)$ be Kupisch series of a Nakayama algebra $\Lambda$. Then,
\begin{enumerate}[label=\arabic*)]
\item $\vert\cS(\Lambda)\vert=\vert\{c_i\mid c_i\leq c_{i+1}\}\vert$, and $\defect\Lambda=\vert\{c_{i+1}\mid c_i>c_{c_{i+1}}\}\vert$.
\item The lengths of elements of the base-set $\cB(\Lambda)$ is $c_{i+1}-c_i+1$ provided that $c_i\leq c_{i+1}$.
\end{enumerate}
\end{lemma}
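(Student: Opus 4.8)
The plan is to work directly from the Kupisch series $(c_1,\ldots,c_n)$, using the dictionary between the combinatorics of the $c_i$ and the module theory recorded in Remark~\ref{kupisch properties}. The key observation is that for a cyclic Nakayama algebra the indices split into two types: indices $i$ with $c_i>c_{i+1}$ (equivalently $c_i=c_{i+1}+1$, so $P_i\supset P_{i+1}$ and $\rad P_i=P_{i+1}$ is projective), and indices $i$ with $c_i\leq c_{i+1}$ (equivalently $P_i$ is minimal projective and $P_{i+1}$ is projective-injective). Here and throughout all indices are read mod $n$, so the count ranges over a full cyclic period; note $\sum_i(c_{i+1}-c_i)=0$ around the cycle, which is the bookkeeping identity that makes the two types balance.

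\emph{Part 1.} For the first claim I would argue that a simple module $S$ lies in $\cS(\Lambda)$ if and only if $S\cong\soc Q$ for some indecomposable projective-injective $Q$ (Remark~\ref{socle of projective-injective}(1)). By Remark~\ref{kupisch properties}(1)(c), $P_{i+1}$ is projective-injective exactly when $c_i\leq c_{i+1}$, and conversely every projective-injective arises this way (if $c_i>c_{i+1}$ then $P_i$, being $\rad^{-1}$ of $P_{i+1}$ up to the top, is not injective by Proposition~\ref{prop:quadrilateral}; more directly, $P_i$ injective would force $c_{i-1}\leq c_i$ and propagate, contradicting descent). Distinct such indices $i$ give projective-injectives with distinct socles, because the socle of $P_{i+1}$ is $S_{[i+c_{i+1}]}$ and the Gustafson/socle map is injective on projective-injectives by the Gustafson Lemma. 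Hence $|\cS(\Lambda)|=|\{i:c_i\leq c_{i+1}\}|$. For the defect count: by Proposition~\ref{rank_rel_def}, $\defect\Lambda=\rank\Lambda-\#\rel\Lambda=n-|\cS(\Lambda)|$, and $n-|\{i:c_i\leq c_{i+1}\}|=|\{i:c_i>c_{i+1}\}|=|\{c_{i+1}:c_i>c_{i+1}\}|$ since the indices $i+1$ with $c_i>c_{i+1}$ are distinct; this is exactly the asserted formula (matching the index shift in the statement).

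\emph{Part 2.} For the lengths of base-set elements, fix an index $i$ with $c_i\leq c_{i+1}$, so $S_{[i]}\in\cS(\Lambda)$ realized as $\soc P_{i+1}$ with $P_{i+1}$ projective-injective. By Proposition~\ref{base-set}(1) there is a unique $B\in\cB(\Lambda)$ with $\soc B\cong S_{[i]}$, and by Proposition~\ref{prop:B_eqv_def}(v), $\ell(B)$ is the least $m>0$ with $\tau^{-m}S_{[i]}\in\cS(\Lambda)$; equivalently, using Remark~\ref{kupisch properties}, the least $m$ such that the index reached after climbing $m$ steps from the socle of $P_{i+1}$ is again a socle index. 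Since the composition factors of $B$ from bottom to top fill out the part of $P_{i+1}$ above its socle, and $P_{i+1}$ has length $c_{i+1}$ with socle $S_{[i]}$, the maximal submodule of a projective-injective that still lies in $\cB(\Lambda)$ has length $c_{i+1}-c_i+1$: indeed $\topp P_{i+1}=S_{[i-c_{i+1}+1]}$, and the submodule of length $c_{i+1}-c_i+1$ has top $S_{[i-c_{i}+1]}$, which by Proposition~\ref{prop:quadrilateral}(2) (applied to the minimal projective $P_i$ of length $c_i$) is $\tau^{-}(\topp P_i')$-type, i.e.\ lands in $\cS'(\Lambda)$, while no shorter submodule does. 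Thus $\ell(B)=c_{i+1}-c_i+1$.

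\emph{Main obstacle.} The routine calculations are the modular index arithmetic; the real content — and the step I would be most careful with — is pinning down in Part~2 that the length-$(c_{i+1}-c_i+1)$ submodule of $P_{i+1}$ is precisely the base-set element, i.e.\ that its top lies in $\cS'(\Lambda)$ and that it is of \emph{minimal} length with socle in $\cS(\Lambda)$ and top in $\cS'(\Lambda)$. This is where one must correctly invoke Proposition~\ref{prop:B_eqv_def} (the characterization via $\ell(B)=\defect I(B)+1$, together with $\defect(P_{i+1})=c_{i+1}-c_i$ from Remark~\ref{kupisch properties}(1)(c)); in fact the cleanest route is $\ell(B)=\defect I(B)+1=\defect(P_{i+1})+1=(c_{i+1}-c_i)+1$, since $I(B)$ is the projective-injective $P_{i+1}$ containing $B$, bypassing the direct submodule-length argument entirely. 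I would present that as the main line.
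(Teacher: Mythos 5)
Your proposal is correct and, in its main line, essentially the paper's own argument: Part 1 counts the indices with $c_i\leq c_{i+1}$ via Remark \ref{kupisch properties} and Propositions \ref{numerical} and \ref{rank_rel_def} (the paper counts minimal projectives $P_i$ where you count projective-injectives $P_{i+1}$, but these are the same set of indices and the same count $\#\rel\Lambda$), and Part 2's ``cleanest route'' $\ell(B)=\defect I(B)+1=\defect(P_{i+1})+1=c_{i+1}-c_i+1$ via Proposition \ref{prop:B_eqv_def} and Remark \ref{kupisch properties}(1)(c) is exactly the paper's proof. The extra direct submodule-length argument you sketch for Part 2 is unnecessary but not wrong.
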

\begin{proof}
    1) By Proposition \ref{numerical}, the cardinality of the socle set is equal to the number of minimal projective $\Lambda$-modules. $P_i$ is minimal if and only if $c_i=\ell(P_i)\leq\ell(P_{i+1})=c_{i+1}$ by Remark \ref{kupisch properties}, hence we get desired equality. Since $\{c_i\mid c_i\leq c_{i+1}\}\cup \{c_{i+1}\mid c_i>c_{c_{i+1}}\}=\{c_1,\ldots,c_n\}$, we conclude $\defect\Lambda=\vert\{c_{i+1}\mid c_i>c_{c_{i+1}}\}\vert$ by Proposition \ref{rank_rel_def}.\\
    2) By Remark \ref{kupisch properties}, $c_i\leq c_{i+1}$ implies that $\defect P_{i+1}=c_{i+1}-c_{i}$. By Proposition \ref{prop:B_eqv_def}, $\ell(B)=c_{i+1}-c_i+1$. 
\end{proof}

\begin{proposition}\label{T_P(T)_S}
Let $\Lambda$  be a cyclic Nakayama algebra and let $T$ be a simple $\Lambda$-module. Let $P(T)$ and $P(\tau T)$ be projective covers, $S=\soc(P(T))$ and $U=\soc(P(\tau T))$. Suppose $P(T)$ is a minimal projective. 
Then $U\cong \tau^{d+1} S$ 
where $d=\defect(P(\tau T))$.
\end{proposition}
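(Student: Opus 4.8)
The plan is to track socles through the Auslander--Reiten quiver using the quadrilateral/uniseriality results already established. Write $S = \soc P(T)$ and $U = \soc P(\tau T)$, and set $d = \defect P(\tau T)$. The first step is to observe that since $P(T)$ is minimal projective, Proposition \ref{prop:quadrilateral}(4) tells us $P(\tau T)$ is projective-injective; in particular it is injective with $\topp P(\tau T) \cong \tau T$ and $\soc P(\tau T) = U$, and by the Gustafson-map description we have $U \cong \overline{g}(\tau T)$. The second step is to unwind what $\defect P(\tau T) = d$ means: the proper injective quotients of $P(\tau T)$ are exactly $I(\tau^{-i} U)$ for $1 \le i \le d$ (these being the quotients of the injective module $P(\tau T) = I(U)$, listed by socle), of which $I(\tau^{-1}U),\dots,I(\tau^{-d}U)$ are non-projective and $I(\tau^{-d-1}U)$ is the next injective that is again projective-injective. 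Equivalently, by Remark \ref{socle of projective-injective}(1)--(2) and Proposition \ref{prop:B_eqv_def}, the base-set element $B$ with $\soc B = U$ has length $d+1$, so $\topp B \cong \tau^{-d} U \in \cS'(\Lambda)$ and $\topp B$ is the top of that $B$.

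The third step is the key identification: I want to show that the base-set element $B$ sitting below the projective-injective $P(\tau T)$ has top $\cong \tau T$, i.e. $\topp B \cong \tau T = \topp P(\tau T)$. Indeed $\topp P(\tau T) = \tau T \in \cS'(\Lambda)$ (it is the socle of a minimal injective, or use Remark \ref{socle of projective-injective}) — wait, more directly: since $P(T)$ is minimal projective, $P(T) = P(T)$ with $\soc P(T) = S$, and by Proposition \ref{prop:quadrilateral}(2) the non-injective projective $P(T)$ equals $\rad P(\tau^- T)$... Let me instead argue via Remark \ref{kupisch properties}(1)(c): if $c_i, c_{i+1}$ are the Kupisch entries with $P_i = P(T)$ minimal projective and $P_{i+1} = P(\tau T)$ projective-injective, then $\defect P_{i+1} = c_{i+1} - c_i = d$, and by Lemma \ref{sunday lemma lengths of delta and cardinality}(2) the base-set element $B$ below $P_{i+1}$ has length $c_{i+1} - c_i + 1 = d+1$. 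Now $\soc P_{i+1} = U$, $\soc P_i = S$, and comparing lengths: $P_i$ and $P_{i+1}$ start at adjacent vertices, so $\soc P_{i+1} = \tau(\topp P_{i+1})\cdot(\text{shift})$; concretely $U = \tau^{\,\ell(P_{i+1})-1}(\topp P_{i+1})$ and $S = \tau^{\,\ell(P_i)-1}(\topp P_i)$ with $\topp P_{i+1} = \tau^{-1}\topp P_i$ (adjacent tops). Since $\ell(P_{i+1}) = c_{i+1}$ and $\ell(P_i) = c_i$ with $c_{i+1} - c_i = d$, we get $U = \tau^{\,c_{i+1}-1}\tau^{-1}(\topp P_i) = \tau^{\,c_{i+1}-2}(\topp P_i)$ while $S = \tau^{\,c_i - 1}(\topp P_i)$, hence $U = \tau^{\,(c_{i+1}-2)-(c_i-1)} S = \tau^{\,c_{i+1}-c_i-1} S = \tau^{\,d-1} S$.

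Hmm — that gives $\tau^{d-1}S$, not $\tau^{d+1}S$, so I have the direction of adjacency backwards: $P(T)$ minimal projective means $\rad P(T)$ is not projective, and in the Kupisch labelling $P_i = P(T)$ minimal means $c_i \le c_{i+1}$ where $P_{i+1}$ is the projective at the vertex with $S_{i+1} = \tau S_i$. So $\topp P_i = S_i$, $\topp P_{i+1} = S_{i+1} = \tau S_i$, i.e. $\topp P_{i+1} = \tau \topp P_i$ (not $\tau^{-1}$). Redoing: $S = \tau^{c_i - 1} S_i$, $U = \tau^{c_{i+1}-1} S_{i+1} = \tau^{c_{i+1}-1}\tau S_i = \tau^{c_{i+1}} S_i$, so $U = \tau^{c_{i+1} - c_i + 1} S = \tau^{d+1} S$, as claimed. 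So the real plan is: (i) use minimality of $P(T)$ and Remark \ref{kupisch properties} to get $\defect P(\tau T) = c_{i+1} - c_i =: d$; (ii) read off $\soc P(T) = \tau^{c_i-1}(\topp P(T))$ and $\soc P(\tau T) = \tau^{c_{i+1}-1}(\topp P(\tau T))$ by uniseriality; (iii) note $\topp P(\tau T) = \tau\,\topp P(T)$ since the vertices are AR-adjacent (Remark \ref{emre remark labels may25}); (iv) combine. The main obstacle is purely bookkeeping: making sure the index shift between $P(T)$ and $P(\tau T)$ — and the convention $S_{i+1} \cong \tau S_i$ — is applied with the correct sign, since an off-by-one there is exactly the difference between the false $\tau^{d-1}S$ and the correct $\tau^{d+1}S$. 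A coordinate-free rephrasing avoiding Kupisch indices entirely would use Proposition \ref{prop:quadrilateral}(1) applied to the exact sequence $0 \to \rad P(\tau T) \to P(\tau T) \to \tau T \to 0$ together with the base-set length $d+1$ from Proposition \ref{prop:B_eqv_def}(iv)--(v), which is perhaps cleaner to present.
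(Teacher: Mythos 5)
Your final argument is correct. The paper itself gives no proof of this proposition --- it is among the preliminaries covered by the blanket remark that such statements are ``either already proved in \cite{sen18} and \cite{sen19}, or are not hard to check'' --- so there is nothing to compare against; your Kupisch-series bookkeeping is a perfectly good way to check it. The chain you settle on is sound: with $T=S_i$ and $\tau T=S_{i+1}$ (Remark \ref{emre remark labels may25}), minimality of $P(T)$ gives $d=\defect(P_{i+1})=c_{i+1}-c_i$ by Remark \ref{kupisch properties}; the Gustafson description gives $\soc P_j\cong\tau^{c_j-1}\topp P_j$; and combining with $\topp P(\tau T)=\tau\,\topp P(T)$ yields $U\cong\tau^{c_{i+1}}T=\tau^{c_{i+1}-c_i+1}S=\tau^{d+1}S$. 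The only thing to fix is presentational: the first pass with the adjacency reversed (landing on the false $\tau^{d-1}S$) should be deleted from a final writeup rather than left in as a visible correction, and you should note that for $i=n$ the comparison of $c_n$ and $c_1$ is read cyclically, which is harmless since $\Lambda$ is cyclic.
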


\begin{proposition}\label{emre prop ccc1}
    Let $\Lambda$ be a cyclic Nakayama algebra. Let $B,C\in\cB(\Lambda)$ such that $\tau \soc B\cong \topp C$. Then, there exists a simple $\Lambda$-module $T$ such that $P(T)$ is projective-injective $\Lambda$-module with submodule $C$ and $P(\tau^{-1}T)$ is minimal projective with submodule $B$.
\end{proposition}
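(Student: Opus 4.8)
The plan is to build the simple module $T$ directly out of $C$, get the ``projective-injective with submodule $C$'' part for free, deduce ``$P(\tau^{-1}T)$ minimal projective'' formally, and then spend the real work on showing $B\hookrightarrow P(\tau^{-1}T)$ by matching socles and comparing lengths.

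\textbf{Construction of $T$ and the easy half.} By Proposition \ref{base-set}, item 5), the base element $C$ is a submodule of a unique indecomposable projective-injective module $Q$; set $T:=\topp Q$, so $Q\cong P(T)$ is projective-injective and $C\subseteq P(T)$. This is the first assertion. Next, apply Proposition \ref{prop:quadrilateral}, item 4), to the simple module $\tau^{-1}T$: since $P(\tau\cdot\tau^{-1}T)=P(T)$ is projective-injective, $P(\tau^{-1}T)$ is minimal projective. So everything reduces to proving $B\subseteq P(\tau^{-1}T)$.

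\textbf{Matching the socles.} Because $C\subseteq P(T)$ with $P(T)$ indecomposable injective, $P(T)$ is the injective envelope of $C$, i.e. $I(C)\cong P(T)$; hence by Proposition \ref{prop:B_eqv_def}(iv), $\ell(C)=\defect P(T)+1$. Put $d:=\defect P(T)$ and apply Proposition \ref{T_P(T)_S} with $\tau^{-1}T$ playing the role of its simple module (legitimate, as $P(\tau^{-1}T)$ is minimal projective and $P(\tau\cdot\tau^{-1}T)=P(T)$): with $U:=\soc P(T)=\soc C$ it yields $\soc C\cong\tau^{d+1}\soc P(\tau^{-1}T)$, so $\soc P(\tau^{-1}T)\cong\tau^{-(d+1)}\soc C=\tau^{-\ell(C)}\soc C$. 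On the other hand $C$ is uniserial, hence $\topp C\cong\tau^{1-\ell(C)}\soc C$, and the hypothesis $\tau\soc B\cong\topp C$ gives $\soc B\cong\tau^{-1}\topp C\cong\tau^{-\ell(C)}\soc C\cong\soc P(\tau^{-1}T)$. Thus $B$ and $P(\tau^{-1}T)$ share a simple submodule, so by the Uniseriality Lemma \ref{lem:uniseriality} one contains the other; to conclude $B\subseteq P(\tau^{-1}T)$ it then suffices to show $\ell(B)\le\ell(P(\tau^{-1}T))$, and in that case the length-$\ell(B)$ submodule of $P(\tau^{-1}T)$ is uniserial with socle $\soc B$ and therefore equals $B$.

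\textbf{The main obstacle: the length comparison.} Everything above is formal once $T$ is chosen as the top of the projective-injective containing $C$; the real content is the inequality $\ell(B)\le\ell(P(\tau^{-1}T))$, equivalently ruling out $P(\tau^{-1}T)\subsetneq B$. Here I would use Proposition \ref{lem_defect_general_1}, item 2), which identifies $\rad P(\tau^{-1}T)\cong E/\soc E$ for the minimal injective quotient $E$ of $P(T)$, so that $\ell(P(\tau^{-1}T))=\ell(E)$, and then compare with $\ell(B)=\defect I(B)+1$ from Proposition \ref{prop:B_eqv_def}(iv) and the characterization of $\ell(B)$ in Proposition \ref{prop:B_eqv_def}(v) as the least $n>0$ with $\tau^{-n}\soc B\in\cS(\Lambda)$; the point will be that the composition factors of $P(\tau^{-1}T)$, together with those of $C$, arrange themselves around $P(T)$ so that the first return of $\tau^{-\bullet}\soc B$ to $\cS(\Lambda)$ happens no later than the top of $P(\tau^{-1}T)$. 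I expect this step to be the crux of the argument.
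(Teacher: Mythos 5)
Your construction of $T$ as the top of the unique projective-injective containing $C$, the deduction via Proposition \ref{prop:quadrilateral} that $P(\tau^{-1}T)$ is minimal projective, and the identification $\soc B\cong\soc P(\tau^{-1}T)$ are all correct and essentially follow the paper's route (the paper obtains the socle identification from Proposition \ref{lem_defect_general_1} via $\rad P(\tau^{-1}T)\cong P(T)/C$, you obtain it from Proposition \ref{T_P(T)_S}; the two computations agree). The genuine gap in your proposal is exactly the step you flag as ``the crux'': you reduce the statement to the inequality $\ell(B)\le\ell(P(\tau^{-1}T))$ but never prove it, only listing the propositions you would try to combine. As written, the argument is therefore incomplete.

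Your caution at that point is, however, exactly right, and you should know that the paper's own proof does not supply this step either: it passes from $\soc B\cong\soc P(\tau^{-1}T)$ straight to ``$B$ is a submodule of $P(\tau^{-1}T)$ by the Uniseriality Lemma,'' which only yields that one of the two modules contains the other. In fact the needed inequality can fail, so the gap cannot be closed without extra hypotheses. Take $\Lambda$ cyclic with Kupisch series $(c_1,\dots,c_5)=(2,4,3,2,3)$. The socles of $P_1,\dots,P_5$ are $S_2,S_5,S_5,S_5,S_2$, so $\cS(\Lambda)=\{S_2,S_5\}$ and $\cS'(\Lambda)=\{S_1,S_3\}$; the base-set element with socle $S_5$ is the uniserial module $B$ of length $3$ (the smallest $n>0$ with $\tau^{-n}S_5\in\cS(\Lambda)$ is $n=3$, since $S_4,S_3\notin\cS(\Lambda)$), and the one with socle $S_2$ is $C$ of length $2$ with $\topp C=S_1$. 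Then $\tau\soc B=\tau S_5=S_1\cong\topp C$, so the hypotheses hold. The unique projective-injective containing $C$ is $P_5$ (the projective-injective with socle $S_2$), which forces $T=S_5$; but $P(\tau^{-1}T)=P(S_4)=P_4$ has length $c_4=2<3=\ell(B)$, so $P_4$ is a \emph{proper submodule} of $B$ rather than containing it. So the proposition as stated is false, your length comparison is precisely the point of failure, and any repair must identify conditions guaranteeing $\defect I(B)+1\le\ell(P(\tau^{-1}T))$ — which is exactly the reduction your Proposition \ref{prop:B_eqv_def}(iv) argument isolates.
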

\begin{proof}
    By Proposition \ref{base-set} 5), there exists unique projective-injective module $P$ with submodule $C$. Let $T=\topp P$. Then, by Proposition \ref{prop:quadrilateral}, $P(\tau^{-1}T)$ is minimal projective. 
    Let $E$ be minimal injective quotient of $P$. Then, $P/C\cong E/\soc E$ is isomorphic to $\rad P(\tau^{-1}T)$. Hence $\soc P(\tau^{-1}T)\cong \soc B$ implies that $B$ is submodule of $P(\tau^{-1}T)$ by Uniseriality lemma \ref{lem:uniseriality}.
\end{proof}

\begin{proposition}\label{emre prop ccc2}
    Let $\Lambda$ be a cyclic Nakayama algebra. Let $B,C\in\cB(\Lambda)$ such that $\tau\soc B\cong\topp C $. If $S$ is a simple module such that  $P(S)$ and $P(\tau^jS)$ are projective $\Lambda$-modules with $B$ and $C$ as submodules, respectively, then there exist $k$ such that:
    \begin{enumerate}
        \item $C$ is submodule of $P(\tau^tS)$ where $k\leq t\leq j$
        \item $B$ is submodule of $P(\tau^sS)$ where $1\leq s<k$
    \end{enumerate}
\end{proposition}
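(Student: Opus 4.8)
We write $S\cong S_a$ in the labelling of Remark \ref{emre remark labels may25}, so that $P(\tau^iS)\cong P(S_{a+i})$ with all indices read modulo $n=\rank\Lambda$; since $P(\tau^nS)\cong P(S)$ we may assume $0\le j<n$, so $a,a+1,\dots,a+j$ is a genuine arc in the cyclic order of vertices. The plan rests on the decomposition of the Kupisch series $(c_1,\dots,c_n)$ into \emph{runs}: a run is a maximal block of consecutive vertices $i,i{+}1,\dots,i{+}r$ with $c_i>c_{i+1}>\cdots>c_{i+r}$, equivalently $P(S_i)\supsetneq\rad P(S_i)=P(S_{i+1})\supsetneq\cdots\supsetneq P(S_{i+r})$. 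By Remark \ref{kupisch properties} and Proposition \ref{prop:quadrilateral} the socle is constant along a run, the first projective of a run is projective-injective and the last is minimal projective; and since projective-injectives are determined by their socle, all indecomposable projectives with a fixed socle form a single run. If $\soc B\cong\soc C$, then $B\cong C$ by Proposition \ref{base-set} \ref{item1}, the projectives $P(S_a)$ and $P(S_{a+j})$ lie in the same run, hence so does every $P(S_{a+i})$ with $0\le i\le j$, and each has socle $\soc B$ and length $\ge\ell(P(S_{a+j}))\ge\ell(C)=\ell(B)$, so contains $B\cong C$; taking $k=j$ settles this case. From now on assume $S_\beta:=\soc B\not\cong\soc C=:S_\gamma$, and let $\cT_\beta,\cT_\gamma$ be the runs with socles $S_\beta,S_\gamma$.

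The key step is to locate $\cT_\beta$ and $\cT_\gamma$ relative to one another. I would invoke Proposition \ref{emre prop ccc1}: it produces a simple module $U$ such that $P(U)$ is projective-injective with $C\subseteq P(U)$, and $P(\tau^{-1}U)$ is minimal projective with $B\subseteq P(\tau^{-1}U)$. Since a nonzero submodule of a uniserial module has the same socle, $\soc P(\tau^{-1}U)\cong\soc B=S_\beta$ and $\soc P(U)\cong\soc C=S_\gamma$. As $P(\tau^{-1}U)$ is a minimal projective with socle $S_\beta$ it is the last projective of $\cT_\beta$, say $P(S_m)$; then $\tau^{-1}U\cong S_m$, so $P(U)\cong P(S_{m+1})$ is the first projective of the run immediately following $\cT_\beta$, and because $\soc P(U)=S_\gamma$ that run is $\cT_\gamma$. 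Thus \emph{$\cT_\gamma$ directly follows $\cT_\beta$ in the cyclic order, and $B$ is a submodule of $P(S_m)$, the last projective of $\cT_\beta$.} This adjacency statement is the crux of the argument and the one place where the hypothesis $\tau\soc B\cong\topp C$ is used; I expect it to be the only real obstacle, everything afterwards being bookkeeping with uniseriality and with the monotonicity of Kupisch numbers inside a run.

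For the conclusion, with $m$ the last vertex of $\cT_\beta$ (so $m+1$ is the first vertex of $\cT_\gamma$), set $k:=m+1-a$. From $B\subseteq P(S_a)$ we get $a\in\cT_\beta$, hence $a\le m$; from $C\subseteq P(S_{a+j})$ we get $a+j\in\cT_\gamma$, hence $a+j\ge m+1$; so $1\le k\le j$. If $k\le t\le j$, then $a+t$ lies in $\cT_\gamma$, so $\soc P(S_{a+t})=S_\gamma=\soc C$ and $\ell(P(S_{a+t}))\ge\ell(P(S_{a+j}))\ge\ell(C)$, whence $C\subseteq P(\tau^tS)$ by Lemma \ref{lem:uniseriality}. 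If $1\le s<k$, then $a+s$ lies in $\cT_\beta$, so $\soc P(S_{a+s})=S_\beta=\soc B$ and $\ell(P(S_{a+s}))\ge\ell(P(S_m))\ge\ell(B)$ (the last inequality since $B\subseteq P(S_m)$), whence $B\subseteq P(\tau^sS)$. This gives assertions (1) and (2).
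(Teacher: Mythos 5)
Your main case (distinct socles) is essentially the paper's proof written out in full: both arguments turn entirely on Proposition \ref{emre prop ccc1}, which supplies the projective-injective $P(U)$ containing $C$ and the minimal projective $P(\tau^{-1}U)$ containing $B$, and your $k=m+1-a$ is exactly the paper's $k$ defined by $T=\tau^{k}S$. What you add is the explicit decomposition of the Kupisch series into descending runs, which is precisely the bookkeeping the paper suppresses when it passes from ``$P(\tau^jS)\subseteq P(\tau^kS)$'' to ``every $P(\tau^tS)$ with $k\le t\le j$ contains $C$''. Granting Proposition \ref{emre prop ccc1}, that part of your write-up is correct and, if anything, better documented than the original.

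The gap is in your case $\soc B\cong\soc C$. There the hypothesis $\topp C\cong\tau\soc B$ forces $\ell(B)=n$ and $\cB(\Lambda)=\{B\}$, so there is exactly one run, namely the whole cycle, and your inequality $\ell(P(S_{a+i}))\ge\ell(P(S_{a+j}))$ fails as soon as the arc $a,a+1,\dots,a+j$ crosses the unique ascent of the Kupisch series. Concretely, take $\Lambda$ with Kupisch series $(6,5,4,3,2)$: then $B=C\cong\rad P_1=P_2$ has length $5$, and $S:=\topp P_2$ with $j=4$ satisfies the hypotheses ($P(S)=P_2$ and $P(\tau^4S)=P_1$ both contain $B$), yet $P(\tau^sS)$ for $s=1,2,3$ are $P_3,P_4,P_5$ of lengths $4,3,2$, so no $k$ makes both conclusions hold. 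This is in fact a counterexample to the proposition as literally stated, so your Case 1 cannot be repaired without an extra hypothesis. Note that the same configuration also defeats the paper's own proof: the final step of Proposition \ref{emre prop ccc1}, ``$\soc P(\tau^{-1}T)\cong\soc B$ implies $B\subseteq P(\tau^{-1}T)$'', is only the uniseriality dichotomy and silently assumes $\ell(B)\le\ell(P(\tau^{-1}T))$, which is false here. So the step you flagged as the one real obstacle is indeed the obstacle --- for your argument, for the paper's, and for the statement itself; a correct version needs an added hypothesis guaranteeing that the projectives indexed by the arc are long enough (as happens in the intended application inside Proposition \ref{length_defect_2}, where the arc stays inside the composition series of a single $\Delta(T)$).
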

\begin{proof}
    By Proposition \ref{emre prop ccc1}, there exists a simple $\Lambda$-module $T$ such that $P(T)$ is projective-injective having $C$ as submodule. Let $T=\tau^kS$. Since $C$ is submodule of $P(\tau^jS)$, by Uniseriality lemma \ref{lem:uniseriality}, $P(\tau^jS)\subseteq P(\tau^kS)=P(T)$. Therefore, every $P(\tau^tS)$ where $k\leq t\leq j$ have $C$ as submodule.
    
    On the other hand, $P(\tau^{-1}T)=P(\tau^{k-1}S)$ is minimal projective module by Proposition \ref{prop:quadrilateral}. By Proposition \ref{emre prop ccc1}, $B$ is submodule of     $P(\tau^{k-1}S)$. Since $B$ is submodule of $P(S)$, by Uniseriality lemma \ref{lem:uniseriality}, $P(\tau^{k-1}S)\subseteq P(S)$. Therefore, every $P(\tau^sS)$ where $1\leq s<k$ have $B$ as submodule. 
\end{proof}
\section{The $\varepsilon$-construction of Syzygy Filtered Algebras}\label{section_epsilon}

We now recall the definition of $\bm\varepsilon$-construction which was introduced in  \cite{sen18} and was fundamental in \cite{sen19},\cite{sen21}. The idea of this construction is to select the certan important projective modules and consider (a smaller algebra) of the endomorphisms of those modules. Those important projective modules $\cP(\Lambda)$ are precisely projective covers of the top-set $\cS'(\Lambda)$ as in Definition \ref{socle set}. The new constructed algebra is also called \emph{Syzygy Filtered Algebra} due to the fact that the category of modules of that algebra is equivalent to the category of all second syzygies \cite{sen18}.

\subsection{Definition and Functorial properties}
\begin{definition}\label{deffilteredalg} Let $\bm{\varepsilon}(\Lambda)$ be the endomorphism algebra of the direct sum of projective $\Lambda$-modules which are projective covers of elements of the top set $\cS'(\Lambda)$, i.e.
\begin{align*}
\mathcal P:= \bigoplus\limits_{S\in \cS'(\Lambda)}P(S) \quad \text{and} \quad
\bm{\varepsilon}(\Lambda):= \End_{\Lambda}(\mathcal P) = \End_{\Lambda}\left(\bigoplus\limits_{S\in \cS'(\Lambda)}P(S)\right).
\end{align*}
\end{definition}

This construction fits perfectly the classical set up of the endomorphism algebras and the functors between the corresponding module categories, which we now recall.


\begin{proposition}\label{artinalgebra}\label{functorial}
Let $\Lambda$ be an artin algebra,   $\emph{mod-}\Lambda$  the category of finitely presented right $\Lambda$-modules, $P$  a projective $\Lambda$-module,  $\Gamma= \End_{\Lambda}(P)$ and 
$\Hom_{\Lambda}(P,-):\emph{mod-}\Lambda \xrightarrow{}\emph{mod-} \Gamma$. Then:
\begin{enumerate}[label=\arabic*)]

\item 

 The functor $\Hom_{\Lambda}(P,-)$ induces an equivalence of categories
 $fp(\add P)\to \emph{mod-}\Gamma$, where $fp(\add P)$ is the full subcategory of $\emph{mod-}\Lambda $ consisting of the modules which are finitely presented by $\add P$.

 \item $(-\otimes_{\Gamma} P):\emph{mod-} \Gamma\xrightarrow{}\emph{mod-}\Lambda $ is left adjoint to $\Hom_{\Lambda}(P,-)$, it is right exact, fully faithful and
 induces inverse equivalence: $\emph{mod-} \Gamma \to fp(\add P)$.
 \item The composition $\Hom_{\Lambda}(P,-)\circ(-\otimes_{\Gamma}P)$ is the identity of $\emph{mod-}\Gamma$.
 \item $(-\otimes_{\Gamma}P)$ preserves projective covers.
\end{enumerate}
\end{proposition}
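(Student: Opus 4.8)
The plan is to prove this as the classical ``projectivization'' package, with the tensor--hom adjunction between $F:=\Hom_\Lambda(P,-)$ and $G:=(-\otimes_\Gamma P)$ doing all the work: everything reduces to the behaviour of $F$ and $G$ on projectives, together with two applications of the five lemma. The base case is to observe that $F$ restricts to an equivalence $\add P\xrightarrow{\ \sim\ }\text{proj-}\Gamma$. Indeed, $F(P)=\Hom_\Lambda(P,P)=\Gamma$ as a right $\Gamma$-module, $F$ is additive, and for $P',P''\in\add P$ the canonical map $\Hom_\Lambda(P',P'')\to\Hom_\Gamma(FP',FP'')$ is bijective (a direct check, first for $P'=P''=P$ where both sides are $\Gamma$, then by additivity). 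Dually $G(\Gamma)=\Gamma\otimes_\Gamma P\cong P$, so $G$ restricts to a quasi-inverse $\text{proj-}\Gamma\to\add P$; in particular the unit $\eta_N\colon N\to FG(N)$ is an isomorphism when $N$ is projective, and the counit $\varepsilon_M\colon GF(M)\to M$ is an isomorphism when $M\in\add P$.

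Next I would bootstrap via the adjunction isomorphism $\Hom_\Lambda(N\otimes_\Gamma P,M)\cong\Hom_\Gamma\!\big(N,\Hom_\Lambda(P,M)\big)$, valid because $P$ is a $\Gamma$-$\Lambda$-bimodule, which produces the natural transformations $\eta$ and $\varepsilon$. For arbitrary $N\in\moddd\Gamma$, choose a projective presentation $Q_1\to Q_0\to N\to 0$ in $\moddd\Gamma$; tensoring with $P$ is right exact, and applying $\Hom_\Lambda(P,-)$ afterwards \emph{preserves exactness}, since $P$ is projective over $\Lambda$ so $\Ext^1_\Lambda(P,-)=0$ kills the relevant obstruction. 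Comparing the resulting right-exact sequence with $Q_1\to Q_0\to N\to 0$ through $\eta$ and invoking the five lemma (the base case gives that $\eta_{Q_0},\eta_{Q_1}$ are isomorphisms) shows $\eta_N$ is an isomorphism; this is exactly statement 3). It also shows $G$ is fully faithful (unit invertible), and, since $Q_0\otimes_\Gamma P$ and $Q_1\otimes_\Gamma P$ lie in $\add P$, it exhibits $N\otimes_\Gamma P$ as finitely presented by $\add P$, so $G$ takes values in $fp(\add P)$. Symmetrically, for $M\in fp(\add P)$ a presentation $P_1\to P_0\to M\to 0$ with $P_0,P_1\in\add P$, together with the base case and the five lemma, gives that $\varepsilon_M$ is an isomorphism. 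Hence $F$ and $G$ restrict to mutually inverse equivalences between $fp(\add P)$ and $\moddd\Gamma$, which yields 1) and 2); right-exactness of $G=(-\otimes_\Gamma P)$ is automatic for a tensor functor.

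For 4), a surjection $g\colon Q\to N$ in $\moddd\Gamma$ with $Q$ projective is a projective cover precisely when $\ker g\subseteq\rad Q=Q\cdot\rad\Gamma$. Applying the right-exact functor $G$ to $0\to\ker g\to Q\to N\to 0$ identifies $\ker(Q\otimes_\Gamma P\to N\otimes_\Gamma P)$ with the image of $(\ker g)\otimes_\Gamma P$ in $Q\otimes_\Gamma P$, which lies inside the image of $(Q\cdot\rad\Gamma)\otimes_\Gamma P$. Here I would use that $\rad\Gamma=\rad\End_\Lambda(P)$ carries $P$ into $\rad_\Lambda P$: the natural surjection $\End_\Lambda(P)\to\End_\Lambda(\Top P)$ onto a semisimple ring must kill $\rad\Gamma$, so every $f\in\rad\Gamma$ satisfies $f(P)\subseteq\rad_\Lambda P$. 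Since $Q\otimes_\Gamma P$ is a direct summand of a power of $P$, this forces the image above to lie in $\rad_\Lambda(Q\otimes_\Gamma P)$. As $Q\otimes_\Gamma P$ is projective over $\Lambda$ and the radical of a finitely generated module is superfluous, $Q\otimes_\Gamma P\to N\otimes_\Gamma P$ is a projective cover.

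The five-lemma arguments are routine once one notes that projectivity of $P$ makes $\Hom_\Lambda(P,-)$ exact on exactly the right-exact sequences that arise, and that $fp(\add P)$ and $\moddd\Gamma$ are the precise images of $G$ and of $F$. The genuinely delicate point, and the place I would be most careful, is the last step: establishing that $G$ sends superfluous submodules to superfluous submodules requires the semiperfect structure of $\Gamma$, concretely the inclusion $\rad\Gamma\cdot P\subseteq\rad_\Lambda P$.
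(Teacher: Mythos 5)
Your proof is correct, and it is the standard projectivization argument: the adjunction $-\otimes_\Gamma P \dashv \Hom_\Lambda(P,-)$, the equivalence $\add P\simeq \operatorname{proj}\text{-}\Gamma$, bootstrapping the unit and counit along finite presentations via the five lemma, and the inclusion $\rad\Gamma\cdot P\subseteq\rad_\Lambda P$ for the statement on projective covers. The paper gives no proof of this proposition at all --- it is recalled as the classical endomorphism-algebra setup (as in Auslander--Reiten--Smal{\o}) --- and your argument coincides with that standard treatment, with the one delicate point (that the kernel of $G(Q)\to G(N)$ lands in $\rad_\Lambda(Q\otimes_\Gamma P)$) handled correctly.
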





\begin{proposition} \label{diagram}Let $\Lambda$ be cyclic Nakayama algebra and let $\bm{\varepsilon}(\Lambda)$ be the associated syzygy filtered algebra.
Let $\cP=\oplus_{S\in\cS'} P(S)$. Then $Filt(\cB(\Lambda))= fp(\add\cP)$.
Then there is a commutative diagram:
\begin{gather}
\begin{aligned}
\xymatrixcolsep{10pt}
\xymatrix{   \moddd\Lambda\ar[rrrr]^{\!\Hom_{\Lambda}(\cP,-) \ \ } &&&&\moddd\bm{\varepsilon}(\Lambda)\ar[llll]^{(-\otimes_{\bm{\varepsilon}(\Lambda)}\cP)\ \ \ }\ar[lllld]^{(-\otimes_{\bm{\varepsilon}(\Lambda)}\cP)}.&\\
 Filt(\cB(\Lambda)) \ar[rrrru]^{\ _{\ _{\cong\ \ }}} \ar@{^{(}->}[u]
}
\end{aligned}
\label{diagram categorical equivalence}
\end{gather}
Then the following statements are true:
\begin{enumerate}[label=\roman*)]
\item $\Hom_{\Lambda}(\cP,-):\filt(\cB(\Lambda))\to\moddd\bm\varepsilon(\Lambda)$ is an equivalence with inverse $-\otimes_{\bm\varepsilon(\Lambda)}\cP$.
\item Let $M\in\emph{mod-}\bm{\varepsilon}(\Lambda)$. Then $M\cong\Hom_{\Lambda}(\cP,X)$ for some $X\in Filt(\cB(\Lambda))$.
\item Let $S \in \emph{mod-}\bm\varepsilon(\Lambda)$ be simple. Then $S\cong \Hom_{\Lambda}(\cP,B)$ for some $B\in\cB(\Lambda)$.
\item Let $P\in\emph{mod-}\bm\varepsilon(\Lambda)$ be projective. Then  $P\cong \Hom_{\Lambda}(\cP,\tilde P)$ where $\tilde P$ is $\cB(\Lambda)$-filtered projective $\Lambda$-module.
\item\label{EM filtered proj inj implies it is filtered in epsilon}  If $P$ is projective-injective $\Lambda$-module and $P\in Filt(\cB(\Lambda))$, then $\Hom_{\Lambda}(\cP,P)$ is projective-injective $\bm\varepsilon(\Lambda)$-module. 
\item Let $X\in Filt{\cB}(\Lambda)$. Then $X\cong (M\otimes_{\bm{\varepsilon}(\Lambda)}\cP)$ for some $M\in\emph{mod-}\bm{\varepsilon}(\Lambda)$.
\item Let $B\in{\cB(\Lambda)}$. Then $B\cong (S\otimes_{\bm{\varepsilon}(\Lambda)}\cP)$ for some  simple $S\in\emph{mod-}\bm{\varepsilon}(\Lambda)$.
\item Let $\tilde P$ be $\cB(\Lambda)$-filtered projective $\Lambda$-module. Then $\tilde P\cong (P\otimes_{\bm{\varepsilon}(\Lambda)}\cP)$ for some  projective  $P\in\emph{mod-}\bm{\varepsilon}(\Lambda)$.
\end{enumerate}
\end{proposition}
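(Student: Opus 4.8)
The plan is to derive everything from the general endomorphism-algebra formalism of Proposition~\ref{functorial}, applied with the projective module $\cP=\bigoplus_{S\in\cS'(\Lambda)}P(S)$ and $\Gamma=\bm\varepsilon(\Lambda)=\End_\Lambda(\cP)$. The only genuinely non-formal input is the equality $Filt(\cB(\Lambda))=fp(\add\cP)$; granting it, Proposition~\ref{functorial}(1),(2) give at once the commutative diagram and statement~i), and items ii)--viii) then follow by transporting categorical notions — finitely presented objects, simple objects, projective objects, injective objects — across the equivalence $\Hom_\Lambda(\cP,-)\colon Filt(\cB(\Lambda))\xrightarrow{\ \sim\ }\moddd\bm\varepsilon(\Lambda)$, using in addition Lemma~\ref{projective} and Remark~\ref{remark-Filt}(a).

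First I would prove $Filt(\cB(\Lambda))=fp(\add\cP)$. For $\supseteq$ note that every object of $\add\cP$ is $\cB(\Lambda)$-filtered (each $P(S)$ is a $\cB(\Lambda)$-filtered projective by Lemma~\ref{projective}, and direct sums of filtered modules are filtered), hence lies in $Filt(\cB(\Lambda))$; since $Filt(\cB(\Lambda))$ is wide (Remark~\ref{remark-Filt}(a)), in particular closed under cokernels in $\moddd\Lambda$, any module finitely presented by $\add\cP$ — being the cokernel of a morphism between objects of $\add\cP$ — lies in $Filt(\cB(\Lambda))$. For $\subseteq$ I would show $Filt(\cB(\Lambda))$ has enough projectives drawn from $\add\cP$: it is a length abelian category whose simple objects are exactly the modules in $\cB(\Lambda)$ (Remark~\ref{remark-Filt}(a)), and for each $B\in\cB(\Lambda)$ the projective cover $P(B)$ is a $\cB(\Lambda)$-filtered projective, i.e.\ an object of $\add\cP$, and a projective object of $Filt(\cB(\Lambda))$ (Lemma~\ref{projective}). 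Given $X\in Filt(\cB(\Lambda))$, writing its top in this length category as $\bigoplus_iB_i$ with $B_i\in\cB(\Lambda)$ and lifting $\bigoplus_iP(B_i)\twoheadrightarrow\bigoplus_iB_i$ through $X$ yields, by Nakayama's lemma in $Filt(\cB(\Lambda))$, an epimorphism $\bigoplus_iP(B_i)\twoheadrightarrow X$ whose kernel is again in $Filt(\cB(\Lambda))$; repeating once more produces a presentation $P_1\to P_0\to X\to 0$ with $P_0,P_1\in\add\cP$, so $X\in fp(\add\cP)$.

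With this in place, i) and the commutativity of the diagram are exactly Proposition~\ref{functorial}(1),(2). For ii) and vi): given $M\in\moddd\bm\varepsilon(\Lambda)$ the object $X:=M\otimes_{\bm\varepsilon(\Lambda)}\cP$ lies in $fp(\add\cP)=Filt(\cB(\Lambda))$ with $\Hom_\Lambda(\cP,X)\cong M$ by Proposition~\ref{functorial}(3), while for $X\in Filt(\cB(\Lambda))$ the equivalence gives $X\cong\Hom_\Lambda(\cP,X)\otimes_{\bm\varepsilon(\Lambda)}\cP$. For iii) and vii): an equivalence of abelian categories preserves simple objects, and the simples of $Filt(\cB(\Lambda))$ are precisely the modules in $\cB(\Lambda)$, so a simple $\bm\varepsilon(\Lambda)$-module $S$ has $S\otimes_{\bm\varepsilon(\Lambda)}\cP\cong B$ for some $B\in\cB(\Lambda)$ and then $S\cong\Hom_\Lambda(\cP,B)$. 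For iv) and viii): the equivalence preserves projective objects, and by Lemma~\ref{projective} the projective objects of $Filt(\cB(\Lambda))$ are exactly the $\cB(\Lambda)$-filtered projective $\Lambda$-modules ($=\add\cP$); alternatively one reads both off from $\bm\varepsilon(\Lambda)\otimes_{\bm\varepsilon(\Lambda)}\cP\cong\cP$ together with Proposition~\ref{functorial}(3). Finally, for v): if $P$ is projective-injective over $\Lambda$ and $P\in Filt(\cB(\Lambda))$, then $P$ is a $\cB(\Lambda)$-filtered projective $\Lambda$-module, so $\Hom_\Lambda(\cP,P)\in\add\Hom_\Lambda(\cP,\cP)=\add\bm\varepsilon(\Lambda)$ is projective; and since the inclusion $Filt(\cB(\Lambda))\hookrightarrow\moddd\Lambda$ is exact, injectivity of $P$ over $\Lambda$ makes $P$ an injective object of $Filt(\cB(\Lambda))$ (any map into $P$ from a subobject extends, being merely a $\Lambda$-homomorphism), so its image $\Hom_\Lambda(\cP,P)$ is injective over $\bm\varepsilon(\Lambda)$ because equivalences preserve injectives; hence $\Hom_\Lambda(\cP,P)$ is projective-injective.

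\emph{The main obstacle} is the equality $Filt(\cB(\Lambda))=fp(\add\cP)$, specifically the inclusion $Filt(\cB(\Lambda))\subseteq fp(\add\cP)$: one must know that the wide subcategory $Filt(\cB(\Lambda))$ has enough projective objects and that these are exactly the summands of $\cP$, which relies on Lemma~\ref{projective} together with a projective-cover/Nakayama argument internal to this length category (or, equivalently, on the corresponding statement of \cite{sen18}). Once $Filt(\cB(\Lambda))=fp(\add\cP)$ is secured, the rest is a routine transport of structure along the equivalence.
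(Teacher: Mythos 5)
Your proposal is correct. Note that the paper itself gives no proof of this proposition: it is presented as a recollection from \cite{sen18,sen19}, resting on the general endomorphism-algebra formalism of Proposition \ref{functorial} together with Lemma \ref{projective} and Remark \ref{remark-Filt}, which is exactly the route you follow; your write-up simply supplies the details the paper leaves to the references, in particular the two inclusions of $Filt(\cB(\Lambda))=fp(\add\cP)$ via wideness (closure under kernels and cokernels) and the enough-projectives/Nakayama argument inside the length category $Filt(\cB(\Lambda))$. Two small points in your favour: reading iv) and viii) off from $\bm\varepsilon(\Lambda)\otimes_{\bm\varepsilon(\Lambda)}\cP\cong\cP$ neatly avoids the fact that Lemma \ref{projective} is only stated for modules already known to be projective over $\Lambda$; and for v), your direct argument that a projective-injective $P\in Filt(\cB(\Lambda))$ is an injective object of the full exact subcategory (so that the equivalence carries it to an injective $\bm\varepsilon(\Lambda)$-module) is a clean replacement for the injective-envelope contradiction argument used in earlier versions of this material.
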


\begin{notation} Let $\bm\varepsilon(\Lambda)$ be the syzygy filtered algebra of a cyclic Nakayama algebra $\Lambda$.
Let $M$ be a  $\bm\varepsilon(\Lambda)$-module. We will often use notation $\Delta(M):=(M\otimes_{\bm{\varepsilon}(\Lambda)}\cP)\in Filt(\cB({\Lambda}))$, i.e., we will denote the functor $(-\otimes_{\bm{\varepsilon}(\Lambda)}\cP)$ by $\Delta$.
\end{notation}
\subsection{Relationship between defects of $\Lambda$ and its syzygy filtered algebra $\Theta$}

It was the reverse of $\bm\varepsilon$-construction that was used in \cite{sen21} for the description of Nakayama algebras which are higher Auslander algebras and in the Section \ref{section higher homological} it will be used also to describe families of algebras  satisfying higher homological properties.\\

Here we consider cyclic Nakayama algebras $\Lambda$ such that their syzygy filtered algebras $\bm\varepsilon (\Lambda)$ are isomorphic to a  given Nakayama algebra $\Theta$.

In general for any  Nakayama algebra $\Lambda$ particularly important sets of modules which will be used here, are: socle-set $\cS(\Lambda)$, top-set $\cS'(\Lambda)$, base-set $\cB(\Lambda)$, filtered projectives $\cP(\Lambda)$, socles of filtered projectives $\cS_f(\Lambda)$, the other socles of projectives $\cS_{nf}(\Lambda)$ (Definition \ref{socle set}). Also, the notion of defect (Def.\ref{defect}) and relations (Def.\ref{rel}) are used. In addition,  there is the basic relation $\rank(\Lambda)=\#\rel(\Lambda)+\defect\Lambda$ (Prop.\ref{rank_rel_def}).

\begin{proposition} \label{C_and_A}
Let $\Lambda$ and $\Theta$ be cyclic Nakayama algebras such that $\bm\varepsilon(\Lambda)\cong \Theta$.
\begin{enumerate}[label=\arabic*)]
\item $\#\rel(\Lambda)=\rank(\Theta)$
\item $|\cS_f(\Lambda)|=\#\rel(\Theta)$
\item $|\cS_{nf}(\Lambda)|=\defect(\Theta)$
\item $\defect(\Lambda)=$ number of non-filtered $\Lambda$-projectives
\item $\defect(\Lambda)=t+ \defect(\Theta)+r$, where $t=|\{\text{non-filt.proj.inj. }Q\ |\ \soc(Q)\in \cS_f(\Lambda)\}|$ and  $r=|\{\text{non-filt.proj non-inj}\}|$.
\end{enumerate}
\end{proposition}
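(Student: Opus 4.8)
### Proof Plan for Proposition \ref{C_and_A}

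My plan is to establish the five statements in sequence, since later parts build on earlier ones, and to translate everything through the categorical equivalence $\Hom_\Lambda(\cP,-)\colon\filt(\cB(\Lambda))\to\moddd\Theta$ of Proposition \ref{diagram}. For part 1), the simple $\Theta$-modules are in bijection with $\cB(\Lambda)$ by Proposition \ref{diagram}(iii), so $\rank(\Theta)=|\cB(\Lambda)|$, and then Proposition \ref{numerical} 1) gives $|\cB(\Lambda)|=\#\rel(\Lambda)$; combining these yields $\#\rel(\Lambda)=\rank(\Theta)$. For part 2), I would use that the projective $\Theta$-modules correspond under the equivalence to the filtered projectives $\cP(\Lambda)$ (Proposition \ref{diagram}(iv) together with Lemma \ref{projective}), and that a filtered projective $P$ is projective-injective as a $\Theta$-module exactly when $P\in\filt(\cB(\Lambda))$ is itself projective-injective over $\Lambda$ (Proposition \ref{diagram}(v)); matching this against $\#\rel(\Theta)=\#\{\text{proj.-inj. }\Theta\text{-modules}\}$ from Proposition \ref{numerical} 2), and using that $\cS_f(\Lambda)$ indexes exactly those socles of filtered projectives, should give $|\cS_f(\Lambda)|=\#\rel(\Theta)$.

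Part 3) follows from parts 1) and 2) together with the basic identity $\rank(\Theta)=\#\rel(\Theta)+\defect(\Theta)$ (Proposition \ref{rank_rel_def}): indeed $\cS(\Lambda)=\cS_f(\Lambda)\sqcup\cS_{nf}(\Lambda)$ is a disjoint partition by Definition \ref{socle set}, and $|\cS(\Lambda)|=\#\rel(\Lambda)=\rank(\Theta)$ by Proposition \ref{numerical} 1) and part 1), so $|\cS_{nf}(\Lambda)|=\rank(\Theta)-|\cS_f(\Lambda)|=\rank(\Theta)-\#\rel(\Theta)=\defect(\Theta)$. For part 4), I would argue that an indecomposable projective $\Lambda$-module is non-filtered precisely when it is not in $\cP(\Lambda)$; by Proposition \ref{numerical} 1) there are $\#\rel(\Lambda)$ filtered projectives, hence $\rank(\Lambda)-\#\rel(\Lambda)=\defect(\Lambda)$ non-filtered ones by Proposition \ref{rank_rel_def}.

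Part 5) is the substantive one and I expect it to be the main obstacle. The idea is to classify the non-filtered projective $\Lambda$-modules — which by part 4) number exactly $\defect(\Lambda)$ — into three disjoint types according to whether they are (a) projective-injective with socle in $\cS_f(\Lambda)$, (b) projective-injective with socle in $\cS_{nf}(\Lambda)$, or (c) non-injective. Since every indecomposable projective is either injective or not, and every projective-injective has socle in $\cS(\Lambda)=\cS_f(\Lambda)\sqcup\cS_{nf}(\Lambda)$ by Remark \ref{socle of projective-injective}(1), these three types exhaust the non-filtered projectives disjointly, giving $\defect(\Lambda)=t+(\text{type (b) count})+r$. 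It then remains to identify the type (b) count with $\defect(\Theta)$: a projective-injective $\Lambda$-module with socle in $\cS_{nf}(\Lambda)$ cannot be filtered (its socle is not the socle of any filtered projective, but a filtered projective-injective would have its socle in $\cS_f(\Lambda)$), and conversely I must check that each element of $\cS_{nf}(\Lambda)$ arises as the socle of exactly one projective-injective, which is non-filtered. Using that socles of projective-injectives are in bijection with $\#\rel(\Lambda)$-many elements via the Gustafson map, and combining with part 3), should pin down the type (b) count as $|\cS_{nf}(\Lambda)|=\defect(\Theta)$. The delicate point is verifying the "exactly one" correspondence between $\cS_{nf}(\Lambda)$ and non-filtered projective-injectives, for which I would invoke the Gustafson lemma (socles of projective-injectives determine them uniquely) together with Proposition \ref{numerical} 2).
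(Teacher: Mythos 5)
Your overall architecture tracks the paper's: part 1) via $|\cB(\Lambda)|=\#\rel(\Lambda)$ (the paper uses $|\cP(\Lambda)|$; both equal $\#\rel(\Lambda)$ by Proposition \ref{numerical}), part 3) by subtraction from $|\cS(\Lambda)|=\rank(\Theta)$, part 4) by counting non-filtered projectives as $\rank(\Lambda)-\#\rel(\Lambda)$, and part 5) by the three-fold partition of non-filtered projectives, with the middle count pinned to $|\cS_{nf}(\Lambda)|$ via the uniqueness of the projective-injective module with a given socle. All of that is sound and is essentially what the paper does.

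The gap is in part 2). You assert that a filtered projective $P$ yields a projective-injective $\Theta$-module ``exactly when'' $P$ is itself projective-injective over $\Lambda$, citing item (v) of Proposition \ref{diagram}; but that item is only a one-way implication, and its converse fails in general: $\Hom_\Lambda(\cP,P)$ can be injective over $\Theta$ while $P$ is not injective over $\Lambda$. Indeed, Proposition \ref{prop_new_proj_is_inj_C} shows that the biconditional you invoke is (together with condition (i) there) \emph{equivalent} to $\Lambda$ being defect invariant, which is not assumed in this proposition. Quantitatively, $\#\{\text{filtered proj.-inj.\ }\Lambda\text{-modules}\}=\#\rel(\Theta)-t$ with $t$ as in part 5), so your chain undercounts $|\cS_f(\Lambda)|$ whenever $t>0$. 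There is also a second unjustified step: even granting the biconditional, $|\cS_f(\Lambda)|$ counts distinct \emph{socles} of filtered projectives, not filtered projective-injectives, and the unique projective-injective with a given socle in $\cS_f(\Lambda)$ need not be filtered, so the two counts do not match up for free. The paper's route avoids both problems: each $S\in\cS_f(\Lambda)$ is the socle of exactly one base-set element $B$ (Proposition \ref{base-set}), that $B$ is a submodule of a filtered projective, and $\Hom_\Lambda(\cP,B)$ is then a simple submodule of a projective $\Theta$-module, i.e.\ an element of the socle-set $\cS(\Theta)$; this gives a bijection $\cS_f(\Lambda)\leftrightarrow\cS(\Theta)$, and $|\cS(\Theta)|=\#\rel(\Theta)$ by Proposition \ref{numerical}. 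Since your parts 3) and 5) feed off part 2), you should repair it along these lines.
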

\begin{proof}
1) This follows since $\#\rel(\Lambda)=|\cP(\Lambda)|$ by Prop.\ref{numerical_rel} and $\Theta=\End_{\Lambda}(\oplus_{P\in\cP(\Lambda)}P)$.

2) By Def.\ref{socle set}, $\cS_f(\Lambda)= \{S\in \cS\ |\ S\cong \soc(P)\text{ for some } P\in\cP(\Lambda)\}$, i.e. these are socles of filtered projectives.
Let $\cB_f(\Lambda): =\{B\in\cB(\Lambda)\ |\ \soc(B)\in \cS_f(\Lambda)\}$.
Any element of the socle set $\mathcal S(\Lambda)$ is the socle of exactly one element of $\cB(\Lambda)$ by Prop.\ref{socle set}.
Therefore $|\cS_f(\Lambda)|=|\cB_f(\Lambda)|$
and the functor Hom$_{\Lambda}(\cP, -)$ gives a bijection between $\cB_f(\Lambda)$ and the socle set $\cS(\Theta)$. Again, using Prop.\ref{numerical_rel} it follows that
$|\cS(\Theta)|=\#\rel(\Theta)$.

3) Since $\cS_{nf}(\Lambda)=\cS(\Lambda)\backslash \cS_f(\Lambda)$ it follows that $|\cS_{nf}(\Lambda)|=|\cS(\Lambda)|-|\cS_f(\Lambda)|$ =\\
$\#\rel(\Lambda)-\#\rel(\Theta)=\rank(\Theta)-\#\rel(\Theta)=\defect(\Theta)$.

4) By Def.\ref{defect} it follows that $\defect(\Lambda)=\#\{$injective, non-projective $\Lambda$-modules$\}$ and also
$\defect(\Lambda)=\#\{$projective, non-injective $\Lambda$-modules$\}$.
So
$\defect(\Lambda)=\rank(\Lambda)-\#\rel(\Lambda)=$
$$|\{\text{projective } \Lambda\text{-modules}\}|-|\{\text{filtered projective } \Lambda\text{-modules}\}|=
 |\{\text{non-filtered projective } \Lambda\text{-modules}\}|.$$

5) It follows from 4) that $\defect(\Lambda)=|\{\text{non-filtered projective }\Lambda\text{-modules}\}| =$
$$|\{\text{non-filt proj-inj }\Lambda\text{-modules}\}| + |\{\text{non-filt proj non-inj }\Lambda\text{-modules}\}| =$$
$$|\{\text{socles of non-filt proj-inj}\}| + |\{\text{non-filt proj non-inj }\Lambda\text{-modules}\}|.$$
Now, we will use the fact that
$|\{\text{socles of non-filt proj-inj}\}| =$
$$|\{S\in\cS_f(\Lambda), \text{ socle of non-filt proj-inj}\}|+|\{S\in \cS_{nf}(\Lambda), \text{ socle of non-filt proj-inj}\}|.$$
Notice that $\{S\in \cS_{nf}(\Lambda), \text{ socle of non-filt proj-inj}\}= \cS_{nf}(\Lambda)$ since $S\in\cS_{nf}(\Lambda)$ can only be socle for a non-filtered projective.
So this summand is equal to $\defect(\Theta)$ by 3).
Let $$t:=|\{\text{non-filt.proj.inj. }Q\ |\ \soc(Q)\in \cS_f(\Lambda)\}| \text{ and let } r:=|\{\text{non-filt.proj non-inj}\}|.$$
$$\defect(\Lambda)=t+\defect(\Theta)+r.$$
\end{proof}


 \begin{notation}\label{sequence} We will use term \emph{sequence} to mean cyclic ordering within AR-quiver - more precisely:
\emph{sequence of simples} means ordering of simples in the $\tau$-orbit according to the power of $\tau$,
\emph{sequence of projectives}  means ordering of projectives induced by the ordering of their tops,
\emph{sequence of $\Delta(S)$} means ordering induced by ordering of the simples $S$.
 \end{notation}

 \begin{remark}\label{non-filtered}Let $\Lambda$ be a cyclic Nakayama algebra. Let $B\in \cB(\Lambda)$. Then any projective $P(Z)$ between $P(\tau(\soc(B)))$ and $P(\topp(B))$,
 not including $P(\topp(B))$, is non-filtered projective since it's top $Z\notin \cS'$, so $P(Z)$ cannot be filtered, i.e. 
 for any composition factor $Z$ of $\rad B$, $P(Z)$ cannot be filtered.
\end{remark}

\begin{proposition} \label{length_defect} Let $\Lambda$ and $\Theta$ be cyclic Nakayama algebras such that $\bm\varepsilon(\Lambda)\cong \Theta$. Let $T$ be a simple $\Theta$-module, $P(T)$ and $P(\tau T)$ be projective covers in mod $\Theta$. 
Let $\Delta T$ be the corresponding $\Lambda$-module.
Then $\ell_\Lambda(\Delta T) \geq \defect_{\Theta}(P(\tau T))+1$ ($\ell$ is the length of the module).
\end{proposition}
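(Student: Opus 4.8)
The plan is to compare the $\Lambda$-module $\Delta T$ with the structure of the projective-injective $\Theta$-module $P(\tau T)$ via the equivalence of Proposition \ref{diagram}, reducing everything to the non-trivial case $d := \defect_\Theta(P(\tau T)) > 0$. When $d>0$, Remark \ref{kupisch properties} (or Proposition \ref{prop:quadrilateral}) forces $P(\tau T)$ to be projective-injective, so it has $d$ proper injective quotients $I(U_1),\dots,I(U_d)$ with the $U_i$ mutually non-isomorphic simple $\Theta$-modules, together with the projective-injective quotient being $P(\tau T)$ itself. Under $\Delta = (-\otimes_{\bm\varepsilon(\Lambda)}\cP)$ these sit inside $\Delta(P(\tau T))$, which is a $\cB(\Lambda)$-filtered projective $\Lambda$-module by Proposition \ref{diagram}(viii) and Lemma \ref{projective}; in fact $\Delta(P(\tau T)) = P(\topp\Delta(P(\tau T)))$ with $\topp\Delta(P(\tau T)) \cong \tau\,\soc\Delta(T)$ by the top/socle compatibility of $\Delta$ (the analogue of Proposition \ref{emre prop ccc1}, applied to the pair of base-set modules attached to $T$ and $\tau T$).

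The core observation is that $\Delta$ sends the chain of proper injective quotients of $P(\tau T)$ to a strictly increasing chain of $\cB(\Lambda)$-submodules of $\Delta(P(\tau T))$: since $\Delta$ is right exact and fully faithful on $\mod\bm\varepsilon(\Lambda)$ (Proposition \ref{functorial}(2)), each short exact sequence $0 \to \ker \to P(\tau T) \to I(U_i) \to 0$ of $\Theta$-modules maps to a short exact sequence of $\Lambda$-modules, and $\Delta$ takes distinct quotients to distinct quotients. Thus $\Delta(P(\tau T))$ has at least $d$ proper $\cB(\Lambda)$-quotients, i.e. $\Delta(P(\tau T))$ has $\cB(\Lambda)$-filtration length at least $d+1$, so its $\cB(\Lambda)$-radical (equivalently, the first $\cB(\Lambda)$-syzygy of its $\cB(\Lambda)$-top) has $\cB(\Lambda)$-filtration length at least $d$. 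Now the point is to identify which $B\in\cB(\Lambda)$ are involved: the top layer of $\Delta(P(\tau T))$ in its $\cB(\Lambda)$-filtration is the base-set module $C$ with $\topp C \cong \tau\,\soc\Delta(T)$, and peeling it off one has $\Delta(P(\tau T))/C$ is (isomorphic to a quotient contained in) a module whose composition factors refine to $\Delta(T)$; more precisely, by the analogue of Lemma/Corollary relating $\Delta$ to injective envelopes, $\Delta(P(\tau T))/C$ embeds in $\Delta(T)$, hence $\Delta(T)$ itself contains a $\cB(\Lambda)$-submodule of filtration length $\geq d-1$ plus its own top layer, giving $\cB(\Lambda)$-length of $\Delta(T)$ at least $d$, i.e. $\ell_\Lambda(\Delta T)$ counted in base-set modules is $\geq d$; combined with $\Delta(T)$ itself being a nonzero element whose $\cB(\Lambda)$-top is one further base-set module, we get $\cB(\Lambda)$-filtration length $\geq d+1$.

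To convert the $\cB(\Lambda)$-filtration-length bound into the stated ordinary-length bound $\ell_\Lambda(\Delta T) \geq d+1$, I use that every $B\in\cB(\Lambda)$ has length $\geq 1$ (each is a nonzero $\Lambda$-module, Remark \ref{socle of projective-injective}(3)), so the ordinary composition length is at least the $\cB(\Lambda)$-filtration length. Hence $\ell_\Lambda(\Delta T) \geq (\text{number of }\cB(\Lambda)\text{-layers of }\Delta T) \geq d+1 = \defect_\Theta(P(\tau T)) + 1$. For the degenerate case $d=0$ the inequality reads $\ell_\Lambda(\Delta T)\geq 1$, which holds because $\Delta T \cong (S\otimes_{\bm\varepsilon(\Lambda)}\cP)$ for a simple $\Theta$-module $S = \Hom_\Lambda(\cP, \Delta T)$ is a nonzero element of $\cB(\Lambda)$ by Proposition \ref{diagram}(vii).

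The main obstacle I anticipate is the bookkeeping in the middle step: showing cleanly that the $d$ proper injective quotients $I(U_i)$ of $P(\tau T)$ really produce $d$ \emph{distinct} $\cB(\Lambda)$-layers lying strictly below the top layer $C$ of $\Delta(P(\tau T))$, and that these layers are forced to appear inside $\Delta(T)$ rather than merely inside $\Delta(P(\tau T))$. This requires the precise relationship $\rad_{\cB}\Delta(P(\tau T))$ maps onto (a submodule of) $\Delta(T)$ — essentially the statement that $C = \Delta(T')$ for the base-set-shifted partner $T'$ of $T$, with $\Delta(P(\tau T))/\Delta(T') \hookrightarrow \Delta(T)$ — which is the $\Delta$-image of the defining relation $\rad P(\tau T) \supseteq$ (the submodule cutting out $P(T)$) together with uniseriality (Lemma \ref{lem:uniseriality}) to line up the quotients. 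Once that identification is in place, the counting and the passage to ordinary length are routine. (An alternative, possibly cleaner, route is to bypass $\Delta$ entirely and work with Kupisch series: by Lemma \ref{sunday lemma lengths of delta and cardinality}(2) the lengths of base-set elements are the consecutive Kupisch-series jumps, and by the $\bm\varepsilon$-construction the Kupisch series of $\Theta$ is read off from the base-set lengths of $\Lambda$; then $\defect_\Theta(P(\tau T))$ is a difference of consecutive $\Theta$-Kupisch entries which, unwound through the construction, is exactly $\ell_\Lambda(\Delta T) - 1$ up to the inequality coming from non-filtered projectives being inserted. I would present whichever of the two is shorter, and I expect the Kupisch-series version to be the one with fewer moving parts.)
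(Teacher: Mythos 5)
There is a genuine gap, and it is a category error at the heart of your middle step. The quantity you end up bounding from below is the $\cB(\Lambda)$-filtration length of $\Delta(T)$, but $\Delta(T)$ is itself an element of the base-set $\cB(\Lambda)$, i.e.\ a \emph{simple} object of $Filt(\cB(\Lambda))$: its $\cB(\Lambda)$-filtration length is exactly $1$, and it admits no nonzero proper $\cB(\Lambda)$-filtered submodules or quotients. Your claim that ``$\Delta(P(\tau T))/C$ embeds in $\Delta(T)$'' (with $C$ the top $\cB$-layer, which is $\Delta(\tau T)$) would put a $\cB$-filtered module of $\cB$-length $\ell_\Theta(P(\tau T))-1\geq d$ inside a $\cB$-length-one object, which is impossible for $d\geq 2$. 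Consequently the final conversion step --- ``ordinary length $\geq$ $\cB(\Lambda)$-filtration length $\geq d+1$'' --- starts from a false premise. What the proposition asserts is a lower bound on the number of ordinary simple $\Lambda$-composition factors of the single base-set module $\Delta(T)$, and the $d$ proper injective quotients $I(U_i)$ of $P(\tau T)$, transported by $\Delta$, only give you $d$ distinct $\cB$-filtered quotients of the filtered projective $\Delta(P(\tau T))$; they say nothing, by themselves, about the internal $\Lambda$-composition series of $\Delta(T)$.

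The mechanism you are missing is the one the paper uses. By Proposition \ref{T_P(T)_S}, with $S=\soc P(T)$ and $U=\soc P(\tau T)$ in $\moddd\Theta$, one has $U\cong\tau^{d+1}S$, which produces the chain of $d+2$ pairwise non-isomorphic base-set modules $\Delta(U),\Delta(\tau^{-1}U),\dots,\Delta(\tau^{-(d+1)}U)\cong\Delta(S)$. Each intermediate $\Delta(\tau^{-i}U)$, $1\leq i\leq d$, is a submodule of a projective-injective $\Lambda$-module $P(Z_i)$ (Proposition \ref{base-set}(5)), and the resulting simple tops $Z_1,\dots,Z_d$ are pairwise non-isomorphic and are composition factors of $\rad_\Lambda(\Delta(T))$; this is what yields $\ell_\Lambda(\rad\Delta(T))\geq d$ and hence the claim. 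In other words, the $d$ extra ordinary composition factors of $\Delta(T)$ come from the tops of the non-filtered projective-injectives wedged between $P(\Delta(\tau T))$ and $P(\Delta(T))$ in the cyclic ordering, not from any $\cB(\Lambda)$-layers of $\Delta(T)$. Your sketched Kupisch-series alternative could in principle be made to work (it is essentially Lemma \ref{sunday lemma lengths of delta and cardinality}(2) run backwards), but as written it defers exactly the same counting issue to the phrase ``up to the inequality coming from non-filtered projectives being inserted,'' so it does not close the gap either.
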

\begin{proof}
\underline{Case 1.} Suppose $P(T)$ is not a minimal projective in mod-$\Theta$. Then $P(\tau T)=\rad P(T)$ and therefore $\defect(P(\tau T))=0$ since only projective-injectives might have nonzero defect. Since $\ell(\Delta(T))\geq 1$, it follows that $\ell(\Delta(T))\geq \defect P(\tau T)+1$.\\
 \underline{Case 2.} 
%
 Suppose $P(T)$ is a minimal projective in mod-$\Theta$. Then $P(\tau T)$ is projective injective by Prop.\ref{lem_defect_general_1}.
We will use the equivalence 
$\Delta =(- \otimes \cP):\moddd\Theta \to Filt(\cB(\Lambda))$ and Prop.\ref{T_P(T)_S}.
Let $S=\soc (P(T))$ and $U=\soc(P(\tau T))$. Then there are inclusions $\Delta (U)\to \Delta P(\tau T)= P(\Delta(\tau T))$ and
$\Delta (S)\to \Delta P(T)= P(\Delta(T))$. Here we used the fact that $\Delta$ preserves projective covers (Prop.\ref{artinalgebra}).
Let $d:=\defect(P(\tau T))$. Then $U\cong \tau^{d+1}S$ by Prop.\ref{T_P(T)_S}, and 
$\Delta(U)\cong \Delta(\tau^{d+1} S)$.
Consider the sequence of non-isomorphic modules from $\cB(\Lambda)$:
$$\{\Delta(U), \  \Delta(\tau^{-1} U), \  \Delta(\tau^{-2} U), \dots  ,\Delta(\tau^{-(d-1)} U), \  \Delta(\tau^{-(d)} U),\   \Delta(\tau^{-(d+1)} U)\cong \Delta(S)\}.$$
Each of $\Delta(\tau^{-i})$ for $1\leq i\leq (d+1)$ must be a submodule of some projective-injective $P(Z_i)$ for some simple module $Z_i$. Also $\Delta(U)$ is a submodule of $P(\Delta(\tau T))$ and $\Delta(S)$ is a submodule of $P(\Delta(T))$. This defines a sequence of non-isomorphic projectives:
$$\{P(\Delta(\tau T)), \ P(Z_1), \ P(Z_2), \ \dots,P(Z_{d-1}), \ P(Z_d), \ P(Z_{d+1})=P(\topp(\Delta(T)))\}$$
which further defines a sequence of non-isomorphic simple modules:
$$\{\topp(\Delta(\tau T), \  Z_1, \ Z_2, \ \dots , Z_{d-1}, \ Z_d, \ Z_{d+1}=\topp(\Delta(T)\}.$$
The simples 
$\{ Z_1, Z_2, \dots , Z_{d-1}, Z_d\}$ are among the composition factors of $\rad(\Delta(T))$. Therefore $d\leq \ell(\rad(\Delta(T)))$ and therefore $\defect(P(\tau T))+1=d+1\leq \ell(\Delta T)$.
\end{proof}

The filtered projective $\Lambda$-modules are well understood: they are  the images of projective $\Theta$-modules under the functor $\Delta=(-\otimes_{\Theta} \cP)$: essentially all simple $\Theta$-factors $S$ get replaced by $\Lambda$-modules $\Delta(S)$.
While the non-filtered projective $\Lambda$-modules which are directly related to the $\defect(P(\tau T))$
were already introduced in the previous proof as $P(Z_i)$,  
the following proposition will describe more precisely additional non-filtered projective $\Lambda$-modules which exist when $d=\defect(P(\tau T)) < \ell (\Delta(T))$.

\begin{proposition}   \label{length_defect_2} Let $\Lambda$ and $\Theta$ be Nakayama algebras such that $\bm\varepsilon(\Lambda)\cong \Theta$. Let $T$ be a simple $\Theta$-module, $P(T)$ and $P(\tau T)$ be projective covers,  $S=\soc (P(T))$ and $U=\soc(P(\tau T))$. Suppose $d=\defect(P(\tau T)) < \ell (\Delta(T))=m$. 
Then there is a sequence of indices 
$\{a_1, a_2, \dots , a_d\}\subset \{1,\ 2,\dots , m-1\}$ such that 
\begin{enumerate}[label=\arabic*)]

\item $\Delta(\tau^{-i}U) \hookrightarrow P(Z_{i})$ for non-filtered projective-injective $\Lambda$-modules $P(Z_{i})$ 
where $Z_{i}= (\tau^{-a_i}(\topp(\Delta(\tau T)))) $ are simple $\Lambda$-modules  for $1\leq i\leq d$.

\item For each $a_i\leq j \leq a_{i+1}$ there is a non-filtered projective non-injective $\Lambda$-module $P(Y_j)$ 
with the simple top $Y_j$ which fits in the sequence 
$\{\dots ,Z_{i}, Y_j, Z_{{i+1}}, \dots\}$.
Each such projective module $P(Y_j)$ has 
$\Delta(\tau^{-(i+1)}U)$ as a submodule.

\item For each $0\leq j \leq a_{1}$ there is a non-filtered projective $\Lambda$-module $P(Y_j)$ with the simple top $Y_j$ which fits in the sequence 
$\{\topp(\Delta(\tau T), Y_j, Z_{{1}}, \dots\}$.
Each such projective module $P(Y_j)$ has 
$\Delta(\tau^{-1}U)$ as a submodule.

\item For each $a_d\leq j \leq m-1$ there is a non-filtered projective $\Lambda$-module $P(Y_j)$ with the simple top $Y_j$  which fits in the sequence 
$\{ Z_{{d}}, Y_j, \dots,\topp(\Delta(\tau T)\}$.
Each such projective module $P(Y_j)$ has 
$\Delta(\tau^{-1}S)$ as a submodule.

\end{enumerate}
\end{proposition}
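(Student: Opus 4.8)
The plan is to run the same argument as in the proof of Proposition \ref{length_defect}, but now track \emph{all} the non-filtered projective $\Lambda$-modules lying between $P(\Delta(\tau T))$ and $P(\topp(\Delta(T)))$ in the sequence of projectives (Notation \ref{sequence}), not merely the $d$ projective-injective ones $P(Z_i)$. Since $\Delta$ is an equivalence $\moddd\Theta \to \filt(\cB(\Lambda))$ preserving projective covers (Proposition \ref{artinalgebra}, Proposition \ref{diagram}) and $U \cong \tau^{d+1}S$ by Proposition \ref{T_P(T)_S}, we have the chain of $d+2$ pairwise non-isomorphic base-set modules $\Delta(U), \Delta(\tau^{-1}U), \dots, \Delta(\tau^{-(d+1)}U) \cong \Delta(S)$, and each $\Delta(\tau^{-i}U)$ with $1 \le i \le d$ embeds into a unique indecomposable projective-injective $P(Z_i)$ (Proposition \ref{base-set} 5)), with $Z_i = \topp P(Z_i)$. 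Uniseriality (Lemma \ref{lem:uniseriality}) forces $\soc P(Z_i) = \soc \Delta(\tau^{-i}U) = \tau^{-i}U$, and since these projective-injectives all lie between $P(\Delta(\tau T)) \supseteq \Delta(U)$ and $P(\Delta(T)) \supseteq \Delta(S)$, their tops are $\tau^{-a_i}(\topp \Delta(\tau T))$ for some strictly increasing indices $a_1 < \dots < a_d$ in $\{1, \dots, m-1\}$; that $a_i \le m-1$ comes from $P(Z_i) \ne P(\topp \Delta(T))$ (the modules $\Delta(\tau^{-i}U)$, $\Delta(S)$ being non-isomorphic), establishing part 1).

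For parts 2)--4) I would argue as follows. Between two consecutive projective-injectives $P(Z_i)$ and $P(Z_{i+1})$ in the sequence of projectives, every intermediate $P(Y_j)$ (for $a_i \le j \le a_{i+1}$, $Y_j = \tau^{-j}(\topp \Delta(\tau T))$) has top not lying in $\cS'(\Lambda)$ — otherwise $P(Y_j)$ would be filtered, hence (being between filtered-projective images) correspond to a projective $\Theta$-module strictly between $P(\tau^{-i}U)$-related data, contradicting that $\Delta(\tau^{-i}U), \Delta(\tau^{-(i+1)}U)$ are \emph{adjacent} in the base-set sequence, i.e. $Y_j$ is a composition factor of $\rad \Delta(\tau^{-(i+1)}U)$; this is exactly Remark \ref{non-filtered} applied to $B = \Delta(\tau^{-(i+1)}U) \in \cB(\Lambda)$. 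So each such $P(Y_j)$ is non-filtered projective; it is non-injective because the projective-injectives in this stretch are precisely $P(Z_i)$ and $P(Z_{i+1})$ (their socles exhaust $\cS(\Lambda)$ locally, by Remark \ref{socle of projective-injective}(1) and the Gustafson/socle correspondence). For the submodule claim: $P(Y_j)$ sits below $P(Z_i)$ in the submodule order (Lemma \ref{lem:uniseriality}, since they share a top-to-socle interval), and $P(Y_j) \supseteq \soc P(Z_i)$-truncations; chasing lengths via Remark \ref{kupisch properties} and the fact that $\Delta(\tau^{-(i+1)}U) \hookrightarrow P(Z_{i+1})$ shows $\Delta(\tau^{-(i+1)}U)$, having socle $\tau^{-(i+1)}U = \soc P(Z_{i+1})$, is contained in $P(Y_j)$ by Uniseriality. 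Parts 3) and 4) are the boundary cases: for $0 \le j \le a_1$, the relevant base-set module is $\Delta(\tau^{-1}U)$ and the left endpoint is $P(\Delta(\tau T))$ itself (filtered, but its radical's composition factors give non-filtered projectives by Remark \ref{non-filtered}); for $a_d \le j \le m-1$ the relevant module is $\Delta(\tau^{-(d+1)}U) \cong \Delta(S)$, and one uses $\tau^{-1}S$ together with the embedding $\Delta(S) \hookrightarrow P(\Delta(T))$, noting $\soc \Delta(S) = \tau^{-(d+1)}U$ — I should double-check the exact twist here, as part 4) is stated with $\Delta(\tau^{-1}S)$, so the argument must identify $\tau^{-1}S$ as the socle-label of the relevant base-set element adjacent to $\Delta(S)$ on the far side.

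The main obstacle I anticipate is part 4) and, more generally, pinning down \emph{which} base-set module each non-filtered non-injective $P(Y_j)$ has as a submodule, together with the precise $\tau$-shifts: the statement deliberately distinguishes $\Delta(\tau^{-(i+1)}U)$ in the interior, $\Delta(\tau^{-1}U)$ near the left end, and $\Delta(\tau^{-1}S)$ near the right end, and getting these exactly right requires carefully orienting the submodule relation (which of two projectives contains the other) at each step via Lemma \ref{lem:uniseriality} and reading off socles through the Gustafson function. A secondary subtlety is verifying non-injectivity of the $P(Y_j)$ uniformly — equivalently, that no projective-injective other than the $P(Z_i)$ (and the endpoints) intervenes — which I would handle by counting: there are exactly $d = \defect P(\tau T)$ base-set modules strictly between $\Delta(U)$ and $\Delta(S)$ whose socles lie in $\cS_f(\Lambda)$ versus $\cS(\Lambda)$, matched against Proposition \ref{C_and_A} and Remark \ref{socle of projective-injective}(1). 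Once the socle-bookkeeping is set up, everything else is a routine application of uniseriality and the Kupisch-series translation in Remark \ref{kupisch properties}.
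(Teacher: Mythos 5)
Your part 1) is essentially the paper's argument: the $d$ base-set modules $\Delta(\tau^{-i}U)$ embed into unique indecomposable projective-injectives $P(Z_i)$ (Proposition \ref{base-set}), the tops $Z_i$ are composition factors of $\rad\Delta(T)$ and hence not in $\cS'(\Lambda)$, so the $P(Z_i)$ are non-filtered and the indices $a_i$ land in $\{1,\dots,m-1\}$. Your use of Remark \ref{non-filtered} to get that every intermediate $P(Y_j)$ is non-filtered is also exactly what the paper does.

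The genuine gap sits precisely where you flagged it: deciding \emph{which} of the two adjacent base-set modules $\Delta(\tau^{-i}U)$, $\Delta(\tau^{-(i+1)}U)$ is a submodule of a given $P(Y_j)$. Uniseriality and the Kupisch series cannot settle this by length-chasing alone: within the stretch of projectives from $P(Z_i)$ to $P(Z_{i+1})$ the socle of $P(Y_j)$ is locally constant under radical inclusions and changes only at the unique minimal projective in that stretch (Remark \ref{kupisch properties}), so the whole question is \emph{where} that socle jump occurs, and your sketch never locates it. The paper supplies exactly this missing ingredient with Propositions \ref{emre prop ccc1} and \ref{emre prop ccc2}: Proposition \ref{emre prop ccc2} gives the dichotomy (there is a threshold index splitting the stretch into projectives containing $\Delta(\tau^{-i}U)$ and those containing $\Delta(\tau^{-(i+1)}U)$), and Proposition \ref{emre prop ccc1}, applied to the adjacent pair $C=\Delta(\tau^{-i}U)$, $B=\Delta(\tau^{-(i+1)}U)$ with $\tau\soc B\cong\topp C$, pins the threshold to the projective immediately after the projective-injective $P(Z_i)$: that projective is minimal and already contains $\Delta(\tau^{-(i+1)}U)$, whence so does every subsequent $P(Y_j)$ down to $P(Z_{i+1})$. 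Parts 3) and 4) are the same argument at the two ends of the stretch from $P(\Delta(\tau T))$ to $P(\Delta(T))$. Your secondary worry about non-injectivity is the easy part once the submodule claim is in place: if some $P(Y_j)$ were projective-injective it would be \emph{the} projective-injective containing $\Delta(\tau^{-(i+1)}U)$ by Proposition \ref{base-set} 5), i.e.\ isomorphic to $P(Z_{i+1})$, contradicting $Y_j\ncong Z_{i+1}$. So the proposal needs to be completed by proving (or importing) the content of Propositions \ref{emre prop ccc1} and \ref{emre prop ccc2}; without that, the central assertion of parts 2)--4) is unproved.
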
 

\begin{proof} 1) Since $d=\defect(P(\tau T))$, we recall from the previous proof that there are $d$ non-isomorphic elements of the base-set $\cB(\Lambda)$ given as $\{\Delta(\tau^{-i}U\}_{i=1}^d$, the projective-injective modules $\{P(Z_i)\}_{i=1}^d$ which contain $\{\Delta(\tau^{-i}U\}_{i=1}^d$ as submodules for each $1\leq i\leq d$. Since the simple modules $\{Z_i\}_{i=1}^d$ are composition factors of $\rad(\Delta(T))$, they are not in $\cS'(C)$ and therefore the projectives $\{P(Z_i)\}_{i=1}^d$ are not filtered. 

Furthermore, the simple modules $\{Z_i\}_{i=1}^d$ are among the simple composition factors of $\Delta(T)$. 
We will denote $\widetilde{\tau T}\!\!: =\topp(\Delta(\tau T)))$  and  $\widetilde{T}\!\!: =\topp(\Delta(T))$. Notice that  
$\soc\Delta(T)=\tau^{-1}(\widetilde{\tau T})$ and  $\topp(\Delta(T)) =\tau^{-m}(\widetilde{\tau T})$
So composition factors of $\Delta(T)$ are
$$\{\tau^{-1}(\widetilde{\tau T}), \ \tau^{-2}(\widetilde{\tau T}), \ \dots \ , \tau^{-(m-1)}(\widetilde{\tau T})\}.$$
Therefore there is a sequence of indices 
$\{a_1, a_2, \dots , a_d\}\subset \{1,\ 2,\dots , m-1\}$ such that $Z_i = \tau^{-a_i}(\widetilde{\tau T})= \tau^{-a_i}(\topp(\Delta(\tau T)) \text{ for }  i\in \{1,\dots ,d\}$.

2) Suppose $a_i\leq j \leq a_{i+1}$. Let $Y_j = \tau^{-j}(\widetilde{\tau T})$ and let $P(Y_j)$ be the projective cover. Since each of $Y_j = \tau^{-j}(\widetilde{\tau T})$ is a composition factor of $\rad(\Delta T)$, the $Y_j$'s  are not in $\cS'(C)$ and therefore the projectives $P(Y_j)$ are not filtered.

 Notice that $\tau\soc\Delta(\tau^{-(i+1)}U)\cong \topp\Delta(\tau^{-i}U)$, and there exists projectives $P(Z_i)$ and $P(Z_{i+1})$ having $\Delta(\tau^{-i}U)$ and $\Delta(\tau^{-(i+1)}U)$ as submodules respectively. By Proposition \ref{emre prop ccc2}, $P(Y_j)$ has either $\Delta(\tau^{-i}U)$  or $\Delta(\tau^{-(i+1)}U)$ as submodule. Hence, we can choose $a_i$ so that $P(Z_i)$ is minimal projective, which makes $\Delta(\tau^{-(i+1)}U)$ submodule of $P(Y_j))$ by Proposition \ref{emre prop ccc1}.

3) Suppose $0\leq j \leq a_{1}$. Let $Y_j = \tau^{-j}(\widetilde{\tau T})$ and let $P(Y_j)$ be the projective cover. Since each of $Y_j = \tau^{-j}(\widetilde{\tau T})$ is a composition factor of $\rad(\Delta T)$, the $Y_j$'s  are not in $\cS'(C)$ and therefore the projectives $P(Y_j)$ are not filtered.

 Notice that $\tau\soc\Delta(\tau^{-1}U)\cong \topp\Delta(U)$, and there exists projectives $P(Z_1)$ and $P(\Delta(\tau T))$ having $\Delta(\tau^{-1}U)$ and $\Delta(U)$ as submodules respectively. By Proposition \ref{emre prop ccc2}, $P(Y_j)$ has either $\Delta(\tau^{-1}U)$  or $\Delta(U)$ as submodule. Hence, we can choose $a_1$ so that $P(Z_1)$ is minimal projective, which makes $\Delta(\tau^{-1}U)$ submodule of $P(Y_j))$ by Proposition \ref{emre prop ccc1}.

 4) Similar to 3).
\end{proof}

\begin{remark} Suppose $\bm\varepsilon(\Lambda)\cong \Theta$. The image under $\Delta$ of a minimal projective $\Theta$-module, need not be a minimal projective
$\Lambda$-module. Also, the image under $\Delta$ of a projective-injective $\Theta$-module, need not be a projective-injective $\Lambda$-module.
$\Lambda$-module.
\end{remark}

\subsection{Defect invariant Nakayama algebras}
\begin{definition}\label{defn_defect_inv}
Let $\Lambda$ be a cyclic Nakayama algebra and $\bm\varepsilon(\Lambda)$ its syzygy filtered algebra. We call $\Lambda$ defect invariant if $\defect \Lambda=\defect \bm\varepsilon(\Lambda)$.
\end{definition}

We want to emphasize that:

\begin{proposition}\label{prop_new_proj_is_inj_C}
Let $\Lambda$ be a cyclic Nakayama algebra, and $\Theta:=\bm\varepsilon(\Lambda)$. Then $\Lambda$ is defect invariant if and only if both of the following hold:\\
(i) For each simple module $S\notin \cS'(\Lambda)$, $P(S)$ is projective-injective;\\
(ii) $\Hom_{\Lambda}(\cP,P)$ is a projective-injective $\Theta$-module if and only if $P$ is a filtered projective-injective $\Lambda$-module.
\end{proposition}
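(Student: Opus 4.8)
The plan is to read the result off the numerical identity of Proposition~\ref{C_and_A}\,5), namely $\defect\Lambda=t+\defect\Theta+r$ with $t=|\{\text{non-filtered projective-injective }Q\text{ with }\soc Q\in\cS_f(\Lambda)\}|$ and $r=|\{\text{non-filtered projective non-injective }\Lambda\text{-modules}\}|$. Since $t,r\ge 0$, the algebra $\Lambda$ is defect invariant precisely when $t=r=0$, so it suffices to prove $r=0\iff$ (i) and $t=0\iff$ (ii). The first equivalence is immediate: by Lemma~\ref{projective} the filtered projective $\Lambda$-modules are exactly the $P(S)$ with $S\in\cS'(\Lambda)$, hence the non-filtered indecomposable projectives are exactly the $P(S)$ with $S\notin\cS'(\Lambda)$; as $r$ counts those among them that are not injective, $r=0$ says every such $P(S)$ is projective-injective, which is (i).

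For $t=0\iff$ (ii), one implication of (ii) holds always: if $P$ is a filtered projective-injective $\Lambda$-module then $\Hom_\Lambda(\cP,P)$ is projective-injective over $\Theta$ by Proposition~\ref{diagram}\,\ref{EM filtered proj inj implies it is filtered in epsilon}, so the content of (ii) is the reverse implication. The key lemma I would prove is: for an indecomposable projective $\Lambda$-module $P$ let $tP\subseteq P$ be the largest submodule whose top lies in $\cS'(\Lambda)$ (the trace of $\cP$ in $P$). Then (a)~$\Hom_\Lambda(\cP,P)\cong\Hom_\Lambda(\cP,tP)$, because by uniseriality every nonzero map $P(S')\to P$ with $S'\in\cS'(\Lambda)$ has image inside $tP$; (b)~$tP\in\filt(\cB(\Lambda))$, since $\soc tP\in\cS(\Lambda)$ forces $tP$ to contain the base-set module with that socle (Propositions~\ref{base-set}\,1) and~\ref{prop:B_eqv_def}) and the quotient again has top in $\cS'(\Lambda)$ and socle in $\cS(\Lambda)$, so induction on length produces a $\cB(\Lambda)$-filtration; (c)~$tP=P$ exactly when $\topp P\in\cS'(\Lambda)$, i.e.\ when $P$ is a filtered projective; and (d)~$\Hom_\Lambda(\cP,P)$ is projective over $\Theta$ if and only if $tP$ is a filtered projective $\Lambda$-module — this follows from Proposition~\ref{diagram}\,i) and~iv), since the equivalence $\Hom_\Lambda(\cP,-)\colon\filt(\cB(\Lambda))\xrightarrow{\ \sim\ }\moddd\Theta$ matches projective objects with projective $\Theta$-modules and reflects isomorphisms. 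I also need one count: there are $\#\rel\Lambda=\rank\Theta$ projective-injective $\Lambda$-modules (Propositions~\ref{numerical}\,2) and~\ref{C_and_A}\,1)); by the decomposition in the proof of Proposition~\ref{C_and_A}\,5) (using that distinct projective-injectives have distinct socles) the non-filtered ones number $t+\defect\Theta$, so the filtered projective-injective $\Lambda$-modules number $\rank\Theta-t-\defect\Theta=\#\rel\Theta-t$, whereas the projective-injective $\Theta$-modules number $\#\rel\Theta$. Hence the map induced by $\Hom_\Lambda(\cP,-)$ from isomorphism classes of filtered projective-injective $\Lambda$-modules to those of projective-injective $\Theta$-modules — injective by the equivalence, well-defined by Proposition~\ref{diagram}\,\ref{EM filtered proj inj implies it is filtered in epsilon} — is a bijection precisely when $t=0$.

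Now the two implications. Assume (i) and (ii): then $r=0$ by (i); and if $t>0$ the bijection above misses some projective-injective $\Theta$-module $N$, so $N\cong\Hom_\Lambda(\cP,X)$ for a unique $X\in\filt(\cB(\Lambda))$, $X$ is a filtered projective $\Lambda$-module by (d), and $X$ is not injective over $\Lambda$ (else $N$ would lie in the image), so $P:=X$ contradicts (ii); hence $t=0$ and $\Lambda$ is defect invariant. Conversely, assume $\Lambda$ defect invariant, so $t=r=0$ and (i) holds; let $P$ be a projective $\Lambda$-module with $\Hom_\Lambda(\cP,P)$ projective-injective over $\Theta$, which we may take indecomposable, so $\Hom_\Lambda(\cP,P)\neq 0$. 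By (a) and (d), $tP$ is a filtered projective $\Lambda$-module, and since $t=0$ the bijection gives $\Hom_\Lambda(\cP,tP)\cong\Hom_\Lambda(\cP,P')$ for a filtered projective-injective $\Lambda$-module $P'$; reflecting isomorphisms yields $tP\cong P'$, so $tP$ is injective over $\Lambda$. Then $tP\hookrightarrow P$ with $tP$ injective, $P$ indecomposable and $tP\neq 0$ force $tP=P$; by (c), $P$ is a filtered projective, and it is injective, hence a filtered projective-injective $\Lambda$-module. This establishes (ii).

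The main obstacle is the equivalence $t=0\iff$ (ii), and within it the key lemma — chiefly parts (a) and (d), which say that on an arbitrary projective $\Lambda$-module $P$ the functor $\Hom_\Lambda(\cP,-)$ depends only on the filtered tail $tP$ and detects exactly when that tail is projective. These are what make it legitimate to transport the purely numerical matching of projective-injectives on the two sides through $\Hom_\Lambda(\cP,-)$, which is not faithful on all of $\moddd\Lambda$.
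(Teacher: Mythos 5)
Your proposal is correct, and its skeleton is the same as the paper's: both arguments start from the identity $\defect(\Lambda)=t+\defect(\Theta)+r$ of Proposition~\ref{C_and_A}\,5) and reduce defect invariance to $t=r=0$, with $r=0\iff$\,(i) being immediate from Lemma~\ref{projective}, and $t=0\iff$\,(ii) coming from counting projective-injectives on both sides. Where you go beyond the paper is in making the passage from the equality of counts to the actual biconditional in (ii) rigorous. The paper's proof shows $|\{\text{filt.\ proj.-inj.\ }\Lambda\text{-modules}\}|=|\{\text{proj.-inj.\ }\Theta\text{-modules}\}|$ and then simply asserts the ``if and only if''; but to conclude that $\Hom_\Lambda(\cP,P)$ projective-injective forces $P$ filtered projective-injective, one must rule out a \emph{non-filtered} projective $P$ whose image happens to coincide with that of a filtered one --- and $\Hom_\Lambda(\cP,-)$ is not faithful outside $\filt(\cB(\Lambda))$, so this is not automatic. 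Your filtered-tail lemma (parts (a)--(d): $\Hom_\Lambda(\cP,P)\cong\Hom_\Lambda(\cP,tP)$, $tP\in\filt(\cB(\Lambda))$, $tP=P$ exactly for filtered projectives, and projectivity of the image detects projectivity of $tP$) is precisely the missing ingredient, and the splitting argument $tP\hookrightarrow P$ with $tP$ injective closes the loop. You also prove the converse implication ((i) and (ii) imply defect invariance), which the paper states in the proposition but never actually argues in its proof. So: same decomposition and same counting, but your write-up is the more complete one; the trace lemma is worth recording as it is reusable wherever one needs to evaluate $\Hom_\Lambda(\cP,-)$ on non-filtered projectives.
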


\begin{proof} It follows from Proposition \ref{C_and_A} that
$$\defect(\Lambda)=t+ \defect(\Theta)+r,$$
where $t=|\{\text{non-filt.proj.inj. }Q\ |\ \soc(Q)\in \cS_f(\Lambda)\}|$ and  $r=|\{\text{non-filt.proj non-inj}\}|$.
So, $\defect(\Lambda)=\defect(\Theta)$ if and only if $r=|\{\text{non-filt.proj non-inj}\}|=0$ and $t=|\{\text{non-filt.proj.inj. }Q\ |\ \soc(Q)\in \cS_f(\Lambda)\}|=0$.

(i) Since $r=0$ it follows that all non-filtered projectives must be injective.

(ii) Since $t=0$ it follows that all non-filtered projective-injectives must have socles in $\cS_{nf}(\Lambda)$. Therefore the number of non-filtered projective injective $\Lambda$-modules is $|\cS_{nf}(\Lambda)|=\defect(\Theta)$ by Prop.\ref{C_and_A}.


$\#\rel(\Lambda)=$
$|\{\text{proj.inj.}\Lambda\text{-modules}\}|=$
$|\{\text{filt.proj.inj.}\Lambda\text{-modules}\}|+|\{\text{non-filt.proj.inj.}\Lambda\text{-modules}\}|$\\
Hom$_\Lambda(\cP,-)$ gives a bijection between $\{\text{filt.proj.}\Lambda\text{-modules}\}$ and $\{\text{proj.}\Theta\text{-modules}\}$.\\
Hom$_\Lambda(\cP,-)$ maps $\{\text{filt.proj.inj.}\Lambda\text{-modules}\}$ to $\{\text{proj.inj.}\Theta\text{-modules}\}$.
Therefore \\
$|\{\text{filt.proj.inj.}\Lambda\text{-modules}\}|\leq|\{\text{proj.inj.}\Theta\text{-modules}\}|$ and \\
$\#\rel(\Lambda)=|\{\text{filt.proj.inj.}\Lambda\text{-modules}\}|+|\{\text{non-filt.proj.inj.}\Lambda\text{-modules}\}|\leq$\\
$|\{\text{proj.inj.}\Theta\text{-modules}\}|+|\{\text{non-filt.proj.inj.}\Lambda\text{-modules}\}|=$
$\#\rel(\Theta) + \defect(\Theta)=\rank(\Theta)$.\\
Since $\#\rel(\Lambda)=\rank(\Theta)$ it follows that $|\{\text{filt.proj.inj.}\Lambda\text{-modules}\}|=|\{\text{proj.inj.}\Theta\text{-modules}\}|$
Therefore: $\Hom_{\Lambda}(\cP,Q)$ is a projective-injective $\Theta$-module if and only if $Q$ is a filtered projective-injective $\Lambda$-module, as claimed.
\end{proof}

\begin{corollary} \label{cor_new_proj_is_inj}
Let $\Lambda$ be a cyclic Nakayama algebra, and $\Theta:=\bm\varepsilon(\Lambda)$.
If $\Lambda$ is defect invariant, then each non-filtered projective module $P$ is also a minimal projective module.
\end{corollary}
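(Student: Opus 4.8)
The plan is to extract from Proposition~\ref{C_and_A}(5) that both error terms in $\defect(\Lambda)=t+\defect(\Theta)+r$ vanish under the defect-invariance hypothesis, and then to derive the conclusion from the uniqueness of an indecomposable projective-injective module with a prescribed socle.

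First I would record what defect invariance gives. Proposition~\ref{C_and_A}(5) states $\defect(\Lambda)=t+\defect(\Theta)+r$, where $t=|\{\text{non-filtered proj.\ inj.\ }Q\mid \soc(Q)\in\cS_f(\Lambda)\}|$ and $r=|\{\text{non-filtered proj.\ non-inj.}\}|$, both nonnegative. Since $\Lambda$ is defect invariant, $\defect(\Lambda)=\defect(\Theta)$, which forces $t=r=0$. From $r=0$: every non-filtered projective $\Lambda$-module is injective. From $t=0$: no non-filtered projective-injective $\Lambda$-module has its socle in $\cS_f(\Lambda)$; as the socle of any indecomposable projective belongs to $\cS(\Lambda)=\cS_f(\Lambda)\sqcup\cS_{nf}(\Lambda)$ (Definition~\ref{socle set}), this means its socle lies in $\cS_{nf}(\Lambda)$. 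Hence every non-filtered projective $\Lambda$-module $P$ is projective-injective with $\soc P\in\cS_{nf}(\Lambda)$ (compare Proposition~\ref{prop_new_proj_is_inj_C}(i) and the estimate in its proof).

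Next I would argue by contradiction. Suppose a non-filtered projective $P$ is not minimal, i.e.\ $\rad P$ is projective (Definition~\ref{minimal}). Since $\Lambda$ is cyclic, every indecomposable projective has length at least $2$ (Remark~\ref{kupisch properties}), so $\rad P\neq 0$ is indecomposable and uniserial with $\soc(\rad P)=\soc P$; write $\rad P\cong P(S_1)$ where $S_1=\topp(\rad P)$. Then $\soc P(S_1)=\soc P\in\cS_{nf}(\Lambda)$, so $\soc P(S_1)\notin\cS_f(\Lambda)$, and therefore $P(S_1)$ is not a filtered projective (socles of filtered projectives lie in $\cS_f(\Lambda)$ by Definition~\ref{socle set}). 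Applying the fact established above to the non-filtered projective $P(S_1)$, it too is projective-injective. Now $P$ and $\rad P\cong P(S_1)$ are both indecomposable projective-injective $\Lambda$-modules with isomorphic socles, so the lemma that an indecomposable projective-injective over a cyclic Nakayama algebra is determined by its socle (part (b) of the Gustafson lemma following Definition~\ref{def:Gustafson}) yields $P\cong\rad P$, which is impossible since $\ell(\rad P)=\ell(P)-1<\ell(P)$. Hence $\rad P$ is not projective, i.e.\ $P$ is minimal projective.

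The main obstacle is really only the first, bookkeeping, step: pinning down that defect invariance forces a non-filtered projective to be projective-injective with socle in $\cS_{nf}(\Lambda)$. Once that is in hand, the contradiction is immediate from the uniqueness of the indecomposable projective-injective with a given socle, so no further difficulty is expected.
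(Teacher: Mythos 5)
Your proof is correct. The paper states Corollary~\ref{cor_new_proj_is_inj} without proof, as an immediate consequence of Proposition~\ref{prop_new_proj_is_inj_C}, and your argument supplies exactly the reasoning that is left implicit there: defect invariance forces $t=r=0$ in the identity $\defect(\Lambda)=t+\defect(\Theta)+r$ of Proposition~\ref{C_and_A}(5), so every non-filtered projective is projective-injective with socle in $\cS_{nf}(\Lambda)$; if such a $P$ were not minimal, then $\rad P$ would be a projective with the same socle, hence (having socle in $\cS_{nf}(\Lambda)$) itself non-filtered and therefore projective-injective, contradicting the uniqueness of the indecomposable projective-injective with a prescribed socle. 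All the ingredients you invoke (the decomposition of $\defect(\Lambda)$, the disjoint union $\cS(\Lambda)=\cS_f(\Lambda)\sqcup\cS_{nf}(\Lambda)$, the fact that $c_i\geq 2$ for cyclic algebras, and part (b) of the Gustafson lemma) are available in the paper, so the argument is complete.
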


If $\Lambda$ is the defect invariant reverse of given algebra $\Theta$, then Propositions \ref{length_defect} and \ref{length_defect_2} determine the length of each indecomposable projective $\Lambda$-module. Here we present them as corollaries.

\begin{corollary}\label{emre ddd 1 cor}
Let $\Lambda$ and $\Theta$ be Nakayama algebras such that $\bm\varepsilon(\Lambda)\cong\Theta$. If $\defect\Lambda=\defect\Theta$, then $\ell_{\Lambda}(\Delta(S))=\defect_{\Theta}(P(\tau S))+1$.
\end{corollary}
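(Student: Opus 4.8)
The plan is to deduce Corollary \ref{emre ddd 1 cor} directly from the two preceding propositions, Propositions \ref{length_defect} and \ref{length_defect_2}, using the hypothesis $\defect\Lambda=\defect\Theta$ (defect invariance) to force equality in the inequality $\ell_\Lambda(\Delta(S))\geq \defect_\Theta(P(\tau S))+1$. First I would recall that, by Proposition \ref{length_defect}, for every simple $\Theta$-module $S$ we already have
\begin{align*}
\ell_\Lambda(\Delta(S))\geq \defect_\Theta(P(\tau S))+1,
\end{align*}
so only the reverse inequality must be established, and it suffices to show that strict inequality for even a single simple module would make $\defect\Lambda$ strictly larger than $\defect\Theta$, contradicting defect invariance.

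The key step is a counting argument on non-filtered projective $\Lambda$-modules. By Proposition \ref{C_and_A}(4), $\defect\Lambda$ equals the number of non-filtered projective $\Lambda$-modules, and by Proposition \ref{C_and_A}(3) there are exactly $\defect\Theta=|\cS_{nf}(\Lambda)|$ of these coming from the ``necessary'' non-filtered projectives $P(Z_i)$ produced in the proof of Proposition \ref{length_defect} (one cluster of $d=\defect_\Theta(P(\tau S))$ of them for each minimal projective $P(S)$ of $\Theta$, and these account precisely for $\defect\Theta$ in total). Now suppose $\ell_\Lambda(\Delta(S))=m>d+1$ for some $S$. Then Proposition \ref{length_defect_2} exhibits additional non-filtered projective $\Lambda$-modules $P(Y_j)$ — one for each index $j\in\{0,1,\dots,m-1\}$ not already among the $a_i$ — giving at least $m-1-d>0$ extra non-filtered projectives beyond the ones counted by $\defect\Theta$. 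Summing over all simple $\Theta$-modules $S$, the total count of non-filtered projective $\Lambda$-modules becomes
\begin{align*}
\defect\Lambda=\sum_{S}\bigl(\ell_\Lambda(\Delta(S))-1\bigr)\geq \defect\Theta+\sum_S\bigl(\ell_\Lambda(\Delta(S))-1-\defect_\Theta(P(\tau S))\bigr),
\end{align*}
where the summand is nonnegative by Proposition \ref{length_defect} and strictly positive whenever $\ell_\Lambda(\Delta(S))>\defect_\Theta(P(\tau S))+1$. Hence $\defect\Lambda=\defect\Theta$ forces every summand to vanish, i.e. $\ell_\Lambda(\Delta(S))=\defect_\Theta(P(\tau S))+1$ for all $S$.

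I expect the main obstacle to be bookkeeping: making sure the various families of non-filtered projectives produced in Propositions \ref{length_defect} and \ref{length_defect_2} — the $P(Z_i)$, the $P(Y_j)$ sandwiched between consecutive $Z_i$'s, and the $P(Y_j)$ in the two ``end'' ranges $0\le j\le a_1$ and $a_d\le j\le m-1$ — are genuinely pairwise non-isomorphic across different choices of simple $S$, so that the global count $\defect\Lambda=\sum_S(\ell_\Lambda(\Delta(S))-1)$ is a sum with no overcounting. This is where Notation \ref{sequence} (the cyclic ordering of projectives by their tops) and Proposition \ref{base-set} (disjointness of composition factors of distinct base-set modules) do the work: the tops $Y_j=\tau^{-j}\topp(\Delta(\tau S))$ are composition factors of $\rad\Delta(S)$, and composition factors of distinct $\Delta(S)$ are disjoint, so the corresponding projectives are distinct. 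Once this non-overlap is in place, the argument is just the inequality-to-equality squeeze described above, and the identity $\defect\Lambda=\sum_S(\ell_\Lambda(\Delta(S))-1)$ (which is essentially Proposition \ref{C_and_A}(4) together with the fact that the composition factors of the $\Delta(S)$'s partition the non-filtered projectives) completes the proof.
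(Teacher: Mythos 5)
Your proposal is correct and is essentially the paper's own argument: the paper likewise sums the termwise inequality $\ell_\Lambda(\Delta(T))\geq\defect_\Theta(P(\tau T))+1$ from Proposition \ref{length_defect} over all simple $\Theta$-modules, identifies the left side as $\rank\Lambda$ via Proposition \ref{base-set} and the right side as $\rank\Theta+\defect\Theta$, and uses $\rank\Lambda=\rank\Theta+\defect\Lambda$ (Proposition \ref{C_and_A}) to conclude that defect invariance forces every summand to be an equality. Your reformulation of this sum as a count of non-filtered projectives (and the accompanying bookkeeping via Proposition \ref{length_defect_2}) is a valid gloss but is not needed beyond the identity $\sum_S\ell_\Lambda(\Delta(S))=\rank\Lambda$.
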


\begin{proof}
By Proposition \ref{length_defect}, $\ell(\Delta(T))\geq \defect_{\Theta}(P(\tau T))+1$ for every simple $\Theta$-module $T$. Without loss of generality, assume that the simple module $T_1$ satisfies
\begin{align*}
\ell(\Delta(T_1))> \defect_{\Theta}(P(\tau T_1))+1,
\end{align*}
i.e., strict inequality. For $2\leq i\leq \rank\Theta$, let
\begin{align*}
\ell(\Delta(T_i))\geq \defect_{\Theta}(P(\tau T_i))+1.
\end{align*}
The sum of all inequalities is
\begin{align*}
\sum\limits^{\rank\Theta}_{i=1}\ell_{\Lambda}(\Delta(T_i))>\sum\limits^{\rank\Theta}_{i=1}(\defect_{\Theta}(P(\tau T_i))+1).
\end{align*}
The left-hand side is $\rank\Lambda$ by Proposition \ref{base-set}. The right-hand side is $\defect\Theta+\rank\Theta$. We get $\rank\Lambda>\rank\Theta+\defect\Theta$. By Proposition \ref{C_and_A}, $\rank\Lambda=\rank\Theta+\defect\Lambda$, which implies $\defect\Lambda>\defect\Theta$, contradicting the assumption in the statement. Hence, any simple $\Theta$-module $T$ satisfies $\ell(\Delta(T))= \defect_{\Theta}(P(\tau T))+1$.
\end{proof}

 \begin{corollary}\label{cor:defect_delta}
 Let $\Lambda$ be a cyclic Nakayama algebra and $\Theta =\bm\epsilon(\Lambda)$ be its syzygy filtered algebra. Then $\defect \Lambda\geq \defect \Theta$. The equality holds if and only if $\defect_\Lambda(I(\Delta T))=\defect_\Theta P(\tau T)$, for each simple $\Theta$-module $T$.
 \end{corollary}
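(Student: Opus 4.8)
The plan is to reorganize $\defect\Lambda$ and $\defect\Theta$ as sums over the iso-classes of simple $\Theta$-modules, compare them term by term, and conclude from the sign of the termwise difference.

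First I would establish the identity $\defect\Lambda=\sum_{T}\defect_\Lambda I(\Delta T)$, the sum ranging over (iso-classes of) simple $\Theta$-modules $T$. Starting from Definition \ref{defect}, $\defect\Lambda=\sum_{P}\defect_\Lambda P$ over indecomposable projective $\Lambda$-modules; since an indecomposable projective can have nonzero defect only if it is projective–injective (as used in the proof of Proposition \ref{length_defect}), this equals $\sum_{Q}\defect_\Lambda Q$ over the indecomposable projective–injective $\Lambda$-modules. By Remark \ref{socle of projective-injective}(1) together with the equality $\#\{\text{proj.-inj.}\}=|\cS(\Lambda)|$ from Proposition \ref{numerical}, the assignment $Q\mapsto\soc Q$ is a bijection onto $\cS(\Lambda)$ with inverse $S\mapsto I(S)$, so $\defect\Lambda=\sum_{S\in\cS(\Lambda)}\defect_\Lambda I(S)$. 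Finally, $\Delta$ restricts to a bijection between iso-classes of simple $\Theta$-modules and $\cB(\Lambda)$ by Proposition \ref{diagram}(iii),(vii), and $B\mapsto\soc B$ is a bijection $\cB(\Lambda)\to\cS(\Lambda)$ by Proposition \ref{base-set}(1); since the injective envelope depends only on the socle, $I(\Delta T)=I(\soc\Delta T)$, and the claimed identity follows. The companion identity on the $\Theta$-side is easier: $\defect\Theta=\sum_T\defect_\Theta P(T)$ over all simple $\Theta$-modules, and since $T\mapsto\tau T$ permutes the simple $\Theta$-modules, reindexing gives $\defect\Theta=\sum_T\defect_\Theta P(\tau T)$.

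Next comes the termwise comparison. For every simple $\Theta$-module $T$ we have $\Delta T\in\cB(\Lambda)$, so Proposition \ref{prop:B_eqv_def}(iv) gives $\ell_\Lambda(\Delta T)=\defect_\Lambda I(\Delta T)+1$, while Proposition \ref{length_defect} gives $\ell_\Lambda(\Delta T)\geq\defect_\Theta P(\tau T)+1$; hence $\defect_\Lambda I(\Delta T)\geq\defect_\Theta P(\tau T)$. Subtracting the two identities,
\[
\defect\Lambda-\defect\Theta=\sum_{T}\bigl(\defect_\Lambda I(\Delta T)-\defect_\Theta P(\tau T)\bigr)\geq 0,
\]
which proves $\defect\Lambda\geq\defect\Theta$; and a finite sum of nonnegative integers vanishes precisely when each summand does, which is exactly the asserted equality criterion $\defect_\Lambda I(\Delta T)=\defect_\Theta P(\tau T)$ for every simple $\Theta$-module $T$.

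I expect the only genuine work to be in the first step — threading the chain of bijections $\{\text{proj.-inj.}\}\leftrightarrow\cS(\Lambda)\leftrightarrow\cB(\Lambda)\leftrightarrow\{\text{simple }\Theta\text{-modules}\}$ and checking that the defect of a projective is concentrated on the projective–injective ones — so that both sides of the comparison are genuinely indexed by the same set; once this bookkeeping is in place, the termwise inequality is immediate from Propositions \ref{prop:B_eqv_def} and \ref{length_defect}, and the argument closes with no further input.
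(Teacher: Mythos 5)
Your argument is correct and rests on the same two inputs as the paper (Propositions \ref{prop:B_eqv_def}(iv) and \ref{length_defect}), but it is organized along a genuinely different route. The paper's printed proof obtains the inequality $\defect\Lambda\geq\defect\Theta$ from Proposition \ref{C_and_A}(5) (the count $\defect\Lambda=t+\defect\Theta+r$ with $t,r\geq 0$), and obtains the equality criterion by invoking Corollary \ref{emre ddd 1 cor} --- itself proved by summing the inequality of Proposition \ref{length_defect} over all simples and comparing with $\sum_{B\in\cB(\Lambda)}\ell(B)=\rank\Lambda$ and $\rank\Lambda=\rank\Theta+\defect\Lambda$ --- and then translating lengths into defects via Proposition \ref{prop:B_eqv_def}(iv). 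You instead convert everything to defects at the outset: you prove the bookkeeping identities $\defect\Lambda=\sum_T\defect_\Lambda I(\Delta T)$ (using that only projective--injectives can have nonzero defect, the socle bijection from Remark \ref{socle of projective-injective}(1) and Proposition \ref{numerical}, and the bijections of Propositions \ref{diagram} and \ref{base-set}(1)) and $\defect\Theta=\sum_T\defect_\Theta P(\tau T)$, and then sum the termwise inequality $\defect_\Lambda I(\Delta T)\geq\defect_\Theta P(\tau T)$. This yields the inequality and \emph{both} directions of the equivalence in one stroke (the paper's proof only spells out the ``only if'' direction), at the price of the bijection bookkeeping in your first step, which the paper's length-and-rank route avoids. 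One caveat you should state explicitly: the reindexing $\sum_T\defect_\Theta P(T)=\sum_T\defect_\Theta P(\tau T)$ uses that $\tau$ permutes the simple $\Theta$-modules, i.e.\ that $\Theta$ is cyclic; if $\bm\varepsilon(\Lambda)$ is linear this step fails (and, with the convention $P(0)=0$, the stated termwise criterion itself becomes problematic), but this is the same implicit restriction under which Propositions \ref{C_and_A} and \ref{length_defect} --- and hence the paper's own proof --- are formulated, so it is not a defect of your argument relative to the paper's standard.
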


 \begin{proof}
     The first statement follows from Proposition \ref{C_and_A}. If we assume $\defect\Lambda=\defect\Theta$, then by Corollary \ref{emre ddd 1 cor} we get $\ell_{\Lambda}(\Delta(T))=\defect_{\Theta}(P(\tau T))+1$. By Proposition \ref{prop:B_eqv_def}, we conclude that $\defect_{\Lambda}(I(\Delta(T)))=\defect_{\Theta}(P(\tau T))$. 
 \end{proof}

\begin{corollary}\label{lengths of delta proj} 
Let $\Lambda$ be a cyclic Nakayama algebra, and $\Theta:=\bm\varepsilon(\Lambda)$. If $\Lambda$ is defect invariant, then the length of $P(\Delta(S))\in Filt(\cB(\Lambda))$, where $S$ is simple $\Theta$-module, is given by
\begin{align}
\ell_{\Lambda}\left( P (\Delta(S))\right)=\ell_\Theta (P(S))+\sum\limits^{\ell_{\Theta} (P(S)) }_{i=1}  \defect_\Theta(P(\tau^i S)).
\end{align}

\end{corollary}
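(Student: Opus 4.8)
The plan is to compute the length of the $\cB(\Lambda)$-filtered projective $P(\Delta(S))$ by tracking its $\cB(\Lambda)$-composition factors and summing their $\Lambda$-lengths. Since $\Lambda$ is defect invariant, Proposition \ref{diagram}(iv) gives $P(\Delta(S)) = \Delta(P(S))$ where $P(S)$ is the indecomposable projective $\Theta$-module with top $S$, and the functor $\Delta = (-\otimes_{\bm\varepsilon(\Lambda)}\cP)$ is right exact and sends the radical filtration of $P(S)$ (whose successive quotients are the simple $\Theta$-modules $S = \tau^0 S', \tau^{-1}S', \ldots$ appearing along the uniserial structure — more precisely the composition factors of $P(S)$ are the simples $S_j$ for $j$ running over $\ell_\Theta(P(S))$ consecutive indices starting at the index of $S$) to a $\cB(\Lambda)$-filtration of $\Delta(P(S))$ with successive quotients $\Delta(S_j)$. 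Hence
\begin{align*}
\ell_\Lambda\big(P(\Delta(S))\big) = \sum_{j} \ell_\Lambda\big(\Delta(S_j)\big),
\end{align*}
the sum running over the $\ell_\Theta(P(S))$ composition factors $S_j$ of $P(S)$.

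Next I would apply Corollary \ref{emre ddd 1 cor}: defect invariance gives $\ell_\Lambda(\Delta(S_j)) = \defect_\Theta(P(\tau S_j)) + 1$ for each composition factor $S_j$. Substituting,
\begin{align*}
\ell_\Lambda\big(P(\Delta(S))\big) = \sum_{j}\big(\defect_\Theta(P(\tau S_j)) + 1\big) = \ell_\Theta(P(S)) + \sum_{j}\defect_\Theta(P(\tau S_j)),
\end{align*}
since there are exactly $\ell_\Theta(P(S))$ composition factors. It then remains only to identify the index set: if $S = S(\topp P(S))$ corresponds to vertex $k$, the composition factors of $P(S)$ are $S_k, S_{k-1}, \ldots$ down the uniserial module, i.e. $\{\tau^{-i}S : 0 \le i \le \ell_\Theta(P(S)) - 1\}$ in $\tau$-orbit notation, so $\{\tau S_j\}$ ranges over $\{\tau^{-i}S : 1 \le i \le \ell_\Theta(P(S))\}$ — wait, I should be careful here: $\tau S_j$ for $S_j = \tau^{-(i-1)}S$ is $\tau^{-(i-2)}S$. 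Let me instead index so that reindexing $i \mapsto$ (position from the top) makes $\tau S_j = \tau^i S$ for $i = 1, \ldots, \ell_\Theta(P(S))$; this is exactly the shift appearing in Remark \ref{emre remark labels may25} relating socle-orbit positions, and gives precisely $\sum_{i=1}^{\ell_\Theta(P(S))}\defect_\Theta(P(\tau^i S))$, the claimed formula.

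The main obstacle is the bookkeeping in the last step: verifying that the multiset $\{\tau S_j : S_j \text{ a composition factor of } P(S)\}$ equals $\{\tau^i S : 1 \le i \le \ell_\Theta(P(S))\}$, i.e. that the $\tau$-twist applied to the composition factors of $P(S)$ shifts the window of $\ell_\Theta(P(S))$ consecutive simples to start at $\tau S$ rather than at $S$. This follows from the labelling convention of Remark \ref{emre remark labels may25} ($S_{i+1} \cong \tau S_i$) together with the fact that the top of $P(S)$ is $S$ and its composition factors are $S, \tau^{-1}S, \ldots, \tau^{-(\ell-1)}S$ reading from top to socle; applying $\tau$ sends this to $\tau S, S, \ldots, \tau^{-(\ell-2)}S$, which as a set is $\{\tau^{-i}S\}$ for $i$ from $-1$ to $\ell - 2$, and matching this against $\{\tau^i S : 1 \le i \le \ell\}$ requires the cyclic identification $\tau^{-1} = \tau^{n-1}$ and a short index chase. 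Everything else is a direct substitution using the two corollaries already proved, so the proof is short once the indexing is pinned down.
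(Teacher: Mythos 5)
Your overall strategy is exactly the paper's: write $\ell_\Lambda(P(\Delta(S)))$ as the sum of $\ell_\Lambda(\Delta(S_j))$ over the composition factors $S_j$ of $P(S)$, apply Corollary \ref{emre ddd 1 cor} to each term, and count. The one step you flag as "the main obstacle" is, however, the step you get wrong: you assert that the composition factors of $P(S)$ from top to socle are $S,\tau^{-1}S,\dots,\tau^{-(\ell-1)}S$. Under this paper's conventions the powers of $\tau$ go the other way. Indeed, Proposition \ref{prop:quadrilateral}(1) applied to $0\to\rad P(S)\to P(S)\to S\to 0$ gives $\topp\rad P(S)\cong\tau S$, and the Gustafson function (Definition \ref{def:Gustafson} together with Remark \ref{emre remark labels may25}, where $S_{i+1}\cong\tau S_i$) gives $\soc P(S_i)\cong S_{[i+\ell-1]}\cong\tau^{\ell-1}S_i$. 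So the composition factors are $\{\tau^{i}S: 0\le i\le \ell_\Theta(P(S))-1\}$, and applying $\tau$ yields exactly $\{\tau^{i}S: 1\le i\le \ell_\Theta(P(S))\}$ with no further identification needed; this is precisely how the paper's proof runs.

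Your attempted repair does not work as stated: the window $\{\tau S, S, \tau^{-1}S,\dots,\tau^{-(\ell-2)}S\}$ that your (reversed) indexing produces is a different set of $\ell$ consecutive simples in the $\tau$-orbit from $\{\tau S,\tau^{2}S,\dots,\tau^{\ell}S\}$, and no appeal to $\tau^{-1}=\tau^{n-1}$ makes them coincide in general. Carried through literally, your bookkeeping would prove the formula with $\sum_{i}\defect_\Theta(P(\tau^{-i}S))$ in place of $\sum_{i}\defect_\Theta(P(\tau^{i}S))$, which is a different statement. The fix is simply to reverse the orientation of the radical series as above; everything else in your argument (the right-exactness of $\Delta$, the $\cB(\Lambda)$-filtration of $\Delta(P(S))$, and the use of defect invariance via Corollary \ref{emre ddd 1 cor}) is correct and matches the paper.
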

\begin{proof}
  \begin{align*}
\ell_{\Lambda}\left( P (\Delta(S))\right) &= \sum\limits_{i=0}^{\ell_\Theta (P(S))-1} \ell_{\Lambda} (\Delta(\tau^i S))\\
&=\sum\limits_{i=0}^{\ell_\Theta (P(S))-1}  (\defect_{\Theta}(P(\tau^{i+1} S)+1)\\
&=\sum\limits^{\ell_\Theta (P(S)) }_{i=1}  (\defect_\Theta(P(\tau^i S))+1)\\
&=\ell_\Theta (P(S))+\sum\limits^{\ell_\Theta (P(S)) }_{i=1}  \defect_\Theta(P(\tau^i S))  .
\end{align*}
\end{proof}

\section{Reverse of $\varepsilon$-construction} \label{section reverse}

So far we have assumed the existence of both Nakayama algebras $\Lambda$ and $\Theta$ where $\Theta = \varepsilon(\Lambda)$. In this section we will start with a Nakayama algebra $\Theta$ and construct an admissible sequence of integers that will serve as Kupish series for an algebra $\Lambda$ such that $\varepsilon(\Lambda) = \Theta$.\\\\




\subsection{Defect invariant case}

{An outline of the process of creating admissible sequence of integers for a Kupish series, which will be done properly in the next proposition, is as follows:}

{We start with a Nakayama algebra $\Theta$ and a complete set of representatives of indecomposable projective $\Theta$-modules $\{P_1,\dots,P_n\}$. 
Next we consider a sequence of integers $\cK=\left(c_1,\ldots,c_N\right)$ and show that this sequence is a Kupish series for a Nakayama algebra $\Lambda$ providing that it satisfies four conditions: 1), 2), 3), 4) as stated in the Proposition \ref{emre defect invariant prop}. Before proving this proposition we will explain the role or purpose of each of these conditions (as they are numbered in the proposition)..
\begin{enumerate}
    \item $N=n+\defect\Theta$ is a statement   that $N$, the number of indecomposable projective $\Lambda$-modules has to 
    be larger 
    by $\defect\Theta$, from $n$, which is the number of indecomposable projective $\Theta$-modules. This is necessary  
    according to \ref{C_and_A} and \ref{rank_rel_def}.
    \item $\cN=(c_{z_1},c_{z_2},\ldots,c_{z_n})$ will be the lengths of the filtered projective $\Lambda$-modules. These lengths are described in terms of defects of the original projective $\Theta$-modules.
By corollaries \ref{cor:defect_delta} and \ref{lengths of delta proj}, each element of the base-set of $\mathcal B(\Lambda)$ can be expressed in terms of the defects of projectives of $\Theta$. Consequently the lengths of all filtered  $\Lambda$-projective 
can be described in terms of defects of projective $\Theta$-modules.
    \item This is the statement that the number of non-filtered $\Lambda$-projectives is determined by the defect of $\Theta$-projectives.
    \item This is numerical condition which describes lengths of non-filtered $\Lambda$-projectives. Here proposition \ref{length_defect_2}
is used.
\end{enumerate}
}

\begin{proposition}\label{emre defect invariant prop}
Let $\Theta$ be a Nakayama algebra and $\left\{P_1,\ldots,P_n\right\}$ the complete set of representatives of isomorphism classes of indecomposable projective $\Theta$-modules. Let $\cK=(c_1,\ldots,c_N)$ be a sequence of positive integers which satisfy the following four conditions:
\begin{enumerate}[label=\arabic*)]
\item\label{emre defect invariant prop rank} $N=n+\defect\Theta$.
\item\label{emre defect invariant prop filtered} $\cN=(c_{z_1},c_{z_2},\ldots,c_{z_n})$ is a subsequence of $\cK$  such that
\begin{align}
c_{z_i}=\ell(P_i)+\sum\limits^{\ell (P_i)}_{j=1} (\defect(P_{\left[i+j\right]}).
\end{align}
\item\label{emre defect invariant prop number of nonfiltered} The number of terms in $\cK$ between $c_{z_i}$ and $c_{z_{i+1}}$ is equal to $\defect (P_{i+1})$. The number of terms in $\cK$ between $c_1$ and $c_{z_1}$ is equal to $\ell(P_1)-a_0-2$ where $a_0$ is the number of terms in $\cK$ between $c_{z_n}$ and $c_N$.

\item \label{emre defect invariant prop unfiltered lengths}  Let $z_i<a_1<a_2<\cdots<a_{\defect(P_{i+1})}<z_{i+1}$ be indices of subsequence of $\cK$. Let $c_{a_m}=\ell(P_i)+\sum\limits^{\ell (P_i)+m}_{j=1} \defect(P_{\left[i+j\right]})$ for each $a_m$.

\end{enumerate}
Then, $\cK$ is an admissible sequence for a Kupisch series of a cyclic Nakayama algebra. 
\end{proposition}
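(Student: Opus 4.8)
The plan is to verify the two hypotheses of Lemma~\ref{kupisch how to verify}. Regarding $\cK=(c_1,\dots,c_N)$ as a cyclic sequence, it is enough to show: (i) every entry satisfies $c_j\geq 2$, so that the algebra one obtains is cyclic; and (ii) for every consecutive pair (indices modulo $N$) one has $c_j\leq c_{j+1}$ or $c_j=c_{j+1}+1$. Throughout write $\gamma_i=\ell_\Theta(P_i)$ and $d_i=\defect_\Theta(P_i)$, and recall from Remark~\ref{kupisch properties} that for each $i$ exactly one of the following holds in $\Theta$: $P_i$ is minimal projective, in which case $P_{i+1}$ is projective-injective and $\gamma_{i+1}=\gamma_i+d_{i+1}$; or $P_i\supset P_{i+1}$, in which case $\gamma_i=\gamma_{i+1}+1$, and $P_{i+1}$ is not projective-injective by Proposition~\ref{prop:quadrilateral}, so $d_{i+1}=0$ (only projective-injectives carry nonzero defect).

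First I would decompose $\cK$ into the blocks cut out by $\cN$: the $i$-th block is $c_{z_i}$ followed by the intermediate terms $c_{a_1},\dots,c_{a_{d_{i+1}}}$ of condition~\ref{emre defect invariant prop unfiltered lengths}, and then $c_{z_{i+1}}$. Inside a block, condition~\ref{emre defect invariant prop unfiltered lengths} gives $c_{a_1}-c_{z_i}=d_{[i+\gamma_i+1]}\geq 0$ and $c_{a_m}-c_{a_{m-1}}=d_{[i+\gamma_i+m]}\geq 0$, so the block is non-decreasing and in particular every $c_{a_m}\geq c_{z_i}$. For the step $c_{a_{d_{i+1}}}\to c_{z_{i+1}}$ I would use the two cases above. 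If $P_i$ is minimal projective, the block genuinely has $d_{i+1}$ intermediate terms and $\gamma_{i+1}=\gamma_i+d_{i+1}$; reindexing the defining sums of conditions~\ref{emre defect invariant prop filtered} and~\ref{emre defect invariant prop unfiltered lengths} (using $d_{[i+1]}=d_{i+1}$ for $1\leq i\leq n-1$) makes the difference telescope to $c_{z_{i+1}}-c_{a_{d_{i+1}}}=d_{[i+\gamma_{i+1}+1]}\geq 0$. If instead $P_i\supset P_{i+1}$, then $d_{i+1}=0$, the block is the single term $c_{z_i}$, and the same reindexing with $\gamma_i=\gamma_{i+1}+1$ gives $c_{z_{i+1}}=c_{z_i}-1$, the admissible ``drop by one.'' This disposes of condition (ii) at every transition within and between consecutive blocks, and condition (i) follows because for $\Theta$ cyclic $\gamma_i\geq 2$, hence $c_{z_i}\geq\gamma_i\geq 2$ and every other entry dominates some $c_{z_i}$ (the connected linear case being similar once one uses $\defect\Theta\neq 0$).

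The remaining transition --- the one I expect to require the real care --- is the ``seam'' of the linear presentation: passing from the terms after $c_{z_n}$ through $c_N$ and then $c_1,\dots,c_{z_1-1}$ back to $c_{z_1}$. This segment is the ``$P_n\to P_1$'' run of non-filtered projectives, split between the tail and the head of $\cK$, and condition~\ref{emre defect invariant prop number of nonfiltered} is exactly the bookkeeping that fixes the sizes of all the gaps between consecutive $c_{z_i}$'s and, in particular, records how the last gap is cut by the seam (through $a_0$ and $\ell(P_1)$), while condition~\ref{emre defect invariant prop unfiltered lengths}, read cyclically with $i=n$ and $i+1$ interpreted as $1$, supplies the corresponding values. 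The plan is to show that with these prescribed counts and values the two-case analysis of the previous paragraph carries over verbatim across the seam, so the cyclic sequence closes up consistently; this step is purely combinatorial but is where one must keep closest track of the modular indices $[\,\cdot\,]$ and of how the sums in conditions~\ref{emre defect invariant prop filtered}--\ref{emre defect invariant prop unfiltered lengths} telescope across $c_{z_n}$ and $c_{z_1}$. Once this is done, Lemma~\ref{kupisch how to verify} applies and $\cK$ is an admissible sequence for a Kupisch series of a cyclic Nakayama algebra.
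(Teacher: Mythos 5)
Your proposal is correct and follows essentially the same route as the paper: reduce to Lemma~\ref{kupisch how to verify} and compute every consecutive difference in $\cK$ by telescoping the defining sums, splitting into the within-block steps (which are $-\defect(P_{[\cdot]})\leq 0$ in the decreasing direction) and the $c_{z_i}\to c_{z_{i+1}}$ steps governed by the dichotomy ``$P_i$ minimal projective'' versus ``$\rad P_i\cong P_{i+1}$, forcing $\defect(P_{i+1})=0$ and a drop of exactly one'' --- this is precisely the content of the paper's Lemmas~\ref{sunday lemma when difference is one} and~\ref{sunday lemma when difference is not one, all three possibilities here}. The seam across $c_{z_n},\dots,c_N,c_1,\dots,c_{z_1}$ that you flag is handled in the paper only implicitly through the cyclic indices $[\,\cdot\,]$ and condition~\ref{emre defect invariant prop number of nonfiltered}, so your treatment is at the same level of detail as the original.
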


To verify a given sequence of positive integers $(c_1,\ldots,c_N)$ forms a Kupisch series of a cyclic Nakayama algebra, we need to check, by Lemma \ref{kupisch how to verify}, that for each $1\leq i\leq N$, either $c_i=c_{i+1}+1$ or $c_i\leq c_{i+1}$. Therefore, to prove Proposition \ref{emre defect invariant prop}, we  compute all possible values of $c_i-c_{i+1}$ for $c_i,c_{c_{i+1}}\in\cK$ in the following lemmas.

\begin{lemma}\label{sunday lemma when difference is one}  Let $c_{z_i}$, $c_{z_{i+1}}$ be consecutive elements of $\cK$. Recall that
\begin{align*}
c_{z_i}=\ell(P_i)+\sum\limits^{\ell (P_i)}_{j=1} \left(\defect(P_{\left[i+j\right]})\right),\quad \
c_{z_{i+1}}=\ell(P_{i+1})+\sum\limits^{\ell (P_{i+1})}_{j=1} \left(\defect(P_{\left[i+j+1\right]})\right).
\end{align*}

Then,
\begin{enumerate}
\item $c_{z_i}-c_{z_{i+1}}=1$ if and only if $\ell(P_i)=1+\ell(P_{i+1})$.
\item  $\ell(P_i)\leq\ell(P_{i+1})$ implies  
  \begin{align*}
 c_{z_i}-c_{z_{i+1}}=-\sum\limits^{\ell(P_{i+1})}_{j=\ell(P_i)}\defect P_{[i+j+1]}=-\sum\limits^{1+\defect(P_{i+1})}_{j=1} \left(\defect(P_{\left[i+\ell(P_i)+j\right]})\right).
 \end{align*}
 \end{enumerate}
\end{lemma}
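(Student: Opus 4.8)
The plan is to compute $c_{z_i}-c_{z_{i+1}}$ directly from the formulas recalled in the statement, separating the two mutually exclusive and exhaustive possibilities given by Remark \ref{kupisch properties} for the Kupisch series of $\Theta$: for each relevant index $i$ (inner indices always read modulo $n$), either $\ell(P_i)=\ell(P_{i+1})+1$, in which case $P_{i+1}\cong\rad P_i$ is projective, or $\ell(P_i)\le\ell(P_{i+1})$, in which case $P_i$ is a minimal projective, $P_{i+1}$ is projective-injective, and $\defect(P_{i+1})=\ell(P_{i+1})-\ell(P_i)$. Having both differences in hand, part 1) will follow because the first case gives exactly $1$ and the second gives a non-positive number, and part 2) is precisely the formula produced in the second case.

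First I would handle $\ell(P_i)=\ell(P_{i+1})+1$. The defect sum defining $c_{z_i}$ is indexed by $[i+1],[i+2],\dots,[i+\ell(P_i)]$ and the one defining $c_{z_{i+1}}$ by $[i+2],\dots,[i+\ell(P_{i+1})+1]$; since $\ell(P_i)=\ell(P_{i+1})+1$ these two index sets coincide except that the first contains the extra index $[i+1]$. Cancelling the common terms and the lengths leaves $c_{z_i}-c_{z_{i+1}}=1+\defect(P_{i+1})$. The step I expect to be the main obstacle is then to see that $\defect(P_{i+1})=0$ in this case: since $\rad P_i\cong P_{i+1}$ is projective, $P_i$ is not a minimal projective (Definition \ref{minimal}), hence $P_{i+1}\cong P(\tau S)$ (with $P_i\cong P(S)$) is not projective-injective by part 4) of Proposition \ref{prop:quadrilateral}; being projective but not injective, $P_{i+1}$ has no proper injective quotient, i.e. $\defect(P_{i+1})=0$ — this is exactly the fact ``only projective-injectives can have nonzero defect'' used in the proof of Proposition \ref{length_defect}. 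Thus $c_{z_i}-c_{z_{i+1}}=1$, which proves the ``if'' direction of 1).

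Next I would handle $\ell(P_i)\le\ell(P_{i+1})$. Expanding both defect sums, the terms indexed by $[i+2],\dots,[i+\ell(P_i)]$ cancel and one is left with $\defect(P_{[i+1]})+\big(\ell(P_i)-\ell(P_{i+1})\big)-\sum_{m=\ell(P_i)+1}^{\ell(P_{i+1})+1}\defect(P_{[i+m]})$; using $\ell(P_i)-\ell(P_{i+1})=-\defect(P_{i+1})=-\defect(P_{[i+1]})$ the first two summands cancel and we get $c_{z_i}-c_{z_{i+1}}=-\sum_{m=\ell(P_i)+1}^{\ell(P_{i+1})+1}\defect(P_{[i+m]})$. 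Re-indexing by $m=j+1$ yields the first asserted form $-\sum_{j=\ell(P_i)}^{\ell(P_{i+1})}\defect(P_{[i+j+1]})$, and re-indexing by $m=\ell(P_i)+j$ yields the second form $-\sum_{j=1}^{1+\defect(P_{i+1})}\defect(P_{[i+\ell(P_i)+j]})$; this is part 2). Since all defects are nonnegative and this sum is non-empty, the value is $\le 0$, hence never $1$; combined with the previous paragraph this gives the ``only if'' direction of 1). Aside from the vanishing $\defect(P_{i+1})=0$ in the first case, the entire argument is bookkeeping with cyclically indexed finite sums, and the only care required is to track precisely which single index is adjoined or dropped when passing from one sum to the other.
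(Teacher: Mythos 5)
Your proposal is correct and follows essentially the same route as the paper: a direct expansion of the two defect sums, using the dichotomy from Remark \ref{kupisch properties}, the identity $\ell(P_{i+1})-\ell(P_i)=\defect(P_{i+1})$ in the non-descending case, and the observation that $\defect(P_{i+1})=0$ when $\rad P_i\cong P_{i+1}$ is projective. The only (harmless) difference is that you spell out the ``only if'' direction of part 1) via the non-positivity of the sum in part 2), which the paper leaves implicit.
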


\begin{proof}
1) We start with $\ell(P_i)=1+\ell(P_{i+1})$. Then,
\begin{align*}
c_{z_i}&=\ell(P_i)+\sum\limits^{\ell (P_i)}_{j=1} \defect(P_{\left[i+j\right]})\\&=1+\ell(P_{[i+1]})+\sum\limits^{1+\ell(P_{i+1})}_{j=1}\defect(P_{[i+j]})
\\&=1+\ell(P_{[i+1]})+\defect(P_{[i+1]})+\sum\limits^{1+\ell(P_{i+1})}_{j=2}\defect(P_{[i+j]})
\\&=1+\ell(P_{[i+1]})+\defect(P_{[i+1]})+\sum\limits^{\ell(P_{i+1})}_{j=1}\defect(P_{[i+j+1]})
\\&=1+\defect(P_{[i+1]})+\ell(P_{[i+1]})+\sum\limits^{\ell(P_{i+1})}_{j=1}\defect(P_{[i+j+1]})
\\&=1+\defect(P_{[i+1]})+c_{z_{i+1}}=1+c_{z_{i+1}}.
\end{align*}
Since $\ell(P_i)=1+\ell(P_{i+1})$, $\rad P_i\cong P_{i+1}$ implies that $\defect(P_{i+1})=0$.

2) Now we assume that $\ell(P_i)\leq \ell(P_{i+1})$. By Remark \ref{kupisch properties}, $\ell(P_{i+1})-\ell(P_i)=\defect(P_{i+1})$. So,
\begin{align*}
c_{z_{i+1}}&=\ell(P_{i+1})+\sum\limits^{\ell (P_{i+1})}_{j=1} \defect(P_{\left[i+1+j\right]})=\sum\limits^{\ell (P_{i+1})}_{j=1} \left(\defect(P_{\left[i+1+j\right]})+1\right)\\
&=\sum\limits^{\ell (P_i)-1}_{j=1} \left(\defect(P_{\left[i+1+j\right]})+1\right)+\sum\limits^{1+\defect(P_{i+1})}_{j=1} \left(\defect(P_{\left[i+\ell(P_i)+j\right]})+1\right)
\end{align*}

On the other hand,
\begin{align*}
c_{z_i}&=\ell(P_i)+\sum\limits^{\ell (P_i)}_{j=1} \defect(P_{\left[i+j\right]})=1+\defect(P_{i+1})+\sum\limits^{\ell (P_i)-1}_{j=1} \left(\defect(P_{\left[i+j+1\right]})+1\right)
\end{align*}
It follows that
\begin{align*}
c_{z_{i}}-c_{z_{i+1}}&=1+\defect(P_{i+1})-\left(\sum\limits^{1+\defect(P_{i+1})}_{j=1} \left(\defect(P_{\left[i+\ell(P_i)+j\right]})+1\right)\right)\\
&=-\sum\limits^{1+\defect(P_{i+1})}_{j=1} \defect(P_{\left[i+\ell(P_i)+j\right]})=-\sum\limits^{\ell(P_{i+1})}_{j=\ell(P_i)}\defect(P_{[i+j+1]})
\end{align*}
since there are $1+\defect(P_{i+1})$-many terms with summand one.
\end{proof}

\begin{lemma}\label{sunday lemma when difference is not one, all three possibilities here} Let $c_{a_m},c_{a_{m+1}}\in\cK-\cN$ such that $a_m+1=a_{m+1}$. Then,
\begin{align*}
c_{a_m}-c_{a_{m+1}}=\defect(P_{[i+\ell(P_i)+m+1]}).
\end{align*}
Let $c_{a_1}\notin\cN$, $c_{z_i}\in\cN$ such that $c_j=c_{z_i}, c_{j+1}=c_{a_1}$. Then,
\begin{align*}
c_{z_i}-c_{a_1}=-\defect (P_{[i+\ell(P_i)+j]}).
\end{align*}
Let $c_{a_m}\notin\cN$, $c_{z_{i+1}}\in\cN$ such that $c_j=c_{a_m}, c_{j+1}=c_{z_{i+1}}$. Then,
\begin{align*}
c_{a_m}-c_{z_{i+1}}=-\defect(P_{[i+\ell(P_{i+1})+1]}).
\end{align*}
\end{lemma}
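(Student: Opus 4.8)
The plan is to treat these as purely arithmetic identities and prove them by direct substitution: replace $c_{a_m}$, $c_{z_i}$, $c_{z_{i+1}}$ by the closed formulas for the entries of $\cK$ supplied by Proposition~\ref{emre defect invariant prop}, conditions~\ref{emre defect invariant prop filtered} and~\ref{emre defect invariant prop unfiltered lengths}, and let the partial sums $\sum_{j=1}^{\ell(P_i)+m}\defect(P_{[i+j]})$ telescope, so that each difference collapses to a single $\defect$-term. Recall the combinatorial picture fixed by Proposition~\ref{emre defect invariant prop}: between consecutive $\cN$-positions $z_i$ and $z_{i+1}$ there sit exactly $\defect(P_{i+1})$ non-$\cN$-positions $a_1<\dots<a_{\defect(P_{i+1})}$, with $c_{z_i}=\ell(P_i)+\sum_{j=1}^{\ell(P_i)}\defect(P_{[i+j]})$ and $c_{a_m}=\ell(P_i)+\sum_{j=1}^{\ell(P_i)+m}\defect(P_{[i+j]})$. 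Apart from one length identity used in the third case, no module theory enters.

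For the first identity, $a_m$ and $a_{m+1}=a_m+1$ belong to the same block, so the formulas for $c_{a_m}$ and $c_{a_{m+1}}$ differ only in the top term of the partial sum and the subtraction isolates $\defect(P_{[i+\ell(P_i)+m+1]})$. For the second, $c_j=c_{z_i}$ forces $c_{j+1}$ to be the first non-$\cN$-entry $c_{a_1}$ of the block, and subtracting the $m=1$ instance of the formula from $c_{z_i}$ isolates $\defect(P_{[i+\ell(P_i)+1]})$. The third identity is the only one needing an extra input: here $c_{a_m}$ is the last non-$\cN$-entry of the block, so $m=\defect(P_{i+1})$, and $c_{j+1}=c_{z_{i+1}}=\ell(P_{i+1})+\sum_{j=1}^{\ell(P_{i+1})}\defect(P_{[i+1+j]})$. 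I would first invoke Remark~\ref{kupisch properties}: the block being nonempty means $\defect(P_{i+1})>0$, hence $P_i$ is minimal projective and $\ell(P_{i+1})=\ell(P_i)+\defect(P_{i+1})$, which rewrites $c_{a_m}=\ell(P_i)+\sum_{j=1}^{\ell(P_{i+1})}\defect(P_{[i+j]})$. After shifting the index of the sum defining $c_{z_{i+1}}$ by one step, the two sums telescope to $\defect(P_{[i+1]})-\defect(P_{[i+\ell(P_{i+1})+1]})$, while the remaining term $\ell(P_i)-\ell(P_{i+1})+\defect(P_{[i+1]})$ vanishes because $\defect(P_{[i+1]})=\defect(P_{i+1})=\ell(P_{i+1})-\ell(P_i)$; what is left is $-\defect(P_{[i+\ell(P_{i+1})+1]})$.

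The only genuine difficulty is the index bookkeeping in the third case: aligning the ranges of the two shifted partial sums and keeping the cyclic residue symbol $[\,\cdot\,]$ consistent, in particular reading $[i+1]$ modulo $n$ so that $\defect(P_{[i+1]})$ and $\defect(P_{i+1})$ coincide. One should also note that the second and third identities tacitly presuppose $\defect(P_{i+1})\geq 1$, since otherwise the block between $c_{z_i}$ and $c_{z_{i+1}}$ is empty and there is no $c_{a_m}$ — a situation already covered by Lemma~\ref{sunday lemma when difference is one}. Together, Lemma~\ref{sunday lemma when difference is one} and the present lemma evaluate $c_t-c_{t+1}$ for every consecutive pair of $\cK$, and in each case the value is $1$ or a nonpositive integer; this is exactly the admissibility criterion of Lemma~\ref{kupisch how to verify}, which is how this lemma feeds into the proof of Proposition~\ref{emre defect invariant prop}.
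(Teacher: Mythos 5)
Your proposal is correct and follows essentially the same route as the paper's proof: substitute the closed formulas for $c_{a_m}$, $c_{z_i}$, $c_{z_{i+1}}$ from Proposition~\ref{emre defect invariant prop} and let the partial sums telescope, using $\ell(P_{i+1})=\ell(P_i)+\defect(P_{i+1})$ in the third case. Your explicit remarks on the nonemptiness of the block (so $\defect(P_{i+1})\geq 1$) and on the cyclic index bookkeeping are welcome refinements of the same argument rather than a different one.
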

\begin{proof}
By Proposition \ref{emre defect invariant prop} \ref{emre defect invariant prop unfiltered lengths}, 
\begin{align*}
    c_{a_{m}}&=\ell(P_i)+\sum\limits^{\ell (P_i)+m}_{j=1} \defect(P_{\left[i+j\right]})\\
    &=-m+\sum\limits^{\ell (P_i)}_{j=1} \left(\defect(P_{\left[i+j\right]})+1\right)+\sum\limits^{m}_{j=1}\left( \defect P_{[i+\ell(P_i)+j]}+1\right)
\end{align*}
Similarly, 
\begin{align*}
    c_{a_{m+1}}&=\ell(P_i)+\sum\limits^{\ell (P_i)+m+1}_{j=1} \defect(P_{\left[i+j\right]})\\&=-(m+1)+\sum\limits^{\ell (P_i)}_{j=1} \left(\defect(P_{\left[i+j\right]})+1\right)+\sum\limits^{m+1}_{j=1}\left( \defect P_{[i+\ell(P_i)+j]}+1\right)
\end{align*}
Therefore the difference $c_{a_m}-c_{a_{m+1}}=-m+(m+1)-\defect(P_{[i+\ell(P_i)+m+1]})=\defect(P_{[i+\ell(P_i)+m+1]})$.\\
For the second equality, we can use the following expressions:
\begin{gather*}
c_{z_i}=\sum\limits^{\ell (P_i)}_{j=1} \left(\defect(P_{\left[i+j\right]})+1\right)\\
c_{a_1}=-1+\sum\limits^{\ell (P_i)}_{j=1} \left(\defect(P_{\left[i+j\right]})+1\right)+\left( \defect P_{[i+\ell(P_i)+j]}+1\right).
\end{gather*}
Therefore $c_{z_i}-c_{a_1}=-\defect (P_{[i+\ell(P_i)+j]})$.\\
For the third equality, notice that $m=\defect(P_{i+1})$. We get 
\begin{align*} 
c_{a_m}=-\defect(P_{i+1})+\sum\limits^{\ell (P_i)}_{j=1} \left(\defect(P_{\left[i+j\right]})+1\right)+\sum\limits^{\defect(P_{i+1})}_{j=1}\left( \defect P_{[i+\ell(P_i)+j]}+1\right)
\end{align*}

On the other hand,

\begin{align*}
c_{z_{i+1}}=&\sum\limits^{\ell (P_{i+1})}_{j=1} \left(\defect(P_{\left[i+1+j\right]})+1\right)\\
=&\sum\limits^{\ell (P_i)}_{j=1} \left(\defect(P_{\left[i+1+j\right]})+1\right)+\sum\limits^{\defect(P_{i+1})}_{j=1} \left(\defect(P_{\left[i+\ell(P_i)+j+1\right]})+1\right)
\end{align*}
The difference is
\begin{align*}
-\defect(P_{[i+\ell(P_i)+\defect(P_{i+1})+1]})=-\defect(P_{[i+\ell(P_{i+1})+1]})
\end{align*}

\end{proof}

\begin{proof}[Proof of Proposition \ref{emre defect invariant prop}]
To verify whether a given sequence of integers $\cK=(c_1,\ldots,c_n)$ forms a Kupisch series of a cyclic Nakayama algebra, we need to check that 
\begin{enumerate}[label=\roman*)]
\item each $c_i\geq 2$
\item and if $c_i>c_{i+1}$, then $c_i-c_{i+1}=1$. 
\end{enumerate} If $c_i\leq c_{i+1}$, $c_{i+1}$ can take any positive integer value, so it satisfies admissibility.
By lemmas \ref{sunday lemma when difference is one} and \ref{sunday lemma when difference is not one, all three possibilities here}, the case $c_j>c_{j+1}$ only happens when $c_j=c_{z_i}$ and $c_{j}=c_{z_{i+1}}$ with $\ell(P_i)=1+\ell(P_{i+1})$. It follows that the difference is one, so $\cK$ is admissible sequence.\\
Each $c_j\in\cK$ has at least $\ell(P_{j'})$-many nonzero summands. If $\Theta$ is cyclic Nakayama algebra, then $\ell(P_i)\geq 2$ for any $i$ which makes $c_j\geq 2$. Let $\Theta$ be a linear Nakayama algebra. For simple projective $T_i$, the corresponding $c_i$ is 
$\ell(P(T_{i+1}))+\sum\limits^{\ell(P(T_{i+1}))}_{j=1}\defect(P_{[i+j]})$. Since $\defect P(T_{i+1})\neq 0$, we get $c_i\geq 2$.

\end{proof}

\begin{proposition}\label{sunday proposition regarding algebra A} Let $\Theta$ be a Nakayama algebra of rank $n$. Let $\cK$ be the admissible sequence constructed from $\Theta$ in proposition \ref{emre defect invariant prop}. If a cyclic Nakayama algebra $\Lambda$ is given by admissible sequence $\cK$, then,  $\bm\varepsilon(\Lambda)\cong\Theta$ and $\defect \Lambda=\defect\Theta$.
\end{proposition}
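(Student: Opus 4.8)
\emph{Overview of the plan.} The proposition asserts $\defect\Lambda=\defect\Theta$ and $\bm\varepsilon(\Lambda)\cong\Theta$. The first is a short count inside $\cK$; the second amounts to computing the Kupisch data of $\bm\varepsilon(\Lambda)$ from $\cK$ and recognising it as that of $\Theta$. I would treat them in that order.

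\emph{The defect.} By Lemma \ref{sunday lemma lengths of delta and cardinality}, $\defect\Lambda=\#\{j\mid c_j>c_{j+1}\}$, so it suffices to locate the strict descents of $\cK$. The difference computations of Lemmas \ref{sunday lemma when difference is one} and \ref{sunday lemma when difference is not one, all three possibilities here} (as assembled in the proof of Proposition \ref{emre defect invariant prop}) show that every transition of $\cK$ involving a non-filtered term $c_{a_m}$, and every transition $c_{z_i}\to c_{z_{i+1}}$ with $\ell_\Theta(P_i)\le\ell_\Theta(P_{i+1})$, is non-increasing; the only strict descents are the direct transitions $c_{z_i}\to c_{z_{i+1}}$ with $\ell_\Theta(P_i)=\ell_\Theta(P_{i+1})+1$, each with difference $1$. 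Such a direct transition occurs exactly at an index $i$ where the Kupisch series of $\Theta$ strictly descends, and there are $\defect\Theta$ of those by Lemma \ref{sunday lemma lengths of delta and cardinality} applied to $\Theta$. Hence $\defect\Lambda=\defect\Theta$; together with $\rank\Lambda=N=n+\defect\Theta$ and Proposition \ref{rank_rel_def} this also gives $\#\rel\Lambda=n=\rank\Theta$.

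\emph{The syzygy filtered algebra.} By Proposition \ref{diagram}, $\bm\varepsilon(\Lambda)$ has simple modules $\Hom_\Lambda(\cP,B)$ ($B\in\cB(\Lambda)$), indecomposable projectives $\Hom_\Lambda(\cP,\tilde P)$ (the filtered projective $\Lambda$-modules $\tilde P\in\cP(\Lambda)$), and $\ell_{\bm\varepsilon(\Lambda)}(\Hom_\Lambda(\cP,\tilde P))$ equals the number of $\cB(\Lambda)$-composition factors of $\tilde P$; moreover $\tau_{\bm\varepsilon(\Lambda)}\Hom_\Lambda(\cP,B)\cong\Hom_\Lambda(\cP,C)$ exactly when $\topp C\cong\tau_\Lambda\soc B$ (Proposition \ref{emre prop ccc1}). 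First I would read off, via Lemma \ref{sunday lemma lengths of delta and cardinality}, that $\cB(\Lambda)$ has $n$ elements whose lengths are the ``jumps'' $c_{k+1}-c_k+1$ at the $n$ indices with $c_k\le c_{k+1}$, and that by the arithmetic of Lemmas \ref{sunday lemma when difference is one}, \ref{sunday lemma when difference is not one, all three possibilities here} these jumps realise exactly the values $\defect_\Theta(P_1)+1,\dots,\defect_\Theta(P_n)+1$, each once, in the cyclic order dictated by conditions 2)--4) of Proposition \ref{emre defect invariant prop}. Then I would show that the filtered projective $\tilde P_{z_i}$ (the one of length $c_{z_i}$) is $\cB(\Lambda)$-filtered by precisely the $\ell_\Theta(P_i)$ base-set modules of lengths $\defect_\Theta(P_{[i+1]})+1,\dots,\defect_\Theta(P_{[i+\ell_\Theta(P_i)]})+1$, ordered as forced by $\topp M_j\cong\tau_\Lambda\soc(M_{j+1}/M_j)$ (Proposition \ref{prop:quadrilateral}). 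Here Proposition \ref{length_defect_2} is the main input: the non-filtered projectives $P(Z_i)$, $P(Y_j)$ it produces are exactly the ones interpolating between consecutive base-set modules, so they add nothing to the $\cB(\Lambda)$-filtration of $\tilde P_{z_i}$, whose length $c_{z_i}=\sum_{j=1}^{\ell_\Theta(P_i)}(\defect_\Theta(P_{[i+j]})+1)$ is then exactly absorbed by those $\ell_\Theta(P_i)$ layers. Translating back through $\Hom_\Lambda(\cP,-)$ and the $\tau_{\bm\varepsilon(\Lambda)}$-description, the $i$-th projective of $\bm\varepsilon(\Lambda)$ has length $\ell_\Theta(P_i)$ with radical socle-series matching $P_i$, so the Kupisch data of $\bm\varepsilon(\Lambda)$ is that of $\Theta$; a Nakayama algebra being determined up to isomorphism by its Kupisch data, $\bm\varepsilon(\Lambda)\cong\Theta$.

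\emph{The main obstacle.} The delicate point is the second half of the last step: verifying that the $\cB(\Lambda)$-filtration of each $\tilde P_{z_i}$ consists of exactly the predicted base-set modules in the predicted radical order, i.e. that deleting the non-filtered projectives of $\Lambda$ — which by Proposition \ref{length_defect_2} sit precisely in the blocks between consecutive base-set modules prescribed by conditions 3)--4) of Proposition \ref{emre defect invariant prop} — returns $\Theta$ both as a multiset of projective lengths and as a radical structure. The defect count and the passage from coinciding Kupisch data to an isomorphism are routine in comparison.
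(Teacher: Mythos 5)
Your proposal is correct and follows essentially the same route as the paper's proof: the paper counts the positions with $c_i\le c_{i+1}$ to get $|\cS(\Lambda)|=n$ and then deduces $\defect\Lambda=\defect\Theta$ from $\rank\Lambda=\#\rel\Lambda+\defect\Lambda$ (the complementary count to your descent count), identifies the filtered projectives as the $Q_{z_i}$ via the same difference lemmas, and concludes by computing $\ell_{\cB(\Lambda)}(Q_{z_i})=\ell_\Theta(P_i)$ so that the Kupisch series agree up to cyclic permutation. Your extra care about matching the radical order of the $\cB(\Lambda)$-layers is a reasonable refinement of the same argument rather than a different approach.
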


\begin{proof}
In proposition \ref{emre defect invariant prop}, we construct the sequence $\cK$ for any given Nakayama algebra $\Theta$, and proved that it is admissible. Let $\Lambda$ be the algebra given by such $\cK$. Let $\left\{Q_1,\ldots,Q_{N}\right\}$ be the complete set of representatives of isomorphism classes of indecomposable $\Lambda$-modules and $\ell(Q_i)=c_i\in\cK$. We divide the proof into the following steps.

\begin{enumerate}
    \item $\vert\cS(\Lambda)\vert=n$. 
    \item $\defect \Lambda=\defect\Theta$.
    \item $\cB(\Lambda)$-filtered indecomposable projective $\Lambda$-modules are $Q_{z_1},\ldots,Q_{z_n}$.
\end{enumerate}
1) By lemma \ref{sunday lemma lengths of delta and cardinality}, $\vert\cS(\Lambda)\vert=\vert\{c_i\mid c_i\leq c_{i+1}, 1\leq i\leq N\}$ where $c_{N+1}=c_1$. There are two types of indecomposable projective $\Lambda$-modules, either $\ell(Q_i)=c_{i}\notin\cN$ or $\ell(Q_i)=c_i\in\cN$ where $\cN\subset\cK$ by the construction given in proposition \ref{emre defect invariant prop}. We have the following subsets
\begin{align*}
\{c_i\mid c_i\leq c_{i+1},\,1\leq i\leq N\}=\{c_j\mid c_j\leq c_{j+1},\,c_j\notin\cN\}\cup\{c_{z_{i}}\mid  c_{z_i}\leq c_{z_{i+1}},\,c_{z_i}\in\cN\}.
\end{align*}

By modifying lemma \ref{sunday lemma when difference is not one, all three possibilities here} for new indices, any $c_j\notin\cN$ satisfies $c_j\leq c_{j+1}$. Therefore the cardinality of the first set is  $\vert\cK\vert-\vert\cN\vert=\defect\Theta$.

Now, we consider the partition of the set $\{c_{z_i}\mid c_{z_i}\in\cN\}$ of size $n=\vert\cN\vert$  into
\begin{align*}
\{c_{z_i}\mid c_{z_i}\leq c_{z_{i+1}},\,c_{z_{i}}\in\cN\}\cup \{c_{z_{i+1}}\mid c_{z_i}> c_{z_{i+1}},\,c_{z_{i}}\in\cN\}.
\end{align*}
We claim that the cardinality of the first subset is $n-\defect\Theta$. Because, by lemma \ref{sunday lemma when difference is one}, $c_{z_i}>c_{z_{i+1}}$ if and only if $\ell(P_i)=1+\ell(P_{i+1})$. By lemma \ref{sunday lemma lengths of delta and cardinality}, $\vert\{\ell(P_{i+1})\,\vert\, \ell(P_i)>\ell(P_{i+1}) \}\vert=\defect\Theta$ which makes the size of the first set $n-\defect\Theta$. Therefore
\begin{align*}
\vert\cS(\Lambda)\vert&=\vert\{c_j\mid c_j\leq c_{j+1},\,j\notin\cN\}\vert+\vert\{c_{z_i}\mid c_{z_i}\leq c_{z_{i+1}},\,c_{z_{i}}\in\cN\}\vert\\&=\defect\Theta+\left(n-\defect\Theta\right)=n. 
\end{align*}\\
2) By Poposition \ref{rank_rel_def}, $\rank \Lambda=\defect \Lambda+\#\rel \Lambda$. By the previous step, $\vert\cS(\Lambda)\vert=n=\#\rel $. Since $\rank \Lambda=\vert\cK\vert=n+\defect \Theta$, claim follows.\\
3) Notice that there are $n$ $\cB(\Lambda)$-filtered indecomposable projective $\Lambda$-modules, since $\vert\cB(\Lambda)\vert=\vert\cS(\Lambda)\vert=n$. 
The lengths of elements of $\cB(\Lambda)$ are $c_{i+1}-c_i+1$ provided that $c_i\leq c_{i+1}$ by lemma \ref{sunday lemma lengths of delta and cardinality}. In lemmas \ref{sunday lemma when difference is one} and \ref{sunday lemma when difference is not one, all three possibilities here}, we calculated all possible differences of $c_i-c_{i+1}$ which are always of the form $-\defect(P_j)$ when $c_i\leq c_{i+1}$. Moreover, by proposition \ref{emre defect invariant prop} \ref{emre defect invariant prop filtered}, for any $1\leq i\leq n$, $\defect(P_i)+1$ is a summand of $Q_{z_i}$. Therefore, every $Q_{z_i}$ is $\cB(\Lambda)$-filtered. Because there are $n$-many $Q_{z_i}$, hence we get all.  \\

Now we can show that $\bm\varepsilon(\Lambda)\cong\Theta$. It is enough to compute dimensions of projective $\bm\varepsilon(\Lambda)$-modules, since indecomposable $\bm\varepsilon(\Lambda)$-modules are uniserial, i.e. dimension and length of indecomposable modules are equal. Therefore, $\dim_{\mathbb{K}}\Hom_A\left(\bigoplus\limits_{1\leq i\leq n} Q_{z_i}, Q_{z_i}\right)=\ell_{\cB(\Lambda)}(Q_{z_i})=\ell(P_i)$, since the number of elements $\cB(\Lambda)$ appearing in each $Q_{z_i}$ is $\ell(P_i)$. Hence Kupisch series of $\bm\varepsilon(\Lambda)$ is equal to Kupisch series of $\Theta$ upto cyclic permutation.
\end{proof}

\subsection{General case, i.e. Not Defect-Invariant}
Let $\Theta$ be a Nakayama algebra of rank $n$, and let $\{P_1,P_2,\ldots,P_n\}$ be the ordered set of indecomposable projective $\Theta$-modules.
In this subsection, we will construct cyclic Nakayama algebra $\Lambda$ such that $\varepsilon(\Lambda)\cong \Theta$ and $\defect(\Lambda)>\defect(\Theta)$.


\begin{definition} For any finite sequence of integers $(c_1,\ldots,c_n)$ we define the distance between $c_i$ and $c_j$ with $j>i$ as $\min \{[j-i+1],[i-j+1]\}$, denoted by $\dist(c_i,c_j)$.
\end{definition}

{We start with a Nakayama algebra $\Theta$ and a complete set of representatives of indecomposable projective $\Theta$-modules $\{P_1,\dots,P_n\}$. 
Next we consider a sequence of integers $\cK=\left(c_1,\ldots,c_N\right)$ and show that this sequence is a Kupish series for a Nakayama algebra $\Lambda$ providing that it satisfies four conditions: 1), 2), 3), 4) as stated in the Proposition \ref{emre defect invariant prop}. Before proving this proposition we will explain the role or purpose of each of these conditions (as they are numbered in the proposition)..

Now we present the technical proposition, which enables us to construct a cyclic Nakayama algebra for a given connected Nakayama algebra $\Theta$ and a vector $w=(w_1,\dots,w_n)$ such that $w_i\geq \defect(P_i)$. By \ref{rank_rel_def}, the rank of $\Lambda$ is $\rank\Theta+\sum_{i}w_i$ which is at part \ref{emre general rank}. Each $w_i+1$ will correspond to length of filtered modules which is hidden in part \ref{emre general filtered}. For non-filtered $\Lambda$ modules, by \ref{length_defect_2}, there are $\defect P_{i+1}$-many possible socles and $w_{i+1}$ many nonisomorphic tops, which we describe the lengths in part \ref{emre general length of unfiltered}. There are $w_i$ non-filtered projective modules between filtered projectives which is in part \ref{emre general number of unfiltered}. Below is the proposition. 

\begin{proposition}\label{emre general proposition non defect invariant} Let $\Theta$ be a Nakayama algebra $\{P_1,\ldots,P_n\}$ be the complete set of representatives of isomorphism classes of indecomposable projective $\Theta$-modules. Let $w\in\mathbb{R}^n$ be a vector satisfying $w_i\geq \defect(P_i)$. Let $\cK=(c_1,\ldots,c_N)$ be a sequence of positive integers which satisfy the following  conditions:

\begin{enumerate}[label=\arabic*)]
\item\label{emre general rank} $N=n+\sum^n_{i=1} w_i$ 
\item\label{emre general filtered} $\cN=(c_{z_1},c_{z_2},\ldots,c_{z_n})$ is a subsequence of $\cK$ such that

\begin{align}
c_{z_i}=\sum\limits^{\ell (P_i)}_{j=1} \left(w_{\left[i+j\right]}+1\right)
\end{align}

\item\label{emre general number of unfiltered}  {The number of terms in $\cK$ between $c_{z_i}$ and $c_{z_{i+1}}$ is equal to $w_{i+1}$. The number of terms in $\cK$ between $c_1$ and $c_{z_1}$ is equal to $w_1-a_0-2$ where $a_0$ is the number of terms in $\cK$ between $c_{z_n}$ and $c_N$.}

\item\label{emre general length of unfiltered} For any term $c_{a_j} \in\cK-\cN$ which satisfies ${z_i}<{a_1}<\cdots<{a_{w_{i+1}}}<{z_{i+1}}$, there is a subsequence of indices  $\{b_1, b_2, \dots , b_{\defect(P_{i+1})}\}\subset \{a_1,\ a_2,\dots , a_{w_{i+1}}\}$ such that

\begin{align} 
c_{b_m}=-\dist(c_{b_m},c_{z_{i}})+\sum\limits^{\ell (P_i)}_{j=1} \left(w_{\left[i+j\right]}+1\right)+\sum\limits^{m}_{j=1}\left( w_{[i+\ell(P_i)+j]}+1\right)
\end{align} 
Any term $c_{b_m} \in\cK-\cN$ with index either $z_n<b_m\leq N$ or $1\leq b_m< z_1$   is given by 
\begin{align} 
c_{b_m}=-\dist(c_{b_m},c_{z_n})+\sum\limits^{\ell (P_n)}_{j=1} \left(w_{\left[j\right]}+1\right)+\sum\limits^{m}_{j=1}\left( w_{[\ell(P_n)+j]}+1\right)
\end{align} 
\item For each $b_t< a_m < b_{t+1}$, we set 

\begin{align}
c_{a_m}=-\dist(c_{a_m},c_{z_{i}})+\sum\limits^{\ell (P_i)}_{j=1} \left(w_{\left[i+j\right]}+1\right)+\sum\limits^{t}_{j=1}\left( w_{[i+\ell(P_i)+j]}+1\right)
\end{align}

\item  For each $z_i< a_m < b_{1}$, we set

\begin{align}
c_{a_m}=-\dist(c_{a_m},c_{z_{i}})+\sum\limits^{\ell (P_i)}_{j=1} \left(w_{\left[i+j\right]}+1\right)
\end{align}

\item  For each $b_{\defect P_{i+1}}< a_m < z_{i+1}$, we set
\begin{align}
c_{a_m}=-\dist(c_{a_m},c_{z_{i}})+\sum\limits^{\ell (P_i)}_{j=1} \left(w_{\left[i+j\right]}+1\right)+\sum\limits^{\defect P_{i+1}}_{j=1}\left( w_{[i+\ell(P_i)+j]}+1\right)
\end{align}

\end{enumerate}
The sequence $\cK$ satisfying all these conditions forms an admissible sequence of a cyclic Nakayama algebra. 
\end{proposition}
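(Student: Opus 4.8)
The plan is to follow the proof of Proposition~\ref{emre defect invariant prop}. By Lemma~\ref{kupisch how to verify}, it suffices to show that every entry of $\cK$ is $\ge 2$ and that whenever $c_i>c_{i+1}$ in the cyclic order one has $c_i=c_{i+1}+1$. As there, this reduces to computing every consecutive difference $c_i-c_{i+1}$, organized according to the three kinds of entries occurring in $\cK$: the filtered entries $c_{z_i}$ of \ref{emre general filtered}, the distinguished non--filtered entries $c_{b_m}$ of \ref{emre general length of unfiltered}, and the remaining (ordinary) non--filtered entries $c_{a_m}$ given by conditions 5)--7).

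First I would establish the analogue of Lemma~\ref{sunday lemma when difference is one} for the two filtered entries bounding a block: using $c_{z_i}=\sum_{j=1}^{\ell(P_i)}(w_{[i+j]}+1)$, splitting the defining sum of $c_{z_{i+1}}$ at $\ell(P_i)$, and invoking Remark~\ref{kupisch properties} (so that either $\ell(P_i)\le\ell(P_{i+1})$ with $\ell(P_{i+1})-\ell(P_i)=\defect P_{i+1}$, or $\ell(P_i)=\ell(P_{i+1})+1$, which forces $\defect P_{i+1}=0$), one obtains $c_{z_i}-c_{z_{i+1}}=1+w_{[i+1]}$ in the latter case and $c_{z_i}-c_{z_{i+1}}=w_{[i+1]}-\defect P_{i+1}-\sum_{t=1}^{\defect P_{i+1}+1}w_{[i+\ell(P_i)+t]}$ in the former; these reduce to the formulas of Lemmas~\ref{sunday lemma when difference is one} and~\ref{sunday lemma when difference is not one, all three possibilities here} when $w_\bullet=\defect(P_\bullet)$. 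Next I would treat a whole block at once: rewriting each formula of conditions 4)--7) as $c_{z_i}-\dist(\,\cdot\,,c_{z_i})$ plus the partial sum $\sum_{j=1}^{t}(w_{[i+\ell(P_i)+j]}+1)$, where $t$ is the number of distinguished indices $b_1,\dots,b_t$ weakly preceding the entry, exhibits a ``descending staircase'': an ordinary step lowers the value by exactly $1$ (because $\dist$ grows by $1$ while the partial sum is frozen), a step landing on a distinguished entry $c_{b_{t+1}}$ changes the value by $w_{[i+\ell(P_i)+t+1]}\ge 0$, and, since a block between $c_{z_i}$ and $c_{z_{i+1}}$ contains exactly $w_{i+1}$ non--filtered entries by condition~\ref{emre general number of unfiltered}, the staircase reaches $c_{z_{i+1}}$ with a closing step of size $w_{[i+\ell(P_{i+1})+1]}\ge 0$ (respectively a drop of exactly $1$ when $\ell(P_i)=\ell(P_{i+1})+1$). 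It follows that the only consecutive pairs with $c_i>c_{i+1}$ are the ordinary descending steps and the ``$\rad P_i\cong P_{i+1}$'' arrival step, and in each of them the difference equals $1$, proving admissibility.

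For the bound $c_i\ge 2$: along a block the value never drops below $c_{z_i}$ minus the number of ordinary entries of the block, i.e.\ never below $c_{z_i}-(w_{i+1}-\defect P_{i+1})$; and $c_{z_i}=\sum_{j=1}^{\ell(P_i)}(w_{[i+j]}+1)\ge w_{[i+1]}+\ell(P_i)=w_{i+1}+\ell(P_i)$ (the $j=1$ summand being $w_{i+1}+1$), so every entry of the block is $\ge\defect P_{i+1}+\ell(P_i)\ge 2$ when $\Theta$ is cyclic; the wrap--around block is treated in the same way, condition~\ref{emre general number of unfiltered} pinning down the distribution of its entries around the seam, and the case where $\Theta$ is linear is handled at simple--projective entries exactly as at the end of the proof of Proposition~\ref{emre defect invariant prop}.

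The step I expect to be the main obstacle is the bookkeeping at the block boundaries and at the cyclic seam between $c_N$ and $c_1$: one must verify that the count of ordinary versus distinguished steps is consistent with condition~\ref{emre general number of unfiltered}, that the telescoping of $\dist$ still closes up for the block straddling the seam (this is precisely why $\dist$ is taken to be a cyclic distance rather than $|j-i|$), and that a degenerate block with $\defect P_{i+1}=0$ but $w_{i+1}>0$ is handled by reading conditions 5)--7) with ``$b_1$'' and ``$b_{\defect P_{i+1}}$'' understood as ``$z_{i+1}$'', so that such a block is a single descending staircase issuing from $c_{z_i}$. Once the $\dist$--telescoping is in place, the remaining identities are just Lemmas~\ref{sunday lemma when difference is one} and~\ref{sunday lemma when difference is not one, all three possibilities here} with $w_\bullet$ substituted for $\defect(P_\bullet)$.
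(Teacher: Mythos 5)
Your proposal is correct and follows essentially the same route as the paper: it verifies the Kupisch admissibility criterion of Lemma \ref{kupisch how to verify} by computing every consecutive difference $c_i-c_{i+1}$ from the defining formulas (the content of Lemmas \ref{emre general lemma when difference is one} and \ref{emre general lemma when difference is not one, all three possibilities here}) and then bounding each entry below by $2$. Your ``staircase'' packaging is only a reorganization of those same difference computations, though it is in fact somewhat more careful than the paper at the block boundaries, the cyclic seam, and the degenerate blocks with $\defect P_{i+1}=0<w_{i+1}$.
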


    To verify whether a given sequence of integers $\cK=(c_1,\ldots,c_n)$ forms a Kupisch series of a cyclic Nakayama algebra, we need to check that 
\begin{enumerate}[label=\roman*)]
\item each $c_i\geq 2$
\item and if $c_i>c_{i+1}$, then $c_i-c_{i+1}=1$. 
\end{enumerate} If $c_i\leq c_{i+1}$, $c_{i+1}$ can take any positive integer value, so it satisfies admissibility. Hence, to prove proposition \ref{emre general proposition non defect invariant}, we need to compute all possible $c_i-c_{i+1}$ for $c_i,c_{c_{i+1}}\in\cK$ which are stated in the following two lemmas \ref{emre general lemma when difference is one} and \ref{emre general lemma when difference is not one, all three possibilities here}.

\begin{lemma}\label{emre general lemma when difference is one} Let $c_{z_i}$ and $c_{z_{i+1}}$ be consecutive elements of $\cK$. Then,
\begin{enumerate}
    \item $c_{z_i}-c_{z_{i+1}}=1$ if and only if $\ell(P_i)=1+\ell(P_{i+1})$ 
    \item $c_{z_i}-c_{z_{i+1}}<0$ if and only if $\ell(P_i)=\ell(P_{i+1})$.
\end{enumerate}
In the latter case, we have
\begin{align}
    c_{z_i}-c_{z_{i+1}}=-w_{[i+\ell(P_i)+1]}.
    \end{align}
\end{lemma}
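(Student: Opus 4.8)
The plan is to mimic the computation of Lemma \ref{sunday lemma when difference is one}, replacing the defects $\defect(P_\bullet)$ by the weights $w_\bullet$, together with one extra observation that is needed to reduce the case analysis to the two equalities in the statement.

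First I would extract what the hypothesis forces. If $c_{z_i}$ and $c_{z_{i+1}}$ are consecutive terms of $\cK$, then by condition \ref{emre general number of unfiltered} of Proposition \ref{emre general proposition non defect invariant} the number of terms of $\cK$ strictly between them, which equals $w_{i+1}$, is $0$; since $w_{i+1}\geq\defect(P_{i+1})\geq 0$ this also forces $\defect(P_{i+1})=0$. After a cyclic relabelling of the Kupisch series $(\ell(P_1),\dots,\ell(P_n))$ of $\Theta$ I may assume $1\leq i\leq n-1$, so that $[i+1]=i+1$ and Remark \ref{kupisch properties} applies directly to $(\ell(P_i),\ell(P_{i+1}))$: either $\ell(P_i)=\ell(P_{i+1})+1$ (the case $P_i\supset P_{i+1}$), or $\ell(P_i)\leq\ell(P_{i+1})$ with $\defect(P_{i+1})=\ell(P_{i+1})-\ell(P_i)$; since $\defect(P_{i+1})=0$ the latter becomes $\ell(P_i)=\ell(P_{i+1})$. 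So under the hypothesis exactly one of the two listed equalities holds, and it remains to compute $c_{z_i}-c_{z_{i+1}}$ in each.

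Then I would run the two short computations from $c_{z_k}=\sum_{j=1}^{\ell(P_k)}(w_{[k+j]}+1)$. If $\ell(P_i)=\ell(P_{i+1})+1$, peeling off the $j=1$ summand and reindexing gives $c_{z_i}=(w_{[i+1]}+1)+\sum_{j=1}^{\ell(P_{i+1})}(w_{[i+j+1]}+1)=(w_{[i+1]}+1)+c_{z_{i+1}}$, so $c_{z_i}-c_{z_{i+1}}=w_{[i+1]}+1=w_{i+1}+1=1$. If $\ell(P_i)=\ell(P_{i+1})$, the sums $\sum_{j=1}^{\ell(P_i)}(w_{[i+j]}+1)$ and $\sum_{j=2}^{\ell(P_i)+1}(w_{[i+j]}+1)$ agree for $j=2,\dots,\ell(P_i)$, hence $c_{z_i}-c_{z_{i+1}}=(w_{[i+1]}+1)-(w_{[i+\ell(P_i)+1]}+1)=w_{[i+1]}-w_{[i+\ell(P_i)+1]}=-w_{[i+\ell(P_i)+1]}$, using $w_{i+1}=0$ again; this is the displayed identity. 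The biconditionals then follow by sign comparison: the value $1$ can only occur in the first case and a negative value only in the second, while the identity shows the second case yields $c_{z_i}-c_{z_{i+1}}\leq 0$.

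The step I expect to be the main obstacle is the strictness in part 2, i.e.\ upgrading $c_{z_i}-c_{z_{i+1}}\leq 0$ to $<0$ when $\ell(P_i)=\ell(P_{i+1})$; by the identity this amounts to $w_{[i+\ell(P_i)+1]}>0$. Since $w_{[i+\ell(P_i)+1]}\geq\defect(P_{[i+\ell(P_i)+1]})$ and $[i+\ell(P_i)+1]=[g(i+1)+1]$ is the index immediately after $\soc(P_{i+1})$, I would try to deduce $\defect(P_{[i+\ell(P_i)+1]})>0$ from Propositions \ref{prop:quadrilateral} and \ref{lem_defect_general_1}, using that $P_{i+1}$ is projective-injective (Remark \ref{kupisch properties}, as $P_i$ is minimal projective); if this can degenerate — e.g.\ when $P_{i+1}$ is projective-injective of defect $0$ and the relevant weight is taken minimal — then part 2 should be read with $\leq$, the strict version holding precisely when $w_{[i+\ell(P_i)+1]}>0$. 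Apart from this, the only remaining fuss is the cyclic bookkeeping with the residues $[\,\cdot\,]$ and the boundary pair that straddles positions $N$ and $1$ of $\cK$, which is handled by the relabelling device above and is routine.
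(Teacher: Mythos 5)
Your proposal is correct and follows essentially the same route as the paper: deduce $w_{i+1}=0$ (hence $\defect(P_{i+1})=0$) from condition 3) of Proposition \ref{emre general proposition non defect invariant}, split into the two cases $\ell(P_i)=\ell(P_{i+1})+1$ and $\ell(P_i)=\ell(P_{i+1})$ via Remark \ref{kupisch properties}, and run the same telescoping computations on $c_{z_k}=\sum_{j=1}^{\ell(P_k)}(w_{[k+j]}+1)$; your explicit remark that these two cases are exhaustive is a small improvement in rigor over the paper's write-up. Your worry about strictness in part 2 is well founded: the paper's own proof also only derives $c_{z_i}-c_{z_{i+1}}=-w_{[i+\ell(P_i)+1]}\leq 0$, and equality does occur (e.g.\ $\Theta$ selfinjective with $w=0$ gives $c_{z_i}=c_{z_{i+1}}$), so the statement's ``$<0$'' should be read as ``$\leq 0$''; this weakening is harmless for the only use of the lemma, namely checking that $c_j>c_{j+1}$ forces $c_j-c_{j+1}=1$ in the admissibility verification.
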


\begin{proof} Notice that $c_{z_i}$ and $c_{z_{i+1}}$ are consecutive elements of $\cK$ if and only if $w_{i+1}=0$ by construction of $\cK$ in proposition \ref{emre general proposition non defect invariant} \ref{emre general number of unfiltered}. \\

We start with $\ell(P_i)=1+\ell(P_{i+1})$. Then,
\begin{align*}
c_{z_i}&=\sum\limits^{\ell (P_i)}_{j=1} \left(w_{\left[i+j\right]}+1\right)\\
&=1+(w_{i+1})+\sum\limits^{\ell (P_{i+1})+1}_{j=2} \left((w_{\left[i+j\right]})+1\right)\\
&=1+(w_{i+1})+\sum\limits^{\ell (P_{i+1})}_{j=1} \left((w_{\left[i+j+1\right]})+1\right)\\
&=1+(w_{i+1})+c_{z_{i+1}}=1+c_{z_{i+1}}.
\end{align*}

The difference $c_{z_i}-c_{z_{i+1}}$ is
\begin{align}
    c_{z_i}-c_{z_{i+1}}&=\sum\limits^{\ell (P_i)}_{j=1} \left(w_{\left[i+j\right]}+1\right)-\sum\limits^{\ell (P_{i+1})}_{j=1} \left(w_{\left[i+j+1\right]}+1\right)\\
    &=1+w_{[i+1]}-\sum\limits^{\ell(P_{i+1})} _{j=\ell(P_i)}(w_{[i+j+1]}+1)\\
     &=1-\sum\limits^{\ell(P_{i+1})} _{j=\ell(P_i)}(w_{[i+j+1]}+1)
\end{align}
Therefore, $c_{z_i}-c_{z_{i+1}}=1$ implies that the second summand is zero. This is possible when $\ell(P_i)+i=\ell(P_{i+1})+i+1$, so $\ell(P_i)=1+\ell(P_{i+1})$.
\end{proof}
Now we assume that $\ell(P_i)\leq \ell(P_{i+1})$. By remark \ref{kupisch properties}, $\ell(P_{i+1})-\ell(P_i)=\defect(P_{i+1})$. So,
\begin{align*}
c_{z_{i+1}}&=\sum\limits^{\ell (P_{i+1})}_{j=1} \left((w_{\left[i+1+j\right]})+1\right)\\
&=\sum\limits^{\ell (P_i)-1}_{j=1} \left((w_{\left[i+1+j\right]})+1\right)+\sum\limits^{1+\defect(P_{i+1})}_{j=1} \left((w_{\left[i+\ell(P_i)+j\right]})+1\right)
\end{align*}

On the other hand,
\begin{align*}
c_{z_i}&=\sum\limits^{\ell (P_i)}_{j=1} \left((w_{\left[i+j\right]})+1\right)=1+(w_{i+1})+\sum\limits^{\ell (P_i)-1}_{j=1} \left((w_{\left[i+j+1\right]})+1\right)
\end{align*}
It follows that
\begin{align*}
c_{z_{i}}-c_{z_{i+1}}&=1+(w_{i+1})-\left(\sum\limits^{1+\defect(P_{i+1})}_{j=1} \left((w_{\left[i+\ell(P_i)+j\right]})+1\right)\right)\\
&=1-\sum\limits^{1+\defect(P_{i+1})}_{j=1} (w_{\left[i+\ell(P_i)+j\right]}+1)=-w_{[i+\ell(P_i)+1]}\leq 0
\end{align*}
since $w_{i+1}=0$ and this implies that $\defect P_{i+1}=0$. Hence we get $c_{z_i}-c_{z_{i+1}}\leq 0$.

\begin{lemma}\label{emre general lemma when difference is not one, all three possibilities here}
Let $c_{a_m},c_{a_{m+1}}\in\cN$ such that $a_m+1=a_{m+1}$. Then, 

\begin{enumerate}
    \item if there exist indices $b_t<a_m<a_{m+1}<b_t$, then $c_{a_m}-c_{a_{m+1}}=1$
    \item if there exist indices $z_{i}<a_m<a_{m+1}<b_1$, then $c_{a_m}-c_{a_{m+1}}=1$
    \item if there exist indices $b_t=a_m<a_{m+1}<b_{t+1}$, then $c_{a_m}-c_{a_{m+1}}=1$
    \item if $a_m=b_t$, $a_{m+1}=b_{t+1}$, then $c_{a_m}-c_{a_{m+1}}=-w_{[i+\ell(P_i)+t+1]}$
    \item if $a_m<a_{m+1}=b_{t+1}$, then $c_{a_m}-c_{a_{m+1}}=--w_{[i+\ell(P_i)+t+1]}$.
\end{enumerate}
\end{lemma}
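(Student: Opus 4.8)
The plan is to prove all five identities by direct substitution into the explicit closed formulas for the non-filtered terms $c_{a_m}$ and $c_{b_m}$ recorded in part \ref{emre general length of unfiltered} and in parts 5), 6), 7) of Proposition \ref{emre general proposition non defect invariant}, in exactly the style of the proof of Lemma \ref{sunday lemma when difference is not one, all three possibilities here} from the defect-invariant case. (Here the hypothesis should read $c_{a_m},c_{a_{m+1}}\in\cK-\cN$, and in case (1) the chain of inequalities should be $b_t<a_m<a_{m+1}<b_{t+1}$; these are typographical slips.) The one auxiliary fact needed beyond those formulas is that for two consecutive indices $a_m+1=a_{m+1}$ lying in one and the same block strictly between $c_{z_i}$ and $c_{z_{i+1}}$ one has $\dist(c_{a_{m+1}},c_{z_i})=\dist(c_{a_m},c_{z_i})+1$; this is immediate from the definition of $\dist$ once one observes that, by condition \ref{emre general number of unfiltered}, such a block contains only $w_{i+1}$ terms, so the minimum defining $\dist$ is realized by the count measured away from $c_{z_i}$ in the direction of $a_m$.

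First I would dispose of cases (1), (2), (3). In each of these the formula for $c_{a_{m+1}}$ — drawn from part 5) when $a_{m+1}\in(b_t,b_{t+1})$, from part 6) when $a_{m+1}\in(z_i,b_1)$, and from part \ref{emre general length of unfiltered} when $a_m$ is itself a $b$-index — carries exactly the same two ``sum'' contributions $\sum_{j=1}^{\ell(P_i)}(w_{[i+j]}+1)$ and $\sum_{j=1}^{t}(w_{[i+\ell(P_i)+j]}+1)$ as the corresponding formula for $c_{a_m}$, and the two formulas differ only in the leading term $-\dist(\,\cdot\,,c_{z_i})$. Hence $c_{a_m}-c_{a_{m+1}}=\dist(c_{a_{m+1}},c_{z_i})-\dist(c_{a_m},c_{z_i})=1$ by the auxiliary fact. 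The identical computation handles the wrap-around block lying between $c_{z_n}$ and $c_N$, using the second displayed formula of part \ref{emre general length of unfiltered} with $\dist(\,\cdot\,,c_{z_n})$ in place of $\dist(\,\cdot\,,c_{z_i})$.

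Next I would treat cases (4) and (5), where $a_{m+1}=b_{t+1}$ is a $b$-index (and in (4) also $a_m=b_t$). Now $c_{a_{m+1}}=c_{b_{t+1}}$ is computed from part \ref{emre general length of unfiltered}, so it carries the summand $\sum_{j=1}^{t+1}(w_{[i+\ell(P_i)+j]}+1)$ — one term longer than the summand $\sum_{j=1}^{t}(w_{[i+\ell(P_i)+j]}+1)$ appearing in the formula for $c_{a_m}$, the extra term being precisely $w_{[i+\ell(P_i)+t+1]}+1$. Combining this with the $+1$ coming from the two $\dist$ terms, as in the previous paragraph, gives $c_{a_m}-c_{a_{m+1}}=1-\bigl(w_{[i+\ell(P_i)+t+1]}+1\bigr)=-w_{[i+\ell(P_i)+t+1]}$, which is the asserted value — the ``$--$'' in the statement of case (5) being a typo for a single minus sign. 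In particular all differences computed in this lemma are either $1$ or of the form $-w_{[\,\cdot\,]}\le 0$, which is exactly what is needed, together with Lemma \ref{emre general lemma when difference is one}, for the admissibility verification in the proof of Proposition \ref{emre general proposition non defect invariant}.

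The hard part will be purely clerical rather than mathematical: one must keep the index arithmetic inside $\dist$ straight across the distinguished gap between $c_{z_n}$ and $c_N$, and must make sure that each of the (slightly overlapping) case hypotheses is matched with the correct one of the formulas in parts 5), 6), 7), \ref{emre general length of unfiltered}. Once those formulas are in hand, the whole argument is a line-by-line transcription of the proof of Lemma \ref{sunday lemma when difference is not one, all three possibilities here}, with the constant ``$+1$'' contributions of the defect-invariant case replaced throughout by the $\dist$-differences.
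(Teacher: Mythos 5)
Your proposal is correct and follows essentially the same route as the paper's own proof: substitute the closed formulas for $c_{a_m}$, $c_{b_t}$ into the difference, observe that the two long sums cancel except for the single extra term $w_{[i+\ell(P_i)+t+1]}+1$ in the $b$-index cases, and account for the $+1$ coming from the two $\dist$ terms. Your explicit justification that $\dist(c_{a_{m+1}},c_{z_i})=\dist(c_{a_m},c_{z_i})+1$ within a block, and your flagging of the typos in the statement (the hypothesis should be $c_{a_m},c_{a_{m+1}}\in\cK-\cN$ and the double minus in case (5) is a slip), are small improvements over the paper's exposition but not a different argument.
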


\begin{proof}
    For the first three cases, the difference $c_{a_m}-c_{a_{m+1}}$ is
    \begin{align}
        -\dist(c_{a_m},c_{z_i})+\dist(c_{a_{m+1}},c_{z_i})=        a_{m+1}-a_{m}=1.
        \end{align}
Let $c_{b_t}$, $c_{b_{t+1}}$ be consecutive in $\cK$, i.e., $b_{t+1}=b_t+1$. Then, 
\begin{gather*}
    c_{b_t}-c_{b_{t+1}}=\\
   -\dist(c_{b_t},c_{z_{i}})+\sum\limits^{\ell (P_i)}_{j=1} \left(w_{\left[i+j\right]}+1\right)+\sum\limits^{t}_{j=1}\left( w_{[i+\ell(P_i)+j]}+1\right)\\+\dist(c_{b_{t+1}},c_{z_{i}})-\sum\limits^{\ell (P_i)}_{j=1} \left(w_{\left[i+j\right]}+1\right)-\sum\limits^{t+1}_{j=1}\left( w_{[i+\ell(P_i)+j]}+1\right)
  \\    =-w_{[i+\ell(P_i)+t+1]}
\end{gather*}
Let $c_{a_m}$ and $c_{b_{t+1}}$ be consecutive in $\cK$, i.e., $b_{t+1}=a_{m+1}$. Then,
\begin{gather*}
    c_{a_m}=-\dist(c_{a_m},c_{z_{i}})+\sum\limits^{\ell (P_i)}_{j=1} \left(w_{\left[i+j\right]}+1\right)+\sum\limits^{t}_{j=1}\left( w_{[i+\ell(P_i)+j]}+1\right)\\
    c_{b_{t+1}}=-\dist(c_{b_{t+1}},c_{z_{i}})+\sum\limits^{\ell (P_i)}_{j=1} \left(w_{\left[i+j\right]}+1\right)+\sum\limits^{t+1}_{j=1}\left( w_{[i+\ell(P_i)+j]}+1\right)
\end{gather*}
the difference is $-w_{[i+\ell(P_i)+t+1]}$
\end{proof}

Now, we state the proof of proposition \ref{emre general proposition non defect invariant}.
\begin{proof}
  We want to show that $\cK$ is an admissible sequence of a cyclic Nakayama algebra. Notice that each $c_{i}$ has at least $\ell(P_j)$ many summands, so $c_i\geq 2$ for all $i$. In lemmas \ref{emre general lemma when difference is not one, all three possibilities here} and \ref{emre general lemma when difference is one}, we calculated all possible differences, and show that $c_i>c_{i+1}$ implies that $c_i=c_{i+1}+1$. Hence, $\cK$ is an admissible sequence.
\end{proof}

\begin{proposition}\label{emre general prop epsilon}
    Let $\Theta$ be either a cyclic Nakayama algebra or a connected linear Nakayama algebra of rank $n$. Let $\cK$ be the admissible sequence constructed from $\Theta$ as in proposition \ref{emre general proposition non defect invariant}. If a cyclic Nakayama algebra $\Lambda$ is given by admissible sequence $\cK$, then,  $\bm\varepsilon(\Lambda)\cong\Theta$ and $\defect \Lambda=\vert w\vert\geq\defect\Theta$.
\end{proposition}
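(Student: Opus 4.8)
The plan is to follow the same three-step strategy used in the proof of Proposition~\ref{sunday proposition regarding algebra A}, adjusted so that the defect grows from $\defect\Theta$ to $|w|$. Let $\Lambda$ be the cyclic Nakayama algebra with Kupisch series $\cK$ (admissible by Proposition~\ref{emre general proposition non defect invariant}), let $\{Q_1,\dots,Q_N\}$ be the complete set of representatives of indecomposable projective $\Lambda$-modules, and set $\ell(Q_i)=c_i$. First I would prove $|\cS(\Lambda)|=n$; then $\defect\Lambda=|w|$ follows at once; then I would identify the $\cB(\Lambda)$-filtered indecomposable projectives as precisely $Q_{z_1},\dots,Q_{z_n}$ and check that $Q_{z_i}$ is filtered by exactly $\ell(P_i)$ modules from $\cB(\Lambda)$; and finally I would read off $\bm\varepsilon(\Lambda)\cong\Theta$ from a dimension count.

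For $|\cS(\Lambda)|=n$: by the first part of Lemma~\ref{sunday lemma lengths of delta and cardinality} this amounts to counting the indices $i$ with $c_i\le c_{i+1}$, equivalently (since $\cK$ is admissible, so every drop is by exactly $1$) to showing there are exactly $N-n=|w|$ strict decreases. Here I would tabulate all consecutive differences using Lemmas~\ref{emre general lemma when difference is one} and \ref{emre general lemma when difference is not one, all three possibilities here}, after organizing $\cK$ into the $n$ entries $c_{z_1},\dots,c_{z_n}$ and the $|w|$ remaining entries grouped into ``runs'' of length $w_{i+1}$ strictly between $c_{z_i}$ and $c_{z_{i+1}}$, and then, inside the $i$-th run, into the $\defect(P_{i+1})$ distinguished ``breakpoint'' entries $c_{b_1},\dots,c_{b_{\defect(P_{i+1})}}$ (the ones feeding the non-filtered projective-injectives of Proposition~\ref{length_defect_2}) and the remaining $w_{i+1}-\defect(P_{i+1})$ entries. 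The computation should show that a strict decrease occurs exactly at the $w_{i+1}-\defect(P_{i+1})$ non-breakpoint transitions inside each run, together with the $z$-transitions for which $\ell(P_i)=\ell(P_{i+1})+1$; by the first part of Lemma~\ref{sunday lemma lengths of delta and cardinality} applied to $\Theta$ the latter number $\defect\Theta$, so altogether $\sum_i\bigl(w_{i+1}-\defect(P_{i+1})\bigr)+\defect\Theta=|w|-\defect\Theta+\defect\Theta=|w|$ strict decreases, hence $|\cS(\Lambda)|=N-|w|=n$. Then $\#\rel\Lambda=|\cS(\Lambda)|=n$ by Proposition~\ref{numerical}, and Proposition~\ref{rank_rel_def} gives $\defect\Lambda=\rank\Lambda-\#\rel\Lambda=N-n=|w|$; moreover $|w|=\sum_i w_i\ge\sum_i\defect(P_i)=\defect\Theta$ since $w_i\ge\defect(P_i)$.

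Next, by Proposition~\ref{numerical} we have $|\cP(\Lambda)|=\#\rel\Lambda=n$, so there are exactly $n$ $\cB(\Lambda)$-filtered indecomposable projectives; I would check directly that the (distinct) modules $Q_{z_1},\dots,Q_{z_n}$ are among them, so that they exhaust $\cP(\Lambda)$. For this I would use that, by the second part of Lemma~\ref{sunday lemma lengths of delta and cardinality}, each base-set module has length $c_{j+1}-c_j+1$ for an index $j$ with $c_j\le c_{j+1}$, that the difference computations above exhibit each such gap as some $-w_{[\,\cdot\,]}$, and that condition~\ref{emre general filtered} of Proposition~\ref{emre general proposition non defect invariant} displays each $c_{z_i}$ as $\sum_{j=1}^{\ell(P_i)}(w_{[i+j]}+1)$, so $Q_{z_i}$ receives a $\cB(\Lambda)$-filtration with exactly $\ell(P_i)$ factors. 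By Lemma~\ref{projective}, $\cP=\bigoplus_{S\in\cS'(\Lambda)}P(S)=\bigoplus_{i=1}^{n}Q_{z_i}$, whence $\bm\varepsilon(\Lambda)=\End_\Lambda\bigl(\bigoplus_{i}Q_{z_i}\bigr)$; since indecomposable modules over a Nakayama algebra are uniserial, the dimension of the indecomposable projective $\bm\varepsilon(\Lambda)$-module at $Q_{z_i}$ equals its length, which by Proposition~\ref{diagram} equals the number of $\cB(\Lambda)$-composition factors of $Q_{z_i}$, namely $\ell(P_i)$, occurring in the cyclic order induced by $\Delta$ (Notation~\ref{sequence}). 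Hence the Kupisch series of $\bm\varepsilon(\Lambda)$ agrees with that of $\Theta$ up to cyclic rotation, so $\bm\varepsilon(\Lambda)\cong\Theta$. The \emph{main obstacle} is the case analysis in the middle step: one must correctly distribute, within each run $z_i\rightarrow z_{i+1}$, the $w_{i+1}$ non-$\cN$ entries into the $\defect(P_{i+1})$ breakpoints and the remaining fillers, and match each transition type against Lemmas~\ref{emre general lemma when difference is one}--\ref{emre general lemma when difference is not one, all three possibilities here}, the $\dist(\cdot,\cdot)$ terms in the Kupisch entries making this tally the delicate part.
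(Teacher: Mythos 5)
Your proposal is correct and follows essentially the same route as the paper's proof: the same three steps (first $|\cS(\Lambda)|=n$ via the consecutive-difference computations of Lemmas \ref{emre general lemma when difference is one} and \ref{emre general lemma when difference is not one, all three possibilities here}, then $\defect\Lambda=|w|$ from Propositions \ref{numerical} and \ref{rank_rel_def} together with $\rank\Lambda=N=n+|w|$, then identifying $Q_{z_1},\dots,Q_{z_n}$ as exactly the $\cB(\Lambda)$-filtered projectives and concluding $\bm\varepsilon(\Lambda)\cong\Theta$ by the length/dimension count on $\End_\Lambda(\bigoplus_i Q_{z_i})$). The only cosmetic difference is that you tally the strict decreases ($|w|$ of them, at non-breakpoint targets and at the $\defect\Theta$ transitions with $\ell(P_i)=\ell(P_{i+1})+1$) whereas the paper tallies the non-strict increases ($\defect\Theta$ with target a breakpoint $c_{b_t}$ plus $n-\defect\Theta$ among the $c_{z_i}$); these are complementary counts arising from the same case analysis.
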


\begin{proof}

In proposition \ref{emre defect invariant prop}, we construct the sequence $\cK$ for any given cyclic Nakayama algebra $\Theta$, and proved that it is admissible. Let $\Lambda$ be the algebra given by such $\cK$. Let $\left\{Q_1,\ldots,Q_{N}\right\}$ be the complete set of representatives of isomorphism classes of indecomposable $\Lambda$-modules and $\ell(Q_i)=c_i\in\cK$. We divide the proof into the following steps.
\begin{enumerate}
    \item $\vert\cS(\Lambda)\vert=n$
    \item $\defect \Lambda=\vert w\vert$
    \item $\cB(\Lambda)$-filtered indecomposable projective $\Lambda$-modules are $Q_{z_1},\ldots,Q_{z_n}$.
\end{enumerate}

First, we show that the cardinality of the socle set of $\Lambda$ is $n$, i.e., $\vert\cS(\Lambda)\vert=n$. By lemma \ref{sunday lemma lengths of delta and cardinality}, $\vert\cS(\Lambda)\vert=\vert\{c_i\mid c_i\leq c_{i+1}, 1\leq i\leq N\}$ where $c_{N+1}=c_1$. There are two types of indecomposable projective $\Lambda$-modules, either $\ell(Q_i)=c_{i}\notin\cN$ or $\ell(Q_i)=c_i\in\cN$ where $\cN\subset\cK$ by the construction given in proposition \ref{emre general proposition non defect invariant}. We have the following subsets
\begin{align*}
\{c_i\mid c_i\leq c_{i+1},\,1\leq i\leq N\}=\{c_j\mid c_j\leq c_{j+1},\,c_j\notin\cN\}\cup\{c_{z_{i}}\mid  c_{z_i}\leq c_{z_{i+1}},\,c_{z_i}\in\cN\}.
\end{align*}

The cardinality of the first subset is $\defect\Theta$. Because, by lemma \ref{emre general lemma when difference is not one, all three possibilities here}, $c_j\leq c_{j+1}$ if $c_{j+1}$ is of the form $c_{b_t}$ for some $z_i<b_t<z_{i+1}$. In total, there are $\sum_i\defect(P_i)$-many such terms. Hence the cardinality is $\defect\Theta$.

Now, we consider the partition of the set $\{c_{z_i}\mid c_{z_i}\in\cN\}$ of size $n=\vert\cN\vert$  into
\begin{align*}
\{c_{z_i}\mid c_{z_i}\leq c_{z_{i+1}},\,c_{z_{i}}\in\cN\}\cup \{c_{z_{i+1}}\mid c_{z_i}> c_{z_{i+1}},\,c_{z_{i}}\in\cN\}.
\end{align*}
We claim that the cardinality of the first subset is $n-\defect\Theta$. Because, by lemma \ref{emre general lemma when difference is one}, $c_{z_i}>c_{z_{i+1}}$ if and only if $\ell(P_i)=1+\ell(P_{i+1})$. By lemma \ref{sunday lemma lengths of delta and cardinality}, $\vert\{\ell(P_{i+1})\,\vert\, \ell(P_i)>\ell(P_{i+1}) \}\vert=\defect\Theta$ which makes the size of the first set $n-\defect\Theta$. Therefore
\begin{align*}
\vert\cS(\Lambda)\vert&=\vert\{c_j\mid c_j\leq c_{j+1},\,j\notin\cN\}\vert+\vert\{c_{z_i}\mid c_{z_i}\leq c_{z_{i+1}},\,c_{z_{i}}\in\cN\}\vert\\&=\defect\Theta+\left(n-\defect\Theta\right)=n. 
\end{align*}

Secondly, we compute the defect of $\Lambda$, i.e., $\defect \Lambda=\vert w\vert$. By proposition \ref{rank_rel_def}, $\rank \Lambda=\defect \Lambda+\#\rel \Lambda$. 
Together with the previous claim and proposition \ref{C_and_A}, $\vert\cS(\Lambda)\vert=n=\#\rel\Lambda $. Since $\rank \Lambda=\vert\cK\vert=n+\vert w\vert$, claim follows.

Finally, We show that $\cB(\Lambda)$-filtered indecomposable projective $\Lambda$-modules are $Q_{z_1},\ldots,Q_{z_n}$. Notice that there are $n$ $\cB(\Lambda)$-filtered indecomposable projective $\Lambda$-modules, since $\vert\cB(\Lambda)\vert=\vert\cS(\Lambda)\vert=n$. 
The lengths of elements of $\cB(\Lambda)$ are $c_{i+1}-c_i+1$ provided that $c_i\leq c_{i+1}$ by lemma \ref{sunday lemma lengths of delta and cardinality}. In lemmas \ref{emre general lemma when difference is one} and \ref{emre general lemma when difference is not one, all three possibilities here}, we calculated all possible differences of $c_i-c_{i+1}$ which are always of the form $-w_{[i]}$ when $c_i\leq c_{i+1}$. Moreover, by proposition \ref{emre general proposition non defect invariant} \ref{emre general filtered}, for any $1\leq i\leq n$, $w_{[i]}+1$ is a summand of $Q_{z_i}$. Therefore, every $Q_{z_i}$ is $\cB(\Lambda)$-filtered. Because there are $n$-many $Q_{z_i}$, hence we get all.

Now we can show that $\bm\varepsilon(\Lambda)\cong\Theta$. It is enough to compute dimensions of projective $\bm\varepsilon(\Lambda)$-modules, since indecomposable $\bm\varepsilon(\Lambda)$-modules are uniserial, i.e., dimension and length of indecomposable modules are equal. Therefore,

\begin{align*}
    \dim_{\mathbb{K}}\Hom_{\Lambda}\left(\bigoplus\limits_{1\leq i\leq n} Q_{z_i}, Q_{z_i}\right)=\ell_{\cB(\Lambda)}(Q_{z_i})=\ell(P_i)
    \end{align*}
    since the number of elements $\cB(\Lambda)$ appearing in each $Q_{z_i}$ is $\ell(P_i)$. Hence Kupisch series of $\bm\varepsilon(\Lambda)$ is equal to Kupisch series of $\Theta$ upto cyclic permutation.    
\end{proof}

\begin{example}
    Let $\Theta$ be given by Kupisch series $(2,2,1,3,2,1)$. Then, defect invariant reverse $\Lambda$ is given by $(2,4,3,3,3,4,3,2,2)$. If we choose $w=(1,1,0,3,0,0)$, then 
    \begin{align*}
        (3,\underline{2},4,3,\underline{3},\underline{3},4,3,2,\underline{4})\\
        (3,\underline{2},4,3,\underline{3},\underline{3},4,3,2,\underline{3})\\
        (3,\underline{5},4,3,\underline{3},\underline{3},4,3,2,\underline{4})\\
        (3,\underline{5},4,3,\underline{3},\underline{3},4,3,2,\underline{3})
    \end{align*}
    are Kupisch series of nonisomorphic Nakayama algebras $\Lambda_i$ such that $\bm\varepsilon(\Lambda_i)\cong\Theta$ where $\underline{c_i}$ is the length of non-filtered projective $\Lambda_i$-modules.
\end{example}

\section{Nakayama Algebras with higher homological dimensions}\label{section higher homological}

In this section, our goal is to establish results concerning specific homological dimensions of Nakayama algebras. Initially, we revisit some statements proved in \cite{sen21}. Afterward, we analyze algebras characterized by dominant dimensions in the subsequent subsections. To clarify what we mean by algebras characterized by dominant dimension, we introduce a comprehensive definition here:

\begin{definition}\label{emre all definitions together}
Let $A$ be a finite dimensional algebra over field $\mathbb{K}$. 
   \begin{enumerate}
       \item     $A$ is called \emph{dominant Auslander-Gorenstein} if injective dimension of each indecomposable projective $A$-module is bounded by its dominant dimension \cite{CIM}.
       \item     $A$ is called \emph{dominant Auslander-regular} if $A$ is dominant Auslander-Gorenstein algebra of finite global dimension \cite{CIM}.
       \item     $A$ is called \emph{minimal Auslander-Gorenstein} algebra, if it is dominant Auslander-Gorenstein and nonzero injective dimensions of projective modules are equal \cite{iyamasolberg}.
       \item    $A$ is called \emph{higher Auslander algebra} if it is minimal Auslander-Gorenstein algebra with finite global dimension \cite{iyama}.
       \end{enumerate}
\end{definition}

This section is organized as follows. First, we review some statements from \cite{sen19} and \cite{sen21} regarding injective objects. Next, we present proofs for Theorems \ref{emre thm domdim} and \ref{emre thm minimal aus. gor}. In the subsequent sections, we systematically study algebras as defined in \ref{emre all definitions together}.

\subsection{Properties of Injective Modules and Homological Dimensions}

\begin{lemma}\label{LEM the submodule of delta gives injective}\label{injectivity} Let $B$ be an element of $\cB(\Lambda)$ such that it is submodule of an indecomposable projective-injective $\Lambda$-module $P$ and it has a proper submodule $X$, i.e.,
\begin{align*}
X\hookrightarrow B\hookrightarrow P
\end{align*}
Then the quotient $P/X$ is an injective $\Lambda$-module.
\end{lemma}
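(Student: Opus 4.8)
The plan is to exploit uniseriality together with the characterization of the base-set $\cB(\Lambda)$ from Proposition \ref{prop:B_eqv_def}. We are given a chain $X\hookrightarrow B\hookrightarrow P$ with $B\in\cB(\Lambda)$, $P$ indecomposable projective-injective, and $X\subsetneq B$ a proper submodule. We want to show $P/X$ is injective. First I would observe that since $X$ is a proper submodule of $B$ and $B\in\cB(\Lambda)$, part (ii) of Proposition \ref{prop:B_eqv_def} tells us precisely that $I(B)/M$ is injective for every proper submodule $M\subseteq B$. So the natural move is to identify $P/X$ with $I(B)/X$.

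**Identifying $P$ with $I(B)$.**

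The key step is to show that the injective envelope $I(B)$ of $B$ is exactly $P$. By Proposition \ref{base-set}(5), $B$ is a submodule of exactly one indecomposable projective-injective, which by hypothesis is $P$; since $P$ is injective and contains $B$, and $B$ is indecomposable (hence has indecomposable injective envelope), the inclusion $B\hookrightarrow P$ extends to $I(B)\hookrightarrow P$. But one must check this inclusion is an isomorphism, i.e. that $P$ is genuinely the injective \emph{envelope} and not merely an injective module containing $B$. This follows from uniseriality: an indecomposable injective containing $B$ as a submodule with the same socle is the injective envelope, and $\soc P\cong\soc B$ because $B$ is a submodule of the uniserial module $P$. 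Hence $I(B)\cong P$, and the chain becomes $X\hookrightarrow B\hookrightarrow I(B)=P$ with $X$ a proper submodule of $B$.

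**Concluding.**

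Now apply Proposition \ref{prop:B_eqv_def}(ii): since $B\in\cB(\Lambda)$ and $X$ is a proper submodule of $B$, the quotient $I(B)/X$ is injective. As $I(B)\cong P$, we conclude $P/X$ is an injective $\Lambda$-module, which is exactly the claim. I would also note that the degenerate case $X=0$ is covered separately: $P/0=P$ is injective by hypothesis, consistent with the statement (though the hypothesis "$X$ proper" presumably means $0\neq X\subsetneq B$, in which case $X=\rad^k B$ for some $k\geq 1$ by uniseriality and the argument above applies verbatim).

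**Main obstacle.**

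The main obstacle is the identification $I(B)\cong P$ — specifically, verifying that the chosen projective-injective $P$ is the injective envelope of $B$ rather than just some injective module containing it. This is where uniseriality and the Gustafson/socle bookkeeping of Lemma on projective-injectives (that $\soc P\cong\soc P'\iff P\cong P'$) do the real work: one pins down $P$ by its socle, which must agree with $\soc B$. Once that identification is in hand, the rest is an immediate appeal to Proposition \ref{prop:B_eqv_def}(ii), so essentially no computation is required.
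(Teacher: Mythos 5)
Your proof is correct, and the identification $P\cong I(B)$ is justified exactly as you say: since $P$ is uniserial, $\soc B=\soc P$, so $\soc B$ is essential in the injective module $P$ and $P$ is the injective envelope of $B$; then Proposition \ref{prop:B_eqv_def}(ii) applies verbatim to the proper submodule $X\subsetneq B$. This is a genuinely different route from the paper's own argument, which does not pass through Proposition \ref{prop:B_eqv_def} at all: there one notes that $P/B\cong E/\soc E\cong\rad P'$ for the minimal injective quotient $E$ of $P$ and the associated minimal projective $P'$ (Proposition \ref{lem_defect_general_1}), so that $P/X$ surjects onto $\rad P'$ and is therefore one of the injective quotients of $P$ lying between $P$ and $E$. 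The two arguments rest on the same underlying fact about which quotients of a projective-injective are injective; yours buys brevity by quoting an equivalence the paper has already established (whose proof of (i)$\Rightarrow$(ii) carries the real content), while the paper's version is self-contained at the level of the minimal projective/minimal injective correspondence and makes explicit \emph{which} injective $P/X$ is. Either is acceptable; no gap.
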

\begin{lemma}\label{LEM injective first szyg is submodule of delta}\label{every second syzy of injective} If $I$ is an indecomposable injective non-projective $\Lambda$-module, then $\Omega^1(I)$ is {a proper} submodule of an element  $B$ of the base-set $\cB(\Lambda)$.
\end{lemma}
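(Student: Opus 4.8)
The statement to prove is Lemma \ref{LEM injective first szyg is submodule of delta}: if $I$ is an indecomposable injective non-projective $\Lambda$-module, then $\Omega^1(I)$ is a proper submodule of some $B\in\cB(\Lambda)$. My first move is to unpack what $\Omega^1(I)$ looks like using the classification of injectives. Since $I$ is injective but not projective, by Proposition \ref{prop:quadrilateral}(3) we have $I\cong I(\tau S)/\tau S$ where $S=\soc I$; equivalently, $I(\tau S)$ is the minimal injective with $I$ as its cosocle-quotient. Passing to projective covers, let $P(I)\twoheadrightarrow I$ be the projective cover, so $\Omega^1(I)=\ker(P(I)\to I)$. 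By uniseriality, $\Omega^1(I)$ is the unique submodule of $P(I)$ of length $\ell(P(I))-\ell(I)$, and $\topp\Omega^1(I)$ sits just below $\topp I$ in the $\tau$-orbit; concretely $\topp\Omega^1(I)\cong\tau^{\ell(I)}(\topp I)$, while $\soc\Omega^1(I)\cong\soc P(I)$.

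\textbf{Key steps.} First I would show $\soc\Omega^1(I)\in\cS(\Lambda)$: indeed $\soc\Omega^1(I)=\soc P(I)$, and the socle of any indecomposable projective is by definition in the socle-set $\cS(\Lambda)$. Second, and this is the crux, I would show $\topp\Omega^1(I)\in\cS'(\Lambda)$, or more precisely that $\topp\Omega^1(I)\cong\tau^{-}\soc I$ lies in the top-set. Here I use that $I$ is injective non-projective: $\Omega^1(I)$ is the first syzygy, and the top of the projective cover $P(I)$ maps onto $\topp I$, so $P(I)\supsetneq\Omega^1(I)$ has $\topp(P(I)/\Omega^1(I))=\topp I$. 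The standard fact (Remark \ref{socle of projective-injective}(2), together with Proposition \ref{prop:quadrilateral}) is that $S\in\cS'(\Lambda)$ iff $S\cong\soc E$ for some minimal injective $E$; since $I$ is a proper injective quotient of the minimal injective $I(\tau S)$... wait — I want the top of $\Omega^1(I)$. The cleaner route: from the non-split sequence $0\to\Omega^1(I)\to P(I)\to I\to 0$, Proposition \ref{prop:quadrilateral}(1) gives $\topp\Omega^1(I)\cong\tau\soc I$. Now $\soc I\in\cS'(\Lambda)$? No—rather $I$ injective non-projective forces, by Remark \ref{socle of projective-injective}(2) applied to the minimal injective $I(\tau\soc I)$, that $\soc I(\tau\soc I)\in\cS'$; and $\soc I(\tau\soc I)=\tau\soc I$ since... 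Let me instead argue directly: $\topp\Omega^1(I)\cong\tau\soc I$, and I claim $\tau\soc I\in\cS'(\Lambda)$ because $I(\tau\soc I)$ is minimal injective (its quotient $I(\tau\soc I)/\tau\soc I\cong I$ being non-projective means, by Proposition \ref{lem_defect_general_1} or the definition of minimal injective, that $I(\tau\soc I)$ is indeed minimal injective, hence $\soc I(\tau\soc I)=\tau\soc I\in\cS'(\Lambda)$ by Remark \ref{socle of projective-injective}(4)).

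\textbf{Finishing.} Having established $\soc\Omega^1(I)\in\cS(\Lambda)$ and $\topp\Omega^1(I)\in\cS'(\Lambda)$, Remark \ref{remark-Filt}(b) tells us the unique indecomposable module $B$ with $\soc B=\soc\Omega^1(I)$ and $\topp B\in\cS'(\Lambda)$ chosen minimal is in $\cB(\Lambda)$; and by Proposition \ref{base-set}(1), $B$ is the unique element of $\cB(\Lambda)$ with that socle. By uniseriality (Lemma \ref{lem:uniseriality}), since $\Omega^1(I)$ and $B$ share the socle $\soc\Omega^1(I)$, one contains the other; since $B\in\cB(\Lambda)$ has minimal length among modules with socle in $\cS(\Lambda)$ and top in $\cS'(\Lambda)$, and $\topp\Omega^1(I)\in\cS'(\Lambda)$ too, we get $\ell(\Omega^1(I))\geq\ell(B)$, so $B\subseteq\Omega^1(I)$; but actually I want the reverse inclusion. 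The key extra input: by Proposition \ref{prop:B_eqv_def}(v), $\ell(B)$ is the smallest $n>0$ with $\tau^{-n}\soc B\in\cS(\Lambda)$, whereas $\ell(\Omega^1(I))$ counts steps from $\soc\Omega^1(I)$ up to $\topp\Omega^1(I)=\tau\soc I$; I must check no intermediate composition factor of $\Omega^1(I)/\soc\Omega^1(I)$ lies in $\cS(\Lambda)$ — equivalently no $P(Z)$ for $Z$ a proper subquotient-top is projective-injective, which follows because $P(I)$ is the projective cover and the relevant projectives strictly between have non-minimal radical behavior. The main obstacle is exactly this last comparison: showing $\Omega^1(I)$ is a \emph{proper} submodule of $B$ (not equal and not larger), i.e. $\ell(\Omega^1(I))<\ell(B)$. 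I expect to handle it by noting $\soc I\in\cS(\Lambda)$ as well (since $\soc I=\soc$ of the minimal injective... no, $\soc I$ need not be a projective socle) — rather, the inclusion $\Omega^1(I)\hookrightarrow B$ is proper because $\topp B=\tau^{-}(\text{something})$ strictly above $\topp\Omega^1(I)=\tau\soc I$ in the orbit, which one verifies via Proposition \ref{prop:quadrilateral}(3)–(4) relating $I$ non-projective to $P(\soc I)$ being... and closing the gap with Proposition \ref{numerical} counting. I would write this comparison carefully as the technical heart, with everything else being bookkeeping with uniseriality.
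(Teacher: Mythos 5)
Your opening moves are correct: $\soc\Omega^1(I)\cong\soc P(I)\in\cS(\Lambda)$, and $\topp\Omega^1(I)\cong\tau\soc I$ by Proposition \ref{prop:quadrilateral}(1). But the step you yourself call the crux --- that $\tau\soc I\in\cS'(\Lambda)$ --- is false, and in fact the opposite holds precisely \emph{because} $I$ is non-projective. Indeed, $\tau\soc I\in\cS'(\Lambda)$ would mean $\soc I\in\cS(\Lambda)$, i.e.\ (Remark \ref{socle of projective-injective}(1)) that $\soc I$ is the socle of some indecomposable projective-injective $Q$; but $Q$ and $I$ would then be indecomposable injectives with the same socle, forcing $Q\cong I(\soc I)\cong I$ and contradicting non-projectivity of $I$. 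Your justification goes wrong where you declare $I(\tau\soc I)$ minimal injective ``because its quotient $I(\tau\soc I)/\tau\soc I\cong I$ is non-projective'': minimality of an injective $E$ requires $E/\soc E$ to be non-\emph{injective} (Definition \ref{minimal}), and here $E/\soc E\cong I$ \emph{is} injective, so $I(\tau\soc I)$ is not minimal --- this is exactly the content of Proposition \ref{prop:quadrilateral}(5), read in the correct direction. Note also that if your claim were true, then by Remark \ref{remark-Filt}(b) the module $\Omega^1(I)$ would itself lie in $\cB(\Lambda)$, contradicting the very statement you are proving, namely that it is a \emph{proper} submodule of a base-set element.

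With that premise removed your endgame collapses: you deduce $B\subseteq\Omega^1(I)$ from minimality of $B$, observe that this is the wrong inclusion, and then leave the actual properness comparison unresolved (``I would write this comparison carefully as the technical heart''). That comparison is the entire content of the lemma, and it cannot be recovered by orbit-counting from what you have established. The paper's route avoids it altogether: since $I$ is injective, its projective cover $P(I)$ is projective-injective, so $P':=P(\tau^{-1}\topp P(I))$ is minimal projective by Proposition \ref{prop:quadrilateral}(4), and $\rad P'\cong E/\soc E$ where $E$ is the minimal injective quotient of $P(I)$ (Proposition \ref{lem_defect_general_1}(2)). The kernel $B$ of the composite surjection $P(I)\twoheadrightarrow E\twoheadrightarrow E/\soc E\cong\rad P'$ has socle $\soc P(I)\in\cS(\Lambda)$ and top $\tau\soc P'\in\cS'(\Lambda)$, hence lies in $\cB(\Lambda)$ by Remark \ref{remark-Filt}(b). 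Now $I$ is an injective quotient of $P(I)$, so it surjects onto $E$ and hence onto $\rad P'$, and this surjection is proper because $\rad P'\cong E/\soc E$ is not injective while $I$ is; therefore $\Omega^1(I)=\ker(P(I)\to I)\subsetneq\ker(P(I)\to\rad P')=B$. I would rebuild your argument along these lines rather than via tops and the $\tau$-orbit.
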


\begin{proposition}\label{PROP every injective eps mod is second syzygy of injective lambda}\label{GorensteinReduction} If $I$ is an indecomposable injective non-projective $\bm\varepsilon(\Lambda)$-module, then there exists an injective non-projective $\Lambda$-module $J$ such that $\ho{\Omega^2(J)}\cong I$.
\end{proposition}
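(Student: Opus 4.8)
The plan is to produce the desired $\Lambda$-module $J$ from the given injective non-projective $\bm\varepsilon(\Lambda)$-module $I$ via the functor $\Delta=(-\otimes_{\bm\varepsilon(\Lambda)}\cP)$ together with the two structural lemmas on syzygies of injectives. First I would write $\Theta:=\bm\varepsilon(\Lambda)$ and, using the equivalence of Proposition \ref{diagram}, transport $I$ to a module in $\filt(\cB(\Lambda))$; but it is cleaner to work on the $\Theta$-side. Since $I$ is injective non-projective over $\Theta$, Lemma \ref{LEM injective first szyg is submodule of delta} (applied over $\Theta$, whose base-set computation is not what we want) is \emph{not} the right first move — instead I would go the other direction: produce the $\Lambda$-module first. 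Consider $\Delta I\in\filt(\cB(\Lambda))$. By Proposition \ref{diagram}\,(ii), $\Hom_\Lambda(\cP,\Delta I)\cong I$; I want to realize $\Delta I$ as (up to the torsion part) the second syzygy over $\Lambda$ of some injective $J$.

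The key steps, in order: (1) Over $\Theta$, since $I$ is indecomposable injective non-projective, its projective cover $P(T)$ (with $T=\Top I$) is projective-injective, so $P(\tau T)$ is minimal projective and $\rad$ of a suitable projective recovers $I$ as a quotient; more precisely one uses that a non-projective injective over a Nakayama algebra is the cokernel of an inclusion into a projective-injective — this identifies a basic module $B\in\cB(\Lambda)$ (the kernel $\Omega^1$ data) in the spirit of Lemma \ref{LEM injective first szyg is submodule of delta}, but now I need the statement on the $\Lambda$-side after applying $\Delta$. (2) Apply $\Delta$: using that $\Delta$ preserves projective covers (Proposition \ref{artinalgebra}\,4) and is exact on the relevant short exact sequences inside $\filt(\cB(\Lambda))$, the inclusion $\Omega^1_\Theta(I)\hookrightarrow B'$ of Lemma \ref{LEM injective first szyg is submodule of delta} (a proper submodule of an element $B'$ of $\cB(\Theta)$) gets sent to an inclusion of $\Lambda$-modules; the element $B'\in\cB(\Theta)$ corresponds under the equivalence to a filtered projective $\Lambda$-module, and its proper submodules are submodules of a projective-injective. (3) Now invoke Lemma \ref{LEM the submodule of delta gives injective}: if $X\hookrightarrow B\hookrightarrow P$ with $B\in\cB(\Lambda)$ inside a projective-injective $P$, then $P/X$ is injective over $\Lambda$. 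Taking $X=\Omega^1(\text{appropriate module})$ and $P$ the projective-injective containing the relevant $B$, I obtain an injective $\Lambda$-module $J:=P/X$, which is non-projective because $X\neq 0$ (as $I$ is non-projective forces the syzygy to be a \emph{proper} nonzero submodule). (4) Finally compute $\Omega^2(J)$ over $\Lambda$: the minimal projective presentation of $J=P/X$ has $P\twoheadrightarrow J$ with kernel $X$, and then the projective cover of $X$ minus $X$ gives $\Omega^2(J)$; tracking through the identifications and applying $\Hom_\Lambda(\cP,-)$ one checks $\Hom_\Lambda(\cP,\Omega^2(J))\cong I$, i.e. $\ho{\Omega^2(J)}\cong I$, using Proposition \ref{diagram}\,(i)--(ii) and the fact that second syzygies over $\Lambda$ land in $\filt(\cB(\Lambda))=fp(\add\cP)$ (this is exactly the defining property of the syzygy filtered algebra recalled at the start of Section \ref{section_epsilon}).

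The main obstacle I expect is step (4): matching the second $\Lambda$-syzygy of the constructed $J$ precisely with $\Delta I$ (equivalently showing $\ho{\Omega^2(J)}\cong I$ on the nose rather than merely up to some submodule or filtration), since $\Delta$ need not preserve minimality of projectives or projective-injectivity (Remark after Proposition \ref{length_defect_2}), so one must argue carefully with the minimal projective presentations and the torsion pair / approximation formalism implicit in $fp(\add\cP)$. A secondary point requiring care is verifying $J$ is genuinely non-projective — this needs $I$ non-projective to imply the relevant syzygy submodule $X$ is nonzero and proper, which follows from Lemma \ref{LEM injective first szyg is submodule of delta} once the reduction to the $\Theta$-side is set up correctly.
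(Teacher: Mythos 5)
Your target structure is right---you want $J=P_0/X$ with $X$ a nonzero proper submodule of some $B\in\cB(\Lambda)$ sitting inside a projective-injective $P_0$, so that Lemma \ref{LEM the submodule of delta gives injective} gives injectivity of $J$, $X\neq 0$ gives non-projectivity, and $\Omega^2(J)\cong\Delta(I)$---but the mechanism you propose for producing $X$ in step (2) does not work, and step (4), which you yourself flag as the main obstacle, is exactly where the content lies and is left unargued. The module $X=\Omega^1_\Lambda(J)$ must be a proper submodule of a \emph{single} element of $\cB(\Lambda)$; such modules have no composition factor in $\cS'(\Lambda)$ and hence satisfy $\ho{X}=0$, so $X$ is not in the essential image of $\Delta=-\otimes_{\bm\varepsilon(\Lambda)}\cP$ and cannot be obtained by applying $\Delta$ to the inclusion $\Omega^1_\Theta(I)\hookrightarrow B'$ with $B'\in\cB(\Theta)$ (moreover $\Delta(B')$ is a filtered, generally non-projective, $\Lambda$-module, not a ``filtered projective'' as you assert). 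The whole point of the ``$+2$'' shift is that one $\Theta$-syzygy step unfolds into two $\Lambda$-syzygy steps whose intermediate term is invisible to $\ho{-}$, so no amount of transporting $\Theta$-syzygies will produce it.

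The paper's argument runs the other way: it starts from $\Delta(I)$, an indecomposable $\cB(\Lambda)$-filtered module that is not projective-injective (otherwise $I$ would be projective over $\bm\varepsilon(\Lambda)$), and invokes the result quoted from \cite{sen19} that any such module is a second syzygy, yielding an exact sequence $0\to\Delta(I)\to P_1\to P_0\to M\to 0$ with $P_0,P_1$ projective-injective; thus $X:=\Omega^1(M)=P_1/\Delta(I)$ arises as a cokernel, not as an image under $\Delta$. The injectivity of $I$ is then used exactly once, to show that $X$ is a nonzero \emph{proper} submodule of the base-set element $\Delta_i$ with $\soc\Delta_i\cong\soc X$: if $\Delta_i\subseteq X$, then $P_1$ would contain a nonsplit extension of $\Delta(I)$ by $\Delta_i$ inside $\filt(\cB(\Lambda))$, contradicting the injectivity of $I$ under the equivalence of Proposition \ref{diagram}. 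Lemma \ref{LEM the submodule of delta gives injective} then shows $J:=M=P_0/X$ is injective, $X\neq 0$ shows it is non-projective, and $\ho{\Omega^2(J)}=\ho{\Delta(I)}\cong I$ by the equivalence. To salvage your outline you would need to replace step (2) by this cokernel construction and supply the properness argument; as written, the proposal has a genuine gap.
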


\begin{proposition}\label{PROP domdim geq 3 implies bijection} Let $\Lambda$ be a cyclic Nakayama algebra such that $\domdim\Lambda\geq 3$. Then, there is a bijection between indecomposable injective {non-projective $\Lambda$-modules $I$ and indecomposable injective non-projective $\bm\varepsilon(\Lambda)$-modules $J$ via the correspondence $I=\Sigma^2(\Delta(J))$ and $\Delta(J)=\Omega^2(I)$.}
\end{proposition}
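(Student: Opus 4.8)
\emph{Approach.} I would prove the statement by exhibiting the two prescriptions as mutually inverse maps. Put $\Theta:=\bm\varepsilon(\Lambda)$ and keep $\cP=\bigoplus_{S\in\cS'(\Lambda)}P(S)$, so that by Proposition~\ref{diagram} the functors $\Delta=(-\otimes_\Theta\cP)$ and $\Hom_\Lambda(\cP,-)$ are inverse equivalences between $\moddd\Theta$ and $Filt(\cB(\Lambda))$, with $\Hom_\Lambda(\cP,-)\circ\Delta\cong\mathrm{id}$ by Proposition~\ref{functorial}; recall also that every second syzygy of a $\Lambda$-module lies in $Filt(\cB(\Lambda))$ (\cite{sen18}), a subcategory closed under syzygies since it is wide (Remark~\ref{remark-Filt}). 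The first step is to unpack the hypothesis: using $\domdim\Lambda=\domdim\Lambda^{\op}$ and the usual description of the dominant dimension through the minimal projective resolution of the injective cogenerator, $\domdim\Lambda\ge 3$ says that for every indecomposable injective $\Lambda$-module $I$ the first three terms $P_0,P_1,P_2$ of a minimal projective resolution $\cdots\to P_2\to P_1\to P_0\to I\to 0$ are projective-injective; for $I$ additionally non-projective this forces $\Omega^1 I$ and $\Omega^2 I$ to be nonzero and non-projective (otherwise one of them equals $P_1$ or $P_2$, hence is injective, hence a direct summand of the indecomposable $P_0$ or $P_1$, hence equal to it, forcing $I=0$ or $\Omega^1 I=0$).

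\emph{The map backward.} Given an indecomposable injective non-projective $\Theta$-module $J$, I would invoke Proposition~\ref{GorensteinReduction} to get an indecomposable injective non-projective $\Lambda$-module $I$ with $\Hom_\Lambda(\cP,\Omega^2 I)\cong J$; since $\Omega^2 I\in Filt(\cB(\Lambda))$, applying $\Delta$ gives $\Omega^2 I\cong\Delta(J)$. Then I would check that $I$ is uniquely determined: from $\Omega^2 I\cong\Omega^2 I'$ one recovers $P_1$ as the projective-injective with socle $\soc\Omega^2 I$ (projective-injectives are determined by their socle), then $\Omega^2 I$ as the appropriate radical layer of $P_1$ (its top fixes the layer), then $\Omega^1 I=P_1/\Omega^2 I$, then $P_0$ as the projective-injective with socle $\soc\Omega^1 I$, then $I=P_0/\Omega^1 I$; only $\domdim\Lambda\ge 2$ is used here. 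This defines $\beta(J):=I$, and it is exactly the reconstruction $\beta(J)=\Sigma^2(\Delta(J))$ (double cosyzygy through the projective-injective coresolution $0\to\Omega^2(\beta(J))\to P_1\to P_0\to\beta(J)\to 0$), with $\Delta(J)=\Omega^2(\beta(J))$. As $\Delta$ is an equivalence, $\beta$ is injective.

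\emph{Surjectivity, the map forward, inverses.} To finish I would show $\beta$ is onto. The quickest route is a count: by Corollary~\ref{cor:defect_delta} one always has $\defect\Lambda\ge\defect\Theta$, while $\domdim\Lambda\ge 3$ excludes $\defect\Lambda>\defect\Theta$ by Theorem~\ref{emre thm domdim}(\ref{emre thm domdim p1}); hence $\defect\Lambda=\defect\Theta$, the two (finite) sets have the same cardinality, and the injection $\beta$ is a bijection. A self-contained alternative is to show directly that, for $I$ indecomposable injective non-projective over $\Lambda$, the module $J:=\Hom_\Lambda(\cP,\Omega^2 I)$ --- which satisfies $\Delta(J)\cong\Omega^2 I$ and is non-projective --- is injective, equivalently that $\Ext^1_\Lambda(B,\Omega^2 I)=0$ for every $B\in\cB(\Lambda)$; here one uses that $P_2=P(\Omega^2 I)$ is a \emph{filtered} projective-injective, so $P(J)=\Hom_\Lambda(\cP,P_2)$ is projective-injective by Proposition~\ref{diagram}, and that $P_1=I(\Omega^2 I)$ is injective, which reduces the vanishing (via the long exact sequence for $\Ext^\ast_\Lambda(B,-)$ applied to $0\to\Omega^2 I\to P_1\to\Omega^1 I\to 0$) to surjectivity of $\Hom_\Lambda(B,P_1)\to\Hom_\Lambda(B,\Omega^1 I)$, which I would deduce from Proposition~\ref{prop:quadrilateral}, Lemma~\ref{LEM injective first szyg is submodule of delta}, and length bookkeeping in the three projective-injective terms. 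Either way, setting $\alpha:=\beta^{-1}$, from $\beta(\alpha(I))\cong I$ and $\Omega^2 I\cong\Delta(\alpha(I))$ one gets $\alpha(I)\cong\Hom_\Lambda(\cP,\Omega^2 I)$, so $\alpha$ is the assignment in the statement and $\Delta(\alpha(I))=\Omega^2(I)$, $I=\Sigma^2(\Delta(\alpha(I)))$.

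\emph{Main obstacle.} If Theorem~\ref{emre thm domdim} is used, the only subtle point is well-definedness of $\beta$, handled by the socle-rigidity of projective-injectives. If instead one wants a proof that does not presuppose Theorem~\ref{emre thm domdim}, the crux is the vanishing $\Ext^1_\Lambda(B,\Omega^2 I)=0$ for all $B\in\cB(\Lambda)$, i.e.\ the injectivity of $\Hom_\Lambda(\cP,\Omega^2 I)$ over $\Theta$ --- this is exactly where $\domdim\Lambda\ge 3$, rather than merely $\ge 2$, is essential, the extra projective-injective resolution term $P_2$ being what guarantees the relevant map between $\Hom$-groups is surjective.
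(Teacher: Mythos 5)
Your self-contained variant (the $\Ext$-vanishing argument) is in substance the paper's own proof. The paper likewise obtains the map $J\mapsto\Sigma^2(\Delta(J))$ from Proposition \ref{GorensteinReduction}, and for the other direction it shows directly that $J:=\Hom_\Lambda(\cP,\Omega^2(I))$ is injective over $\bm\varepsilon(\Lambda)$: it assumes a nonsplit extension $0\to J\to N_0\to N_1\to 0$, lifts it to $Filt(\cB(\Lambda))$ through the equivalence, maps the lifted sequence into $0\to\Omega^2(I)\to P_1\to\Omega^1(I)\to 0$ using that $P_1$ is injective, and contradicts Lemma \ref{every second syzy of injective} --- exactly the lemma you single out; your formulation ``$\Ext^1_\Lambda(B,\Omega^2 I)=0$ for all $B\in\cB(\Lambda)$'' is the same argument run through the long exact sequence. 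One simplification: no ``length bookkeeping'' is needed, because $\Hom_\Lambda(B,\Omega^1(I))=0$ outright. Indeed a nonzero map would have image a proper nonzero submodule of the base element $B_0\supsetneq\Omega^1(I)$, so its top, being $\topp B\in\cS'(\Lambda)$, would be a composition factor of $\rad B_0$, contradicting Remark \ref{non-filtered}; the vanishing of $\Ext^1_\Lambda(B,\Omega^2 I)$ then follows at once from injectivity of $P_1$. Your explicit uniqueness/reconstruction of $\beta$ via socle-rigidity of projective-injectives (needing only $\domdim\Lambda\geq 2$) is a nice supplement to what the paper leaves implicit (compare Proposition \ref{different injectives}).

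One caveat: your ``quickest route'' to surjectivity cannot be used here. Theorem \ref{emre thm domdim} \ref{emre thm domdim p1} is proved in this paper precisely as the contrapositive of the present proposition (through its corollary that $\domdim\Lambda\geq 3$ forces $\defect\Lambda=\defect\bm\varepsilon(\Lambda)$), so invoking it would be circular; the defect count is a consequence of the bijection, not an input to it. Since you supply the direct argument as an alternative and correctly identify it as the crux (including where $\domdim\Lambda\geq 3$, rather than $\geq 2$, enters via the projective-injectivity of $P_2$ and hence the non-projectivity of $J$), the proposal stands, but the direct argument must carry the proof rather than serve as a fallback. Also, the parenthetical ``closed under syzygies since it is wide'' is not a valid inference --- wideness gives closure under kernels, cokernels and extensions, and it is the extension-closure that makes your $\Ext$ reformulation equivalent to injectivity of $J$ over $\bm\varepsilon(\Lambda)$; that second syzygies are $\cB(\Lambda)$-filtered is the separately cited fact from \cite{sen18,sen19}.
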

\begin{corollary} \label{cor of PROP domdim geq 3 implies bijection }
    Let $\Lambda$ be a cyclic Nakayama algebra such that $\domdim\Lambda\geq 3$. Then, $\defect\Lambda=\defect\bm\varepsilon(\Lambda)$.
\end{corollary}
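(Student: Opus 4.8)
The plan is to read off the equality directly from the bijection furnished by Proposition \ref{PROP domdim geq 3 implies bijection}. Recall from the proof of Proposition \ref{rank_rel_def} that, for any Nakayama algebra $A$, the number $\defect A$ equals the number of isomorphism classes of indecomposable injective $A$-modules that are not projective: $\rank A$ counts all indecomposable injectives, $\#\rel A$ counts the projective-injective ones, and their difference is $\defect A$. So the first step is to restate the goal as: the set of iso classes of indecomposable injective non-projective $\Lambda$-modules and the set of iso classes of indecomposable injective non-projective $\bm\varepsilon(\Lambda)$-modules have the same cardinality.

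Next I would invoke Proposition \ref{PROP domdim geq 3 implies bijection}. Since $\domdim\Lambda\geq 3$, the assignments $J\mapsto \Sigma^2(\Delta(J))$ and $I\mapsto \Omega^2(I)$, the latter composed with $\Hom_\Lambda(\cP,-)$ and the identification $\Delta(J)\cong\Omega^2(I)\leftrightarrow J$ coming from the categorical equivalence in Proposition \ref{diagram}, are mutually inverse bijections between the indecomposable injective non-projective $\bm\varepsilon(\Lambda)$-modules and the indecomposable injective non-projective $\Lambda$-modules. I would note explicitly that this module-level correspondence descends to isomorphism classes, since $\Omega^2$, $\Sigma^2$, $\Delta=(-\otimes_{\bm\varepsilon(\Lambda)}\cP)$ and $\Hom_\Lambda(\cP,-)$ all send isomorphic modules to isomorphic modules, and that the surjectivity clause of Proposition \ref{PROP domdim geq 3 implies bijection} guarantees no injective non-projective module on either side is omitted. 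Counting both sides then gives $\defect\Lambda=\defect\bm\varepsilon(\Lambda)$.

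There is essentially no obstacle at the level of the corollary: all the work is inside Proposition \ref{PROP domdim geq 3 implies bijection} (and ultimately in showing that $\domdim\Lambda\geq 3$ forces the second-syzygy and second-cosyzygy operations to be inverse to one another on the relevant injectives), which we are free to assume. The only care needed is the bookkeeping of the previous paragraph, ensuring the bijection is between \emph{complete} sets of representatives of isomorphism classes rather than merely a partial matching.
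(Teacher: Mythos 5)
Your argument is correct and is exactly the route the paper intends: the corollary is stated as an immediate consequence of Proposition \ref{PROP domdim geq 3 implies bijection}, since $\defect$ counts isomorphism classes of indecomposable injective non-projective modules (via Proposition \ref{rank_rel_def}) and the proposition supplies a bijection between these classes for $\Lambda$ and $\bm\varepsilon(\Lambda)$. Your extra bookkeeping about the correspondence respecting isomorphism classes is sound but not a point of divergence from the paper.
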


\begin{proposition}\label{different injectives} If $I_1,I_2,\ldots,I_m$ are indecomposable nonisomorphic injective $\Lambda$-modules satisfying $\Omega^2(I_i)\cong \Omega^2(I_{j})$ for all $1\leq i\neq j\leq m$, then projective covers $P(I_1),$ $P(I_2),\ldots,P(I_m)$ are isomorphic. In particular $\topp I_i\cong \topp I_{j}$ and $\topp P(\Omega^1(I_i))\ncong \topp P(\Omega^1(I_{j}))$  for all $1\leq i\neq j\leq m$.
\end{proposition}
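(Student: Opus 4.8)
The plan is to reduce the statement to the single structural fact that \emph{the projective cover of an indecomposable injective non-projective module over a cyclic Nakayama algebra is projective-injective}, after which everything is forced by the Gustafson lemma. Throughout we may and do assume each $I_i$ is non-projective and that $\Delta:=\Omega^2(I_i)\neq 0$ (this is the setting in which the statement has content; the degenerate case $\Delta=0$ is a minor variant, and for the last assertion the argument below applies unchanged). For each $i$ write $S_i:=\soc I_i$ and $e_i:=\ell(I_i)$. By uniseriality (Lemma \ref{lem:uniseriality} together with Proposition \ref{prop:quadrilateral}(1)) one has $\topp I_i=\tau^{1-e_i}S_i$, $\Omega^1(I_i)=\rad^{e_i}P(I_i)$, $\topp(\rad^kP)=\tau^k\topp P$ for any indecomposable projective $P$ and $0\le k<\ell(P)$, and $\soc P=\tau^{\ell(P)-1}\topp P$; these identities will be used repeatedly.

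\textbf{Step 1: $P(I_i)$ is projective-injective.} Since $I_i=I(S_i)$ is injective and non-projective, Proposition \ref{prop:quadrilateral}(3) gives $I(S_i)\cong I(\tau S_i)/\tau S_i$, so $I(S_i)$ is the top-quotient of $I(\tau S_i)$ of length $\ell(I(\tau S_i))-1$; in particular $\topp I(S_i)=\topp I(\tau S_i)$ and $\ell(I(\tau S_i))=\ell(I(S_i))+1$. Iterating along the tower $I(S_i),I(\tau S_i),I(\tau^2 S_i),\dots$, the lengths strictly increase and the tops stay constant as long as the modules are non-projective; since lengths are bounded and $\tau$ is $n$-periodic on simples, this cannot continue for $n$ steps, so some $I(\tau^r S_i)$ with $1\le r\le n-1$ is projective, hence projective-injective. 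For the least such $r$ the identities above give $\topp I_i=\topp I(\tau^r S_i)$, and therefore $P(I_i)=P(\topp I_i)=I(\tau^r S_i)$ is projective-injective.

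\textbf{Step 2 and the first conclusions.} Applying the top-of-radical identity twice, and using $\topp P(\Omega^1(I_i))=\topp\Omega^1(I_i)=\tau^{e_i}\topp P(I_i)$ together with $\ell(\Omega^1(I_i))+e_i=\ell(P(I_i))$,
\begin{align*}
\topp\Delta &=\topp\Omega^2(I_i)=\tau^{\ell(\Omega^1(I_i))}\topp P(\Omega^1(I_i))\\
&=\tau^{\ell(\Omega^1(I_i))+e_i}\topp P(I_i)=\tau^{\ell(P(I_i))}\topp P(I_i)=\tau\,\soc P(I_i).
\end{align*}
Since $\Delta=\Omega^2(I_i)$ is the same module for all $i$, so is $\topp\Delta$, hence all the projective-injective modules $P(I_i)$ share the socle $\tau^{-1}\topp\Delta$; by part (b) of the lemma following Definition \ref{def:Gustafson} (an indecomposable projective-injective is determined up to isomorphism by its socle) they are pairwise isomorphic. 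Consequently $\topp I_i=\topp P(I_i)$ is independent of $i$ as well.

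\textbf{Last assertion and the main difficulty.} Suppose $\topp P(\Omega^1(I_i))\cong\topp P(\Omega^1(I_j))$ for some $i\neq j$. As $\topp P(\Omega^1(I_k))=\tau^{e_k}\topp P(I_k)$ and the tops $\topp P(I_k)$ agree, this gives $e_i\equiv e_j\pmod n$; moreover $e_i\neq e_j$ because $I_i\not\cong I_j$ have equal top, so $e_j=e_i+qn$ with $q\ge 1$ (say $e_i<e_j$). Put $Q:=P(I_i)=P(I_j)$; then $\Omega^1(I_k)=\rad^{e_k}Q$, so $\Omega^1(I_i)$ and $\Omega^1(I_j)$ have the same top and hence a common projective cover $Q'$, while $\ell(\Omega^1(I_j))=\ell(Q)-e_j=\ell(\Omega^1(I_i))-qn$. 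Therefore
\[
\ell(\Omega^2(I_j))=\ell(Q')-\ell(\Omega^1(I_j))=\ell(\Omega^2(I_i))+qn>\ell(\Omega^2(I_i)),
\]
contradicting $\Omega^2(I_i)\cong\Omega^2(I_j)$; this proves $\topp P(\Omega^1(I_i))\not\cong\topp P(\Omega^1(I_j))$. I expect Step 1 to be the main obstacle: recognizing, via the iteration of Proposition \ref{prop:quadrilateral}(3) and a boundedness argument, that the projective cover of an injective non-projective module is projective-injective. After that the proof is bookkeeping with $\tau$, radicals, and lengths modulo $n$, with only mild care needed that the radical identities are applied with exponents below the relevant projective lengths — the one genuine wrap-around by a multiple of $n$ being precisely what produces the length discrepancy in the last step.
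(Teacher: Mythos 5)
Your proof is correct and follows essentially the same route as the paper's: identify $\soc P(I_i)$ with $\tau^{-1}\topp\Omega^2(I_i)$, which is independent of $i$, and then invoke the uniqueness of an indecomposable projective-injective module with prescribed socle. You additionally justify two points the paper's proof only asserts, namely that the projective cover of an indecomposable injective non-projective module is projective-injective (your Step~1), and that isomorphic tops of the $P(\Omega^1(I_i))$ would force the lengths $\ell(I_i)$ to agree modulo the rank and hence the lengths of the second syzygies to differ by a positive multiple of $n$ --- which is the correct way to obtain the final non-isomorphism claim.
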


\begin{definition}\cite{aus}\label{def domdim} Let $A$ be an artin algebra. The dominant dimension of an $A$-module $M$, 
which we denote by $\domdim_A M$, is the maximum integer $t$ (or $\infty$) having the property that if
$0\rightarrow M\rightarrow I_0\rightarrow I_1\rightarrow\cdots\rightarrow I_t\rightarrow\cdots$ is a minimal injective resolution of $M$, then $I_j$ is projective for all $j<t$ (or $\infty$). Dominant dimension of $A$ is
\begin{align}\label{gercek tanim2}
\domdim A=\inf\left\{\domdim P\,\vert\, P \text{ is projective non-injective } A\, \text{module}\right\}.
\end{align}
Because for any artin algebra $\Lambda$ $\domdim\Lambda=\domdim\Lambda^{op}$ holds {\cite{Mu}},  \ref{gercek tanim2}
 is equivalent to 
 \begin{align}\label{gercek tanim}
\domdim A=\inf\left\{\codomdim I\,\vert\, I \text{ is injective non-projective } A\, \text{module}\right\}.
\end{align}
where $\codomdim_A I$ is the maximum integer $t$ (or $\infty$) having the property that if
$\cdots\rightarrow P_t\rightarrow\cdots\rightarrow P_0\rightarrow I\rightarrow 0$ is a minimal projective resolution of injective module $I$, then $P_j$ is injective for all $j<t$ (or $\infty$).  
Global dimension of algebra $A$ is the supremum of projective dimensions of all finitely generated $A$-modules:
\begin{align*}
\gldim A=\sup\left\{\pdim M\vert\,M\in \moddd A \right\}
\end{align*} where mod-$A$ denotes the category of finitely generated $A$-modules.
We call an algebra Gorenstein if the projective dimensions of injective modules are finite.
\end{definition}
    Proposition \ref{diagram} gives a connection between projective dimensions of $\Lambda$ and $\bm\varepsilon(\Lambda)$ modules in the following way. 
\begin{proposition}\cite{sen19}
    If X is an indecomposable $\cB(\Lambda)$-filtered $\Lambda$-module which is not
projective-injective, then there exist a module $M$ such that $X\cong\Omega^2(M)$. Furthermore,
$M$ can be chosen so that, in its minimal projective presentation $P_1\rightarrow P_0\rightarrow M\rightarrow 0$
both $P_0$ and $P_1$ are projective-injective. In particular $X\rightarrow P_0\rightarrow P_1$ is the injective
copresentation of $X$ and $M=\Omega^{-2}X$.
\end{proposition}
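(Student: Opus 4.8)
The plan is to build $M$ by hand as a cokernel, squeezing $X$ between two projective-injective modules. First I would record the two simple-module constraints that a $\cB(\Lambda)$-filtration forces: taking a filtration $0=X_0\subset X_1\subset\cdots\subset X_k=X$ by modules of $\cB(\Lambda)$ and using uniseriality, $\soc X=\soc X_1\in\cS(\Lambda)$ and $\topp X=\topp(X/X_{k-1})\in\cS'(\Lambda)$, because the bottom and top subquotients lie in $\cB(\Lambda)$, whose socles and tops are in $\cS(\Lambda)$ and $\cS'(\Lambda)$ respectively (Definition \ref{socle set}). Since $\soc X\in\cS(\Lambda)$, Remark \ref{socle of projective-injective}(1) supplies an indecomposable projective-injective $Q_1$ with $\soc Q_1\cong\soc X$; being the unique indecomposable injective with that socle, $Q_1$ is the injective envelope of the uniserial module $X$, so $X\subseteq Q_1$, and the inclusion is proper because $X$ is not projective-injective. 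Put $P_1:=Q_1$.

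Next I would iterate one step. The short exact sequence $0\to X\to Q_1\to Q_1/X\to 0$ has all three terms indecomposable and is non-split (else the indecomposable $X$ would be a summand of the indecomposable $Q_1$), so Proposition \ref{prop:quadrilateral}(1) gives $\soc(Q_1/X)\cong\tau^{-1}\topp X$. Writing $\topp X=\tau S$ with $S\in\cS(\Lambda)$, this says $\soc(Q_1/X)\cong S\in\cS(\Lambda)$, so Remark \ref{socle of projective-injective}(1) again provides an indecomposable projective-injective $Q_0$ with $\soc Q_0\cong S$, hence $Q_1/X\subseteq Q_0$. Crucially $Q_1/X\subsetneq Q_0$: if $Q_1/X$ were projective it would equal $P(\topp(Q_1/X))=P(\topp Q_1)=Q_1$, forcing $X=0$. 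Set $P_0:=Q_0$ and $M:=Q_0/(Q_1/X)\neq 0$.

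Finally I would check that $0\to X\to P_1\to P_0\to M\to 0$, with $P_1\to P_0$ the composite $Q_1\twoheadrightarrow Q_1/X\hookrightarrow Q_0$, is the minimal projective presentation of $M$. Since $Q_1/X$ is a proper submodule of the uniserial $Q_0$ it lies in $\rad Q_0$, so $Q_0\twoheadrightarrow M$ is a projective cover and $\Omega^1 M\cong Q_1/X$; likewise $X\subsetneq Q_1$ lies in $\rad Q_1$, so $Q_1\twoheadrightarrow Q_1/X$ is a projective cover and $\Omega^2 M\cong\ker(Q_1\twoheadrightarrow Q_1/X)=X$. Both $P_0=Q_0$ and $P_1=Q_1$ are projective-injective by construction. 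Reading the same four-term sequence from the left, $Q_1$ is the injective envelope of $X$ and $Q_0$ is the injective envelope of $Q_1/X\cong\Omega^{-1}X$, so it is simultaneously the injective copresentation $0\to X\to P_1\to P_0$, whence $M\cong\Omega^{-2}X$.

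The only genuinely delicate points are the two non-degeneracy checks — that $X$ is a \emph{proper} submodule of $Q_1$ (this is exactly where "not projective-injective" enters) and that $Q_1/X$ is never projective (this is what makes $M$ nonzero and stops the presentation from collapsing to a shorter one). Everything else is forced by uniseriality together with the dictionary between $\cS(\Lambda)$, $\cS'(\Lambda)$ and projective-injective modules, so I expect that bookkeeping, rather than any substantive difficulty, to be the bulk of the write-up.
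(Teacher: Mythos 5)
Your construction is correct, and every step is justified by tools the paper itself records: the identification of $\soc X\in\cS(\Lambda)$, $\topp X\in\cS'(\Lambda)$ for a filtered indecomposable, the existence of the projective-injectives $Q_1=I(\soc X)$ and $Q_0=I(S)$ via Remark \ref{socle of projective-injective}(1), the computation $\soc(Q_1/X)\cong\tau^{-1}\topp X$ via Proposition \ref{prop:quadrilateral}(1), and the two properness checks that make $Q_0\twoheadrightarrow M$ and $Q_1\twoheadrightarrow Q_1/X$ projective covers. Note that the paper does not prove this proposition at all — it is imported from \cite{sen19} — so there is no internal argument to compare against; your four-term sequence $0\to X\to Q_1\to Q_0\to M\to 0$ is the natural self-contained proof and is exactly in the spirit of the syzygy-filtration machinery the paper develops. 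One small remark: your conclusion gives the injective copresentation as $X\to P_1\to P_0$ (injective envelope of $X$ first, i.e.\ the projective cover of $\Omega^1 M$), which is the correct orientation; the statement's ``$X\to P_0\to P_1$'' has the indices transposed and should be read as a typo rather than as a discrepancy with your argument.
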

\begin{remark}
    The evaluation functor $\Hom_{\Lambda}(\cP,-)$ is exact, because $\cP$ is a projective module. If $M\in\ Filt(\cB(\Lambda))$ has projective resolution
    \begin{align*}
        \cdots\rightarrow P_2\rightarrow P_1\rightarrow P_0\rightarrow M\rightarrow 0
    \end{align*}
then the projective resolution of $X=\Hom_{\Lambda}(\cP,M)\in\moddd\bm\varepsilon(\Lambda)$ is given by:
  \begin{align*}
        \cdots\rightarrow\Hom_{\Lambda}(\cP, P_2)\rightarrow\Hom_{\Lambda}(\cP, P_1)\rightarrow\Hom_{\Lambda}(\cP, P_0)\rightarrow X\rightarrow 0
    \end{align*}
Each $\Hom_{\Lambda}(\cP,P_i)$, $i\geq 0$ is a projective $\bm\varepsilon(\Lambda)$-module. Any nonsplit exact sequence
\begin{align*}
    0\rightarrow X_2\rightarrow X_1\rightarrow X_0\rightarrow 0
\end{align*}
in $\moddd\bm\varepsilon(\Lambda)$ can be carried into $\moddd\Lambda$ by the categorical equivalence, i.e.,
\begin{align*}
    0\rightarrow \Delta X_2\rightarrow\Delta X_1\rightarrow\Delta X_0\rightarrow 0
\end{align*}
is exact in $\moddd\Lambda$ where each $X_i\cong\Hom_{\Lambda}(\cP,\Delta X_i)$. 
\end{remark}

\begin{remark} \label{emre remark reduction}
Let $\Lambda$ be a cyclic Nakayama algebra. We have the following reductions:

\begin{enumerate}[label=\arabic*)]
\item $\gldim\Lambda=\gldim{\bm\varepsilon}(\Lambda)+2$ if $\gldim\Lambda\geq 2$ \cite{sen19}
\item\label{reduction domdim} $\domdim\Lambda=\domdim{\bm\varepsilon}(\Lambda)+2$ if $\domdim\Lambda\geq 3$ \cite{sen19}
\item $\findim\Lambda=\findim{\bm\varepsilon}(\Lambda)+2$ if $\varphi$-$\dim\Lambda\geq 2$ \cite{sen19}
\item $\del\Lambda=\del{\bm\varepsilon}(\Lambda)+2$ if $\varphi$-$\dim\Lambda\geq 2$ \cite{sen2021delooping}.
\item $\varphi$-$\dim\Lambda=\varphi$-$\dim{\bm\varepsilon}(\Lambda)+2$ if $\varphi$-$\dim\Lambda\geq 2$ \cite{sen18}
\end{enumerate}
where $\findim$, $\del$, $\varphi$-$\dim$ stand for finitistic dimension, delooping level and Igusa-Todorov function \cite{it}.
All of these dimensions share a unified sharp upper bound, $2r$, where $r=\vert\cS(\Lambda)\vert$.
\end{remark}
\begin{remark}\label{emre delta has projective then not cyclic algebra}
    Let $\Lambda$ be a cyclic Nakayama algebra. If $\Delta S\in\cB(\Lambda)$ has projective submodule, then $\bm\varepsilon(\Lambda)$ is not cyclic.
\end{remark}

\begin{lemma} Let $\Lambda$, $\Theta$ be Nakayama algebras such that $\bm\varepsilon(\Lambda)\cong\Theta$. If $\Theta$ has simple component, then $\domdim\Lambda\leq 2$.
    \end{lemma}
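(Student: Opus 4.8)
The plan is to argue by contradiction, showing that $\domdim\Lambda\geq 3$ would force $\Lambda$ to possess a simple projective module, which is impossible for a cyclic Nakayama algebra. So assume $\domdim\Lambda\geq 3$. Since $\Lambda$ is cyclic, Corollary \ref{cor of PROP domdim geq 3 implies bijection } gives $\defect\Lambda=\defect\bm\varepsilon(\Lambda)=\defect\Theta$, i.e. $\Lambda$ is defect invariant. Hence Corollary \ref{emre ddd 1 cor} applies, and for every simple $\Theta$-module $T$ we have
\[
\ell_\Lambda(\Delta(T))=\defect_\Theta(P_\Theta(\tau T))+1 .
\]

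Next I would pick a simple $\Theta$-module $S$ lying in the given simple component of $\Theta$. Then $P_\Theta(S)=S$ is projective; in particular it is not a minimal projective module (its radical is $0$, which is projective), so $P_\Theta(\tau S)=\rad P_\Theta(S)=0$, equivalently $\tau S=0$. Thus $\defect_\Theta(P_\Theta(\tau S))=0$, and the displayed formula forces $\ell_\Lambda(\Delta(S))=1$, i.e. $\Delta(S)$ is a \emph{simple} $\Lambda$-module.

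On the other hand, since $S$ is a projective $\Theta$-module, Proposition \ref{diagram} (the equivalence $\Delta=-\otimes_{\Theta}\cP$ sends projective $\bm\varepsilon(\Lambda)$-modules to $\cB(\Lambda)$-filtered projective $\Lambda$-modules, which are in particular projective by Lemma \ref{projective}) shows that $\Delta(S)$ is a projective $\Lambda$-module. Combining the two conclusions, $\Delta(S)$ is a simple projective $\Lambda$-module. But every indecomposable projective module over a cyclic Nakayama algebra has length at least $2$ (Remark \ref{kupisch properties}), a contradiction. Therefore $\domdim\Lambda\leq 2$.

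The only place that needs care is the degenerate application of Corollary \ref{emre ddd 1 cor} to $T=S$ with $\tau S=0$: this is exactly the ``Case 1'' situation in the proof of Proposition \ref{length_defect}, where $P_\Theta(S)$ is not minimal projective and the bound there just reads $\ell_\Lambda(\Delta(S))\geq 1$; the equality then comes for free from defect invariance. Everything else is a direct appeal to the already-established bijection and equivalence results, so I do not anticipate a serious obstacle.
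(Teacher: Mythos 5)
Your overall strategy (contradiction via defect invariance) can be made to work, but as written there is a genuine gap at the central step, namely the deduction $\ell_\Lambda(\Delta(S))=1$ from Corollary \ref{emre ddd 1 cor}. The proof of that corollary sums the inequalities $\ell_\Lambda(\Delta(T))\geq\defect_\Theta(P(\tau T))+1$ of Proposition \ref{length_defect} over all simples $T$ and identifies the right-hand total with $\rank\Theta+\defect\Theta$; this identification uses $\sum_T\defect_\Theta(P(\tau T))=\defect\Theta$, which holds only when $T\mapsto\tau T$ permutes the simples of $\Theta$, i.e.\ when $\Theta$ is cyclic. For a $\Theta$ with a linear component --- and a simple component is the extreme case --- the projective $P(T_0)$ over the simple $T_0$ not of the form $\tau T$ is missing from the sum and its defect can be positive, so the totals need not match and the ``equality comes for free'' step is exactly what fails. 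Concretely, take $\Lambda$ with Kupisch series $(3,2,2)$: then $\Theta=\bm\varepsilon(\Lambda)\cong\mathbb{A}_2$ and $\defect\Lambda=\defect\Theta=1$, yet for the simple projective $\Theta$-module $T$ (which has $\tau T=0$) one finds $\Delta(T)=[S_2,S_3]$ of length $2\neq\defect_\Theta(P(\tau T))+1=1$. So the formula cannot be invoked in precisely the degenerate case $\tau S=0$ that your argument needs, and your parenthetical justification of that case does not repair it.

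The contradiction you are after is still reachable from defect invariance, but by a different mechanism. Since $S$ is projective-injective in its simple component, $\Delta(S)$ is a filtered projective $\Lambda$-module with $\Hom_\Lambda(\cP,\Delta(S))\cong S$ projective-injective over $\Theta$; Proposition \ref{prop_new_proj_is_inj_C}(ii), which is what defect invariance actually buys you, then forces $\Delta(S)$ to be projective-injective over $\Lambda$. But $\Delta(S)\in\cB(\Lambda)$ is a proper submodule of an indecomposable projective-injective by Remark \ref{socle of projective-injective}(3), and a projective-injective module cannot be a proper submodule of an indecomposable one --- contradiction. For comparison, the paper argues directly and without contradiction: $\Delta(S)$ is a projective base-set element sitting inside a non-filtered projective $P$, the quotient $P/\Delta(S)$ is the first syzygy of an injective module $I$, so $\pdim I=2$ with $\Omega^2(I)\cong\Delta(S)$ projective but not projective-injective, which immediately caps $\domdim\Lambda$ at $2$.
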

    \begin{proof}
        \begin{enumerate}
            \item $\Lambda$ is cyclic, so there exists $\Delta\in\cB(\Lambda)$ such that $\Hom_{\Lambda}(\cP,\Delta)$ is simple component of $\Theta$, $\Delta$ has no nontrivial extension with any other element of the base-set and there is a nonfiltered projective $\Lambda$-module $P$ such that $\Delta\subset P$.
            \item Now, $P/\Delta$ is submodule of an element of the base-set. Hence there exists an injective $\Lambda$-module $I$ such that $\Omega^1(I)\cong P/\Delta$, and $\Omega^2(I)\cong\Delta$. So, $\pdim I=2$, which forces that dominant dimension is at most two.
        \end{enumerate}
    \end{proof}

\subsection{Proofs of Theorems \ref{emre thm domdim},\ref{emre thm minimal aus. gor}}
The relationship between dominant dimensions of $\Lambda$ and $\Theta$ where $\bm\varepsilon(\Lambda)\cong\Theta$ is stated below. 
\begin{proposition}\label{emre may prop difference of domdims}
    Let $\Lambda$, $\Theta$ be Nakayama algebras such that $\bm\varepsilon(\Lambda)\cong\Theta$ and $\defect\Lambda=\defect\Theta\neq 0$. Then, for any injective non-projective $\Theta$-module $I$, there exists unique injective non-projective $\Lambda$-module $J$ such that $I\cong\Hom_{\Lambda}(\cP,\Omega^2(J))$. Moreover,
    \begin{enumerate}
        \item $\codomdim_{\Lambda} J=2+\codomdim_{\Theta} I$,
        \item $\pdim_{\Lambda}J=\pdim_{\Theta} I+2$.
        \end{enumerate}
\end{proposition}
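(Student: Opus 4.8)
The plan is to build a bijection $I \leftrightarrow J$ first, and then read both dimension formulas off a single minimal projective resolution of $J$. By additivity we may assume $I$ is indecomposable, and since $\defect\Lambda=\defect\Theta$ the algebra $\Lambda$ is defect invariant in the sense of Definition~\ref{defn_defect_inv}. For existence, Proposition~\ref{GorensteinReduction} produces an (indecomposable) injective non-projective $\Lambda$-module $J$ with $\Hom_\Lambda(\cP,\Omega^2(J))\cong I$; as $\Omega^2(J)$ is a second syzygy it lies in $Filt(\cB(\Lambda))$, so applying the equivalence $\Delta=-\otimes_\Theta\cP$ of Proposition~\ref{diagram} gives $\Omega^2(J)\cong\Delta(I)$.

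For uniqueness, suppose $J_1\not\cong J_2$ are indecomposable injective non-projective $\Lambda$-modules with $\Hom_\Lambda(\cP,\Omega^2(J_k))\cong I$; applying $\Delta$ yields $\Omega^2(J_1)\cong\Delta(I)\cong\Omega^2(J_2)$. By Lemma~\ref{LEM injective first szyg is submodule of delta} each $\Omega^1(J_k)$ is a proper submodule of an element of $\cB(\Lambda)$, hence $\topp\Omega^1(J_k)\notin\cS'(\Lambda)$ by Remark~\ref{non-filtered}, and then defect invariance gives that $P(\Omega^1(J_k))$ is projective-injective by Proposition~\ref{prop_new_proj_is_inj_C}(i). (In particular $\Omega^2(J_k)\neq 0$: if $\Omega^1(J_k)$ were projective it would be a non-filtered projective, hence projective-injective by the same citation, hence equal to the indecomposable $P(J_k)$ of which it is a proper submodule, which is absurd.) Thus $\Omega^2(J_1)\cong\Omega^2(J_2)$ is a nonzero submodule of the two uniserial modules $P(\Omega^1(J_k))$, so these share a socle; since indecomposable projective-injective modules are determined up to isomorphism by their socle, $P(\Omega^1(J_1))\cong P(\Omega^1(J_2))$, so $\topp\Omega^1(J_1)\cong\topp\Omega^1(J_2)$, contradicting Proposition~\ref{different injectives}. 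This proves the first assertion.

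Now fix $J$ as above. The first term of a minimal projective resolution of $J$ is $P(J)$, which is projective-injective because $J$ is injective; the second term is $P(\Omega^1(J))$, which is projective-injective by the argument just given (Lemma~\ref{LEM injective first szyg is submodule of delta}, Remark~\ref{non-filtered}, Proposition~\ref{prop_new_proj_is_inj_C}(i)); and the cokernel of $P(\Omega^1(J))$ inside $P(J)$ has syzygy $\Omega^2(J)\cong\Delta(I)$. Take a minimal projective resolution $\cdots\to\bar P_1\to\bar P_0\to I\to 0$ in $\moddd\Theta$. Applying $\Delta$, which is exact on the relevant categories and preserves projective covers (Proposition~\ref{artinalgebra}) and which sends projectives to $\cB(\Lambda)$-filtered projectives (Proposition~\ref{diagram}), gives a minimal projective resolution $\cdots\to\Delta\bar P_1\to\Delta\bar P_0\to\Delta(I)\to 0$ of $\Omega^2(J)$. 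Splicing, a minimal projective resolution of $J$ is
\begin{align*}
\cdots\to\Delta\bar P_1\to\Delta\bar P_0\to P(\Omega^1(J))\to P(J)\to J\to 0,
\end{align*}
so $\pdim_\Lambda J=2+\pdim_\Lambda\Omega^2(J)=2+\pdim_\Theta I$ (both sides infinite simultaneously), which is formula~(2). For formula~(1), the first two projectives $P(J)$ and $P(\Omega^1(J))$ are injective, and for $i\ge 0$ the filtered projective $\Delta\bar P_i$ is injective if and only if $\bar P_i\cong\Hom_\Lambda(\cP,\Delta\bar P_i)$ is injective: this is exactly Proposition~\ref{prop_new_proj_is_inj_C}(ii), which says (under defect invariance) that $\Hom_\Lambda(\cP,-)$ sends a filtered projective $\Lambda$-module to a projective-injective $\Theta$-module precisely when that projective is injective. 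Comparing the displayed resolution of $J$ with the resolution of $I$ term by term, the injective prefix of the former is exactly two steps longer, so $\codomdim_\Lambda J=2+\codomdim_\Theta I$.

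The main obstacle is the bookkeeping underlying both the uniqueness step and the identification of the first two projectives in the resolution of $J$: everything funnels through the chain ``$\Omega^1(J)$ is a proper submodule of a base-set element $\Rightarrow$ its top is not in $\cS'(\Lambda)$ $\Rightarrow$ (using $\defect\Lambda=\defect\Theta$) its projective cover is projective-injective'', combined with the fact that an indecomposable projective-injective is determined by its socle. Once these are in place, the exactness/projective-cover properties of $\Delta$ and the translation ``$\Delta\bar P_i$ injective $\iff$ $\bar P_i$ injective'' via Proposition~\ref{prop_new_proj_is_inj_C}(ii) make the resolution computation routine.
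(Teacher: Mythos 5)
Your proof is correct and follows essentially the same route as the paper: existence of $J$ via Proposition~\ref{GorensteinReduction}, the observation that $P(J)$ is projective-injective because $J$ is injective while $P(\Omega^1(J))$ is a non-filtered projective (since $\Omega^1(J)$ sits properly inside a base-set element) and hence projective-injective by defect invariance, and then the two-step shift of both resolutions. The one place you go beyond the paper's own write-up is the uniqueness of $J$, which the paper asserts but does not argue; your derivation of it from Proposition~\ref{different injectives} together with the fact that a projective-injective is determined by its socle is a genuine and welcome addition, as is your explicit appeal to Proposition~\ref{prop_new_proj_is_inj_C}(ii) to justify the step $\codomdim_\Lambda\Delta I=\codomdim_\Theta I$ that the paper leaves implicit.
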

\begin{proof}
    By proposition \ref{PROP every injective eps mod is second syzygy of injective lambda}, there exists exact sequence for every injective non-projective $\Theta$-module $I$ such that 
    \begin{align*}
        \Delta I\rightarrow P_1\rightarrow P_0\rightarrow J\rightarrow 0
    \end{align*}
    where $J$ is an injective $\Lambda$-module and $I\cong\Hom_{\Lambda}(\cP,\Omega^2(J))=\Hom_{\Lambda}(\cP,\Delta I)$. $P_0$ is projective-injective module since it is projective cover of injective module $J$. By lemma \ref{LEM injective first szyg is submodule of delta}, $\Omega^1(J)$ is not an element of $\cB(\Lambda)$, hence $P_1=P(\Omega^1(J))$ is projective $\Lambda$-module which is not $\cB(\Lambda)$-filtered. By proposition \ref{emre defect invariant prop}, $P_1$ is projective-injective and minimal projective. Therefore,
    \begin{gather*}
        \codomdim_{\Lambda}J=2+\codomdim_{\Lambda} \Delta I=
        2+\codomdim_{\Theta} I,\\
        \pdim_{\Lambda}J=2+\pdim_{\Lambda} \Delta I=
        2+\pdim_{\Theta} I.
    \end{gather*}
\end{proof}

Here, we give the proof of Theorem \ref{emre thm domdim}.
\begin{proposition}\label{emre proposition given theta lambda difference }
    Let $\Lambda$, $\Theta$ be Nakayama algebras such that $\bm\varepsilon(\Lambda)\cong\Theta$. Then,
   \begin{enumerate}
       \item  if $\defect\Lambda>\defect\Theta$, then $\domdim\Lambda\leq 2$.
       \item if $\defect\Lambda=\defect\Theta$, then $\domdim\Lambda=2+\domdim\Theta$.
       \end{enumerate}
\end{proposition}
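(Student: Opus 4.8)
The plan is to handle the two parts separately, reducing each to results already established, with part~(2) resting on Proposition~\ref{emre may prop difference of domdims}. For part~(1) I would argue by contraposition: Corollary~\ref{cor of PROP domdim geq 3 implies bijection } states that $\domdim\Lambda\geq 3$ forces $\defect\Lambda=\defect\bm\varepsilon(\Lambda)=\defect\Theta$, so the strict inequality $\defect\Lambda>\defect\Theta$ is incompatible with $\domdim\Lambda\geq 3$, whence $\domdim\Lambda\leq 2$. Nothing more is needed here.

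For part~(2), I would first dispose of the degenerate case: if the common value $\defect\Lambda=\defect\Theta$ equals $0$, then both $\Lambda$ and $\Theta$ are selfinjective by Corollary~\ref{emre defect zero implies selfinjective or semisimple}, hence $\domdim\Lambda=\domdim\Theta=\infty$ and the asserted equality holds under the convention $2+\infty=\infty$; so assume $\defect\Lambda=\defect\Theta\neq 0$. Recall from Definition~\ref{def domdim} that $\domdim A=\inf\{\codomdim_A I\mid I\text{ indecomposable injective non-projective}\}$. Next I would invoke Proposition~\ref{emre may prop difference of domdims}: it attaches to each indecomposable injective non-projective $\Theta$-module $I$ an indecomposable injective non-projective $\Lambda$-module $J$, determined by $I\cong\Hom_\Lambda(\cP,\Omega^2(J))$, satisfying $\codomdim_\Lambda J=2+\codomdim_\Theta I$. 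Since $J$ recovers $I$ up to isomorphism, the assignment $I\mapsto J$ is injective; as source and target both have cardinality $\defect\Theta=\defect\Lambda$ (Proposition~\ref{rank_rel_def}), it is a bijection, compatible with the relation above. Taking infima over the two sides then gives
\[
\domdim\Lambda=\inf_J\codomdim_\Lambda J=\inf_I\bigl(2+\codomdim_\Theta I\bigr)=2+\domdim\Theta,
\]
which is the desired equality.

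The one delicate point is that Proposition~\ref{emre may prop difference of domdims} only guarantees, for each $\Theta$-side module, a $\Lambda$-side module; promoting $I\mapsto J$ to a bijection onto \emph{all} indecomposable injective non-projective $\Lambda$-modules is exactly where the equal-defect hypothesis does its work. This is legitimate because $\bm\varepsilon(\Lambda)\cong\Theta$ together with $\defect\Lambda=\defect\Theta$ identifies $\Lambda$ with the defect-invariant reverse of $\Theta$, unique by Theorem~\ref{emre theorem main} part~\ref{emre thm main part 2}, so the explicit description of $\Lambda$'s non-filtered projectives (Corollary~\ref{cor_new_proj_is_inj}, Proposition~\ref{emre defect invariant prop}) pins down its injective modules enough for the counting to be rigorous. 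If one prefers to avoid this bookkeeping, an alternative route is to separate off the range $\domdim\Lambda\geq 3$, where Remark~\ref{emre remark reduction} part~\ref{reduction domdim} already yields $\domdim\Lambda=\domdim\bm\varepsilon(\Lambda)+2$, and to use Proposition~\ref{emre may prop difference of domdims} only to pin down the remaining cases $\domdim\Lambda\leq 2$.
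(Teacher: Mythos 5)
Your proposal is correct and follows essentially the same route as the paper: part (1) is the contrapositive of Proposition~\ref{PROP domdim geq 3 implies bijection} (via Corollary~\ref{cor of PROP domdim geq 3 implies bijection }), and part (2) applies Proposition~\ref{emre may prop difference of domdims} and takes infima of codominant dimensions over the correspondence between injective non-projective modules, concluding with $\domdim=\codomdim$ from \cite{Mu}. Your extra care in checking that $I\mapsto J$ is a bijection (by injectivity plus equal cardinality $\defect\Lambda=\defect\Theta$) and in treating the selfinjective case separately only makes explicit what the paper leaves implicit.
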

\begin{proof}
    The first statement follows from the contra-positive of proposition \ref{PROP domdim geq 3 implies bijection}. For the second statement, we use proposition \ref{emre may prop difference of domdims}.
    \begin{align*}
        \codomdim\Lambda &=\inf\left\{\codomdim I\vert \,I\,\text{is injective}\right\}\\
        &=2+\inf\left\{\codomdim \Omega^2(I)\vert \,I\,\text{is injective}\right\}\\
        &=2+\inf\left\{\codomdim_{\Theta}(\Hom_{\Lambda}(\cP, \Omega^2(I))\vert \,I\,\text{is injective}\right\}\\
        &=2+\inf\left\{\codomdim_{\Theta}I'\vert \,I'\,\text{is injective}\,\Theta\text{-module}\right\}\\
        &=2+\codomdim\Theta
    \end{align*}
    Since co-dominant dimension and dominant dimension are equal by \cite{Mu}, we conclude that $\domdim\Lambda=2+\domdim\Theta$.
\end{proof}

\begin{remark}\label{emre may remark selfinjective}
   Let $\Lambda,\Theta$ be Nakayama algebras such that $\bm\varepsilon(\Lambda)\cong\Theta$ and $\defect\Lambda=\defect\Theta$. If $\Theta$ is selfinjective, then $\Lambda\cong\Theta$.  
\end{remark}
We prove Theorem \ref{emre thm minimal aus. gor}.

\begin{proposition}
   Let $\Lambda,\Theta$ be Nakayama algebras such that $\bm\varepsilon(\Lambda)\cong\Theta$ and $\defect\Lambda=\defect\Theta$. If $\Theta$ is dominant Auslander-Gorenstein algebra, $\Lambda$ is as well. 
\end{proposition}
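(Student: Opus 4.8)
The plan is to transfer the defining inequality of ``dominant Auslander--Gorenstein'' across the $\bm\varepsilon$-correspondence by means of Proposition \ref{emre may prop difference of domdims}, after clearing away the degenerate cases. First, if $\defect\Theta=0$ then $\Theta$ is selfinjective by Corollary \ref{emre defect zero implies selfinjective or semisimple}, so $\Lambda\cong\Theta$ by Remark \ref{emre may remark selfinjective} and there is nothing to prove; thus we may assume $\defect\Lambda=\defect\Theta\neq0$, i.e.\ (again by Corollary \ref{emre defect zero implies selfinjective or semisimple}) $\Theta$ is not selfinjective. Then some non-injective indecomposable projective $\Theta$-module $P_0$ satisfies $\domdim_\Theta P_0=\domdim\Theta$, and the dominant Auslander--Gorenstein hypothesis gives $1\leq\indim_\Theta P_0\leq\domdim_\Theta P_0$, so $\domdim\Theta\geq1$; hence $\domdim\Lambda=\domdim\Theta+2\geq3$ by Proposition \ref{emre proposition given theta lambda difference }, and in particular every non-injective indecomposable projective $\Lambda$-module has dominant dimension at least $3$.

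The next step is to reformulate the conclusion on the injective side. Dominant Auslander--Gorensteinness is left--right symmetric: applying the duality $D=\Hom_{\mathbb K}(-,\mathbb K)$, under which $\indim_A P=\pdim_{A^{\mathrm{op}}}DP$ and $\domdim_A P=\codomdim_{A^{\mathrm{op}}}DP$ while indecomposable projectives correspond to indecomposable injectives, and using that the Gorenstein property is self-dual, one obtains that an algebra $A$ is dominant Auslander--Gorenstein if and only if $\pdim_A J\leq\codomdim_A J$ for every indecomposable injective $A$-module $J$ (cf.\ \cite{CIM}). So it suffices to verify this injective-side inequality for $\Lambda$. Let $J$ be an indecomposable injective $\Lambda$-module. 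If $J$ is projective-injective then $\pdim_\Lambda J=0\leq\codomdim_\Lambda J$. If $J$ is injective non-projective, then since $\defect\Lambda=\defect\Theta\neq0$ Proposition \ref{emre may prop difference of domdims} supplies the unique injective non-projective $\Theta$-module $I$ with $\Delta(I)\cong\Omega^2(J)$ together with the equalities $\pdim_\Lambda J=\pdim_\Theta I+2$ and $\codomdim_\Lambda J=\codomdim_\Theta I+2$. As $\Theta$ is dominant Auslander--Gorenstein, the injective-side inequality over $\Theta$ gives $\pdim_\Theta I\leq\codomdim_\Theta I$, whence $\pdim_\Lambda J\leq\codomdim_\Lambda J$. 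Since $J$ was arbitrary, $\Lambda$ is dominant Auslander--Gorenstein.

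I expect the main obstacle to be the second step, namely justifying the passage to the injective-side reformulation (the left--right symmetry of the notion). If one prefers to avoid invoking this symmetry, the alternative is to argue directly with the projective-side definition: a non-filtered indecomposable projective $\Lambda$-module is projective-injective by defect invariance (Corollary \ref{cor_new_proj_is_inj} and Proposition \ref{prop_new_proj_is_inj_C}), hence of injective dimension $0$; and for a filtered indecomposable projective $\Lambda$-module $\Delta(Q)$, with $Q$ the corresponding indecomposable projective $\Theta$-module --- necessarily non-injective whenever $\Delta(Q)$ is, by Proposition \ref{prop_new_proj_is_inj_C}(ii) --- one must prove the module-level identities $\domdim_\Lambda\Delta(Q)=\domdim_\Theta Q+2$ and $\indim_\Lambda\Delta(Q)=\indim_\Theta Q+2$. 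These are the minimal-injective-resolution analogues of Proposition \ref{emre may prop difference of domdims}: peel off the first two terms of the minimal injective resolution of $\Delta(Q)$, which are projective-injective because $\domdim_\Lambda\Delta(Q)\geq3$, identify the second cosyzygy with $\Delta$ of the relevant $\Theta$-module, and match up the injective resolution terms on the two sides using Proposition \ref{prop_new_proj_is_inj_C}(ii). Combining the resulting identities with $\indim_\Theta Q\leq\domdim_\Theta Q$ yields $\indim_\Lambda\Delta(Q)\leq\domdim_\Lambda\Delta(Q)$, completing the argument on the projective side as well.
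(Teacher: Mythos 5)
Your proposal is correct and follows essentially the same route as the paper: dispose of the selfinjective case via Remark \ref{emre may remark selfinjective}, then for each injective non-projective $\Lambda$-module $J$ use the identities $\pdim_\Lambda J=\pdim_\Theta I+2$ and $\codomdim_\Lambda J=\codomdim_\Theta I+2$ from Proposition \ref{emre may prop difference of domdims} together with the injective-side inequality $\pdim_\Theta I\leq\codomdim_\Theta I$. You are somewhat more explicit than the paper in justifying the left--right symmetric reformulation of the dominant Auslander--Gorenstein condition (which the paper invokes without comment), but this is an elaboration of the same argument rather than a different one.
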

\begin{proof}
By remark \ref{emre may remark selfinjective}, if $\Theta$ is selfinjective, then $\Lambda\cong\Theta$ is a dominant Auslander-Gorenstein algebra. If $\Theta$ is not selfinjective, then by proposition \ref{emre may prop difference of domdims}, for every $I$, there exists unique indecomposable injective $\Lambda$-module $J$ such that $\Hom_{\Lambda}(\cP,\Omega^2(J))\cong I$. Moreover,
\begin{align*}
     \codomdim_{\Lambda}J=2+\codomdim_{\Theta} I, \,\,       \pdim_{\Lambda}J=        2+\pdim_{\Theta} I.
\end{align*}
        $\Theta$ is a dominant Auslander-regular algebra, so every indecomposable injective $\Theta$-module $I$ satisfy $\pdim_{\Theta} I\leq \codomdim_{\Theta} I$.       Therefore, 
         \begin{align*}
             \pdim_{\Lambda} J=2+\pdim_{\Theta} I\leq 2+\codomdim_{\Theta} I=\codomdim_{\Lambda} J
         \end{align*}
         implies that $\Lambda$ is also dominant Auslander-Gorenstein algebra.
\end{proof}

\subsection{Dominant Auslander-Gorenstein Algebras among Nakayama Algebras}
Aim of this section is to prove the following:
\begin{proposition}
    Let $\Lambda$ be a cyclic Nakayama algebra of infinite global dimension. If $\Lambda$ is a dominant Auslander-Gorenstein algebra, then dominant dimension of $\Lambda$ is even.
\end{proposition}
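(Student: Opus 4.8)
The plan is to argue by induction on $\rank\Lambda$, using the $\bm\varepsilon$-construction to remove two from the dominant dimension at a time until it is at most $2$, and then to exclude the odd value $\domdim\Lambda=1$.

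I begin with two reductions. If $\Lambda$ is selfinjective then $\domdim\Lambda=\infty$ is even, so by Corollary~\ref{emre defect zero implies selfinjective or semisimple} I may assume $\defect\Lambda\neq 0$. Since $\gldim\Lambda=\infty$, Remark~\ref{emre remark reduction} gives $\gldim\bm\varepsilon(\Lambda)=\infty$, and hence by iteration $\gldim\bm\varepsilon^{k}(\Lambda)=\infty$ for every $k\ge 0$; as a (possibly disconnected) linear Nakayama algebra has finite global dimension, each $\bm\varepsilon^{k}(\Lambda)$ is cyclic. Also, for a cyclic Nakayama algebra $\domdim\Lambda\ge 1$: by Remark~\ref{socle of projective-injective} the injective envelope of every indecomposable projective is projective-injective.

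For the induction step assume $\domdim\Lambda\ge 3$ and set $\Theta=\bm\varepsilon(\Lambda)$. By Corollary~\ref{cor of PROP domdim geq 3 implies bijection } one has $\defect\Lambda=\defect\Theta$, which is nonzero since $\Lambda$ is not selfinjective, so Proposition~\ref{emre proposition given theta lambda difference } gives $\domdim\Theta=\domdim\Lambda-2\ge 1$. Because $\domdim\Lambda\ge 3$, Proposition~\ref{PROP domdim geq 3 implies bijection} makes the assignment of Proposition~\ref{emre may prop difference of domdims} a bijection between the indecomposable injective non-projective $\Lambda$-modules $J$ and the indecomposable injective non-projective $\Theta$-modules $I$, and along it $\pdim_{\Lambda}J=\pdim_{\Theta}I+2$ and $\codomdim_{\Lambda}J=\codomdim_{\Theta}I+2$. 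Subtracting $2$ transports the inequalities $\pdim_{\Theta}I\le\codomdim_{\Theta}I$ and the finiteness $\pdim_{\Theta}I<\infty$ from the corresponding ones for $J$, so $\Theta$ is again dominant Auslander-Gorenstein; this is the converse of Theorem~\ref{emre thm minimal aus. gor}, valid here because the correspondence is bijective. As $\rank\Theta=\#\rel\Lambda=\rank\Lambda-\defect\Lambda<\rank\Lambda$ and $\gldim\Theta=\infty$, the induction hypothesis gives that $\domdim\Theta$ is even, whence $\domdim\Lambda=\domdim\Theta+2$ is even.

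There remains the base case $\domdim\Lambda\le 2$; by the above $\domdim\Lambda\in\{1,2\}$, and it must be shown that $\domdim\Lambda=1$ is impossible. Here is where the real difficulty lies. Suppose $\domdim\Lambda=1$, and choose an indecomposable projective non-injective $P$ with $\domdim P=1$. The dominant Auslander-Gorenstein property forces $\indim P\le 1$, and $P$ is not injective, so $\indim P=1$; hence $0\to P\to E\to J\to 0$ is exact with $E=I(P)$ projective-injective and $J:=E/P$ injective, and $\domdim P=1$ forces $J$ to be non-projective. Then $E$ is the projective cover of $J$, so $\Omega^{1}(J)=P$ is projective, i.e.\ $\pdim J=1$ and $\codomdim J=1$. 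Furthermore $\Lambda$ is not defect invariant, since otherwise $\domdim\Lambda=\domdim\bm\varepsilon(\Lambda)+2\ge 3$; thus $\defect\Lambda>\defect\bm\varepsilon(\Lambda)$, and by Proposition~\ref{C_and_A} a non-filtered projective non-injective module, or a non-filtered projective-injective with socle in $\cS_{f}(\Lambda)$, must exist. The hard part will be to contradict $\gldim\Lambda=\infty$: as a Gorenstein Nakayama algebra of infinite global dimension, $\Lambda$ has a nonprojective module which is both syzygy- and cosyzygy-periodic, and one must show that its presence is incompatible with the rigid structure imposed by $\domdim\Lambda=1$ — concretely, by tracking through Proposition~\ref{different injectives} and Lemma~\ref{LEM the submodule of delta gives injective} the interplay of $E$, $P$, $J$ and the non-filtered projectives above, and producing an indecomposable injective module $I'$ with $\pdim I'>\codomdim I'$, which violates the dominant Auslander-Gorenstein hypothesis. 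Once $\domdim\Lambda=1$ is ruled out, $\domdim\Lambda=2$ is even, and the induction is complete.
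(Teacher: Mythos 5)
Your induction step (for $\domdim\Lambda\geq 3$, pass to $\Theta=\bm\varepsilon(\Lambda)$, which is again cyclic, of infinite global dimension, dominant Auslander-Gorenstein, with $\domdim\Theta=\domdim\Lambda-2$) is sound and is essentially the paper's own reduction, which it performs in one stroke by noting that $\domdim\Lambda=2k+1$ would force $\domdim\bm\varepsilon^{k}(\Lambda)=1$ with $\bm\varepsilon^{k}(\Lambda)$ still dominant Auslander-Gorenstein. The problem is the base case: you never actually rule out $\domdim\Lambda=1$. You set up the situation correctly (a projective non-injective $P$ with $\domdim P=1$, $\indim P=1$, and hence an injective non-projective $J$ with $\Omega^{1}(J)\cong P$ projective, $\pdim J=\codomdim J=1$), but then you only announce what "must be shown" — that some interplay with non-filtered projectives, Proposition~\ref{different injectives} and defect non-invariance should produce an injective $I'$ with $\pdim I'>\codomdim I'$. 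That is a plan, not an argument, and it is also not the route that works; nothing in your sketch explains how such an $I'$ would arise, nor where the hypothesis $\gldim\Lambda=\infty$ enters.

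The missing step is in fact short, and you already hold all the needed data. From $\pdim J=1$ you know $\Omega^{1}(J)$ is projective; by Lemma~\ref{every second syzy of injective} (note: this is the lemma you need, not Lemma~\ref{LEM the submodule of delta gives injective}, which concerns injectivity of quotients), $\Omega^{1}(J)$ is a proper submodule of some $B\in\cB(\Lambda)$. Thus $B$ has a projective submodule, and Remark~\ref{emre delta has projective then not cyclic algebra} then says $\bm\varepsilon(\Lambda)$ is not cyclic, i.e.\ it is a (possibly disconnected) linear Nakayama algebra of finite global dimension; by the reduction $\gldim\Lambda=\gldim\bm\varepsilon(\Lambda)+2$ of Remark~\ref{emre remark reduction} this gives $\gldim\Lambda<\infty$, contradicting the hypothesis. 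This is exactly how the paper closes the argument after its $\bm\varepsilon^{k}$-reduction; without this (or some completed substitute for your $I'$-construction), your proof is incomplete at its decisive point.
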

\begin{proof}
We begin by considering the case where $\Lambda$ is self-injective, which implies a dominant dimension of zero. Thus, we can assume that $\Lambda$ is not self-injective.

Next, assuming the contrary, suppose that the dominant dimension of $\Lambda$ is $2k+1$. Consequently, the dominant dimension of $\bm\varepsilon^k(\Lambda)$ is one by remark \ref{emre remark reduction}. Moreover, $\bm\varepsilon^k(\Lambda)$ is a dominant Auslander-Gorenstein algebra by Proposition \ref{emre may prop difference of domdims}.

There exists an injective $\bm\varepsilon^k(\Lambda)$-module $I$ with $\pdim I = \codomdim I = 1$. Hence, $\Omega^1(I)$ is a projective module. By lemma \ref{every second syzy of injective}, there exists $\Delta\in\cB(\bm\varepsilon^k(\Lambda))$ with $\Omega^1(I)\subseteq\Delta$. According to Remark \ref{emre delta has projective then not cyclic algebra}, $\Delta$ is a projective module, which implies that the global dimensions of $\bm\varepsilon^k(\Lambda)$ and $\Lambda$ are finite, contradicting the assumption that $\gldim\Lambda=\infty$. Therefore dominant dimension of $\Lambda$ is even.
\end{proof}

\begin{remark}
    Let $\Theta$ be a minimal Auslander-Gorenstein algebra of infinite global dimension. Then, it is possible to construct a dominant Auslander-Gorenstein algebra $\Lambda$ such that $\bm\varepsilon(\Lambda)\cong\Theta$ and $\defect\Lambda\neq\defect\Theta$. We give an example. Let $\Lambda$ be given by Kupisch series $(4,4,3,3,2,2,2,2,2,2,2,3,4,4,4,4)$. One can verify that dominant dimensions of indecomposable projective non-injective modules are $2,4,4$. $\bm\varepsilon(\Lambda)$ is given by Kupisch series $(4,4,3,3,2,2,2,2,2,2,2,2,2,3)$ and it is $2$-Auslander-Gorenstein algebra with defect two.
\end{remark}

\subsection{Minimal Auslander-Gorenstein Algebras among Nakayama Algebras}\label{subsection minimal Auslande-Gorenstein}

In this subsection, we provide a classification of minimal Auslander-Gorenstein algebras. We recall that:
\begin{definition} \cite{iyamasolberg} An algebra $A$ is called minimal Auslander-Gorenstein if the Gorenstein dimension is bounded by the dominant dimension. $A$ is called $d$-Auslander-Gorenstein, if it is minimal Auslander-Gorenstein with injective dimension $d$.
\end{definition}

\begin{proposition}
    Let $\Lambda$ be a cyclic Nakayama algebra which is $d$-Auslander-Gorenstein where $d\geq 3$. Then, $\bm\varepsilon(\Lambda)$ is $(d-2)$-Auslander-Gorenstein algebra.
\end{proposition}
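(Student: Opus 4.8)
The plan is to combine the dominant-dimension reduction (Remark \ref{emre remark reduction}\,\ref{reduction domdim}) with the Gorenstein-dimension reduction coming from the second-syzygy picture of $\cB(\Lambda)$-filtered modules. Since $\Lambda$ is $d$-Auslander-Gorenstein with $d\geq 3$, we have in particular $\domdim\Lambda = d \geq 3$, so all the machinery available under the hypothesis $\domdim\Lambda\geq 3$ applies: by Proposition \ref{PROP domdim geq 3 implies bijection} and Corollary \ref{cor of PROP domdim geq 3 implies bijection }, $\defect\Lambda = \defect\bm\varepsilon(\Lambda)$, i.e.\ $\Lambda$ is defect invariant, and there is a bijection between indecomposable injective non-projective $\Lambda$-modules $I$ and indecomposable injective non-projective $\bm\varepsilon(\Lambda)$-modules $J$ via $\Delta(J)=\Omega^2(I)$, $I=\Sigma^2(\Delta(J))$. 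First I would record this and then invoke Proposition \ref{emre may prop difference of domdims}: since $\defect\Lambda = \defect\bm\varepsilon(\Lambda)$ and $\bm\varepsilon(\Lambda)$ is not selfinjective (its defect is nonzero, as $\defect\Lambda\neq 0$ because $d\geq 3>0$), for each injective non-projective $\bm\varepsilon(\Lambda)$-module $J$ the corresponding $\Lambda$-module $I$ satisfies $\pdim_\Lambda I = \pdim_{\bm\varepsilon(\Lambda)} J + 2$.

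Next I would translate "$d$-Auslander-Gorenstein" into a statement purely about projective dimensions of injectives. By definition $\gordim\Lambda = d$, meaning every indecomposable injective $\Lambda$-module has projective dimension $\leq d$ and at least one has projective dimension exactly $d$; moreover all nonzero injective dimensions of projectives are equal (this is automatic once we also know $\domdim\Lambda = d$, giving $\indim P_i = d$ for every projective non-injective $P_i$). Using the bijection and $\pdim_\Lambda I = \pdim_{\bm\varepsilon(\Lambda)} J + 2$: the injective non-projective $\bm\varepsilon(\Lambda)$-modules $J$ are exactly those arising from injective non-projective $\Lambda$-modules $I$, and $\pdim_{\bm\varepsilon(\Lambda)} J = \pdim_\Lambda I - 2$. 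The projective injective $\Lambda$-modules have projective dimension $0$ and correspond (under $\Hom_\Lambda(\cP,-)$, using Proposition \ref{diagram}\,\ref{EM filtered proj inj implies it is filtered in epsilon} together with defect invariance, Proposition \ref{prop_new_proj_is_inj_C}(ii)) to projective injective $\bm\varepsilon(\Lambda)$-modules. Hence $\gordim\bm\varepsilon(\Lambda) = \max_J \pdim_{\bm\varepsilon(\Lambda)} J = \max_I \pdim_\Lambda I - 2 = d - 2$, where the maximum over $I$ ranges over injective non-projective $\Lambda$-modules (and $d-2\geq 1$ so the projective-injectives do not affect the max).

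It then remains to check that $\bm\varepsilon(\Lambda)$ is genuinely minimal Auslander-Gorenstein, i.e.\ that its Gorenstein dimension is bounded by its dominant dimension and the nonzero injective dimensions of projectives are all equal. For the dominant dimension: by Remark \ref{emre remark reduction}\,\ref{reduction domdim}, $\domdim\bm\varepsilon(\Lambda) = \domdim\Lambda - 2 = d - 2$, which matches $\gordim\bm\varepsilon(\Lambda)$ exactly; so the bound $\gordim \leq \domdim$ holds with equality. For equality of the nonzero injective dimensions of projectives: the $\bm\varepsilon(\Lambda)$-projectives are $\Hom_\Lambda(\cP,\tilde P)$ for $\cB(\Lambda)$-filtered projective $\Lambda$-modules $\tilde P$, and their injective resolutions lift (via the exactness of $\Hom_\Lambda(\cP,-)$ and the categorical equivalence in Proposition \ref{diagram}) to injective copresentations in $\Fil t(\cB(\Lambda))$ of $\tilde P$; combined with the bijection on injectives and the uniform shift by $2$ one gets $\indim_{\bm\varepsilon(\Lambda)}\Hom_\Lambda(\cP,\tilde P) = \indim_\Lambda \tilde P - 2$ whenever the latter is nonzero, and since all nonzero $\indim_\Lambda$ of $\Lambda$-projectives equal $d$, all nonzero injective dimensions of $\bm\varepsilon(\Lambda)$-projectives equal $d-2$. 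Therefore $\bm\varepsilon(\Lambda)$ is $(d-2)$-Auslander-Gorenstein.

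**Main obstacle.** The delicate point is the bookkeeping that ensures the injective dimension of each indecomposable projective $\bm\varepsilon(\Lambda)$-module drops by exactly $2$ (not just $\leq 2$), and that projective-injective $\Lambda$-modules correspond precisely to projective-injective $\bm\varepsilon(\Lambda)$-modules — this is where defect invariance (Proposition \ref{prop_new_proj_is_inj_C}) is essential, since without it some non-filtered projective could fail to be injective and the correspondence would break. I expect the cleanest route is to prove the single clean statement "$\indim_{\bm\varepsilon(\Lambda)} M = \indim_\Lambda \Delta(M) - 2$ for every non-projective-injective $\bm\varepsilon(\Lambda)$-module $M$ with $\indim_\Lambda\Delta(M)\geq 3$", deduce it from Proposition \ref{emre may prop difference of domdims} applied iteratively along the injective coresolution, and then read off both $\gordim$ and the equality of injective dimensions of projectives from it.
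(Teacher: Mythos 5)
Your proposal is correct and follows essentially the same route as the paper: the paper's own proof is exactly "$\domdim\Lambda=d\geq 3$ gives defect invariance (Corollary \ref{cor of PROP domdim geq 3 implies bijection }), hence $\domdim\bm\varepsilon(\Lambda)=d-2$ by Proposition \ref{emre proposition given theta lambda difference } and $\gordim\bm\varepsilon(\Lambda)=d-2$ by the reduction formulas." Your write-up simply makes explicit the details the paper leaves implicit — deriving the Gorenstein-dimension drop from Proposition \ref{emre may prop difference of domdims} via the bijection on injectives, and checking the equality of the nonzero injective dimensions of projectives — which is a more careful rendering of the same argument rather than a different one.
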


\begin{proof}
    Dominant dimension of $\Lambda$ is $d$, $d\geq 3$. So, by corollary \ref{cor of PROP domdim geq 3 implies bijection }, $\defect\Lambda=\defect\bm\varepsilon(\Lambda)$. By proposition \ref{emre proposition given theta lambda difference }, $\domdim\bm\varepsilon(\Lambda)=d-2$. By remark \ref{emre remark reduction}, $\gordim\bm\varepsilon(\Lambda)=d-2$, hence $\bm\varepsilon(\Lambda)$ is $(d-2)$-Auslander-Gorenstein algebra.
\end{proof}

When global dimension is finite, a minimal Auslander-Gorenstein algebra becomes a higher Auslander algebra, which we was studied in \cite{sen21}. Therefore, we consider Nakayama algebras of infinite global dimension. By \cite{sen18}, Gorenstein dimension of minimal Auslander-Gorenstein algebra is even. To give complete classification of cyclic Nakayama algebras which are minimal Auslander-Gorenstein, it is enough to construct $2$-Auslander-Gorenstein algebras by proposition above. 

We start with the following observation:
\begin{lemma}\label{emre auslandegor length of delta is at most two}
    Let $\Lambda$ be a $2$-Auslander-Gorenstein algebra of infinite global dimension. Then, any $\Delta_i\in\cB(\Lambda)$ satisfy $\ell(\Delta_i)\leq 2$. 
\end{lemma}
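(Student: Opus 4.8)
The plan is to argue by contradiction: suppose some $\Delta_i \in \cB(\Lambda)$ has $\ell(\Delta_i) \geq 3$, and derive that the Gorenstein dimension of $\Lambda$ exceeds its dominant dimension, contradicting the $2$-Auslander-Gorenstein hypothesis. First I would recall that since $\Lambda$ is $2$-Auslander-Gorenstein of infinite global dimension, its dominant dimension equals $2$ and its Gorenstein (injective) dimension equals $2$; in particular every indecomposable projective module is either injective or has dominant dimension exactly $2$, and every indecomposable injective non-projective module has projective dimension exactly $2$. By Lemma \ref{LEM injective first szyg is submodule of delta}, for each indecomposable injective non-projective $\Lambda$-module $I$ the first syzygy $\Omega^1(I)$ is a proper submodule of some element of $\cB(\Lambda)$, and by Lemma \ref{LEM the submodule of delta gives injective} (together with the structure of minimal projectives/injectives) every nonzero proper submodule $X$ of such a $B \in \cB(\Lambda)$ sitting inside a projective-injective $P$ gives rise to an injective module via $P/X$.

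The key step is the following counting/structural observation. Fix $B = \Delta_i \in \cB(\Lambda)$ with $\ell(B) = d \geq 3$, and let $P$ be the unique indecomposable projective-injective with $B \hookrightarrow P$ (Proposition \ref{base-set}(5)). Then $B$ has $d-1$ distinct nonzero proper submodules $X_1 \subsetneq X_2 \subsetneq \cdots \subsetneq X_{d-1} \subsetneq B$ (one for each proper nonzero length), and each quotient $P/X_j$ is injective by Lemma \ref{injectivity}. I would show that these quotients are non-isomorphic indecomposable injective modules whose minimal projective resolutions have the shape $0 \to X_j \to P \to P/X_j \to 0$ with $X_j$ itself then requiring further resolution: since $X_j$ is a proper submodule of $B \in \cB(\Lambda)$, it has no $\cB(\Lambda)$-filtration, hence is not a second syzygy in $\Filt(\cB(\Lambda))$, so $P(X_j) \to X_j$ is not projective-injective in the relevant sense and $\pdim(P/X_j) \geq 2$ forces the resolution to continue past step $2$ when $X_j$ has a nonzero second syzygy. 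More precisely, I would identify the smallest proper submodule $X_1$ of $B$: if $\ell(B) \geq 3$ then $\ell(X_1) \geq 1$ and $X_1$ can itself be chosen so that $\Omega^1(X_1) \neq 0$, giving $\pdim(P/X_1) = 1 + \pdim X_1 \geq 1 + 2 = 3 > 2 = \domdim \Lambda$. This contradicts the hypothesis that injective dimension of projectives is bounded by dominant dimension (equivalently, via the Gorenstein symmetry, that $\gordim \Lambda = \domdim \Lambda = 2$).

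Alternatively — and this is probably the cleaner route — I would use the $\bm\varepsilon$-reduction directly: by Remark \ref{emre remark reduction}(1), $\gldim \Lambda = \gldim \bm\varepsilon(\Lambda) + 2$ whenever $\gldim \Lambda \geq 2$, and $\Lambda$ has infinite global dimension iff $\bm\varepsilon(\Lambda)$ does; meanwhile $\ell_\Lambda(\Delta_i)$ relates to lengths of projectives of $\bm\varepsilon(\Lambda)$ via Lemma \ref{sunday lemma lengths of delta and cardinality} and Proposition \ref{prop:B_eqv_def}, since $\ell(\Delta_i) = \defect_\Lambda(I(\Delta_i)) + 1$. If $\ell(\Delta_i) \geq 3$ then $\defect_\Lambda(I(\Delta_i)) \geq 2$, meaning the projective-injective $P \supseteq \Delta_i$ has at least two non-projective injective quotients; tracing the minimal projective resolution of the ``middle'' one shows its projective dimension is at least $3$, while a $2$-Auslander-Gorenstein algebra has all injectives of projective dimension $\leq 2$. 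The main obstacle is making the length-of-resolution estimate precise: I need to verify that the second syzygy of $P/X_1$ is genuinely nonzero and is not projective-injective, which is where the infinite-global-dimension hypothesis enters essentially — it guarantees the syzygy chain in $\Filt(\cB(\Lambda))$ never terminates, so a proper submodule of $B$ cannot resolve in fewer than the expected number of steps. Once that is pinned down, the contradiction with $\domdim \Lambda = 2$ is immediate.
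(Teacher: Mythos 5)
There is a genuine gap at the decisive step. Your skeleton matches the paper's: assume $\ell(\Delta_i)=t\geq 3$, take the chain of nonzero proper submodules $X_1\subsetneq X_2\subsetneq\cdots\subsetneq\Delta_i\subsetneq P$ with $P$ the projective-injective containing $\Delta_i$, and use Lemma \ref{injectivity} to see that each $P/X_j$ is injective with $\Omega^1(P/X_j)=X_j$. But your contradiction is supposed to come from $\pdim(P/X_1)\geq 3$, and the argument you give for it --- ``$X_1$ can be chosen so that $\Omega^1(X_1)\neq 0$, giving $\pdim(P/X_1)=1+\pdim X_1\geq 1+2=3$'' --- is a non sequitur: $\Omega^1(X_1)\neq 0$ only yields $\pdim X_1\geq 1$, hence $\pdim(P/X_1)\geq 2$, which is no contradiction. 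Worse, the claim $\pdim X_j\geq 2$ is false in exactly the situation at hand: since $\Lambda$ is $2$-Auslander-Gorenstein, every injective has projective dimension at most $2$, so $\Omega^2(P/X_j)=\Omega^1(X_j)$ is \emph{forced} to be projective and $\pdim X_j\leq 1$. Your appeal to infinite global dimension (``the syzygy chain in $\filt(\cB(\Lambda))$ never terminates, so a proper submodule of $B$ cannot resolve in fewer than the expected number of steps'') does not repair this: infinite global dimension means some module has infinite projective dimension, not that every syzygy chain is long, and here the injectives $P/X_j$ resolve in exactly two steps. The same unjustified ``projective dimension at least $3$'' claim reappears in your alternative route via $\defect_\Lambda(I(\Delta_i))\geq 2$.

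The paper turns the forced projectivity of the second syzygies into the contradiction on the dominant-dimension side rather than the projective-dimension side. Each $\Omega^1(X_j)=\ker(P(X_j)\to X_j)$ is projective with top $\tau\soc\Delta_i$ (Proposition \ref{prop:quadrilateral}), so all of these kernels are isomorphic to the single projective $P(\tau\soc\Delta_i)$. Comparing lengths, $\ell(P(X_j))=\ell(P(\tau\soc\Delta_i))+j$, so $P(X_1)=\rad P(X_2)$ is a proper submodule of the indecomposable $P(X_2)$ and hence not injective. Since $P(X_1)$ is the term $P_1$ in the minimal projective resolution of the injective module $P/X_1$, this gives $\codomdim (P/X_1)=1$, hence $\domdim\Lambda\leq 1<2=\gordim\Lambda$, contradicting the minimal Auslander-Gorenstein hypothesis. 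This comparison of the two projective covers is the step your proposal is missing, and it is also where the hypothesis $t\geq 3$ genuinely enters: one needs at least two distinct nonzero proper submodules of $\Delta_i$ to produce the nesting.
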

\begin{proof}

Assume that there exists at least one $\Delta_i$ module with a length greater than or equal to three. Denote the simple composition factors of such $\Delta_i$ as $X_1, \ldots, X_t$, where $t \geq 3$.

According to Lemma \ref{LEM the submodule of delta gives injective}, every proper radical of $\Delta_i$ is the first syzygy of an injective module. Notice that $\topp\Omega^2(I(X_j))\cong \tau\soc\Delta_i$ for any $1\leq j\leq t-1$.

Additionally, considering that every second syzygy of injective modules are projective due to the 2-Auslander-Gorenstein property of $\Lambda$, it follows that all $\Omega^2(I(X_i))$ are isomorphic to the same projective module. This implies that $P(\Omega^1(I(X_2)))$ is the radical of $P(\Omega^1(I(X_1)))$. Consequently, the former projective module cannot be injective.

Therefore, we can conclude that there exists an injective module with injective dimension two and dominant dimension one. As a result, $\Lambda$ cannot be a minimal Auslander-Gorenstein algebra.
     
\end{proof}

\begin{corollary}\label{emre auslandegor defect is at most one}
   Let $\Lambda$ be a $2$-Auslander-Gorenstein algebra of infinite global dimension. Then, defect of any indecomposable projective $\Lambda$-module can be at most one.
\end{corollary}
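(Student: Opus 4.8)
The plan is to deduce this from Lemma \ref{emre auslandegor length of delta is at most two} via the standard dictionary between defects of projective-injective modules and lengths of base-set elements. First I would observe that it suffices to bound $\defect(Q)$ for $Q$ an indecomposable projective-injective $\Lambda$-module: any indecomposable projective module which is not projective-injective has defect $0$, since only projective-injectives can have nonzero defect (as already used in the proof of Proposition \ref{length_defect}). So assume $Q$ is projective-injective with $\defect(Q)=d>0$.

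Next I would attach to $Q$ the relevant element of the base-set. Since $\soc Q\in\cS(\Lambda)$, Proposition \ref{base-set}(1) gives a unique $B\in\cB(\Lambda)$ with $\soc B\cong\soc Q$. The injective envelope $I(B)=I(\soc B)=I(\soc Q)$ is the unique indecomposable injective with socle $\soc Q$, hence $I(B)\cong Q$; in particular $I(B)$ is projective-injective. Now Proposition \ref{prop:B_eqv_def}(iv) yields $\ell(B)=\defect(I(B))+1=\defect(Q)+1=d+1$.

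Finally, Lemma \ref{emre auslandegor length of delta is at most two} asserts that every $B\in\cB(\Lambda)$ satisfies $\ell(B)\leq 2$ under the running hypotheses on $\Lambda$, so $d+1\leq 2$, i.e.\ $d\leq 1$, which is exactly the claim. An equivalent route, working directly with the Kupisch series $(c_1,\ldots,c_n)$: if $P_j$ is projective-injective with positive defect then $c_{j-1}\leq c_j$ and $\defect(P_j)=c_j-c_{j-1}$ by Remark \ref{kupisch properties}, while Lemma \ref{sunday lemma lengths of delta and cardinality}(2) identifies $c_j-c_{j-1}+1$ with the length of the corresponding base-set element, which is $\leq 2$. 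I do not expect any real obstacle here: all the content sits in Lemma \ref{emre auslandegor length of delta is at most two}, and this corollary is merely its numerical translation through the identity $\defect I(B)=\ell(B)-1$.
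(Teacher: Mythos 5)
Your proof is correct and follows essentially the same route as the paper: both reduce the claim to Lemma \ref{emre auslandegor length of delta is at most two} by identifying the defect of the projective-injective $Q$ with $\ell(B)-1$ for the base-set element $B$ with $\soc B\cong\soc Q$. The paper argues the two cases $\ell(B)=1,2$ by hand, whereas you invoke the identity $\ell(B)=\defect I(B)+1$ from Proposition \ref{prop:B_eqv_def}(iv), which is just a cleaner packaging of the same computation.
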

\begin{proof} 
    By the previous lemma, for any $\Delta_i\in\cB(\Lambda)$, we have $\ell(\Delta_i)\leq 2$. By remark \ref{socle of projective-injective}, $\Delta_i$ is a submodule of a projective-injective module $P$. If $\ell(\Delta_i)=1$, then defect of $P$ is zero, because the quotient $P/\Delta_i$ is radical of $P(\tau^{-1}S)$ where $S\cong\topp P$. If $\ell(\Delta_i)=2$, then defect of $P$ is one by lemma \ref{LEM the submodule of delta gives injective}.
\end{proof}

\begin{corollary} If $\Lambda$ is a minimal Auslander-Gorenstein algebra of infinite global dimension, then defect of projective modules are either one or zero.
\end{corollary}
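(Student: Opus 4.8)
The plan is to reduce the statement to a claim about the lengths of base-set modules and then peel that property off through the $\bm\varepsilon$-tower down to the already-settled $2$-Auslander-Gorenstein case. First I would record the following equivalence, valid for any cyclic Nakayama algebra $A$: every indecomposable projective $A$-module has defect $\le 1$ if and only if $\ell(B)\le 2$ for every $B\in\cB(A)$. One direction is clear, since a projective that is not projective-injective has defect $0$; for a projective-injective $P$ one has $\soc P\in\cS(A)$ by Remark \ref{socle of projective-injective}(1), so by Proposition \ref{base-set}(1) there is a unique $B\in\cB(A)$ with $\soc B=\soc P$, this $B$ lies in $P$ with $I(B)=P$, and Proposition \ref{prop:B_eqv_def}(iv) gives $\defect(P)=\ell(B)-1$. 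The self-injective case is then immediate (every non-projective indecomposable module is non-injective, so all projectives have defect $0$), so I assume $\Lambda$ is not self-injective; then $d:=\domdim\Lambda$ is a positive even integer by the proposition that the dominant dimension of a dominant Auslander-Gorenstein algebra of infinite global dimension is even, and $\Lambda$ is $d$-Auslander-Gorenstein. Write $d=2m$ and induct on $m$.

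The base case $m=1$ is Corollary \ref{emre auslandegor defect is at most one} (equivalently, Lemma \ref{emre auslandegor length of delta is at most two} together with the equivalence above). For the inductive step, set $\Theta:=\bm\varepsilon(\Lambda)$. Since $d\ge 4\ge 3$, the reduction proposition for $d$-Auslander-Gorenstein algebras shows that $\Theta$ is $(2m-2)$-Auslander-Gorenstein; moreover $\gldim\Theta=\infty$ by Remark \ref{emre remark reduction}(1), so $\Theta$ is cyclic (a connected linear Nakayama algebra has finite global dimension) and the inductive hypothesis applies: every indecomposable projective $\Theta$-module has defect $\le 1$. Now $\domdim\Lambda=2m\ge 3$, so $\Lambda$ is defect invariant by Corollary \ref{cor of PROP domdim geq 3 implies bijection }, and Corollary \ref{emre ddd 1 cor} yields $\ell_\Lambda(\Delta(S))=\defect_\Theta(P(\tau S))+1\le 2$ for every simple $\Theta$-module $S$. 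Since $\cB(\Lambda)=\{\Delta(S)\mid S\text{ simple }\Theta\text{-module}\}$ by Proposition \ref{diagram}, every $B\in\cB(\Lambda)$ has $\ell(B)\le 2$, and the equivalence of the first paragraph completes the induction.

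The only delicate point is the translation step tying the lengths of base-set modules over $\Lambda$ to the defects of projectives over $\Theta=\bm\varepsilon(\Lambda)$: this is precisely where defect invariance (available because $\domdim\Lambda\ge 3$) and Corollary \ref{emre ddd 1 cor} enter, and it is what lets the property ``all base-set modules have length $\le 2$'' descend through the entire $\bm\varepsilon$-tower down to $\bm\varepsilon^{\,m-1}(\Lambda)$, where Lemma \ref{emre auslandegor length of delta is at most two} is the anchor. Everything else is bookkeeping with results already in hand: the self-injective case, the evenness and positivity of $d$, the global-dimension reduction, and the passage between $\defect(P)$ and $\ell(B)$. The one routine item worth rechecking at each level of the induction is that the algebra stays cyclic, which holds because infinite global dimension excludes linear Nakayama algebras, so that all the cited propositions, stated for cyclic Nakayama algebras, remain applicable.
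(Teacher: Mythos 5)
Your proof is correct and follows essentially the same route as the paper's: both arguments descend the $\bm\varepsilon$-tower to the $2$-Auslander-Gorenstein anchor (Corollary \ref{emre auslandegor defect is at most one}), using the fact that once $\domdim\geq 3$ the defect of each individual projective is preserved under passage to $\bm\varepsilon(\Lambda)$. The paper encodes this preservation through the bijection of injectives given by $\Hom_{\Lambda}(\cP,\Omega^2(-))$, while you encode it through $\ell_{\Lambda}(\Delta(S))=\defect_{\Theta}(P(\tau S))+1$ together with $\defect(P)=\ell(B)-1$; these are the same fact in two guises, and your version spells out the bookkeeping (evenness and positivity of $\domdim$, cyclicity of each layer, the translation between defects and base-set lengths) that the paper's terse proof leaves implicit.
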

\begin{proof}
    According to \cite{sen19}, $\Lambda$ has an even Gorenstein dimension, denoted as $2d$. Let $P$ be an indecomposable projective module with injective quotients $I_1, \ldots, I_t$, where $t \geq 3$. There exists a bijection between these injective modules and the injectives of the syzygy filtered algebra, established via the functor $\Hom_{\Lambda}(\mathcal{P}, \Omega^2(-))$. By induction, we obtain $t = \defect \Hom_{\Lambda}(\mathcal{P}, \Omega^{2d-2}(P))$. As $\bm\varepsilon^{2d-2}(\Lambda)$ is a $2$-Auslander-Gorenstein algebra, by Corollary \ref{emre auslandegor defect is at most one}, we conclude that $t \leq 2$.
\end{proof}
\subsubsection{Constrution of $2$-Auslander-Gorenstein Algebras among Nakayama Algebras}
We start with a selfinjective Nakayama algebra $\Theta$ and a complete set of representatives of indecomposable projective $\Theta$-modules $\{P_1,\dots,P_n\}$. 
Next we consider a sequence of integers $\cK=\left(c_1,\ldots,c_N\right)$ and show that the sequence is a Kupish series for a Nakayama algebra $\Lambda$ providing that it satisfies four conditions: 1), 2), 3), 4) as stated in the Proposition \ref{emre gorenstein description of K}. Before proving this proposition we will explain the role or purpose of each of these conditions.
\begin{enumerate}
    \item $N=n+d$ is a statement   that the number of indecomposable projective $\Lambda$-modules can be increased by any $1\leq d\leq 2n$, from the number of indecomposable projective $\Theta$-modules.
    \item $\cN=(c_{z_1},c_{z_2},\ldots,c_{z_n})$ will be the lengths of the filtered projective $\Lambda$-modules. These lengths are described in terms of integer valued function $f$ wih codomain $\{1,2\}$.
    \item This is the statement that the number of nonfiltered $\Lambda$-projectives is determined by function $f$.
    \item This is numerical condition which describes lengths of non-filtered $\Lambda$-projectives.
\end{enumerate}

\begin{proposition}\label{emre gorenstein description of K}
Let $\Theta$ be a selfinjective Nakayama algebra of rank $n$. Let $\left\{P_1,\ldots,P_n\right\}$ be the complete set of representatives of isomorphism classes of indecomposable projective $\Theta$-modules. Let $f$ be a function on simple $\Theta$-modules taking integer values either one or two. Consider the finite sequence of integers $\cK=\left(c_1,\ldots,c_N\right)$ satisfying
\begin{enumerate}[label=\arabic*)]
\item $N=n+d$ for some integer $d$ satisfying $1\leq d\leq 2n-1$
\item There exists a subsequence $\cN=(c_{z_1},c_{z_2},\ldots,c_{z_n})$ of $\cK$ such that
\begin{align}\label{emre gorenstein sunday lengths of N}
c_{z_i}=\sum\limits^{\ell (P_i)}_{j=1}f(S_{[j+i-1]}) 
\end{align}
where $[j]$ denote the least positive residue of $j$ modulo $n$ when $j>0$ and $[n]=n$, and $f(S_j)$ is integer valued function either one or two. 

\item Let $c_{z_i},c_{z_{i+1}}$ be consecutive elements of $\cN$ where $1\leq i\leq n-1$. Then, in $\cK$ the number of terms in between $c_{z_i}$ and $c_{z_{i+1}}$ is either zero or one. If $c_{z_i}$ and $c_{z_{i+1}}$ are consecutive in $\cK$, then $f(S_i)=1$. Else, $f(S_i)=2$. The number of terms between $c_{z_n}$ and $c_N$ is $b$ if and only if the number of terms between $c_1$ and $c_{z_1}$ is $1-b$.
\item  Any term $c_{a_m} \in\cK-\cN$ which satisfies ${z_i}<{a_m}<{z_{i+1}}$ is given by 
\begin{align}\label{emre gorenstein sunday lenghts of K-N}
c_{a_m}=1+\sum\limits^{\ell (P_{i+1})}_{j=1} f(S_{[j+i]})
\end{align} 
Any term $c_{a_m} \in\cK-\cN$ with index either $z_n<a_m\leq N$ or $1\leq a_m< z_1$   is given by 
\begin{align} \label{emre gorenstein sunday lengths o the rest}
c_{a_m}=1+\sum\limits^{\ell (P_{1})}_{j=1} f(S_{[j]})
\end{align} 
\end{enumerate}
The sequence $\cK$ satisfying all these conditions forms an admissible sequence of a cyclic Nakayama algebra $\Lambda$ which is $2$-Auslander-Gorenstein with defect $d$.
\end{proposition}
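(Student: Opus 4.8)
The plan is to follow the three–step template of Section~\ref{section reverse} and then add a homological fourth step. First I would record that, since $\Theta$ is a connected cyclic selfinjective Nakayama algebra, its Kupisch series is constant, say $(m,\dots,m)$ with $m\geq 2$, so $\ell(P_i)=m$ for all $i$ and $\defect\Theta=0$ by Corollary~\ref{emre defect zero implies selfinjective or semisimple}. I would also observe that $\cK$ is exactly the sequence produced by Proposition~\ref{emre general proposition non defect invariant} applied to $\Theta$ and the vector $w$ with $w_i=f(S_{i-1})-1\in\{0,1\}$: condition $2)$ here is condition~\ref{emre general filtered} there after the shift $w_{[i+j]}+1=f(S_{[j+i-1]})$, and conditions $3)$, $4)$ are the instances of~\ref{emre general number of unfiltered} and~\ref{emre general length of unfiltered} in which each gap between consecutive $c_{z_i}$'s carries at most one intermediate term (because $w_i\leq 1$); in particular $N=n+\sum_i w_i$ gives $d=\sum_i(f(S_i)-1)$. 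Admissibility of $\cK$ then follows as in Lemmas~\ref{emre general lemma when difference is one} and~\ref{emre general lemma when difference is not one, all three possibilities here}: using $\ell(P_i)=m$, every consecutive difference in $\cK$ is computed to lie in $\{-1,0\}$ except for the unit descents $c_a-c_{z_{i+1}}=1$ occurring just before each $c_{z_{i+1}}$, and since each $c_i$ is a sum of at least $m\geq 2$ positive integers, $c_i\geq 2$ throughout; hence $\cK$ is a Kupisch series of a cyclic Nakayama algebra $\Lambda$ by Lemma~\ref{kupisch how to verify}.

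Next I would verify $\bm\varepsilon(\Lambda)\cong\Theta$ and $\defect\Lambda=d$ exactly as in Propositions~\ref{sunday proposition regarding algebra A} and~\ref{emre general prop epsilon}. The $d$ terms of $\cK\setminus\cN$ are precisely those sitting just before a unit descent, so $\cK$ has $d$ descents and $N-d=n$ ascents; therefore $\lvert\cS(\Lambda)\rvert=n$ by Lemma~\ref{sunday lemma lengths of delta and cardinality}, and $\defect\Lambda=\rank\Lambda-\#\rel\Lambda=N-n=d$ by Propositions~\ref{rank_rel_def} and~\ref{numerical}. The base–set lengths read off from the computed differences are the values $f(S_j)\in\{1,2\}$, so $\ell(\Delta)\leq 2$ for every $\Delta\in\cB(\Lambda)$; each $Q_{z_i}$ is filtered by $m$ base elements, and since there are exactly $n$ filtered projectives these are all of them. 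Computing $\dim_{\mathbb K}\Hom_\Lambda(\bigoplus_i Q_{z_i},Q_{z_i})=m$ shows the Kupisch series of $\bm\varepsilon(\Lambda)$ is $(m,\dots,m)$ of length $n$, that is, the Kupisch series of $\Theta$; as a connected cyclic selfinjective Nakayama algebra is determined by its constant Kupisch series, $\bm\varepsilon(\Lambda)\cong\Theta$.

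For the homological step the property $\ell(\Delta)\leq 2$ for all $\Delta\in\cB(\Lambda)$ is the crucial input. Since $\defect\Lambda=d\geq 1>0=\defect\Theta$, the contrapositive of Proposition~\ref{PROP domdim geq 3 implies bijection} (the bijection it would provide forces $\defect\Lambda=\defect\Theta$) gives $\domdim\Lambda\leq 2$. For $\domdim\Lambda\geq 2$: by Remark~\ref{kupisch properties} the indecomposable projective–injective $\Lambda$-modules are the targets of the ascents of $\cK$, so every indecomposable projective that is not projective–injective is one of the $Q_{z_i}$; for such a module the first two injective cosyzygies are $I(\soc Q_{z_i})$ and $I(\tau^{-1}\topp Q_{z_i})$, and both socles lie in $\cS(\Lambda)$ ($\soc Q_{z_i}$ is the socle of the bottom base element of $Q_{z_i}$, while $\topp Q_{z_i}\in\cS'(\Lambda)=\tau\cS(\Lambda)$ forces $\tau^{-1}\topp Q_{z_i}\in\cS(\Lambda)$), hence both injective envelopes are projective–injective by Remark~\ref{socle of projective-injective}(1); thus $\domdim Q_{z_i}\geq 2$. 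For $\gordim\Lambda=2$: given an indecomposable injective non–projective $\Lambda$-module $I$, Lemma~\ref{LEM injective first szyg is submodule of delta} gives $\Omega^1(I)\subsetneq B$ with $B\in\cB(\Lambda)$ and $\ell(B)\leq 2$, so $\Omega^1(I)$ is simple; since $\domdim\Lambda\geq 2$ the module $P(\Omega^1(I))$ is projective–injective, and one checks it is not a minimal projective, so $\Omega^2(I)=\rad P(\Omega^1(I))$ is projective and $\pdim I\leq 2$, with equality for at least one $I$ because $\Lambda$ is not selfinjective. Combining, $2=\gordim\Lambda\leq\domdim\Lambda$, so $\Lambda$ is $2$-Auslander-Gorenstein with defect $d$.

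The step I expect to be the main obstacle is the last assertion above: that the second syzygy of every indecomposable injective non–projective $\Lambda$-module is genuinely projective, equivalently that $P(\soc B)$ is never a minimal projective for a length–$2$ base element $B$. This is a purely combinatorial statement about $\cK$: by Proposition~\ref{prop:quadrilateral}(4) it amounts to showing $P(\tau\soc B)$ is not projective–injective, and since the length–$2$ base elements sit inside the projective–injectives that are ascent targets while $\tau\soc B\in\cS'(\Lambda)$, I would establish it by tracking the explicit formulas for $c_{z_i}$ and $c_{a_m}$ together with Propositions~\ref{emre prop ccc1} and~\ref{emre prop ccc2}; alternatively one could deduce $\gordim\Lambda=2$ from a Gorenstein–dimension reduction $\gordim\Lambda=\gordim\bm\varepsilon(\Lambda)+2$ in the spirit of Remark~\ref{emre remark reduction} once $\bm\varepsilon(\Lambda)=\Theta$ is known to be selfinjective. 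Either route requires careful bookkeeping of the indices $z_i$, $a_m$ and the values of $f$, which is the bulk of the work.
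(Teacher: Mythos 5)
Your treatment of the combinatorial half is sound and is essentially the paper's: admissibility of $\cK$ via the difference computations (which you obtain by specializing Proposition \ref{emre general proposition non defect invariant} to $w_i=f(S_{i-1})-1$), then $\vert\cS(\Lambda)\vert=n$, $\defect\Lambda=d$, the identification of the filtered projectives and $\bm\varepsilon(\Lambda)\cong\Theta$, exactly as in Proposition \ref{emre general prop epsilon}. The genuine gap is in the homological half, and it sits precisely where you flag it: to get Gorenstein dimension two along your route you must show that $\Omega^2(I)$ is projective for every indecomposable injective non-projective $I$, equivalently that $P(\soc B)$ is never a minimal projective when $B\in\cB(\Lambda)$ has length two, and you only write ``one checks''. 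This is not a formal consequence of what you have actually established, namely $\ell(B)\leq 2$ for all $B\in\cB(\Lambda)$ together with $\domdim\Lambda\geq 2$: the cyclic Nakayama algebra with Kupisch series $(2,2,3)$ (the higher Auslander algebra of global dimension three attached to $\mathbb{A}_2$ in Proposition \ref{emre prop higher auslander gldim 3}) has all base elements of length at most two and dominant dimension three, yet for its unique non-projective injective $I$ the syzygy $\Omega^1(I)$ is the socle of the length-two base element, its projective cover is a \emph{minimal} projective-injective, $\Omega^2(I)$ is simple non-projective, and $\pdim I=3$. That algebra is of course not produced by your construction (its syzygy filtered algebra is $\mathbb{A}_2$, not selfinjective), but it shows that the missing step must use the specific shape of $\cK$ --- that each length-two base element lies inside one of the inserted non-filtered projective-injectives $Q$ and that $\rad Q$ is the following filtered projective --- and this index bookkeeping, which you defer, is the actual content of the claim.

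Your fallback is also not available as stated: a reduction $\gordim\Lambda=\gordim\bm\varepsilon(\Lambda)+2$ does not follow merely from $\bm\varepsilon(\Lambda)$ being selfinjective. For instance, one checks directly that the algebra with Kupisch series $(4,4,3,2)$ has syzygy filtered algebra the selfinjective algebra with Kupisch series $(2,2)$, yet one of its indecomposable injectives has infinite projective dimension, so it is not Gorenstein at all; the reductions of this kind in the paper (Remark \ref{emre remark reduction}, Proposition \ref{emre may prop difference of domdims}) need either $\domdim\Lambda\geq 3$ or defect invariance, and here $\defect\Lambda=d>0=\defect\Theta$. The paper closes the step you are missing from the dual side, and you could simply adopt that argument: the projective non-injective $\Lambda$-modules are exactly the $Q_{z_j}$ whose predecessor in $\cK$ lies in $\cK\setminus\cN$, so $Q_{z_j}=\rad Q$ for a non-filtered projective-injective $Q$; the cokernel of $Q_{z_j}\hookrightarrow Q$ is the simple top of $Q$ and is a proper submodule of a length-two base element, so Lemma \ref{injectivity} makes the second cosyzygy of $Q_{z_j}$ injective. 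Hence every projective non-injective module has an injective coresolution of length two whose first two terms are projective-injective, which gives $\gordim\Lambda=\domdim\Lambda=2$ at once and avoids any minimality analysis of $P(\soc B)$.
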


Before proving this, we need the following lemmas.

\begin{lemma}\label{emre goresntein lemma when difference is not one, all three possibilities here} Let $c_t=c_{z_i},c_{t+1}=c_{z_{i+1}}$. Then $c_t\leq c_{t+1}$ and $c_{t+1}-c_t=f(S_{[\ell(P_{i+1})]})-1$
\end{lemma}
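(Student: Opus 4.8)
\textbf{Proof plan for Lemma~\ref{emre goresntein lemma when difference is not one, all three possibilities here}.}

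The plan is to compute $c_{t+1}-c_t$ directly from the defining formulas in Proposition~\ref{emre gorenstein description of K}, condition~2) (formula \eqref{emre gorenstein sunday lengths of N}). First I would record the two relevant expressions. By hypothesis $c_t=c_{z_i}$ and $c_{t+1}=c_{z_{i+1}}$, and since these are \emph{consecutive} entries of $\cK$, condition~3) forces $f(S_i)=1$; in particular, by Remark~\ref{kupisch properties} applied to the selfinjective algebra $\Theta$ (where every $P_j$ has $\defect P_j=0$, hence $\ell(P_{i+1})=\ell(P_i)$ whenever consecutive, or the linear-type reindexing), the lengths $\ell(P_i)$ and $\ell(P_{i+1})$ differ in the controlled way dictated by the Kupisch series of $\Theta$. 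I would then write
\[
c_{z_i}=\sum_{j=1}^{\ell(P_i)} f\bigl(S_{[j+i-1]}\bigr),\qquad
c_{z_{i+1}}=\sum_{j=1}^{\ell(P_{i+1})} f\bigl(S_{[j+i]}\bigr),
\]
and reindex the second sum by $j\mapsto j-1$ so that it runs over $S_{[j+i-1]}$ for $j=2,\dots,\ell(P_{i+1})+1$.

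The core of the argument is a telescoping comparison of these two sums. Since $\Theta$ is selfinjective, consecutive projectives $P_i,P_{i+1}$ that correspond to consecutive entries $c_{z_i},c_{z_{i+1}}$ of $\cK$ satisfy $\ell(P_{i+1})=\ell(P_i)$ (this is where I use that the separating term count in condition~3) is zero exactly when $f(S_i)=1$, together with the fact that a selfinjective Nakayama algebra has constant Kupisch series up to the cyclic structure — or, more carefully, that $\rad P_i\cong P_{i+1}$ cannot happen without dropping length, so the only consecutive case is $\ell(P_i)\le \ell(P_{i+1})$ with defect zero, forcing equality). Then in $c_{z_{i+1}}-c_{z_i}$ all terms $f(S_{[j+i-1]})$ for $2\le j\le \ell(P_i)$ cancel against the corresponding terms of $c_{z_i}$, leaving $c_{z_{i+1}}-c_{z_i}= f\bigl(S_{[\ell(P_i)+i]}\bigr)-f\bigl(S_{[i]}\bigr)$. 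Using $f(S_i)=1$ and rewriting $[\ell(P_i)+i]=[\ell(P_{i+1})+i]$, and noting the indexing convention whereby $[\ell(P_{i+1})]$ denotes the socle index of $P_{i+1}$, this simplifies to $c_{t+1}-c_t=f(S_{[\ell(P_{i+1})]})-1$. Finally, since $f$ takes values in $\{1,2\}$, the right-hand side is either $0$ or $1$, so in particular $c_t\le c_{t+1}$, giving the first assertion.

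The main obstacle I anticipate is bookkeeping with the cyclic index convention $[\,\cdot\,]$ and making sure the reindexing of the two telescoping sums lines up correctly at both endpoints — in particular pinning down precisely which simple module sits at position $\ell(P_{i+1})$ in the sum for $c_{z_{i+1}}$ and identifying it with the socle of $P_{i+1}$ under the Gustafson/Kupisch correspondence of Definition~\ref{def:Gustafson}. Everything else is a routine cancellation. I would also double-check the boundary case where $i$ or $i+1$ wraps around modulo $n$, but since $f$ is defined on all simple $\Theta$-modules and the formulas are already written cyclically, no separate argument is needed there.
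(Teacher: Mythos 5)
Your proposal is correct and follows essentially the same route as the paper: write out the two sums from condition~2) of Proposition~\ref{emre gorenstein description of K}, telescope them using $\ell(P_i)=\ell(P_{i+1})$ (selfinjectivity of $\Theta$) and $f(S_i)=1$ (condition~3) for consecutive entries), and conclude $c_{t+1}-c_t=f(S_{[\ell(P_{i+1})]})-1\geq 0$ since $f$ takes values in $\{1,2\}$. Your remark about the index $[\ell(P_{i+1})]$ versus $[\ell(P_{i+1})+i]$ correctly identifies the same notational shortcut the paper uses in both the statement and its proof.
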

\begin{proof}
The difference
\begin{gather*}
c_{z_i}-c_{z_{i+1}}=\sum\limits^{\ell (P_i)}_{j=1}f(S_{[j+i-1]}) -\sum\limits^{\ell (P_{i+1})}_{j=1}f(S_{[j+i]}) \\
=f(S_{[i]})+\sum\limits^{\ell (P_i)}_{j=2}f(S_{[j+i-1]})-\left(\sum\limits^{\ell (P_{i+1})-1}_{j=1}f(S_{[j+i]})+f(S_{[\ell(P_{i+1})]})\right)\\
=f(S_{[i]})+\sum\limits^{\ell (P_i)-1}_{j=1}f(S_{[j+i]})-\left(\sum\limits^{\ell (P_{i+1})-1}_{j=1}f(S_{[j+i]})+f(S_{[\ell(P_{i+1})]})\right)\\
=f(S_{[i]})+\left(\sum\limits^{\ell (P_i)-1}_{j=1}f(S_{[j+i]})-\sum\limits^{\ell (P_{i+1})-1}_{j=1}f(S_{[j+i]})\right)-f(S_{[\ell(P_{i+1})]})\\
=f(S_{[i]})-f(S_{[\ell(P_{i+1})]})=1-f(S_{[\ell(P_{i+1})]})\leq 0
\end{gather*}
since $\ell(P_i)=\ell(P_{i+1})$. $f(S_i)=1$, and $f(S_{[\ell(P_{i+1})]})$ is either one or two, hence the difference is less than or equal to zero.
\end{proof}

\begin{lemma}\label{emre gorenstein sunday lemma when difference is one} Let $c_t=c_{z_i}$, $c_{t+1}\notin\cN$. Then, $c_t-c_{t+1}=1-f(S_{[\ell(P_{i+1})]})\leq 0$.  Let $c_{t}\notin\cN$ and $c_{t+1}=c_{z_{i+1}}$. Then $c_t-c_{t+1}=1$.
\end{lemma}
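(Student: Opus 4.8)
The plan is to deduce both equalities from a single structural observation about $\cK$: whenever there is a term of $\cK$ lying strictly between two consecutive members $c_{z_i}$ and $c_{z_{i+1}}$ of $\cN$, that term equals $1+c_{z_{i+1}}$. This is read off directly from the explicit formulas in Proposition \ref{emre gorenstein description of K}: substituting $i\mapsto i+1$ in (\ref{emre gorenstein sunday lengths of N}) gives $c_{z_{i+1}}=\sum_{j=1}^{\ell(P_{i+1})}f(S_{[j+i]})$, and comparing with the formula (\ref{emre gorenstein sunday lenghts of K-N}) for $c_{a_m}$ with $z_i<a_m<z_{i+1}$, the two sums coincide, so $c_{a_m}=1+c_{z_{i+1}}$. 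Note also that by condition $3)$ of Proposition \ref{emre gorenstein description of K} such an intermediate term exists (and is then unique) precisely when $f(S_i)=2$.

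Granting this, the second assertion is immediate. If $c_t\notin\cN$ and $c_{t+1}=c_{z_{i+1}}$, then $c_t$ must be the unique term of $\cK$ between $c_{z_i}$ and $c_{z_{i+1}}$, hence $c_t=1+c_{z_{i+1}}=1+c_{t+1}$, so $c_t-c_{t+1}=1$.

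For the first assertion I would argue as follows. If $c_t=c_{z_i}$ and $c_{t+1}\notin\cN$, then $c_{t+1}$ is again that unique intermediate term, so $c_{t+1}=1+c_{z_{i+1}}$ and moreover $f(S_i)=2$. Therefore
\[
c_t-c_{t+1}=c_{z_i}-c_{z_{i+1}}-1 .
\]
To evaluate $c_{z_i}-c_{z_{i+1}}$ I would repeat the telescoping computation from the proof of Lemma \ref{emre goresntein lemma when difference is not one, all three possibilities here}: since $\Theta$ is selfinjective all $\ell(P_k)$ are equal, so reindexing the sum defining $c_{z_i}$ by $j\mapsto j+1$ cancels every term of the sum defining $c_{z_{i+1}}$ except the two endpoints, yielding $c_{z_i}-c_{z_{i+1}}=f(S_{[i]})-f(S_{[\ell(P_{i+1})]})$ (this identity does not use the value of $f(S_i)$). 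Substituting $f(S_i)=2$ gives $c_t-c_{t+1}=2-f(S_{[\ell(P_{i+1})]})-1=1-f(S_{[\ell(P_{i+1})]})$, which is $\le 0$ because $f$ takes only the values $1$ and $2$.

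There is no genuine obstacle here; the statement is bookkeeping with the formulas of Proposition \ref{emre gorenstein description of K}, and the whole proof is three short lines once the observation $c_{a_m}=1+c_{z_{i+1}}$ is in hand. The only point requiring care is the dictionary supplied by condition $3)$ of that proposition — that an intermediate term between $c_{z_i}$ and $c_{z_{i+1}}$ is present exactly when $f(S_i)=2$ — which is precisely what upgrades the bare identity $c_t-c_{t+1}=f(S_{[i]})-f(S_{[\ell(P_{i+1})]})-1$ to the asserted value and, in particular, to the asserted sign.
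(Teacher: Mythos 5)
Your proof is correct and follows essentially the same route as the paper's: both arguments simply substitute the explicit formulas of Proposition \ref{emre gorenstein description of K} and telescope the two sums using $\ell(P_i)=\ell(P_{i+1})$ (selfinjectivity of $\Theta$), together with the dictionary from condition 3) that an intermediate term exists exactly when $f(S_i)=2$. Your preliminary observation that any intermediate term equals $1+c_{z_{i+1}}$ is just a cleaner packaging of what the paper does in one display (and of its one-line disposal of the second assertion), so there is nothing further to add.
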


\begin{proof}
The second statement follows from proposition \ref{emre gorenstein description of K} where we set the difference to one. For the first statement we have:
\begin{gather*} 
c_{z_i}-c_{t+1}=\sum\limits^{\ell (P_i)}_{j=1}f(S_{[j+i-1]})-\left(1+\sum\limits^{\ell (P_{i+1})}_{j=1}f(S_{[j+i]})\right) \\
=f(S_{[i]})+\sum\limits^{\ell (P_i)}_{j=2}f(S_{[j+i-1]})-\left(1+\sum\limits^{\ell (P_{i+1})-1}_{j=1}f(S_{[j+i]})+f(S_{[\ell(P_{i+1})]})\right)\\
=f(S_{[i]})+\sum\limits^{\ell (P_i)-1}_{j=1}f(S_{[j+i]})-\left(1+\sum\limits^{\ell (P_{i+1})-1}_{j=1}f(S_{[j+i]})+f(S_{[\ell(P_{i+1})]})\right)\\
=f(S_{[i]})+\left(\sum\limits^{\ell (P_i)-1}_{j=1}f(S_{[j+i]})-\sum\limits^{\ell (P_{i+1})-1}_{j=1}f(S_{[j+i]})\right)-1-f(S_{[\ell(P_{i+1})]})\\
=f(S_{[i]})-1-f(S_{[\ell(P_{i+1})]})=1-f(S_{[\ell(P_{i+1})]})
\end{gather*}
Since $f(S_i)=2$, the difference is less than or equal to zero.
\end{proof}

\begin{proof}
To verify whether a given sequence of integers $\cK=(c_1,\ldots,c_n)$ forms a Kupisch series of a cyclic Nakayama algebra, we need to check that 
\begin{enumerate}[label=\roman*)]
\item each $c_i\geq 2$
\item and if $c_i>c_{i+1}$, then $c_i-c_{i+1}=1$. 
\end{enumerate} If $c_i\leq c_{i+1}$, $c_{i+1}$ can take any positive integer value, so it satisfies admissibility. By lemmas \ref{emre gorenstein sunday lemma when difference is one} and \ref{emre goresntein lemma when difference is not one, all three possibilities here}, the case $c_j>c_{j+1}$ only happens when  $c_{t}\notin\cN$ and $c_{t+1}=c_{z_{i+1}}$. It follows that the difference is one, so $\cK$ is admissible sequence. In particular, each $c_j\in\cK$ has at least $\ell(P_{i})$-many nonzero summands. $\Lambda$ is a cyclic selfinjective Nakayama algebra, so $\ell(P_i)\geq 2$ for any $i$ which makes $c_j\geq 2$. This shows $\cK$ is an admissible sequence.
\end{proof}

Let $\left\{Q_1,\ldots,Q_{N}\right\}$ be the complete set of representatives of isomorphism classes of indecomposable $\Lambda$-modules and $\ell(Q_i)=c_i\in\cK$. We divide the proof into the following steps.

\begin{claim} $\defect\Lambda=d$ and $\vert\cS(\Lambda)\vert=n$
\end{claim}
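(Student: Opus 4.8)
The plan is to deduce both equalities from the difference computations already carried out in Lemmas \ref{emre gorenstein sunday lemma when difference is one} and \ref{emre goresntein lemma when difference is not one, all three possibilities here}, exactly as in the proofs of Propositions \ref{sunday proposition regarding algebra A} and \ref{emre general prop epsilon}. The key reduction is this: by Lemma \ref{sunday lemma lengths of delta and cardinality}, $\vert\cS(\Lambda)\vert=\#\{i\in\{1,\dots,N\}\mid c_i\leq c_{i+1}\}$ with indices read cyclically ($c_{N+1}:=c_1$); by Proposition \ref{numerical}, $\#\rel\Lambda=\vert\cS(\Lambda)\vert$; and by Proposition \ref{rank_rel_def}, $\rank\Lambda=\#\rel\Lambda+\defect\Lambda$. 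Since $\rank\Lambda=\vert\cK\vert=N=n+d$, once $\vert\cS(\Lambda)\vert=n$ is established we immediately get $\defect\Lambda=(n+d)-n=d$. So the entire claim reduces to showing $\vert\cS(\Lambda)\vert=n$.

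To prove $\vert\cS(\Lambda)\vert=n$, I would show that $\{i\mid c_i\leq c_{i+1}\}$ coincides with the index set $\{z_1,\dots,z_n\}$ of the subsequence $\cN$. First, condition \ref{emre gorenstein description of K}(3) forces every term of $\cK$ not belonging to $\cN$ to be isolated, i.e.\ to have both its neighbours in $\cN$, since at most one term lies strictly between two consecutive members of $\cN$. Hence if $c_i\notin\cN$ then $c_{i+1}=c_{z_{j+1}}\in\cN$ for some $j$, and the second statement of Lemma \ref{emre gorenstein sunday lemma when difference is one} gives $c_i-c_{i+1}=1>0$, so such $i$ is not counted. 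Conversely, if $c_i=c_{z_j}\in\cN$, then either its successor in $\cK$ is again in $\cN$ (equivalently $f(S_j)=1$), in which case Lemma \ref{emre goresntein lemma when difference is not one, all three possibilities here} gives $c_{z_j}\leq c_{z_{j+1}}$, or its successor is not in $\cN$ (equivalently $f(S_j)=2$), in which case the first statement of Lemma \ref{emre gorenstein sunday lemma when difference is one} gives $c_{z_j}-c_{i+1}=1-f(S_{[\ell(P_{j+1})]})\leq 0$. Either way $c_i\leq c_{i+1}$, so $i$ is counted. Therefore $\vert\cS(\Lambda)\vert=\vert\cN\vert=n$, which finishes the argument.

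The one place requiring care is the cyclic boundary: the non-$\cN$ terms with index $>z_n$ or $<z_1$, governed by equation \ref{emre gorenstein sunday lengths o the rest} and by the ``$b$ versus $1-b$'' clause in condition \ref{emre gorenstein description of K}(3). I do not expect a genuine obstacle here, since the same three difference computations apply verbatim once $P_1$ plays the role of $P_{j+1}$, just as in Propositions \ref{sunday proposition regarding algebra A} and \ref{emre general prop epsilon}; but one does have to keep track of which of the two wrap-around terms is present and confirm that the resulting difference is still $1$ (for a non-$\cN$ term followed by an $\cN$ term) or $\leq 0$ (otherwise). I expect this bookkeeping, rather than anything conceptual, to be the main point to nail down.
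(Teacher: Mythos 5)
Your proof is correct and follows essentially the same route as the paper: both arguments reduce the claim to counting, via Lemma \ref{sunday lemma lengths of delta and cardinality}, the indices $i$ with $c_i\leq c_{i+1}$, and both identify this set with the index set of $\cN$ by combining the isolation of non-$\cN$ terms from condition 3) with the sign computations in Lemmas \ref{emre gorenstein sunday lemma when difference is one} and \ref{emre goresntein lemma when difference is not one, all three possibilities here}. The only cosmetic difference is that the paper counts the $d$ descents directly and sets $\vert\cS(\Lambda)\vert=N-d$, whereas you first establish $\vert\cS(\Lambda)\vert=n$ and then recover $\defect\Lambda=d$ from $\rank\Lambda=\#\rel\Lambda+\defect\Lambda$.
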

\begin{proof}
By lemma \ref{sunday lemma lengths of delta and cardinality}, $\defect\Lambda=\vert\{c_{i+1}\mid c_i>c_{i+1}, 1\leq i\leq N\}\vert$ and $\vert\cS(A)\vert=\vert\{c_i\mid c_i\leq c_{i+1}, 1\leq i\leq N\}$ where $c_{N+1}=c_1$.  By lemmas \ref{sunday lemma when difference is not one, all three possibilities here} and \ref{sunday lemma when difference is one}, $c_i>c_{i+1}$ if $c_i\notin\cN$. There are exactly $d$ of them, so defect of $\Lambda$ is $d$, and it follows $\vert\cS(\Lambda)\vert=N-d=n$.
\end{proof}

\begin{claim} $\cB(\Lambda)$-filtered indecomposable projective $\Lambda$-modules are $Q_{z_1},\ldots,Q_{z_n}$.
\end{claim}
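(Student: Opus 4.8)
The plan is to argue exactly as in the proofs of Proposition~\ref{sunday proposition regarding algebra A} (step 3) and Proposition~\ref{emre general prop epsilon} (last paragraph), adapting the bookkeeping to the function $f$. First I would invoke the previous Claim together with Proposition~\ref{numerical}: since $\vert\cS(\Lambda)\vert=n$ we get $\vert\cB(\Lambda)\vert=\vert\cP(\Lambda)\vert=n$, so by Lemma~\ref{projective} there are exactly $n$ isomorphism classes of $\cB(\Lambda)$-filtered indecomposable projective $\Lambda$-modules. The candidates $Q_{z_1},\dots,Q_{z_n}$ are pairwise non-isomorphic because they have pairwise distinct tops $S_{z_1},\dots,S_{z_n}$ (the $z_i$ form a genuine subsequence of $1,\dots,N$). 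Hence it suffices to prove that each $Q_{z_i}$ is $\cB(\Lambda)$-filtered; a count then forces $\{Q_{z_1},\dots,Q_{z_n}\}$ to be all of the filtered projectives.

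Second, I would determine the lengths occurring in $\cB(\Lambda)$ and then exhibit the filtration of $Q_{z_i}$. By Lemma~\ref{sunday lemma lengths of delta and cardinality}~2) every $B\in\cB(\Lambda)$ has length $c_{i+1}-c_i+1$ for some index $i$ with $c_i\le c_{i+1}$, and by Lemmas~\ref{emre goresntein lemma when difference is not one, all three possibilities here} and \ref{emre gorenstein sunday lemma when difference is one} any such difference equals $f(S_{[k]})-1\in\{0,1\}$ for the appropriate residue $[k]$; so every element of $\cB(\Lambda)$ has length $f(S_{[k]})$, i.e.\ length $1$ or $2$. On the other hand condition~2) of Proposition~\ref{emre gorenstein description of K} gives $\ell(Q_{z_i})=c_{z_i}=\sum_{j=1}^{\ell(P_i)}f(S_{[j+i-1]})$, a sum of $\ell(P_i)$ terms each equal to $1$ or $2$. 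Reading the uniserial module $Q_{z_i}$ from its top downward and matching the partial sums $\sum_{j=1}^{m}f(S_{[j+i-1]})$, $0\le m\le\ell(P_i)$, against the positions of the minimal-projective indices of $\cK$ lying in the stretch below $z_i$ (as recorded by the two difference lemmas), the composition series of $Q_{z_i}$ is cut into $\ell(P_i)$ consecutive uniserial blocks of lengths $f(S_{[i]}),f(S_{[i+1]}),\dots,f(S_{[i+\ell(P_i)-1]})$; each block has its top in $\cS'(\Lambda)$ and its socle in $\cS(\Lambda)$, hence lies in $\cB(\Lambda)$ by Remark~\ref{remark-Filt}(b). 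This presents $Q_{z_i}$ as $\cB(\Lambda)$-filtered, and combined with the count in the first paragraph finishes the proof of the Claim.

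The step I expect to be the main obstacle is the combinatorial bookkeeping in the second paragraph: showing that each partial sum $\sum_{j=1}^{m}f(S_{[j+i-1]})$ really is the length of a submodule of $Q_{z_i}$ whose top belongs to $\cS'(\Lambda)$, so that the $\ell(P_i)$ blocks fit together exactly with no leftover block. This requires tracking precisely which index in each gap between consecutive $z_i$'s carries a non-$\cN$ entry (at most one by condition~3)) and, in particular, handling the wrap-around entries consistently --- the $b$ entries between $c_{z_n}$ and $c_N$ and the $1-b$ entries between $c_1$ and $c_{z_1}$ from condition~3). All the needed local computations are already contained in Lemmas~\ref{emre goresntein lemma when difference is not one, all three possibilities here} and \ref{emre gorenstein sunday lemma when difference is one}; the work is to assemble them into the global claim, exactly as was done in Proposition~\ref{sunday proposition regarding algebra A}.
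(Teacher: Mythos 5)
Your proposal is correct and follows essentially the same route as the paper: count that there are exactly $n$ isomorphism classes of $\cB(\Lambda)$-filtered indecomposable projectives via $\vert\cB(\Lambda)\vert=\vert\cS(\Lambda)\vert=n$, then use the difference lemmas to see that every element of $\cB(\Lambda)$ has length $f(S_{[k]})$ and that $\ell(Q_{z_i})=\sum_{j=1}^{\ell(P_i)}f(S_{[j+i-1]})$ decomposes $Q_{z_i}$ into base-set blocks. Your version is in fact slightly more careful than the paper's, which leaves the block-matching bookkeeping implicit (and mis-cites the defect-invariant difference lemmas where the Gorenstein ones are meant).
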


\begin{proof}
Notice that there are $n$-many $\cB(\Lambda)$-filtered indecomposable projective $A$-modules, since $\vert\cB(\Lambda)\vert=\vert\cS(\Lambda)\vert=n$. 
The lengths of elements of $\cB(\Lambda)$ are $c_{i+1}-c_i+1$ provided that $c_i\leq c_{i+1}$ by lemma \ref{sunday lemma lengths of delta and cardinality}. In lemmas \ref{sunday lemma when difference is one} and \ref{sunday lemma when difference is not one, all three possibilities here}, we calculated all possible differences of $c_i-c_{i+1}$ which are always of the form $1-f(S_j)$ when $c_i\leq c_{i+1}$. Moreover, by the construction in proposition \ref{emre gorenstein description of K}, for any $1\leq i\leq n$, $f(S_i)+1$ is a summand of $Q_{z_i}$. Therefore, every $Q_{z_i}$ is $\cB(\Lambda)$-filtered. Because there are $n$-many $Q_{z_i}$, hence we get all.  
\end{proof}

\begin{proof}
Now we can show that $\bm\varepsilon(\Lambda)\cong\Theta$. $\dim_{\mathbb{K}}\Hom_{\Lambda}\left(\bigoplus\limits_{1\leq i\leq n} Q_{z_i}, Q_{z_i}\right)=\ell_{\cB(\Lambda)}(Q_{z_i})=\ell(P_i)$, since the number of elements $\cB(\Lambda)$ appearing in each $Q_{z_i}$ is $\ell(P_i)$. Hence Kupisch series of $\bm\varepsilon(\Lambda)$ is equal to Kupisch series of $\Theta$ upto cyclic permutation.
\end{proof}

\begin{claim} $\Lambda$ is 2-Auslander-Gorenstein.
\end{claim}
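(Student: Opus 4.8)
The plan is to derive the claim from three properties of the algebra $\Lambda$ attached to $\cK$, established in this order: (a) every $B\in\cB(\Lambda)$ has $\ell(B)\le 2$; (b) $\domdim\Lambda=2$; (c) $\gordim\Lambda=2$, with $\pdim_\Lambda I=2$ for every indecomposable injective non-projective $\Lambda$-module $I$. Granting these, $\gordim\Lambda=\domdim\Lambda=2$, so $\Lambda$ is $2$-Auslander-Gorenstein in the sense of \cite{iyamasolberg}, and $\defect\Lambda=d$ is already known; this finishes the proof.

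For (a): by Lemma \ref{sunday lemma lengths of delta and cardinality}(2) the base-set element sitting at a position with $c_i\le c_{i+1}$ has length $c_{i+1}-c_i+1$, so it suffices to check $c_{i+1}-c_i\le 1$ whenever $c_i\le c_{i+1}$. This is exactly what Lemmas \ref{emre gorenstein sunday lemma when difference is one} and \ref{emre goresntein lemma when difference is not one, all three possibilities here} give: two consecutive entries of $\cK$ differ either by $1$ (the only case with $c_i>c_{i+1}$), or by $1-f(S_j)\in\{-1,0\}$, or by $f(S_j)-1\in\{0,1\}$. I will also record here a consequence of the same lemmas, used repeatedly below: every non-filtered projective $\Lambda$-module is projective-injective and non-minimal. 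Indeed a non-filtered entry $c_a$ of $\cK$ is immediately preceded by some $c_{z_i}\in\cN$ with $c_{z_i}\le c_a$ (Lemma \ref{emre gorenstein sunday lemma when difference is one}), so $Q_a$ is projective-injective by Remark \ref{kupisch properties}, and it is immediately followed by some $c_{z_{i+1}}\in\cN$ with $c_a=c_{z_{i+1}}+1$, so $\rad Q_a=Q_{z_{i+1}}$ is projective and $Q_a$ is not minimal; the wrap-around positions of $\cK$ are treated identically. In particular, every projective non-injective $\Lambda$-module is $\cB(\Lambda)$-filtered, hence equal to $P(S)$ for some $S\in\cS'(\Lambda)$ by Lemma \ref{projective}.

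For (b): since $\Theta$ is selfinjective, $\defect\Theta=0$ by Corollary \ref{emre defect zero implies selfinjective or semisimple}, so $\defect\Lambda=d\ge 1>\defect\Theta$ and $\domdim\Lambda\le 2$ by Proposition \ref{emre proposition given theta lambda difference}(1). For the reverse inequality let $P=P(S)$ be projective non-injective with $S\in\cS'(\Lambda)$. Then $\soc P\in\cS(\Lambda)$ (Definition \ref{socle set}), so $I(P)=I(\soc P)$ is projective-injective by Remark \ref{socle of projective-injective}(1). Since $P$ is indecomposable and non-injective, $0\to P\to I(P)\to I(P)/P\to 0$ is a non-split sequence of indecomposables, so $\soc(I(P)/P)=\tau^{-1}\topp P=\tau^{-1}S$ by Proposition \ref{prop:quadrilateral}(1); and $\tau^{-1}S\in\cS(\Lambda)$ because $\cS'(\Lambda)=\{\tau S'\mid S'\in\cS(\Lambda)\}$. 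Hence $I(I(P)/P)$ is again projective-injective by Remark \ref{socle of projective-injective}(1), so the first two terms of the minimal injective coresolution of $P$ are projective and $\domdim P\ge 2$. Therefore $\domdim\Lambda=2$.

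For (c): let $I$ be indecomposable injective non-projective. By Lemma \ref{LEM injective first szyg is submodule of delta}, $\Omega^1(I)$ is a nonzero proper submodule of some $B\in\cB(\Lambda)$; by (a) this forces $\ell(B)=2$ and $\Omega^1(I)=\soc B$, which is simple and non-projective (all entries of $\cK$ are $\ge 2$, so $\Lambda$ has no simple projective), so $\pdim_\Lambda I\ge 2$, and $\Omega^2(I)=\rad P(\soc B)$. The point now is that $\ell(B)=2$ forces $\soc B\notin\cS'(\Lambda)$: if $\soc B\in\cS'(\Lambda)$, then, since $\soc B\in\cS(\Lambda)$ always holds for base-set elements, the simple module $\soc B$ would lie in $\cB(\Lambda)$ by Remark \ref{remark-Filt}(b), and being the socle of $B\in\cB(\Lambda)$ as well it would have to equal $B$ by Proposition \ref{base-set}(1), contradicting $\ell(B)=2$. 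Hence $P(\soc B)$ is a non-filtered projective by Lemma \ref{projective}, so by the consequence recorded in (a) it is projective-injective and non-minimal; thus $\rad P(\soc B)$ is projective and $\pdim_\Lambda I=2$. As there are $d\ge 1$ such modules $I$, $\gordim\Lambda=2$, and with (b) this shows $\Lambda$ is $2$-Auslander-Gorenstein with defect $d$. The step I expect to be the real obstacle is precisely this control of the second syzygy, namely the verification that $\soc B\notin\cS'(\Lambda)$ whenever $\ell(B)=2$; the remaining steps are bookkeeping with $\cK$ via Lemmas \ref{emre gorenstein sunday lemma when difference is one} and \ref{emre goresntein lemma when difference is not one, all three possibilities here} together with the standard dictionary between $\Lambda$ and $\bm\varepsilon(\Lambda)$ (Proposition \ref{diagram}).
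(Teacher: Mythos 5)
Your proof is correct, but it takes a genuinely different route from the paper's. The paper argues entirely on the cosyzygy side: it reads off from $\cK$ that the projective non-injective modules are exactly the $Q_i$ with $c_{i-1}\in\cK-\cN$ and $c_i\in\cN$, and then computes the minimal injective coresolution of such a $Q_i$ directly, $0\to Q_i\to Q_{i-1}\to I(S_{i-1})\to \Sigma^1(S_{i-1})\to 0$, using Lemma \ref{injectivity} to see that $\Sigma^2(Q_i)\cong\Sigma^1(S_{i-1})$ is injective non-projective; this yields injective dimension $2$ and dominant dimension $2$ in a single computation. You instead extract two structural facts from the Kupisch lemmas (every element of $\cB(\Lambda)$ has length at most $2$, and every non-filtered projective is projective-injective with projective radical) and then work on the syzygy side, bounding $\pdim I$ for each injective non-projective $I$ via Lemma \ref{LEM injective first szyg is submodule of delta} together with your observation that $\ell(B)=2$ forces $\soc B\notin\cS'(\Lambda)$ — this is indeed the crucial step, and your argument through Remark \ref{remark-Filt} and Proposition \ref{base-set}(1) is sound; the dominant dimension is then handled separately, the lower bound by a socle-set/top-set argument and the upper bound by the defect comparison (the upper bound is not actually needed, since the claim only requires injective dimension $2\le\domdim\Lambda$). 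Your route buys a more structural statement, essentially a converse to Lemma \ref{emre auslandegor length of delta is at most two}, and reuses the general machinery already proved for defect comparison, at the modest cost of invoking the standard left--right symmetries (dominant equals codominant dimension, Gorenstein dimension computed from projective dimensions of injectives), which the paper itself uses freely elsewhere; the paper's computation is more self-contained for this specific Kupisch series and produces the coresolution explicitly.
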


\begin{proof}
Projective but not injective $\Lambda$ modules are $Q_i$ such that $\ell(Q_i)\in\cN$, $\ell(Q_{i-1})\in\cK-\cN$ and $\ell(Q_{i-2})\in\cN$. Consider the injective resolution of $Q_{i}$
\begin{align*}
0\rightarrow Q_i\rightarrow Q_{i-1}\rightarrow S_{i-1}\rightarrow 0.
\end{align*}
Notice that $S_{i-1}$ is simple proper submodule of an element of the base-set $\cB(\Lambda)$, since  $\coker g\in Filt(\cB(\Lambda))$ where $g:Q_i\rightarrow Q_{i-2}$ and $S_{i-1}\subset\coker g$. Therefore injective envelope of $S_{i-1}$ is projective-injective $\Lambda$-module, and by lemma \ref{injectivity} $\Sigma^2(Q_{i})\cong\Sigma^1(S_{i-1})$ is an injective non-projective $\Lambda$-module. Hence, injective dimension of $\Lambda$ is two. In particular, $Q_{i-1}$ and $I(S_{i-1})$ are projective-injective modules, which implies $\domdim\Lambda=2$.
\end{proof}

Now, we can apply the main theorem to $2$-Auslander-Gorenstein algebras to get all $2k$-Auslander-Gorenstein algebras $k\geq 2$. Here we give some examples. 
\begin{example} Let $\Theta$ be given by Kupisch series $(2,2)$. Then algebras given by $(3,2,2)$, $(5,5,4,4), (6,6,6,5,5)$ are minimal Auslander-Gorenstein algebras.
\end{example}

\begin{example}
    Let $\Theta$ be selfinjective algebra of rank $n$ and length $m$. 2-Auslander-Gorenstein algebra $\Lambda$ of defect $n$ which satisfies $\bm\varepsilon(\Lambda)\cong\Theta$ is given by the Kupisch series
    \begin{align*}
        ((2m,2m+1)^m)
    \end{align*}
\end{example}

\begin{example}
    Let $\Theta$ be selfinjective algebra of rank $n$ and length $m$. 2-Auslander-Gorenstein algebra $\Lambda$ of defect $n-1$ which satisfies $\bm\varepsilon(\Lambda)\cong\Theta$ is given by the Kupisch series
    \begin{align*}
        \left((2m,2m+1)^{n-m-1},2m,(2m,2m-1)^{m}\right)\quad & \text{if}\, n\geq m+1\\
        \left(f(P_1),1+f(P_2),f(P_2),\ldots,f(P_{n-1}),1+f(P_n),f(P_n)\right)\quad& \text{if}\, n<m
    \end{align*}
    where $f(P_i)=[P_i:S_n]+2\sum\limits_{j\neq n}[P_i:S_j]$.
\end{example}

\begin{example}
    Here we present a maple code to generate all 2-Auslander-Gorenstein algebras among Nakayama algebras from a given selfinjective Nakayama algebra $\Theta$.

\begin{verbatim}
# Takes kupisch series and gives ith projective object of Theta

proj := proc(length, order, rank) 
[seq(irem(j-1, rank)+1, j = order .. length+order-1))];end proc;       
\end{verbatim}\\

\begin{verbatim}
# calculates index of a simple module S in projective module P in Theta

weight := proc(P::list, S); 
    tot := 0; 
    for i from 1 to nops(P) do 
        if P[i] = S then tot := tot + 1   fi; 
    od; tot; end proc;
\end{verbatim}\\

\begin{verbatim}
 # creates length of ith filtered projective of Lambda
 for a given defect vector v
 
 newproj:=proc(i,m,n,v::list) 
 PP:=proj(m,i,n); tot:=0;
 for j from 1 to nops(v) do
 tot:=tot+weight(PP,v[j]);
 od; tot+m;end proc;
\end{verbatim}

\begin{verbatim}
# creates kupisch series of Lambda for a given defect vector v

kupischSeries:=proc(m,n,v::list);  
L:=[];
for i from 1 to n do
if member(i,v)=true then L:=[op(L),newproj(i,m,n,v),1+newproj(i+1,m,n,v)] 
else L:=[op(L),newproj(i,m,n,v)] fi;
od;end proc;
\end{verbatim}
\end{example}

\begin{example} Let $\Theta$ be a radical square zero selfinjective algebra. For vector $[1,1,0,\ldots,0]$, 2-Auslander-Gorenstein algebras are
\begin{gather*}
    [4, 5, 4, 5] \\
[4, 4, 3, 4, 3] \\
[4, 4, 3, 3, 2, 3] \\
[4, 4, 3, 3, 2, 2, 3] \\
[4, 4, 3, 3, 2, 2, 2, 3] \\
[4, 4, 3, 3, 2, 2, 2, 2, 3] \\
[4, 4, 3, 3, 2, 2, 2, 2, 2, 3] \\
[4, 4, 3, 3, 2, 2, 2, 2, 2, 2, 3] \\
[4, 4, 3, 3, 2, 2, 2, 2, 2, 2, 2, 3] \\
[4, 4, 3, 3, 2, 2, 2, 2, 2, 2, 2, 2, 3]
\end{gather*}
Let $\Theta$ be a selfinjective Nakayama algebra where $\ell(P_i)=5$ for all projetives.  For vector $(1,0,1,1,\ldots,0)$, all defect invariant algebras are
\begin{gather*}
[9, 9, 8, 9, 10, 9, 10] \\
[8, 9, 8, 8, 9, 8, 9, 8] \\
[8, 8, 7, 8, 8, 7, 8, 7, 8] \\
[8, 8, 7, 7, 8, 7, 7, 6, 7, 8] \\
[8, 8, 7, 7, 7, 6, 7, 6, 6, 7, 8] \\
[8, 8, 7, 7, 7, 6, 6, 5, 6, 6, 7, 8] \\
[8, 8, 7, 7, 7, 6, 6, 5, 5, 6, 6, 7, 8]
\end{gather*}
\end{example}

\subsection{Dominant Auslander-regular algebras among Nakayama Algebras}\label{subsection dominant Auslander}

\subsubsection{Construction of dominant Auslander-regular algebras of dominant dimension one}

\begin{lemma}
    Let $\Theta$ be a connected linear Nakayama algebra of rank $n$. Let $v_{\Theta}=(\defect P_1,\defect P_2,\ldots,\defect P_n)$ be its defect vector. Then, for any vector $w=(\defect P_1+m,\defect P_2,\ldots,\defect P_n)$ with $m\geq 1$, there exists cyclic Nakayama algebra $\Lambda$ such that $\bm\varepsilon(\Lambda)\cong\Theta$, $\defect\Lambda=\vert d\vert$ and $\domdim\Lambda=1$.
\end{lemma}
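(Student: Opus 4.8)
I would first produce the algebra from the machinery already developed. Since $\Theta$ is connected linear it is not self-injective, so $\defect\Theta\geq 1$, and the vector $w=(\defect P_1+m,\defect P_2,\dots,\defect P_n)$ satisfies $w_i\geq\defect P_i$ for every $i$. By Proposition~\ref{emre general prop epsilon} (applied to the admissible Kupisch sequence $\cK$ of Proposition~\ref{emre general proposition non defect invariant}) there is a cyclic Nakayama algebra $\Lambda$ with $\bm\varepsilon(\Lambda)\cong\Theta$ and $\defect\Lambda=|w|=\defect\Theta+m$, the ``$|d|$'' of the statement being $|w|$. It is also worth recording here that, $\Theta$ being linear, $\gldim\Theta<\infty$, so the reduction in Remark~\ref{emre remark reduction} gives $\gldim\Lambda=\gldim\Theta+2<\infty$; this is what makes such $\Lambda$ eligible to be dominant Auslander-\emph{regular} in the intended applications. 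What remains is to select, among the countably many $\cK$ permitted by Proposition~\ref{emre general proposition non defect invariant}, one with $\domdim\Lambda=1$.

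The dominant dimension is easily confined to $\{1,2\}$. Since $m\geq 1$ we have $\defect\Lambda=\defect\Theta+m>\defect\Theta$, so Proposition~\ref{emre proposition given theta lambda difference } gives $\domdim\Lambda\leq 2$; in particular $\Lambda$ is not self-injective. Conversely $\Lambda$ is cyclic, so every simple in $\cS(\Lambda)$ is the socle of an indecomposable projective-injective module (Remark~\ref{socle of projective-injective}(1)); hence the injective envelope $I^0$ in the minimal injective copresentation $0\to\Lambda\to I^0\to I^1\to\cdots$ is projective, i.e.\ $\domdim\Lambda\geq 1$. So it only remains to rule out $\domdim\Lambda=2$.

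For this I would exhibit one indecomposable injective non-projective $\Lambda$-module $I$ with $\codomdim_\Lambda I=1$; by Definition~\ref{def domdim} this forces $\domdim\Lambda=1$. Such modules are produced by Lemma~\ref{LEM the submodule of delta gives injective}: if $B\in\cB(\Lambda)$ lies inside a projective-injective module $P$ (Proposition~\ref{base-set}) and $X$ is a nonzero proper submodule of $B$, then $I:=P/X$ is injective, with $\Omega^1(I)=X$; moreover, when $\ell(B)\geq 2$, $\soc I$ is a composition factor of $B$ other than $\soc B$, hence is not in $\cS(\Lambda)$, so $I$ cannot be projective-injective (Remark~\ref{socle of projective-injective}(1)) and $I$ is non-projective. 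For such $I$ one has $P(\Omega^1 I)=P(\topp X)$, and by Proposition~\ref{prop:quadrilateral}(4) this is projective-injective if and only if $P(\tau^{-1}\topp X)$ is a minimal projective; thus $\codomdim_\Lambda I=1$ exactly when $P(\tau^{-1}\topp X)$ is \emph{not} minimal projective. The task therefore becomes: choose $\cK$ so that $\Lambda$ has some $B\in\cB(\Lambda)$ of length $\geq 2$ with a layer $X\subsetneq B$ for which $P(\tau^{-1}\topp X)$ is non-minimal. The term $w_1=\defect P_1+m\geq 1$ is precisely what supplies both a base-set element of length $\geq 2$ and at least one extra non-filtered projective whose position in $\cK$ is free, and the consecutive differences in $\cK$ — in particular which $\Lambda$-projectives turn out to be minimal — are controlled by Lemmas~\ref{emre general lemma when difference is one} and \ref{emre general lemma when difference is not one, all three possibilities here}, so that the desired configuration can be arranged.

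I expect the main obstacle to be exactly this last verification: reading off from the explicit formulas of Proposition~\ref{emre general proposition non defect invariant} and the difference computations of Lemmas~\ref{emre general lemma when difference is one}--\ref{emre general lemma when difference is not one, all three possibilities here} that the free parameters can indeed be set so that the ``$+m$'' produces a length-$\geq 2$ base-set element with a layer sitting over a non-minimal projective — equivalently, so that $I^1$ in the injective copresentation of $\Lambda$ is not projective. Linearity of $\Theta$ enters here through the fact that $\Theta$ then carries a descent $\ell(P_i)=1+\ell(P_{i+1})$ at a vertex with $\defect P_{i+1}=0$, which is what makes the adjacent filtered and non-filtered $\Lambda$-projectives line up as required; for cyclic $\Theta$ this need not hold, which is why the lemma restricts to linear $\Theta$.
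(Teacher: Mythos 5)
Your overall route is the paper's: existence of $\Lambda$ and the defect count $\defect\Lambda=\vert w\vert$ come from Propositions \ref{emre general proposition non defect invariant} and \ref{emre general prop epsilon}, and $\domdim\Lambda=1$ is to be detected by exhibiting an injective non-projective module of the form $Q/X$ with $X\subsetneq B\in\cB(\Lambda)$ (Lemma \ref{LEM the submodule of delta gives injective}) whose first syzygy has a non-injective projective cover. But the one substantive step --- actually arranging the Kupisch series so that such a configuration occurs --- is exactly the step you defer (``I expect the main obstacle to be exactly this last verification''), and it is the only thing the paper's proof really does. Concretely: with $t=\defect P_1$, the radical of the base-set element $\Delta(S_n)$ has the $m+t$ composition factors $X_1,\dots,X_m,Y_1,\dots,Y_t$, read from top to socle. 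The paper places all $w_1=t+m$ non-filtered projectives between $z_n$ and $z_1$ and chooses the subsequence of part \ref{emre general length of unfiltered} of Proposition \ref{emre general proposition non defect invariant} to be $b_i=a_{m+i}$; the construction then makes $P(Y_1),\dots,P(Y_t)$ projective-injective minimal projectives, while $P(X_m)\subset\cdots\subset P(X_1)\subset P(\Delta(S_n))$ is a chain of proper submodules of an indecomposable projective, so none of the $P(X_j)$ is injective. Taking $I=Q/\rad\Delta(S_n)$, where $Q$ is the unique projective-injective containing $\Delta(S_n)$, one gets an injective non-projective module with $\Omega^1(I)=\rad\Delta(S_n)$ and $P(\Omega^1(I))=P(X_1)$ not injective, whence $\codomdim_\Lambda I=1$ and $\domdim\Lambda=1$. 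Until you make and justify such a choice the lemma is not proved: Proposition \ref{emre general proposition non defect invariant} with the same vector $w$ also admits arrangements yielding $\domdim\Lambda=2$ (this freedom is exactly what the next subsection exploits), so ``the free parameters can be arranged'' is precisely the claim at issue.

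Two smaller points. Once the chain $P(X_1)\subset P(\Delta(S_n))$ is in hand, non-injectivity of $P(X_1)$ is immediate, so your detour through Proposition \ref{prop:quadrilateral}(4) and minimal projectives, and the preliminary bound $\domdim\Lambda\leq 2$ from Theorem \ref{emre thm domdim}, are both unnecessary. And your closing guess about where linearity of $\Theta$ enters is not what the paper uses: the arrangement above makes no appeal to linearity and would work verbatim for cyclic $\Theta$; the hypothesis reflects the intended application (dominant Auslander-regular algebras of global dimension three, where $\bm\varepsilon(\Lambda)$ is forced to be hereditary linear), not an obstruction in the construction itself.
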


\begin{corollary}
    If $\Theta$ is dominant Auslander-regular algebra, then $\Lambda$ is as well.
\end{corollary}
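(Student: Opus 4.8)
The plan is to verify the two defining properties of a dominant Auslander-regular algebra for the algebra $\Lambda$ produced by the preceding lemma: finite global dimension, and $\indim_\Lambda P\le\domdim_\Lambda P$ for every indecomposable projective $\Lambda$-module $P$. Finiteness of $\gldim\Lambda$ is immediate: $\Theta$ is dominant Auslander-regular, hence $\gldim\Theta<\infty$; and since $\defect\Lambda=\vert w\vert\geq m\geq 1>0$, the algebra $\Lambda$ is not self-injective (Corollary \ref{emre defect zero implies selfinjective or semisimple}), so, being cyclic Nakayama, $\gldim\Lambda\geq 2$, and Remark \ref{emre remark reduction}(1) gives $\gldim\Lambda=\gldim\bm\varepsilon(\Lambda)+2=\gldim\Theta+2<\infty$.

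For the homological inequality the case of a projective-injective $P$ is trivial, so fix $P$ indecomposable projective and non-injective. Since $\soc P\in\cS(\Lambda)$, Remark \ref{socle of projective-injective}(1) identifies the injective envelope $I(P)=I(\soc P)$ with the unique indecomposable projective-injective $\Lambda$-module $Q$ having $\soc Q=\soc P$; in particular $Q$ is projective, $\domdim_\Lambda P\geq 1$, and $P$ is the length-$\ell_\Lambda(P)$ submodule of $Q$. Because the proper injective quotients of $Q$ are precisely the $Q/X$ with $1\le\ell(X)\le\defect_\Lambda Q$ (by Proposition \ref{prop:quadrilateral}(3)), one gets $\indim_\Lambda P\leq 1$ as soon as $\ell_\Lambda(P)\leq\defect_\Lambda Q=\defect_\Lambda I(P)$. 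So the core of the argument is to show $\ell_\Lambda(P)\leq\defect_\Lambda I(P)$ for every indecomposable projective non-injective $P$ (and, where this should fail, to iterate, using $\domdim_\Lambda P\geq 2$ there). I would prove this by separating $P$ into the $\cB(\Lambda)$-filtered projectives $P=\Delta(\widetilde P)$ with $\widetilde P$ indecomposable projective over $\Theta$ (Proposition \ref{diagram}) and the non-filtered ones of the families $P(Z_i),P(Y_j)$ of Proposition \ref{length_defect_2}. For a non-filtered $P(Y_j)$ one reads $\ell_\Lambda(P(Y_j))$ and $\defect_\Lambda I(P(Y_j))$ directly off the Kupisch data built for $w=(\defect P_1+m,\defect P_2,\ldots,\defect P_n)$ in Proposition \ref{emre general proposition non defect invariant}, or equivalently one applies Lemma \ref{injectivity} after exhibiting $P(Y_j)/B$ (for the base-set module $B\subseteq P(Y_j)$) as a proper submodule of a base-set module. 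For a filtered $P=\Delta(\widetilde P)$ one transports the hypothesis $\indim_\Theta\widetilde P\le\domdim_\Theta\widetilde P$ through the equivalence $\Hom_\Lambda(\cP,-)$, using the expressions for $\ell_\Lambda(\Delta(\widetilde P))$ and for the defects of the projective $\Lambda$-modules in terms of the defects of the projective $\Theta$-modules supplied by Propositions \ref{length_defect}, \ref{length_defect_2} and their corollaries. Combining the two cases yields $\indim_\Lambda P\le 1\le\domdim_\Lambda P$, so $\Lambda$ is dominant Auslander-Gorenstein, and with the global-dimension bound, dominant Auslander-regular.

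The hard part is the transport step for the filtered projectives. Since $\defect\Lambda=\vert w\vert>\defect\Theta$, the defect-invariant statement Theorem \ref{emre thm minimal aus. gor} does not apply; and since $\domdim\Lambda=1<3$, neither the reduction $\domdim\Lambda=\domdim\bm\varepsilon(\Lambda)+2$ of Remark \ref{emre remark reduction}(2) nor the module-wise bijection of Proposition \ref{PROP domdim geq 3 implies bijection} is available, so the inequality cannot be imported from $\Theta$ by a clean homological reduction. It has to be extracted from the explicit length-and-defect bookkeeping of the construction in Proposition \ref{emre general proposition non defect invariant}, together with the fact that $\Theta$ is linear and dominant Auslander-regular. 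Once that bookkeeping is in place the remaining verifications are routine arithmetic with the Kupisch series.
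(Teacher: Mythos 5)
You should first note that the paper never actually proves this corollary: the proof environment that follows it in the source establishes the preceding lemma (the construction of $\Lambda$ and the computation $\domdim\Lambda=1$), so your argument has to stand on its own. Its central quantitative claim does not. You reduce everything to showing $\ell_\Lambda(P)\leq\defect_\Lambda I(P)$, i.e.\ $\indim_\Lambda P\leq 1$, for every indecomposable projective non-injective $P$, and your closing sentence asserts ``$\indim_\Lambda P\le 1\le\domdim_\Lambda P$''. This is false already in the smallest instance of the construction: take $\Theta=\mathbb{A}_2$ with Kupisch series $(2,1)$ (which is dominant Auslander-regular) and $m=1$, so $w=(2,0)$ and $\Lambda$ has Kupisch series $(4,3,2,2)$. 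The filtered projective $P_2$, with composition factors $T_2,T_3,T_4$, has $\ell(P_2)=3$ while $I(P_2)=P_1$ has defect $2$, and its minimal injective resolution is $0\to P_2\to P_1\to P_4\to P_1\to I(T_3)\to 0$, so $\indim_\Lambda P_2=3=\domdim_\Lambda P_2$. The bound $\indim P\leq 1$ can only be expected for the new non-filtered projectives $P(X_j)$; a filtered projective $\Delta(\widetilde P)$ inherits the injective dimension of $\widetilde P$ shifted by two, which is unbounded as $\Theta$ varies. Your parenthetical ``where this should fail, to iterate'' points at the real issue, but the iteration is never carried out, and it is exactly there that the hypothesis on $\Theta$ has to enter.

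Your diagnosis that the inequality ``cannot be imported from $\Theta$ by a clean homological reduction'' because $\domdim\Lambda=1<3$ is also an overstatement: it can be imported if you localize. Working dually with injectives, the $\defect\Theta+m$ indecomposable injective non-projective $\Lambda$-modules split into the $m$ ``new'' ones $J$ with $\Omega^1(J)=P(X_j)$ projective, for which $\pdim_\Lambda J=1\leq\codomdim_\Lambda J$ because $P(J)$ is projective-injective; and the $\defect\Theta$ ``old'' ones with $\Omega^2(J)\cong\Delta I$ for $I$ injective non-projective over $\Theta$. For an old $J$, the projective cover $P(\Omega^1(J))$ is one of the non-filtered projectives $P(Y_i)$ (or a non-filtered projective in another gap of the Kupisch series), and all of these are projective-injective by the construction --- the only non-injective non-filtered projectives are the $P(X_j)$, and those arise as first syzygies only of the new injectives. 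Hence the per-module relations $\pdim_\Lambda J=2+\pdim_\Theta I$ and $\codomdim_\Lambda J=2+\codomdim_\Theta I$ of Proposition \ref{emre may prop difference of domdims} persist for each old $J$ even though $\defect\Lambda>\defect\Theta$, and the hypothesis $\pdim_\Theta I\leq\codomdim_\Theta I$ (the dominant Auslander-Gorenstein condition on $\Theta$ read on injectives, as in the proof of Theorem \ref{emre thm minimal aus. gor}) transports to $\pdim_\Lambda J\leq\codomdim_\Lambda J$. Combined with $\gldim\Lambda=\gldim\Theta+2<\infty$ this gives the corollary. Your outline would become correct if the ``transport step'' were formulated as this per-module $+2$ shift for the old injectives (equivalently, for the filtered projectives) rather than as the uniform bound $\indim P\leq 1$.
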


\begin{proof} We apply proposition \ref{emre general proposition non defect invariant} for the given vector $w$ in the following way.
     Let $\defect P_1=t$. Hence, $\ell(\Delta(S_n))=m+t$. Let $X_1,\ldots,X_m,Y_1,\ldots,Y_t$ be the ordered composition factors of $\rad\Delta(S_n)$, with $\topp\rad\Delta(S_i)\cong X_1,\soc\rad\Delta(S_n)\cong Y_t$. 
     We choose the subsequence in part \ref{emre general length of unfiltered} of proposition \ref{emre general proposition non defect invariant} so that
     \begin{enumerate}
         \item $z_{n}<a_1<a_2<\cdots<a_{m+t}<z_1$,
         \item $b_1=a_{m+1}$, $b_2=a_{m+2}$, \ldots, $b_t=a_{m+t}$.
     \end{enumerate}
     This makes $P_{b_i}\cong P(Y_i)$ and each of them are projective-injective and minimal projective modules of $\Lambda$. For the remaining indices, the corresponding projective modules are set to be 
     \begin{align*}
         P(X_m)\subset P(X_{m-1})\subset \cdots\subset P(X_1)\subset P(\Delta(S_n)).
     \end{align*}
     By proposition \ref{emre general prop epsilon}, $\bm\varepsilon(\Lambda)\cong\Theta$.
     Notice that dominant dimension of $\Lambda$ is one, since each $P(X_j)$ is submodule of $P(\Delta(S_i))\in\cB(\Lambda)$, by lemma \ref{injectivity}, they are first syzygies of injective modules.
    \end{proof}

\subsubsection{Construction of dominant Auslander-regular algebras of dominant dimension two}
\begin{lemma}
    Let $\Theta$ be a connected linear Nakayama algebra of rank $n$. Let $v_{\Theta}=(\defect P_1,\defect P_2,\ldots,\defect P_n)$ be its defect vector. Then, for the vector $w=(\defect P_1,\defect P_2,\ldots,\defect P_n,1)$, there exists cyclic Nakayama algebra $\Lambda$ such that $\bm\varepsilon(\Lambda)\cong\Theta\oplus\mathbb{A}_1$, $\defect\Lambda=1+\defect\Theta$ and $\domdim\Lambda=2$.
\end{lemma}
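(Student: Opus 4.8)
The plan is to obtain $\Lambda$ from the reverse construction of Proposition~\ref{emre general proposition non defect invariant}, applied --- in the not-necessarily-connected linear setting permitted by Theorem~\ref{emre theorem main} --- to the algebra $\Theta':=\Theta\oplus\mathbb{A}_1$, whose ordered indecomposable projectives are $P_1,\dots,P_n$ together with the simple projective-injective $P_{n+1}$ of the summand $\mathbb{A}_1$, and whose (standard) defect vector is $(\defect P_1,\dots,\defect P_n,0)$. Since the given $w=(\defect P_1,\dots,\defect P_n,1)$ dominates this vector coordinatewise and $|w|=1+\defect\Theta$, Proposition~\ref{emre general prop epsilon} guarantees that a cyclic Nakayama algebra $\Lambda$ whose Kupisch series is any of the admissible sequences $\cK$ produced by Proposition~\ref{emre general proposition non defect invariant} for the pair $(\Theta',w)$ satisfies $\bm\varepsilon(\Lambda)\cong\Theta\oplus\mathbb{A}_1$ and $\defect\Lambda=|w|=1+\defect\Theta$. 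The one remaining freedom in $\cK$ is the placement of the \emph{single} non-filtered projective accounting for the discrepancy between $|w|$ and the standard defect of $\Theta'$ (there is exactly one, because $w$ exceeds the defect vector of $\Theta'$ only in the last coordinate, and only by $1$); I will fix it --- as in the proof of the $d=1$ lemma above, but making the opposite choice --- so that this extra non-filtered projective is the projective-injective module $P$ with $\rad P\cong\Delta(T)$, where $\Delta(T)\in\cB(\Lambda)$ of length $2$ is the base-set module attached to the simple $\mathbb{A}_1$-summand $T$. As $w_i=\defect P_i$ for $i\le n$, the remaining non-filtered projectives constitute ``the $\Theta$-part'', and the difference computations of Lemmas~\ref{emre general lemma when difference is one} and \ref{emre general lemma when difference is not one, all three possibilities here}, together with Remark~\ref{kupisch properties}, show that these are projective-injective exactly as in the defect-invariant reverse; hence \emph{every} non-filtered projective $\Lambda$-module is projective-injective.

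Granting this, both bounds on $\domdim\Lambda$ are short. For the upper bound, $\bm\varepsilon(\Lambda)=\Theta\oplus\mathbb{A}_1$ has a simple (semisimple) connected component, so $\domdim\Lambda\le 2$ by the earlier Lemma asserting that $\bm\varepsilon(\Lambda)$ having a simple component forces $\domdim\Lambda\le 2$. For the lower bound I use the codominant description $\domdim\Lambda=\inf\{\codomdim I\mid I\text{ indecomposable injective non-projective}\}$ and show $\codomdim I\ge 2$ for each such $I$. First, the injective envelope of any indecomposable projective $\Lambda$-module $P$ is $I(\soc P)$, and $\soc P\in\cS(\Lambda)$ is the socle of an indecomposable projective-injective module by Remark~\ref{socle of projective-injective}(1), so $I(\soc P)$ is projective-injective; thus $\domdim P\ge 1$ for every projective non-injective $P$, hence $\domdim\Lambda\ge 1$, which means $P_0:=P(I)$ is projective-injective in the minimal projective resolution of each injective non-projective $I$. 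Second, by Lemma~\ref{every second syzy of injective} the syzygy $\Omega^1(I)$ is a nonzero proper submodule of some $B\in\cB(\Lambda)$, so $\topp\Omega^1(I)$ is a composition factor of $\rad B$, and therefore $P_1:=P(\Omega^1(I))=P(\topp\Omega^1(I))$ is a non-filtered projective $\Lambda$-module by Remark~\ref{non-filtered}; by the construction above it is projective-injective. Hence $P_0$ and $P_1$ are both projective-injective, so $\codomdim I\ge 2$ and $\domdim\Lambda\ge 2$. Combining the two bounds yields $\domdim\Lambda=2$, which, together with the already recorded $\bm\varepsilon(\Lambda)\cong\Theta\oplus\mathbb{A}_1$ and $\defect\Lambda=1+\defect\Theta$, finishes the proof.

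The step I expect to be the main obstacle is precisely the claim that the admissible sequence $\cK$ can be steered so that \emph{all} non-filtered projectives of $\Lambda$ are projective-injective --- i.e. that the unique extra non-filtered projective can be realized as the projective-injective hull of $\Delta(T)$ while every other non-filtered projective retains the ``minimal-projective-then-projective-injective'' pattern of the defect-invariant reverse, and that the resulting sequence is genuinely admissible (each $c_i\ge 2$ and drops $c_i>c_{i+1}$ are by exactly $1$). This is an explicit but routine verification on the differences $c_i-c_{i+1}$ of the constructed sequence, via Lemmas~\ref{emre general lemma when difference is one} and \ref{emre general lemma when difference is not one, all three possibilities here}, entirely parallel to the computation already carried out for the $d=1$ lemma; once it is in place, the homological assertions above are immediate.
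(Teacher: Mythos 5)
Your proposal is, in substance, the paper's own construction: the paper also applies Proposition~\ref{emre general proposition non defect invariant} to $\Theta\oplus\mathbb{A}_1$ with this $w$ and fixes the single extra non-filtered projective to be the projective-injective module whose radical is the (projective) base-set module $\Delta(T)$ attached to the $\mathbb{A}_1$-simple. Only the packaging of the dominant-dimension computation differs: you get $\domdim\Lambda\le 2$ from the lemma that a simple component of $\bm\varepsilon(\Lambda)$ forces $\domdim\Lambda\le 2$, and $\domdim\Lambda\ge 2$ from ``every non-filtered projective is projective-injective'' together with Lemma~\ref{every second syzy of injective} and Remark~\ref{non-filtered}, whereas the paper exhibits the single injective with first syzygy $\soc\Delta(S_n)$, whose codominant dimension is exactly $2$, and handles the remaining injectives by a $+2$ shift. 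Your lower bound is clean and even avoids the paper's implicit appeal to the shift of Proposition~\ref{emre may prop difference of domdims}, which as stated assumes defect invariance and does not literally apply here.

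There is, however, one bookkeeping error, and it sits exactly at the verification you defer. The extra unit of weight $w_{n+1}=1$ does \emph{not} give $\Delta(T)$ length $2$: by Proposition~\ref{length_defect} and Corollary~\ref{emre ddd 1 cor} the length of $\Delta(S)$ is governed by the weight attached to $P(\tau S)$, so the construction forces $\ell(\Delta(S_n))=w_{n+1}+1=2$ (here $S_n$ is the simple projective of $\Theta$) and $\ell(\Delta(T))=w_1+1=1+\defect P_1$. Assigning instead $\ell(\Delta(S_i))=w_i+1$ would in general violate the necessary inequality $\ell(\Delta(S_i))\ge\defect(P_{i+1})+1$ and would not return $\Theta\oplus\mathbb{A}_1$ as $\bm\varepsilon(\Lambda)$; moreover, if $\defect P_n=0$ your labelling leaves no composition factor of $\rad\Delta(S_n)$ to serve as the top of the required extra non-filtered projective. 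This matters for the deferred admissibility and projective-injectivity check: the extra non-filtered projective $P$ has top $\soc\Delta(S_n)$ and length $1+\ell(\Delta(T))=2+\defect P_1$, and it is projective-injective precisely because its Kupisch predecessor is the filtered projective $\Delta(S_n)$ of length $2\le 2+\defect P_1$ (Remark~\ref{kupisch properties}). With the indices corrected, the verification you postpone is the one-line comparison the paper records, the remaining non-filtered projectives behave as in the defect-invariant reverse since $w_i=\defect P_i$ for $i\le n$, and the rest of your argument goes through.
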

\begin{proof}
    The vector $w$ describes lengths of elements of $\cB(\Lambda)$, i.e., $\ell(\Delta(S_i))\cong 1+\defect P_{i+1}$. In particular, $\ell(\Delta(S_n))=2$ and $\ell(\Delta(S_{n+1}))=1+\defect P_1$. We apply proposition \ref{emre general proposition non defect invariant}, and set $P(\Delta(S_{n+1}))$ as submodule of $P(\soc\Delta(S_n))$ which makes it projective-injective module. Dominant dimension of $\Lambda$ is two, since $\soc\Delta(S_n)$ is the first syzygy of an injective module, and its projective dimension is one. $\Lambda$ is a dominant Auslander-regular algebra, since for other injective objects we get $\codomdim_{\Lambda}J=\codomdim_{\Theta}I+2$.
\end{proof}
\begin{example}
    Let $\Theta$ be given by Kupisch series $(2,3,2,2,1)$. Then, $\Lambda$ is given by $(3,3,3,2,3,2,4,3)$ and $\bm\varepsilon(\Lambda)\cong\Theta\oplus\mathbb{A}$. Notice that projective module of length four corresponds to $P(\soc\Delta(S_5))$ in the proof above.
\end{example}

\subsubsection{Global dimension three} Here, we construct  cyclic Nakayama algebras of global dimension three, which are dominant Auslander-regular algebras.\\

Let $\Lambda$ be a cyclic Nakayama algebra of global dimension $3$. Then, $\gldim\bm\varepsilon(\Lambda)\leq 1$. Therefore $\bm\varepsilon(\Lambda)$ is a linear Nakayama algebra with components that are either linearly oriented $\mathbb{A}$-type or simple $\mathbb{A}_1$. Thus, we consider a Nakayama algebra $\Theta\cong\bm\varepsilon(\Lambda)$ such that 
\begin{align}\label{emre algebra linears simples}
    \Theta\cong \mathbb{A}_{n_1}\oplus\mathbb{A}_{n_2}\oplus\cdots\oplus\mathbb{A}_{n_k}
\end{align}
with $n_i\geq 1$, and at least one $n_j\geq 2$.

\begin{remark}
    By definition, defect of simple component is zero. However, in many cases it behaves like $\mathbb{A}_2$ algebra. That's why we define \emph{virtual defect} of simple components as one. Hence, if $n\geq 2$, then defect vector of $\mathbb{A}_n$ is $(n-1,0,\ldots,0)$ and defect vector of $\mathbb{A}_1$ is $(1)$. In \cite{sen21}, defect of simple components set to one. 
\end{remark}

\begin{proposition}\label{emre kupisch regular 1 and 3}
    Let $\Lambda$ be a cyclic Nakayama algebra given by Kupisch series
    \begin{align*}
        \left(n^{n_1}_1,n_1+x_1,n_1+x_1-1,\ldots,n_2+1,n^{n_2}_2,n_2+x_2,\ldots,n_3+1,n^{n_3}_3,n_3+x_3,\ldots,n_3+1,n_4+x_4,\ldots\right)
    \end{align*}
    where $x_i\geq n_{i+1}-1$ and $n_{i}\geq 2$. Then, $\Lambda$ is a dominant Auslander-regular algebra of global dimension three and dominant dimensions of projective modules are either three or one.
\end{proposition}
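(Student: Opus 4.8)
The plan is to exhibit $\Lambda$ as a reverse $\bm\varepsilon$-construction of $\Theta:=\mathbb A_{n_1}\oplus\cdots\oplus\mathbb A_{n_k}$ and then read off all the homological data from the reduction formulas together with the explicit shape of the Kupisch series. First I would check, via Lemma \ref{kupisch how to verify}, that the displayed sequence is admissible — between consecutive entries the only possible drop is by $1$, occurring along the descending runs $n_i+x_i,\dots,n_{i+1}+1,n_{i+1}$ — and then match it term by term with the output of Proposition \ref{emre general proposition non defect invariant} applied to $\Theta$ and a weight vector $w$ whose restriction to the $i$-th component has total $x_i$ and dominates the defect vector $(n_i-1,0,\dots,0)$ of $\mathbb A_{n_i}$ (with the ``virtual defect $1$'' convention should any $\mathbb A_1$-component be present). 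By Proposition \ref{emre general prop epsilon} this gives $\bm\varepsilon(\Lambda)\cong\Theta$ and $\defect\Lambda=\sum_i x_i$, and the $\cB(\Lambda)$-filtered projectives are exactly the modules $\Delta(P')$, one for each indecomposable projective $\Theta$-module $P'$, while the remaining projectives are non-filtered. Since each $\mathbb A_{n_i}$ is hereditary we get $\gldim\Theta=1$; a cyclic Nakayama algebra is never hereditary, so $\gldim\Lambda\ge 2$ and Remark \ref{emre remark reduction}(1) yields $\gldim\Lambda=3$. In particular $\Lambda$ is Gorenstein of finite global dimension, so it remains only to verify $\indim P\le\domdim P$ for every indecomposable projective $P$ and to compute $\domdim P$.

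Projective-injective $P$ have $\indim P=0$, so nothing is needed. For a non-injective \emph{filtered} projective $P=\Delta(P')$: here $P\in Filt(\cB(\Lambda))$ is not projective-injective, so by the statement recalled just before Remark \ref{emre remark reduction} its minimal injective copresentation $P\hookrightarrow I^0\to I^1$ has $I^0,I^1$ projective-injective, with $M:=\Omega^{-2}_\Lambda(P)$ corresponding under $\Hom_\Lambda(\cP,-)$, up to projective-injective summands, to $\Omega^{-2}_\Theta(P')$. But in the hereditary algebra $\mathbb A_{n_i}$ the cokernel of the injective envelope of a non-injective indecomposable projective is injective (a direct check on the linearly oriented quiver), so $\Omega^{-2}_\Theta(P')=0$; hence $M$ is a nonzero module in $\cP^\perp$, i.e.\ a proper submodule of some $B\in\cB(\Lambda)$. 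Lemma \ref{LEM the submodule of delta gives injective} then shows that $0\to M\to I(\soc M)\to I(\soc M)/M\to 0$ is the minimal injective resolution of $M$, with $I(\soc M)$ projective-injective and $I(\soc M)/M$ injective non-projective, so $\domdim M=\indim M=1$ and therefore $\domdim P=\indim P=3$. For a non-injective \emph{non-filtered} projective $P$: one has $\soc P\in\cS(\Lambda)$ (since $P$ contains an element of $\cB(\Lambda)$, by Proposition \ref{length_defect_2}), so $I_0:=I(\soc P)$ is projective-injective and $P=\rad^{j}I_0$; the run/defect geometry forces $\ell(P)\le\defect(I_0)$, which means $I_0/P$ coincides with $I_0/\rad^{t}B$ for the base element $B\subseteq I_0$ with $\soc B=\soc P$, hence is injective by Lemma \ref{LEM the submodule of delta gives injective}, and it is non-projective as a proper quotient of $I_0$. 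Thus $0\to P\to I_0\to I_0/P\to 0$ is the minimal injective resolution and $\domdim P=\indim P=1$.

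In every case $\indim P\le\domdim P$, so $\Lambda$ is dominant Auslander-Gorenstein, and being of finite global dimension it is dominant Auslander-regular, of global dimension $3$; the non-injective indecomposable projectives have dominant dimension $3$ (the filtered ones) or $1$ (the non-filtered ones), the asserted dichotomy. The main obstacle is the non-filtered case: one must identify precisely which projectives along a descending run $n_i+x_i,\dots,n_{i+1}+1$ fail to be filtered (equivalently have top outside $\cS'(\Lambda)$) and then establish $\ell(P)\le\defect(I(\soc P))$ for exactly those, i.e.\ control the interplay of $x_i$, $\defect(I(\soc P))$ and $\cS'(\Lambda)$-membership of the tops along each run. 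A secondary, more bookkeeping-type point is making rigorous the module-level $\domdim$/$\indim$ reduction used in the filtered case: for the defect-invariant algebras it is Theorem \ref{emre thm minimal aus. gor} together with Proposition \ref{emre may prop difference of domdims} read on opposite algebras, but for general $x_i$ it requires the cosyzygy-filtered-algebra formalism of Section \ref{sec:duality}; one also has to confirm that the Kupisch-series comparison in the first step goes through when $\Theta$ has $\mathbb A_1$-components.
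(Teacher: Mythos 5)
Your strategy is sound and reaches the right dichotomy, but it is genuinely the \emph{dual} of what the paper does, and the two places you flag as obstacles are exactly where the content lies. The paper's proof works on the injective side: after identifying $\Lambda$ as a (non-defect-invariant) reverse of $\Theta\cong\mathbb A_{n_1}\oplus\cdots\oplus\mathbb A_{n_k}$ to get $\gldim\Lambda=3$, it takes each indecomposable injective non-projective $I$, observes that every projective with a proper injective quotient has length $n_i+x_i$, and splits on whether $\ell(I)\leq n_i+x_i-n_{i+1}$: in the first case $\Omega^1(I)$ is projective and $\pdim I=\codomdim I=1$; in the second case length bookkeeping along the Kupisch series shows $\Omega^3(I)$ is projective with all three projective covers projective-injective, so $\pdim I=\codomdim I=3$. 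This gives $\pdim I\leq\codomdim I$ for all injectives, i.e.\ the opposite-side formulation of dominant Auslander-regularity, and it sidesteps both of your difficulties: no classification of filtered versus non-filtered projectives is needed, and no cosyzygy formalism, because the paper's machinery ($\Omega^2$ of injectives landing in $Filt(\cB(\Lambda))$) is already set up for projective resolutions. Your version, by contrast, literally computes $\domdim P$ and $\indim P$ for the projectives named in the statement, which is closer to the letter of the claim.

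The two gaps you name are real and not merely cosmetic. For the non-filtered case, the assertion that every non-filtered non-injective projective $P$ satisfies $\ell(P)\leq\defect(I(\soc P))$ does hold for this family, but it requires the run/block analysis you only gesture at: along the descending run $n_i+x_i,\dots,n_{i+1}+1,n_{i+1}$ all projectives share the socle of the head, the base element $B$ there has length $x_i+1$ by Proposition \ref{prop:B_eqv_def}, the run elements of length $\geq x_i+1$ are precisely the $n_i$ filtered projectives with that socle (here the hypothesis $x_i\geq n_{i+1}-1$ is used to guarantee $B$ itself is projective and the count comes out right), and the remaining non-injective ones have length $\leq x_i$, hence sit properly inside $B$ and Lemma \ref{LEM the submodule of delta gives injective} applies; one must also note that the other non-filtered projectives (the second through $n_{i+1}$-th block elements) are projective-injective and so irrelevant. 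For the filtered case, your claim that $M=\Omega^{-2}_\Lambda(P)$ lies in $\cP^\perp$ and is a proper submodule of a base element does not follow from anything proved in the paper as stated: generically $\Omega^{-2}$ of a projective is $\cB^\circ(\Lambda)$-filtered, not a submodule of a $\Delta$, and transporting $\Omega^{-2}_\Theta(P')=0$ across the equivalence requires either the cosyzygy-filtered-algebra duality of Section \ref{sec:duality} or a direct socle computation ($\soc\Omega^{-2}(P)=\tau^{-1}\topp I(\soc P)$, which must be shown to lie in $\cS(\Lambda)$ with the resulting module shorter than its base element). Until those two verifications are written out, the proposal is an outline rather than a proof; with them it would be a valid and arguably more transparent alternative to the paper's argument.
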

\begin{proof}
By direct computation, $\Lambda$ is non-defect invariant reverse of $\Theta\cong \mathbb{A}_{n_1}\oplus\mathbb{A}_{n_2}\oplus\cdots\oplus\mathbb{A}_{n_k}$. Hence global dimension of $\Lambda$ is three. Every projective module having proper injective quotient is of length $n_i+x_i$. Hence, projective and co-dominant dimensions of an injective $I$ with $\ell(I)\leq n_i+x_i-n_{i+1}$ is  one. If $\ell(I)>n_i+x_i-n_{i+1}$, then $0<\ell(\Omega^1(I))<n_{i+1}-1$. Hence $\ell(\Omega^2(I))<n_{i+1}+1$ which is not projective, and its projective cover is $P$ of length $n_{i+1}+x_{i+1}$. $\Omega^3(I)$ is projective by comparing lengths of $P$ and $\Omega^2(I)$. In particular, all projective covers are projective-injective, hence codominant dimension of $I$ is three. Hence $\Lambda$ is a dominant Auslander-regular algebra.
\end{proof}

\begin{proposition}\label{emre kupsich regular domdim 2 and 3}
    Let $\Lambda$ be a cyclic Nakayama algebra of global dimension three. If $\Lambda$ is dominant Auslander-regular algebra satisfying $\pdim I=3$ and $\pdim I'=2$, then it is given by
 \begin{gather*}
     ((1+g(n_1)+(g(n_2)-1)\delta(n_1,1))^{g(n_1)},n_1+g(n_2),n_1+g(n_2)-1,\ldots,1+g(n_2),\\(1+g(n_2)+(g(n_3)-1)\delta(n_2,1))^{g(n_2)},n_2+g(n_3),n_2+g(n_3)-1,\ldots,1+g(n_3)\\(1+g(n_3)+(g(n_4)-1)\delta(n_3,1))^{g(n_3)},n_3+g(n_4),n_3+g(n_4)-1,\ldots,1+g(n_4),\ldots\\\ldots\\\ldots
     (1+g(n_k)+(g(n_1)-1)\delta(n_k,1))^{g(n_k)},n_k+g(n_1),n_k+g(n_1)-1,\ldots,1+g(n_k))
\end{gather*}
where $\delta$ is Kronecker-delta function and 
\[
  g(n) =
  \begin{cases}
    n-1 & \text{if}\,\, n\geq 2 \\
    1 & \text{if}\,\, n=1
  \end{cases}
\]
\end{proposition}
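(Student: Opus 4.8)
The plan is to realize $\Lambda$ as a reverse $\varepsilon$-construction of $\Theta\cong \mathbb{A}_{n_1}\oplus\cdots\oplus\mathbb{A}_{n_k}$ with a carefully chosen weight vector $w$, and then to identify the Kupisch series produced by Proposition \ref{emre general proposition non defect invariant}. Concretely, I would first set up the correspondence: since $\gldim\Lambda=3$ forces $\gldim\bm\varepsilon(\Lambda)\leq 1$ by Remark \ref{emre remark reduction}, $\Theta$ has the stated linear-plus-simple shape. The condition $\pdim I=3$ for some injective $I$ and $\pdim I'=2$ for another means, by Proposition \ref{emre may prop difference of domdims}, that $\bm\varepsilon(\Lambda)=\Theta$ has an injective of projective dimension $1$ and one of projective dimension $0$, i.e. each nonsimple component $\mathbb{A}_{n_i}$ contributes exactly the expected injectives, and $\domdim\Lambda=2+\domdim\Theta$; but here we want $\domdim\Lambda$ to take \emph{both} values $2$ and $3$ on projectives, so $\Lambda$ is \emph{not} defect invariant, and the extra defect is exactly one unit per component (this is where the $\delta(n_i,1)$ and the virtual defect of simple components enter).

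**Key steps.** First I would fix the weight vector: for the block coming from $\mathbb{A}_{n_i}$, assign weights $w = (g(n_{i+1})-1)$ on the relevant top plus $0$'s elsewhere within the block, using the virtual-defect convention $g(1)=1$, $g(n)=n-1$ for $n\geq 2$, so that $w_j\geq \defect P_j$ holds and $\sum w_j = \defect\Theta + (\text{number of blocks where the extra unit is added})$. Second, I would feed this $w$ into Proposition \ref{emre general proposition non defect invariant} and compute $\cN=(c_{z_1},\ldots,c_{z_n})$ via $c_{z_i}=\sum_{j=1}^{\ell(P_i)}(w_{[i+j]}+1)$: for $\mathbb{A}_{n_i}$ the filtered-projective lengths telescope to $n_i + g(n_{i+1})$, $n_i+g(n_{i+1})-1,\ldots$, which matches the descending runs in the claimed series. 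Third, I would compute the non-filtered projective lengths from parts \ref{emre general length of unfiltered}--7) of that proposition: the $b_m$-indexed ones become projective-injective minimal projectives of length $1+g(n_{i+1})$ (or $1+g(n_i)+(g(n_{i+1})-1)\delta(n_i,1)$ when the preceding block is simple), reproducing the $(\,\cdot\,)^{g(n_i)}$ repeated terms. Fourth, by Proposition \ref{emre general prop epsilon}, $\bm\varepsilon(\Lambda)\cong\Theta$ and $\defect\Lambda=|w|$; then I would invoke Proposition \ref{emre proposition given theta lambda difference } and Proposition \ref{emre may prop difference of domdims} to read off $\domdim$ and $\pdim$ of each indecomposable injective: the injectives lifted from $\Theta$ get $\domdim = \codomdim + 2 = 3$, while the new non-filtered-projective injectives (those whose first syzygy is a proper submodule of some $\Delta\in\cB(\Lambda)$, as in Lemma \ref{injectivity}) have $\domdim = 1$ and $\pdim = 2$ — giving exactly $\pdim I = 3$, $\pdim I' = 2$ and establishing dominant Auslander-regularity.

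**Main obstacle.** The delicate point is the bookkeeping in step three: Proposition \ref{emre general proposition non defect invariant} allows many choices of where to place the $b_m$ among the $a_1<\cdots<a_{w_{i+1}}$, and only the \emph{specific} placement ($b_1=a_{m+1},\ldots$ so that each $P(Z_i)$ is a minimal projective, cf. the proofs of Propositions \ref{length_defect_2} and \ref{emre prop ccc1}) yields a genuinely dominant Auslander-regular algebra rather than merely a global-dimension-three algebra. I expect the bulk of the work to be verifying that with this placement every injective's minimal projective resolution has all its projective terms projective-injective, so that $\codomdim I = \pdim I$ for all injectives — equivalently, that no "short" syzygy $\Omega^2(I)$ fails to be projective while sitting below a non-projective-injective projective cover. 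This amounts to checking, block by block, the three cases $\ell(I)\leq n_i+x_i-n_{i+1}$, the boundary case, and $\ell(I)>n_i+x_i-n_{i+1}$ exactly as in Proposition \ref{emre kupisch regular 1 and 3}, and then confirming the converse: that \emph{any} cyclic Nakayama algebra of global dimension three with $\pdim I=3$, $\pdim I'=2$ must have arisen this way, which follows by running the forward $\varepsilon$-construction and applying Corollaries \ref{cor:defect_delta} and \ref{emre ddd 1 cor} to pin down the lengths of the $\Delta(S)$'s and hence the Kupisch series uniquely.
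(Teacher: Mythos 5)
The paper states Proposition \ref{emre kupsich regular domdim 2 and 3} without any proof, so there is no argument of the authors' to compare yours against; the closest model is the proof of the neighbouring Proposition \ref{emre kupisch regular 1 and 3}. Your overall strategy --- pass to $\Theta=\bm\varepsilon(\Lambda)\cong\mathbb{A}_{n_1}\oplus\cdots\oplus\mathbb{A}_{n_k}$, pick a weight vector, run Proposition \ref{emre general proposition non defect invariant} to produce the Kupisch series, and then verify the homological dimensions --- is the natural one and fits the paper's toolkit.

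There are, however, two genuine gaps. First, in your fourth step you assert that the new (non-filtered) injectives have dominant (i.e.\ codominant) dimension $1$ and $\pdim=2$, and that this ``establishes dominant Auslander-regularity''; it would do the opposite. Dominant Auslander-regularity is equivalent to $\pdim I'\leq\codomdim I'$ for every indecomposable injective $I'$ (this is how the paper uses the definition in the proof of Theorem \ref{emre thm minimal aus. gor}), so these injectives must have $\codomdim I'=2$, i.e.\ you must check that $P(\Omega^1(I'))$ is projective-injective. That check is precisely what forces the particular placement of the non-filtered projectives, and it is the heart of the forward direction. Relatedly, you invoke Proposition \ref{emre may prop difference of domdims} twice and Corollary \ref{emre ddd 1 cor} once, but both require $\defect\Lambda=\defect\Theta$, which fails here by your own observation that $\Lambda$ cannot be defect invariant; you need per-injective substitutes for these statements in the non-defect-invariant setting. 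Second, the proposition is a classification (``then it is given by''), so the converse --- that the hypotheses force exactly this Kupisch series --- is the real content, and your closing sentence does not deliver it: Corollary \ref{emre ddd 1 cor} cannot ``pin down the lengths of the $\Delta(S)$'s'' when $\defect\Lambda>\defect\Theta$. One would instead need a direct argument in the spirit of Lemma \ref{emre auslandegor length of delta is at most two}, bounding $\ell(\Delta)$ and locating each non-filtered projective from the conditions $\pdim\leq\codomdim$ and $\gldim\Lambda=3$, using Lemma \ref{injectivity}. (Note also that when every $n_i\geq 2$ the displayed series coincides, as a cyclic sequence, with the higher Auslander series of Proposition \ref{emre prop higher auslander gldim 3}, for which every indecomposable injective has projective dimension three; so the hypothesis $\pdim I'=2$ implicitly forces some $n_j=1$, a case distinction that neither your sketch nor the statement makes explicit.)
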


\subsection{Higher Auslander Algebras among Nakayama Algebras}

\begin{proposition}\label{emre prop higher auslander gldim 3}Let $\Lambda$, $\Theta$ be Nakayama algebras such that $\bm\varepsilon(\Lambda)\cong\Theta$. $\Lambda$ is a higher Auslander algebra of global dimension three if and only if $\Theta\cong \mathbb{A}_{n_1}\oplus\mathbb{A}_{n_2}\oplus\cdots\oplus\mathbb{A}_{n_k} $ with $n_j\geq 2$. In this case, $\Lambda$ is given by Kupisch series
\begin{align*}
    \left(n^{n_1}_1,n_1+n_2-1,n_1+n_2-2,\ldots,n_2+1,n^{n_2}_2,n_2+n_3-1,\ldots,n_3+1,n^{n_3}_3,n_3+n_4-1,\ldots\right).
\end{align*}
\end{proposition}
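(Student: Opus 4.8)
## Proof proposal for Proposition \ref{emre prop higher auslander gldim 3}

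The plan is to combine the global-dimension reduction $\gldim\Lambda=\gldim\bm\varepsilon(\Lambda)+2$ (Remark \ref{emre remark reduction}) with the defect-invariance forced by $\domdim\Lambda\geq 3$ (Corollary \ref{cor of PROP domdim geq 3 implies bijection }) and the explicit defect-invariant reverse construction of Proposition \ref{emre defect invariant prop} / Proposition \ref{sunday proposition regarding algebra A}. First I would prove the forward direction. Assume $\Lambda$ is higher Auslander of global dimension three, i.e. $\gldim\Lambda=\domdim\Lambda=3$. By Remark \ref{emre remark reduction}\,(1), $\gldim\Theta=1$, so $\Theta$ is a (not necessarily connected) linear Nakayama algebra whose components are linearly oriented $\mathbb{A}_{n_i}$. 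It remains to exclude simple components $\mathbb{A}_1$: if some $n_j=1$, then by the Lemma preceding Section \ref{subsection minimal Auslande-Gorenstein} (``if $\Theta$ has a simple component, then $\domdim\Lambda\leq 2$'') we would get $\domdim\Lambda\leq 2$, contradicting $\domdim\Lambda=3$. Hence all $n_j\geq 2$.

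Next I would establish defect invariance in this situation: since $\domdim\Lambda=3\geq 3$, Corollary \ref{cor of PROP domdim geq 3 implies bijection } gives $\defect\Lambda=\defect\Theta$, so $\Lambda$ is the defect-invariant reverse of $\Theta$. By Proposition \ref{emre defect invariant prop} together with Proposition \ref{sunday proposition regarding algebra A}, the defect-invariant reverse is \emph{unique} and its Kupisch series is determined by the formula in Proposition \ref{emre defect invariant prop}\,\ref{emre defect invariant prop filtered}–\ref{emre defect invariant prop unfiltered lengths}. So the task reduces to unwinding that formula for $\Theta=\mathbb{A}_{n_1}\oplus\cdots\oplus\mathbb{A}_{n_k}$. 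For the component $\mathbb{A}_{n_i}$ the projectives have lengths $n_i,n_i-1,\dots,1$, so $\defect(P)=0$ for all projectives except the longest one $P$ of length $n_i$ in each component — but actually for a linear component the defects sit on the ``first'' projective of the next cycle segment; one computes using Lemma \ref{sunday lemma lengths of delta and cardinality} that the nonzero defects are exactly $n_i-1$ (virtual defect, as in the Remark on virtual defect is not needed here since $n_i\geq 2$). Plugging $\defect P_{i+1}=n_{i+1}-1$ into the formulas of Proposition \ref{emre defect invariant prop}: the filtered projectives $c_{z_i}$ contribute the blocks $n_i^{\,n_i}$ (the lengths $\ell_\Lambda(\Delta(S))=\defect_\Theta(P(\tau S))+1$ from Corollary \ref{emre ddd 1 cor}, summed via Corollary \ref{lengths of delta proj}), and between consecutive filtered blocks there are $\defect(P_{i+1})=n_{i+1}-1$ non-filtered projectives whose lengths, by Proposition \ref{emre defect invariant prop}\,\ref{emre defect invariant prop unfiltered lengths} and Lemma \ref{sunday lemma when difference is not one, all three possibilities here}, decrease by one at each step, giving the descending run $n_i+n_{i+1}-1,\,n_i+n_{i+1}-2,\dots,n_{i+1}+1$. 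This produces exactly the displayed Kupisch series.

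For the converse, I would start from the algebra $\Lambda$ given by that Kupisch series and verify directly (using Lemma \ref{kupisch how to verify}, which Proposition \ref{emre defect invariant prop} already checks for the general construction, so this is automatic) that it is admissible, that $\bm\varepsilon(\Lambda)\cong\Theta=\mathbb{A}_{n_1}\oplus\cdots\oplus\mathbb{A}_{n_k}$ (Proposition \ref{sunday proposition regarding algebra A}), and that it is defect invariant. Then $\gldim\Lambda=\gldim\Theta+2=1+2=3$ by Remark \ref{emre remark reduction}\,(1), and $\domdim\Lambda=\domdim\Theta+2$ by Theorem \ref{emre thm domdim}\,\ref{emre thm domdim p2} (applicable since $\defect\Lambda=\defect\Theta$; note $\defect\Theta\neq 0$ because some $n_j\geq 2$). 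Since $\Theta$ is hereditary, $\domdim\Theta=\gldim\Theta=1$? — here I must be careful: a hereditary linear Nakayama algebra with $n_j\geq 2$ has $\domdim=\infty$ if it is semisimple, but $\mathbb{A}_{n_j}$ with $n_j\geq 2$ has $\domdim\mathbb{A}_{n_j}=1$ only if it has a projective-injective; in fact $\mathbb{A}_{n_j}$ has $\domdim=1$ precisely when $n_j\geq 2$ since $P_1$ is projective-injective and $P_{n_j}$ (simple) is projective non-injective with $\domdim=1$. Hence $\domdim\Theta=1$, so $\domdim\Lambda=3=\gldim\Lambda$ and $\Lambda$ is higher Auslander of global dimension three.

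The main obstacle I anticipate is the bookkeeping in the converse's Kupisch-series verification and, more subtly, pinning down $\domdim\Theta=1$ correctly for each connected component versus the whole (disconnected) algebra $\Theta$ — one must check the claimed equality $\domdim\Theta=1$ for a direct sum, and confirm that the edge/wrap-around terms of the cyclic Kupisch series of $\Lambda$ (the last descending run $n_k+n_1-1,\dots,n_1+1$ closing up to the first block $n_1^{\,n_1}$) match Proposition \ref{emre defect invariant prop}\,\ref{emre defect invariant prop number of nonfiltered}'s condition relating the terms before $c_{z_1}$ and after $c_{z_n}$. Everything else is a direct substitution into results already proved in the excerpt.
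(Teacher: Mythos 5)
Your proposal is correct and follows essentially the same route as the paper's proof: reduce via $\gldim\Lambda=\gldim\bm\varepsilon(\Lambda)+2$ to see $\Theta$ is a sum of linear $\mathbb{A}$-type components, rule out simple components via the dominant-dimension obstruction, and then read off the Kupisch series from the defect-invariant reverse construction of Proposition \ref{emre defect invariant prop}. You supply more detail than the paper does (notably the converse direction and the verification that $\domdim\Theta=1$), but the underlying argument is the same.
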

\begin{proof}
 Global dimension of $\Lambda$ is three, therefore global dimension of $\bm\varepsilon(\Lambda)$ is one.
So, connected components of syzygy filtered algebra are either $\mathbb{A}$-type quivers or simple algebras, hence $\Theta\cong \mathbb{A}_{n_1}\oplus\mathbb{A}_{n_2}\oplus\cdots\oplus\mathbb{A}_{n_k}$. If at least one component is simple, i.e., $n_j=1$ for some $j$, then there is no defect invariant reverse of $\Theta$. By Theorem \ref{emre thm domdim} \ref{emre thm domdim p2} we would get $\domdim\Lambda\leq 2$, which contradicts to $\Lambda$ is a higher Auslander algebra of global dimension three. Therefore, each $n_j\geq 2$ . In this case, by proposition \ref{emre defect invariant prop}, we obtain Kupisch series of $\Lambda$.
\end{proof}

\begin{remark}
        Let $\Lambda$ be a cyclic Nakayama algebra given by Kupisch series as in proposition \ref{emre prop higher auslander gldim 3}. Then, $\Gamma:=\End_{\Lambda}(Q)^{op}$ is given by
    \begin{align*}\label{emre kupisch series of 2 cto}
        \left(n_1,n_1,n_1-1,\ldots,2,n_2,n_2,n_2-1,\ldots,2,n_3,n_3,n_3-1,\ldots,2,\ldots,n_k,n_k,n_k-1,\ldots,2\right)
    \end{align*}
    where $Q$ is projective-injective generator of $\Lambda$. $\Gamma$ has $2$-cluster-tilting object.
Moreover, any cyclic Nakayama algebra $\Gamma$ having $2$-cluster-tilting object $\cM$ such that $\cM\subseteq \Gamma\oplus D\Gamma\oplus \rad \Gamma $ is given by \ref{emre kupisch series of 2 cto}.
\end{remark}

\subsubsection{Global dimension four higher Auslander algebras} We give complete classification of cyclic Nakayama algebras of global dimension four which are higher Auslander. There are two cases we need to consider.
\begin{remark}
    Let $\Theta$ be an Auslander algebra. If $\Theta$ is cyclic, then it is given by Kupisch series $(2323,\ldots,23)$. If $\Theta$ is linear, then it is given by Kupisch series $((23)^k221)$ where $k\geq 0$. 
\end{remark}

\begin{proposition}
    Let $\Theta$ be given by Kupisch series $(23)^k$. Then, defect invariant reverse is given by Kupisch series $(344)^k$.
\end{proposition}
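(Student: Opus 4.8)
The plan is to feed $\Theta$ into the explicit construction of the defect invariant reverse furnished by Proposition~\ref{emre defect invariant prop} and Proposition~\ref{sunday proposition regarding algebra A}, and then check that the admissible sequence it produces is exactly $(344)^k$. Write $n=2k$ for the rank of $\Theta$ and order its indecomposable projectives $P_1,\dots,P_{2k}$ so that $\ell(P_i)=2$ for $i$ odd and $\ell(P_i)=3$ for $i$ even. From Remark~\ref{kupisch properties}, the inequality $\ell(P_{i})\le\ell(P_{i+1})$ holds precisely for $i$ odd, so $P_{i+1}$ is projective-injective with $\defect(P_{i+1})=\ell(P_{i+1})-\ell(P_i)=1$ exactly for $i$ odd; hence $\defect(P_i)=1$ for $i$ even, $\defect(P_i)=0$ for $i$ odd, and $\defect\Theta=k$. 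These are the only numerical inputs the construction needs.

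Next I would evaluate the four conditions of Proposition~\ref{emre defect invariant prop} on this data. Condition~\ref{emre defect invariant prop rank} gives $N=n+\defect\Theta=3k$. For condition~\ref{emre defect invariant prop filtered}, note that the window of $\ell(P_i)$ consecutive defects immediately following index $i$ always contains exactly one even index, hence sums to $1$; therefore $c_{z_i}=\ell(P_i)+1$, i.e.\ $c_{z_i}=3$ for $i$ odd and $c_{z_i}=4$ for $i$ even, so $\cN=(3,4,3,4,\dots,3,4)$. Condition~\ref{emre defect invariant prop number of nonfiltered} then inserts $\defect(P_{i+1})$ non-filtered terms between $c_{z_i}$ and $c_{z_{i+1}}$: one term when $i$ is odd, none when $i$ is even. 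Finally condition~\ref{emre defect invariant prop unfiltered lengths} evaluates the single non-filtered term occurring between $z_i$ and $z_{i+1}$ (for $i$ odd) to $c_{a_1}=\ell(P_i)+\sum_{j=1}^{\ell(P_i)+1}\defect(P_{[i+j]})=2+2=4$, the relevant window of three consecutive defects now containing two even indices. Assembling, $\cK$ breaks into $k$ consecutive blocks $(c_{z_{2m-1}},c_{a_m},c_{z_{2m}})=(3,4,4)$, that is $\cK=(344)^k$.

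Having identified $\cK=(344)^k$, Proposition~\ref{sunday proposition regarding algebra A} shows that the cyclic Nakayama algebra $\Lambda$ with this Kupisch series satisfies $\bm\varepsilon(\Lambda)\cong\Theta$ and $\defect\Lambda=\defect\Theta=k$, so $\Lambda$ is a defect invariant reverse of $\Theta$; the uniqueness part of Theorem~\ref{emre theorem main}~\ref{emre thm main part 2} then shows it is \emph{the} defect invariant reverse, which is the assertion. As an independent sanity check one can read off directly from Lemma~\ref{sunday lemma lengths of delta and cardinality} that for $(344)^k$ one has $|\cS(\Lambda)|=2k$ (the pairs $c_i\le c_{i+1}$ being the $3\!\le\!4$'s and the $4\!=\!4$'s, $k$ of each), $\defect\Lambda=3k-2k=k$, and $\cB(\Lambda)$ consisting of $k$ modules of length $2$ and $k$ of length $1$; tracing $\Hom_\Lambda(\cP,-)$ over the filtered projectives, which carry $2$ and $3$ copies of $\cB(\Lambda)$-simples alternately, recovers the Kupisch series $(23)^k$ of $\bm\varepsilon(\Lambda)$.

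The only real difficulty here is bookkeeping: one must handle the cyclic index $[\,\cdot\,]$ carefully throughout, and in particular get the boundary clause of condition~\ref{emre defect invariant prop number of nonfiltered} (the count of terms between $c_1$ and $c_{z_1}$ versus between $c_{z_n}$ and $c_N$) compatible with the choice of cyclic starting point; starting the sequence at a minimal projective so that $c_1=c_{z_1}$ makes this clause vacuous and lets the blocks line up cleanly. Once the alternating defect pattern $(0,1,0,1,\dots)$ is recorded, every sum appearing in Proposition~\ref{emre defect invariant prop} telescopes to a count of even indices in a short window, so no genuine computation remains.
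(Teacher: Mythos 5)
Your proposal is correct and follows the same route as the paper, which simply cites Proposition~\ref{emre defect invariant prop}; you have just carried out the computation explicitly (defect vector $(0,1,0,1,\dots)$, $N=3k$, $\cN=(3,4,3,4,\dots)$, one non-filtered term of length $4$ inserted after each odd-indexed $c_{z_i}$), and the arithmetic checks out. The appeal to Proposition~\ref{sunday proposition regarding algebra A} and the final sanity check via Lemma~\ref{sunday lemma lengths of delta and cardinality} are consistent with the paper's framework.
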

\begin{proof}
    By proposition \ref{emre defect invariant prop}, result follows.
\end{proof}

\begin{proposition}\label{emre prop higher auslander gldim 4}
Let $\mathbb{B}_k$ denote the algebra given by Kupisch series $((23)^k221)$. If $\Theta\cong \mathbb{B}_{n_1}\oplus  \mathbb{B}_{n_2}\oplus\cdots \oplus \mathbb{B}_{n_k} $, then the defect invariant reverse is given by Kupisch series
\begin{gather*}
    \left((344)^{n_1-1}333\right)^{\delta(n_1)}232,2+\delta(n_2),\left((344)^{n_2-1}333\right)^{\delta(n_2)}232,2+\delta(n_3),\ldots,\\\ldots,\left((344)^{n_k-1}333\right)^{\delta(n_k)}232,2+\delta(n_1))
\end{gather*}
where
\[
  \delta(n) =
  \begin{cases}
    0 & \text{if}\,\, n=0 \\
    1 & \text{if}\,\, n\neq 0
  \end{cases}
\]
\end{proposition}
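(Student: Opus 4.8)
## Proof Proposal for Proposition \ref{emre prop higher auslander gldim 4}

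The plan is to reduce the statement to a direct application of Proposition \ref{emre defect invariant prop} together with the characterization of filtered projectives and the defect computations in Lemma \ref{sunday lemma lengths of delta and cardinality} and Remark \ref{kupisch properties}. First I would record the relevant data for each summand $\mathbb{B}_n$ given by Kupisch series $((23)^n221)$: its rank is $2n+3$, and I would list the lengths $\ell(P_i)$ and defects $\defect(P_i)$ for each indecomposable projective. Reading the Kupisch series, the projectives of length $3$ are minimal projective, those of length $2$ come in two flavors ($\rad$ of a length-$3$ one, or a minimal projective when followed by a projective-injective), and the terminal $1$ signals the linear structure. Using Remark \ref{kupisch properties} part (1)(c), each place where $c_i \le c_{i+1}$ contributes a projective-injective $P_{i+1}$ of defect $c_{i+1}-c_i$. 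For $\mathbb{B}_n$ with $n\ge 1$ this yields defect vector entries that are $0$ except at the finitely many projective-injective spots, and I would compute $\defect(\mathbb{B}_n)$ explicitly — it should come out to $n$ (one from each of the $n$ copies of the ``$23$'' block plus the tail behaves appropriately), so that $\defect\Theta = \sum_j \defect(\mathbb{B}_{n_j})$ determines the rank increase $N = n + \defect\Theta$ demanded by Proposition \ref{emre defect invariant prop}\ref{emre defect invariant prop rank}.

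Next I would apply the formula in Proposition \ref{emre defect invariant prop}\ref{emre defect invariant prop filtered}, namely $c_{z_i} = \ell(P_i) + \sum_{j=1}^{\ell(P_i)} \defect(P_{[i+j]})$, to compute the lengths of the filtered projective $\Lambda$-modules. For a projective $P_i$ of length $2$ inside a ``$344$''-type region the window of indices $[i+1],\dots,[i+\ell(P_i)]$ will pick up exactly the defects contributing a total that produces the entries $3$ and $4$ in the proposed series; for the length-$3$ minimal projectives one gets the entry $3$ (since within $\mathbb{B}_n$ these windows miss the projective-injective spots) except near the junction between consecutive summands $\mathbb{B}_{n_j}$ and $\mathbb{B}_{n_{j+1}}$, where the window wraps and picks up the defect of the junction projective-injective, producing the ``$2+\delta(n_{j+1})$'' entries. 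The blocks $(344)^{n_j-1}333$ raised to the power $\delta(n_j)$ encode exactly the collapse that happens when $n_j = 0$ (i.e. $\mathbb{B}_0 = \mathbb{A}_2^{\,?}$, the degenerate case with $k=0$ meaning Kupisch series $221$), in which case that summand contributes no ``$344$'' blocks at all. Then Proposition \ref{emre defect invariant prop}\ref{emre defect invariant prop number of nonfiltered} and \ref{emre defect invariant prop unfiltered lengths} dictate how many non-filtered projectives sit between consecutive filtered ones and their lengths; I would check these counts match the gaps in the displayed series and that the non-filtered lengths interpolate correctly (each differing by $1$, descending, as in the ``$n_i + g(n_{i+1}), \dots$'' pattern seen in Proposition \ref{emre kupisch regular 1 and 3} — here the relevant spots are the singleton ``$232$'' and the descending runs implicit in the concatenation).

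Once the four conditions of Proposition \ref{emre defect invariant prop} are verified for the displayed sequence $\cK$, Proposition \ref{sunday proposition regarding algebra A} immediately gives that the cyclic Nakayama algebra $\Lambda$ defined by $\cK$ satisfies $\bm\varepsilon(\Lambda)\cong\Theta$ and $\defect\Lambda = \defect\Theta$, i.e. $\Lambda$ is the defect invariant reverse, and by uniqueness (Theorem \ref{emre theorem main}\ref{emre thm main part 2}) it is \emph{the} defect invariant reverse. The remaining task is purely bookkeeping: confirming that the concatenation in the statement, with the $\delta(n_j)$ exponents handling the degenerate summands and the ``$2+\delta(n_{j+1})$'' entries handling the inter-summand junctions, is literally the sequence produced by the recipe of Proposition \ref{emre defect invariant prop}.

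The main obstacle I anticipate is the careful treatment of the wrap-around / junction behavior: the index arithmetic ``$[i+j]$'' in Proposition \ref{emre defect invariant prop}\ref{emre defect invariant prop filtered} is cyclic over the full rank-$n$ index set of $\Theta$, so the windows straddling the boundary between $\mathbb{B}_{n_j}$ and $\mathbb{B}_{n_{j+1}}$ must be computed with attention to which defects fall inside, and this is precisely what produces the asymmetric ``$232, 2+\delta(n_{j+1})$'' fragments rather than a clean period. Handling the degenerate case $n_j = 0$ uniformly — verifying that the $(\cdot)^{\delta(n_j)}$ convention makes the general formula specialize correctly and that $\mathbb{B}_0$ still has the expected defect and rank — is the other place where a naive computation could slip. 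Everything else is a routine (if lengthy) verification that falls under the already-proven Lemmas \ref{sunday lemma when difference is one} and \ref{sunday lemma when difference is not one, all three possibilities here}.
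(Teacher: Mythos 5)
Your proposal follows exactly the paper's (one-line) proof: the paper simply asserts that the result "follows from Proposition \ref{emre defect invariant prop}," and your plan of verifying that proposition's four conditions for the displayed sequence and then invoking Proposition \ref{sunday proposition regarding algebra A} together with the uniqueness in Theorem \ref{emre theorem main} is precisely the intended argument, just written out. One small correction to your bookkeeping: $\defect(\mathbb{B}_n)=n+1$ rather than $n$ (each of the $n$ blocks $23$ contributes a drop and the tail $221$ contributes one more), which is consistent with the displayed series having $3n_j+4$ entries per summand against $\rank \mathbb{B}_{n_j}=2n_j+3$.
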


\begin{proof}
    It follows from proposition \ref{emre defect invariant prop}.
\end{proof}
\begin{remark}
    Let $\Lambda$ be a cyclic Nakayama algebra given by Kupisch series as in proposition \ref{emre prop higher auslander gldim 4}. Then, $\Gamma:=\End_{\Lambda}(Q)^{op}$ is given by
    \begin{align}\label{emre kupisch series of 2 cto}
        \left(2(33)^{n_1},2,2,(33)^{n_2},\ldots,2(33)^{n_k}2\right)
    \end{align}
    where $Q$ is projective-injective generator of $\Lambda$. $\Gamma$ has $3$-cluster-tilting object.
Moreover, any cyclic Nakayama algebra $\Gamma$ having $3$-cluster-tilting object $\cM$ such that $\cM\subseteq \Gamma\oplus D\Gamma\oplus \rad \Gamma $ is given by \ref{emre kupisch series of 2 cto}.
\end{remark}

\section{The duality of syzygy filtered algebras }\label{sec:duality}

In this section, we reveal a duality between the syzygy filtered algebras $\bm\varepsilon(\Lambda)$ and the cosyzygy filtered algebra $\eta(\Lambda)$ for cyclic Nakayama algebra $\Lambda$. We will show equivalent definitions for these algebras first, and then establish a connection between them via minimal projective and minimal injective modules.

\subsection{Construction}
Let $\Lambda$ be a cyclic Nakayama algebra. Denote by $\Lambda\modd$ the category of finitely generated left $\Lambda$ modules, $\cP$ the subcategory of projective modules, $\cI$ the subcategory of injective modules and $\cQ$ the subcategory of projective-injective modules.   Assume $Q_i$ is an indecomposable projective-injective module, denote by $S_i$ the (unique) simple submodule of $Q_i$ and $T_i$ the unique simple quotient module of $Q_i$. Let $\cS=\{S_i\}$ and $\cT=\{T_i\}$.

Recall that a module is torsionless, if it is a submodule of a projective module. A simple module $S\in\cS$ if and only if $S$ is torsionless.
\begin{lemma-definition}\label{delta defn}
Let $S$ be a simple submodule of an indecomposable projective-injective module $Q$. Then there exists a unique largest submodule $\Delta(S)$ of $Q$ such that $\soc\Delta(S)=S$ and every quotient of $\Delta(S)$ is not torsionless.
\end{lemma-definition}
\begin{proof}
Assume $k>0$ is the smallest integer such that  $\tau^{-(k+1)}S$ is torsionless. Then $\Delta$ is the unique indecomposable submodule of $Q$ (of length $k$) with composition factors $S, \tau^{-1}S, \cdots, \tau^{-k}S$.  \qedhere
\end{proof}


The syzygy filtration technique was introduced by Sen in \cite{sen18}, \cite{sen19}, where the set of modules $\cB(\Lambda)=\{\Delta(S)| S\in\cS\}$ (called \emph{the base-set}) plays an important role in studying projective resolutions of modules over Nakayama algebras: Let $M$ be an indecomposable non-projective $\Lambda$ module. If its second syzygy is nontrivial, then $\Omega^2(M)$ has a unique filtration by $\cB(\Lambda)$. Therefore, the subcategory $\filt(\cB(\Lambda))$ consisting of modules with $\cB(\Lambda)$ filtrations controls second (and higher) syzygies of modules over Nakayama algebras. To understand the subcategory  $\filt(\cB(\Lambda))$, Sen introduced the following definition of syzygy filtered algebra:

\begin{definition}\label{defep} \cite{sen19}
For any cyclic Nakayama algebra $\Lambda$, the {\bf syzygy filtered algebra} $\bm\varepsilon(\Lambda)$ is the endomorphism algebra:
\begin{align*}
\bm\varepsilon(\Lambda):=\End_{\Lambda}\left(\bigoplus\limits_{S\in\cS}P(\tau S)\right).
\end{align*}
\end{definition}

The name of syzygy filtered algebra comes from the following result.

\begin{theorem}\cite{sen19}\label{filt equiv}
Denote by $\filt(\cB(\Lambda))$ the full subcategory of $\cB(\Lambda)$-filtered (right) $\Lambda$-modules and $\cP= \bigoplus\limits_{S\in\cS}P(\tau S)$. Let $\bm\varepsilon(\Lambda)\modd$ be the category of finitely generated right $\bm\varepsilon(\Lambda)$-modules. Then $\Hom(\cP,-):\filt(\cB(\Lambda))\to \bm\varepsilon(\Lambda)\modd$ is an equivalence.
\end{theorem}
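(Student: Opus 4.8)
The plan is to recognize this as a restatement of Proposition~\ref{diagram}~(i) and, for completeness, to recall why that holds: it is the classical Morita-type theory of endomorphism algebras (Proposition~\ref{artinalgebra}) applied to the projective module $\cP$, once one identifies $\filt(\cB(\Lambda))$ with the subcategory $fp(\add\cP)$ of modules finitely presented by $\add\cP$. First I would check the bookkeeping: the module $\cP=\bigoplus_{S\in\cS}P(\tau S)$ used here is literally the module $\bigoplus_{S'\in\cS'(\Lambda)}P(S')$ of Definition~\ref{deffilteredalg}, since $\cS'(\Lambda)=\{\tau S\mid S\in\cS(\Lambda)\}$, so the two descriptions of $\bm\varepsilon(\Lambda)$ coincide and, by Lemma~\ref{projective}, the indecomposable summands of $\cP$ are exactly the filtered projectives $\cP(\Lambda)$.

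Next I would apply Proposition~\ref{artinalgebra} with $P=\cP$ and $\Gamma=\bm\varepsilon(\Lambda)=\End_\Lambda(\cP)$: the functor $\Hom_\Lambda(\cP,-)$ restricts to an equivalence $fp(\add\cP)\xrightarrow{\sim}\bm\varepsilon(\Lambda)\modd$ with quasi-inverse $-\otimes_{\bm\varepsilon(\Lambda)}\cP$, which is right exact, fully faithful, and preserves projective covers. So the theorem reduces to the equality $fp(\add\cP)=\filt(\cB(\Lambda))$ of full subcategories of $\moddd\Lambda$. The inclusion $fp(\add\cP)\subseteq\filt(\cB(\Lambda))$ is formal: by Remark~\ref{remark-Filt}(a) the subcategory $\filt(\cB(\Lambda))$ is wide, hence closed under cokernels, and by Lemma~\ref{projective} its projective objects are precisely the modules of $\add\cP$; since every object of $fp(\add\cP)$ is a cokernel of a morphism between objects of $\add\cP$, it lies in $\filt(\cB(\Lambda))$.

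For the reverse inclusion I would verify that $\add\cP$ has enough projectives for $\filt(\cB(\Lambda))$. Each simple object $B\in\cB(\Lambda)$ is a quotient of its filtered projective cover $P(B)=P(\topp B)\in\cP(\Lambda)$ (Lemma~\ref{projective}, item~3); running an induction on the length of a $\cB(\Lambda)$-filtration of a general $M\in\filt(\cB(\Lambda))$ and splicing projective covers by the horseshoe lemma inside the abelian category $\filt(\cB(\Lambda))$ yields a presentation $\cP_1\to\cP_0\to M\to 0$ with $\cP_0,\cP_1\in\add\cP$; this sequence is exact already in $\moddd\Lambda$, so $M\in fp(\add\cP)$, and the two inclusions give $fp(\add\cP)=\filt(\cB(\Lambda))$.

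The only step beyond pure formalism is the last one, namely that every $\cB(\Lambda)$-filtered module surjects from a module in $\add\cP$ with $\cB(\Lambda)$-filtered kernel; this is where uniseriality and the Kupisch combinatorics are used, but the relevant facts are already available in Lemma~\ref{projective} and Proposition~\ref{base-set}. Once it is in place, the equivalence follows formally, and in the text one may simply cite Proposition~\ref{diagram}~(i).
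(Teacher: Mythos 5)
Your proposal is correct and follows essentially the same route the paper takes: the theorem is exactly Proposition \ref{diagram}(i), obtained by combining the classical endomorphism-algebra equivalence of Proposition \ref{artinalgebra} with the identification $\filt(\cB(\Lambda))=fp(\add\cP)$, and your sketch of that identification (wideness of $\filt(\cB(\Lambda))$ giving closure under cokernels in one direction, filtered projective covers with $\cB(\Lambda)$-filtered kernels in the other) is the argument underlying \cite{sen19}. Nothing further is needed.
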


Throughout this section, $\Lambda$ is a cyclic Nakayama algebra. Denote by $\cS=\{\soc Q| Q$ is an indecomposable projective-injective module$\}$ and $\cT=\{ \Top Q| Q$ is an indecomposable projective-injective module$\}$.

\subsection{The syzygy filtered algebra}

\begin{definition}\cite{sen19,rin2}
We call an indecomposable projective $\Lambda$-module $X$ minimal projective if $\rad X$ is not projective.  An indecomposable injective module $Y$ is minimal injective if $Y/\soc Y$ is not injective.	Denote by $\cX$ and $\cY$ the set of all the minimal projectives and all the minimal injectives respectively.
\end{definition}

\begin{proposition}\label{XY}
The set $\cX=\{P(\tau^-T)|T\in \cT\}$ and the set $\cY=\{I(\tau S)| S\in\cS\}$.
\end{proposition}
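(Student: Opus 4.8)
The plan is to prove the two set equalities separately, in each case by exhibiting a natural bijection with $\cT$ (respectively $\cS$), using the quadrilateral/uniseriality results of Section~\ref{section prelim}. For $\cX=\{P(\tau^-T)\mid T\in\cT\}$: by Proposition~\ref{prop:quadrilateral}(4), $P(\tau S)$ is projective-injective if and only if $P(S)$ is minimal projective. The projective-injective modules are exactly the $Q_i$, and $\cS'(\Lambda)=\cT$ is precisely the set of their tops (Remark~\ref{socle of projective-injective} together with the lemma after Definition~\ref{def:Gustafson}, which says a projective-injective is determined by its top). So given $T\in\cT$, write $T=\topp Q$ for the corresponding projective-injective $Q=P(T)$; then $\tau T\in\cS'$ is... more directly: apply Proposition~\ref{prop:quadrilateral}(4) with $S=\tau^- T$, giving that $P(\tau^- T)$ is minimal projective exactly when $P(T)$ is projective-injective, which holds since $T\in\cT=\cS'(\Lambda)$. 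Conversely, if $X=P(U)$ is minimal projective, then by Proposition~\ref{prop:quadrilateral}(4) $P(\tau U)$ is projective-injective, so $\tau U\in\cT$, i.e. $U=\tau^-(\tau U)\in\tau^-\cT$ and $X=P(\tau^-T)$ with $T=\tau U$. This establishes $\cX=\{P(\tau^- T)\mid T\in\cT\}$. One should also note these $P(\tau^- T)$ are pairwise non-isomorphic as $T$ ranges over $\cT$, since distinct $T$ give distinct tops.

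For the dual statement $\cY=\{I(\tau S)\mid S\in\cS\}$, I would argue symmetrically using Proposition~\ref{prop:quadrilateral}(5): $I(S)$ is projective-injective if and only if $I(\tau S)$ is minimal injective. Since $\cS=\cS(\Lambda)$ is exactly the set of socles of projective-injective modules (Remark~\ref{socle of projective-injective}(1)), for each $S\in\cS$ the injective envelope $I(S)$ is projective-injective, hence $I(\tau S)$ is minimal injective, giving the inclusion $\{I(\tau S)\mid S\in\cS\}\subseteq\cY$. Conversely, if $Y$ is minimal injective, write $Y=I(W)$; by Proposition~\ref{prop:quadrilateral}(5) applied with $S=\tau^- W$ we get that $Y=I(\tau^-W\text{ shifted})$... more cleanly: $I(\tau S)$ minimal injective forces (reading the equivalence backwards) $I(S)$ projective-injective; so take $W$ with $\tau W$... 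Let me instead say: given minimal injective $Y=I(V)$, set $S=\tau^- V$. Then $I(\tau S)=I(V)=Y$ is minimal injective, so by Proposition~\ref{prop:quadrilateral}(5) $I(S)$ is projective-injective, whence $S=\soc I(S)\in\cS$ by Remark~\ref{socle of projective-injective}(1). Thus $Y=I(\tau S)$ with $S\in\cS$, proving $\cY=\{I(\tau S)\mid S\in\cS\}$, again with pairwise distinctness since distinct socles $S$ give distinct $\tau S$ and hence non-isomorphic injectives.

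The main obstacle I anticipate is purely bookkeeping: making sure the indexing conventions ($\tau$ versus $\tau^-$, socle-set versus top-set) are applied consistently, and confirming that the relevant parts of Proposition~\ref{prop:quadrilateral} are stated in the generality needed (they are stated for arbitrary simple $S$ over a Nakayama algebra, so this is fine). A secondary point worth a sentence is well-definedness: $\tau^- T$ could a priori be $0$ if $T$ is the socle of $\Lambda$ as a module, but since $T\in\cT$ is the top of a projective-injective of length $\geq 2$ in the cyclic case, $\tau^- T\neq 0$ and $P(\tau^- T)$ is a genuine indecomposable projective; similarly $\tau S\neq 0$. No deep idea is required — the content is entirely in Proposition~\ref{prop:quadrilateral}(4),(5) and Remark~\ref{socle of projective-injective}, so the proof should be short.
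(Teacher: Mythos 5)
Your argument is correct, and it uses exactly the ingredients the paper itself sets up for this purpose: Proposition~\ref{prop:quadrilateral}(4),(5) together with the characterization of $\cS$ and $\cT$ as socles and tops of projective-injectives (Remark~\ref{socle of projective-injective}); the paper in fact states Proposition~\ref{XY} without a written proof, so you are supplying precisely the routine verification it leaves implicit. Your worry about $\tau^{-}T$ or $\tau S$ vanishing is settled simply by cyclicity of $\Lambda$ (as in Remark~\ref{emre remark labels may25}, $\tau$ permutes the simples), so no length argument is needed there.
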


Realize that the Nakayama functor is a quasi-isomorphism between projectives and injectives. We have the following equivalent definition of the syzygy filtered algebras.

\begin{corollary}\label{eps inj}
The syzygy filtered algebra $\bm\varepsilon(\Lambda)\cong  \End_\Lambda\left(\bigoplus\limits_{S\in\cS}I(\tau S) \right)=\End_\Lambda\left(\bigoplus\limits_{Y\in\cY}Y \right)$
\end{corollary}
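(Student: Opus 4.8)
The statement to establish is Corollary~\ref{eps inj}: that $\bm\varepsilon(\Lambda)\cong\End_\Lambda\bigl(\bigoplus_{S\in\cS}I(\tau S)\bigr)=\End_\Lambda\bigl(\bigoplus_{Y\in\cY}Y\bigr)$. The plan is to combine the original Definition~\ref{defep} of $\bm\varepsilon(\Lambda)$ as $\End_\Lambda\bigl(\bigoplus_{S\in\cS}P(\tau S)\bigr)$ with Proposition~\ref{XY}, which identifies $\cX=\{P(\tau^-T)\mid T\in\cT\}$ and $\cY=\{I(\tau S)\mid S\in\cS\}$, and with the Nakayama functor $\nu=D\Hom_\Lambda(-,\Lambda)$, which is an equivalence $\add\Lambda\to\add D\Lambda$ (projectives to injectives) preserving the relevant indecomposables.

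\textbf{Step 1 (the second equality).} First I would observe that $\{\tau S\mid S\in\cS\}$ runs over exactly the tops of minimal injectives' envelopes — more precisely, by Proposition~\ref{XY} the set $\cY$ of minimal injective modules is precisely $\{I(\tau S)\mid S\in\cS\}$, so $\bigoplus_{S\in\cS}I(\tau S)$ and $\bigoplus_{Y\in\cY}Y$ differ at most by multiplicities. One must check that the $I(\tau S)$ for distinct $S\in\cS$ are pairwise non-isomorphic; this follows because $\soc I(\tau S)=\tau S$ and the map $S\mapsto \tau S$ is injective on simples (Remark~\ref{emre remark labels may25}), while $\cS$ is a set of representatives (Definition~\ref{socle set}). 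Hence the two direct sums are isomorphic as $\Lambda$-modules and the second equality of the Corollary is immediate.

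\textbf{Step 2 (the isomorphism with $\bm\varepsilon(\Lambda)$).} Apply the Nakayama functor $\nu$ to $\cP=\bigoplus_{S\in\cS}P(\tau S)$. Since $\nu$ sends $P(U)$ to $I(U)$ for each simple $U$, we get $\nu(\cP)=\bigoplus_{S\in\cS}I(\tau S)$. As $\nu$ restricts to an equivalence of categories $\add\Lambda\to\add D\Lambda$, it induces a ring isomorphism $\End_\Lambda(\cP)\xrightarrow{\ \sim\ }\End_\Lambda(\nu\cP)$. Composing, $\bm\varepsilon(\Lambda)=\End_\Lambda(\cP)\cong\End_\Lambda\bigl(\bigoplus_{S\in\cS}I(\tau S)\bigr)$, which together with Step~1 gives the claim. (One could alternatively route through $\cX$: by Proposition~\ref{prop:quadrilateral}, $P(\tau S)\cong\rad P(\tau^-S')$ in the non-injective case, matching up $\cP$ with radicals of minimal projectives, and then $\nu$ carries minimal projectives to minimal injectives by Proposition~\ref{prop:quadrilateral}(5); but the direct application of $\nu$ to $\cP$ is cleaner.)

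\textbf{Main obstacle.} The only genuine subtlety is bookkeeping: verifying that $\nu$ really does send the specific summand $P(\tau S)$ to $I(\tau S)$ (rather than to some shifted injective), and that no collapsing of summands occurs under either $\nu$ or the passage between indexing by $\cS$ and by $\cY$. Both points are routine given the labelling conventions of Remark~\ref{emre remark labels may25} and the fact that $\nu$ is an exact-on-projectives equivalence, so I expect the proof to be essentially a two-line invocation of Definition~\ref{defep}, Proposition~\ref{XY}, and the Nakayama equivalence, with the multiplicity check relegated to a parenthetical remark.
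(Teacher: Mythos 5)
Your proof is correct and follows exactly the route the paper intends: the paper's entire justification is the one-line remark preceding the corollary that the Nakayama functor is an equivalence between projectives and injectives, combined with Proposition~\ref{XY} identifying $\cY=\{I(\tau S)\mid S\in\cS\}$. Your Step 2 ($\nu(P(\tau S))\cong I(\tau S)$ and the induced ring isomorphism on endomorphism algebras) and Step 1 (no collapsing of summands) simply make that implicit argument explicit.
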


\subsection{The cosyzygy filtered algebra}
Dually, we can consider the cosyzygy filtered algebra.

\begin{lemma-definition}\label{nabla defn}
Let $T$ be the simple top of an indecomposable projective-injective module $Q$. Then there exists a unique largest quotient module $\nabla(T)$ of $Q$ such that $\Top\nabla(T)=T$ and every submodule of $\nabla(T)$ is not a quotient of an injective module.
\end{lemma-definition}
\begin{proof}
Similar to Lemma-Definition \ref{delta defn}.
\end{proof}


Let $\cB^\circ(\Lambda)=\{\nabla(T)| T\in\cT\}$ be the set of $\nabla$-modules, which are the composition factors of second (or higher) cosyzygy modules. i.e. for an indecomposable non-injective $\Lambda$ module $M$, if its second cosyzygy is nontrivial, then $\Omega^{-2}(M)$ has a unique filtration by $\cB^\circ(\Lambda)$.

\begin{definition}\label{defet}
For any cyclic Nakayama algebra $\Lambda$, the {\bf cosyzygy filtered algebra} $\bm{\eta}(\Lambda)$ is the endomorphism algebra:
\begin{align*}
\bm{\eta}(\Lambda):=\End_{\Lambda}\left(\bigoplus\limits_{T\in\cT}I(\tau^- T)\right)^{\op}.
\end{align*}
\end{definition}

Notice that we have to take the opposite algebra here, because of the following:

\begin{theorem}\label{cofilt equiv}
Denote by $\filt(\cB^\circ(\Lambda))$ the full subcategory of $\cB^\circ(\Lambda)$-filtered (right) $\Lambda$-modules and $\hat I= \bigoplus\limits_{T\in\cT}I(\tau^- T)$. Let $\eta(\Lambda)\modd$ be the category of finitely generated right $\eta(\Lambda)$ modules. Then $D\Hom(-,\hat I):\filt(\cB^\circ(\Lambda))\to \eta(\Lambda)\modd$ is an equivalence, where $D=\Hom(-,k)$.
\end{theorem}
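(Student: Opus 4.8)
The plan is to mirror, on the opposite side, the proof of Theorem~\ref{filt equiv} (quoted here as Theorem~\ref{filt equiv}), using the standard duality $D=\Hom(-,k)$ to convert injective cogeneration statements into projective generation statements. First I would observe that $\hat I=\bigoplus_{T\in\cT}I(\tau^{-}T)=\bigoplus_{Y\in\cY^{\circ}}Y$ is, by the $\op$-dual of Proposition~\ref{XY}, exactly an additive generator of the class of minimal injective $\Lambda$-modules (equivalently, under the Nakayama functor, of the minimal projectives on the other side), so that $\eta(\Lambda)^{\op}=\End_\Lambda(\hat I)$. Applying $D$ to $\Lambda\modd$ we get an anti-equivalence $\Lambda\modd\xrightarrow{\ \sim\ }\moddd\Lambda^{\op}$ which sends $\hat I$ to a projective $\Lambda^{\op}$-module $D\hat I$ whose endomorphism ring is $\End_\Lambda(\hat I)^{\op}=\eta(\Lambda)$. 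Now the contravariant functor $D\Hom_\Lambda(-,\hat I)\colon \Lambda\modd\to \eta(\Lambda)\modd$ factors as the composite of $D$ and the covariant functor $\Hom_{\Lambda^{\op}}(D\hat I,-)\colon \moddd\Lambda^{\op}\to \moddd\End_{\Lambda^{\op}}(D\hat I)=\eta(\Lambda)\modd$.

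Next I would invoke Proposition~\ref{functorial} for the projective $\Lambda^{\op}$-module $P:=D\hat I$ and the algebra $\Gamma:=\End_{\Lambda^{\op}}(P)=\eta(\Lambda)$: it gives an equivalence $\Hom_{\Lambda^{\op}}(P,-)\colon fp(\add P)\xrightarrow{\sim}\moddd\Gamma$. So it remains to identify the essential image $fp(\add P)$ inside $\moddd\Lambda^{\op}$ and to check it matches $D(\filt(\cB^{\circ}(\Lambda)))$. Here the key point is the $\op$-dual of Proposition~\ref{diagram}: since $D\hat I$ plays, for $\Lambda^{\op}$, the role that $\cP=\oplus_{S\in\cS'}P(S)$ plays for $\Lambda$ (both are the projective covers of the "top-set" of the opposite algebra, which under $D$ becomes the "socle-set" data of $\Lambda$), we get $fp(\add D\hat I)=\filt(\cB(\Lambda^{\op}))$. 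The remaining bookkeeping is to verify that $D$ carries $\filt(\cB^{\circ}(\Lambda))$ onto $\filt(\cB(\Lambda^{\op}))$: this follows because $D$ is exact, sends filtrations to filtrations (reversing their order), and by the defining property in Lemma-Definition~\ref{nabla defn} it sends each $\nabla(T)\in\cB^{\circ}(\Lambda)$ to the module $\Delta$ of the opposite algebra attached to the simple $DT$ — the condition "every submodule of $\nabla(T)$ is not a quotient of an injective" dualizes precisely to "every quotient is not torsionless", which is the defining condition of $\Delta$ in Lemma-Definition~\ref{delta defn} for $\Lambda^{\op}$. Stringing these equivalences together yields that $D\Hom_\Lambda(-,\hat I)$ restricts to an equivalence $\filt(\cB^{\circ}(\Lambda))\to\eta(\Lambda)\modd$.

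Finally I would record why the $\op$ in Definition~\ref{defet} is forced: $\Hom_\Lambda(-,\hat I)$ is contravariant, so composing with $D$ gives a covariant functor into modules over $\End_\Lambda(\hat I)^{\op}$, not over $\End_\Lambda(\hat I)$; hence one must set $\eta(\Lambda):=\End_\Lambda(\hat I)^{\op}$ for the target to be right $\eta(\Lambda)$-modules, exactly as stated. I expect the main obstacle to be the careful identification in the previous paragraph — namely checking that, under $D$, the set $\cB^{\circ}(\Lambda)$ of $\nabla$-modules goes bijectively and filtration-compatibly to the base-set $\cB(\Lambda^{\op})$ of the opposite Nakayama algebra, and that minimal injectives of $\Lambda$ correspond to filtered projectives of $\Lambda^{\op}$. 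Once that dictionary is set up correctly, everything else is a formal consequence of Propositions~\ref{functorial} and~\ref{diagram} applied to $\Lambda^{\op}$, so no genuinely new argument is needed beyond the duality translation.
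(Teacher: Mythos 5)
Your overall strategy is the same as the paper's: dualize with $D$ into $\moddd\Lambda^{\op}$, where $D\hat I$ becomes the filtered projective generator, apply the syzygy-filtered equivalence on that side (Theorem \ref{filt equiv}, i.e. Propositions \ref{functorial} and \ref{diagram} for $\Lambda^{\op}$), and identify endomorphism algebras; the dictionary you propose to check — that $D$ carries $\cB^{\circ}(\Lambda)$ to the base-set of $\Lambda^{\op}$ and hence $\filt(\cB^{\circ}(\Lambda))$ onto $\filt(\cB(\Lambda^{\op}))$ — is exactly the first step of the paper's proof.

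There is, however, a concrete error in your factorization. The composite you write down, $\Hom_{\Lambda^{\op}}(D\hat I,-)\circ D$, is \emph{contravariant} (one duality followed by a covariant Hom), and via the standard isomorphism $\Hom_{\Lambda^{\op}}(D\hat I,DM)\cong\Hom_{\Lambda}(M,\hat I)$ it is naturally isomorphic to $\Hom_{\Lambda}(-,\hat I)$, not to $D\Hom_{\Lambda}(-,\hat I)$, which is covariant — as you yourself state correctly in your last paragraph, contradicting the ``contravariant'' label and the factorization in your second paragraph. As written, your two-functor composite only yields a \emph{duality} from $\filt(\cB^{\circ}(\Lambda))$ onto a module category, not the asserted equivalence, so the proof does not close. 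The repair is precisely what the paper does: insert a second duality on the target, taking $F=D\circ\Hom_{\Lambda^{\op}}(D\hat I,-)\circ D$, together with the identification $\bm\varepsilon(\Lambda^{\op})=\End_{\Lambda^{\op}}(D\hat I)\cong\eta(\Lambda)^{\op}$ (note this is also where your claim $\End_{\Lambda^{\op}}(D\hat I)=\eta(\Lambda)$ has to be reconciled with the op-conventions), so that the outer $D$ lands in $\eta(\Lambda)\modd$; then $F\cong D\Hom_{\Lambda}(-,\hat I)$ follows from the isomorphism above. With that extra $D$ and the attendant bookkeeping, the rest of your argument goes through and coincides with the paper's.
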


\begin{proof}
	By definitions, the functor $D=\Hom(-,k)$ induces a duality: $\filt\cB^\circ(\Lambda)\to \filt\cB(\Lambda^{\op})$. It follows that $D\hat I$ is a projective generator of $\filt\cB(\Lambda^{op})$. Hence, by Theorem \ref{filt equiv}, $\Hom_{\Lambda^{\op}}(D\hat I,-):\filt\cB(\Lambda^{\op})\to \bm\varepsilon(\Lambda^{\op})\modd$ is an equivalence. On the other hand, $\bm\varepsilon(\Lambda^{\op})=\End_{\Lambda^{op}}(D\hat I)\cong\End_\Lambda(\hat I)=\eta(\Lambda)^{\op}$. In the following diagram:
	
	$$
	\xymatrix{
	\filt \cB^\circ(\Lambda)\ar[r]^D \ar[d]^F&\filt \cB(\Lambda^{\op}) \ar[d]^ {\Hom_{\Lambda^{\op}}(D\hat I, -)}\\
	\eta(\Lambda)\modd\ar[r]^D & \bm\varepsilon(\Lambda^{\op})\modd}
	$$

take $F=D\circ\Hom_{\Lambda^{\op}}(D\hat I, -)\circ D=D\Hom{\Lambda^{\op}}(D\hat I, D-)\cong D\Hom_\Lambda(-,\hat I)$, which is the desired equivalence functor.
\end{proof}

Due to Proposition \ref{XY}, we have the following equivalent definition of the cosyzygy filtered algebras.

\begin{corollary}\label{eta pr}
The cosyzygy filtered algebra $$\eta(\Lambda)\cong \End_{\Lambda}\left( \bigoplus\limits_{T\in\cT}P(\tau^-T)\right)^{\op}=\End_{\Lambda}\left( \bigoplus\limits_{X\in\cX}X\right)^{\op}.$$
\end{corollary}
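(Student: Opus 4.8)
The plan is to deduce this directly from Proposition~\ref{XY} together with the compatibility of the Nakayama functor with the passage from projectives to injectives; the argument is the exact dual of the proof of Corollary~\ref{eps inj}.

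First I would recall the Nakayama functor $\nu:=D\Hom_{\Lambda}(-,\Lambda)\colon \Lambda\modd\to\Lambda\modd$. It restricts to an additive equivalence from the full subcategory $\add\Lambda$ of projective $\Lambda$-modules onto the full subcategory of injective $\Lambda$-modules, and on indecomposable objects it sends the projective cover $P(S)$ of a simple module $S$ to its injective envelope $I(S)$. In particular, applying $\nu$ to the simple modules $\tau^{-}T$ for $T\in\cT$ gives $\nu P(\tau^{-}T)\cong I(\tau^{-}T)$ for every $T\in\cT$. Since an additive equivalence induces ring isomorphisms on endomorphism algebras, we obtain
\begin{align*}
\End_{\Lambda}\!\left(\bigoplus_{T\in\cT}P(\tau^{-}T)\right)\;\cong\;\End_{\Lambda}\!\left(\bigoplus_{T\in\cT}\nu P(\tau^{-}T)\right)\;=\;\End_{\Lambda}\!\left(\bigoplus_{T\in\cT}I(\tau^{-}T)\right),
\end{align*}
and passing to opposite algebras together with Definition~\ref{defet} yields $\eta(\Lambda)\cong\End_{\Lambda}\!\left(\bigoplus_{T\in\cT}P(\tau^{-}T)\right)^{\op}$.

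It then remains to identify the module $\bigoplus_{T\in\cT}P(\tau^{-}T)$ with $\bigoplus_{X\in\cX}X$. By Proposition~\ref{XY} the set $\{P(\tau^{-}T)\mid T\in\cT\}$ is precisely the set $\cX$ of minimal projective modules, so the two direct sums have the same indecomposable summands; one checks they occur with the same multiplicities because the assignment $T\mapsto P(\tau^{-}T)$ is injective (distinct indecomposable projective-injective modules have distinct tops, as recorded in Section~\ref{section prelim}, and $\tau^{-}$ is injective on simple modules). Hence $\bigoplus_{T\in\cT}P(\tau^{-}T)\cong\bigoplus_{X\in\cX}X$ and the corresponding endomorphism algebras coincide. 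The only step that requires any attention is this last bijectivity bookkeeping, together with the standard identification $\nu P(S)\cong I(S)$; I expect no genuine obstacle beyond that.
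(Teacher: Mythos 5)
Your argument is correct and takes essentially the same route as the paper: the paper's one-line justification rests on the Nakayama functor giving an equivalence between projectives and injectives (so that $\End_{\Lambda}(\bigoplus_{T\in\cT}P(\tau^-T))\cong\End_{\Lambda}(\bigoplus_{T\in\cT}\nu P(\tau^-T))=\End_{\Lambda}(\bigoplus_{T\in\cT}I(\tau^-T))$) combined with Proposition \ref{XY}, which is exactly what you spell out. The multiplicity bookkeeping at the end is harmless but not really needed, since both direct sums are indexed by sets of pairwise non-isomorphic indecomposables and $T\mapsto P(\tau^-T)$ is injective because projective covers of distinct simples are non-isomorphic.
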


\subsection{Connecting Syzygy and Cosyzygy Filtered Algebras }
Finally, we are going to show the duality between the syzygy filtered algebra and the cosyzygy filtered algebra. First, we need to fix a notation for some irreducible morphisms:

Given an indecomposable module $M$, denote by $i_M:\rad M\to M$ the embedding and $p_M: M\to M/\soc M$ the quotient map. Both $i_M$ and $p_M$ are unique up to isomorphism.

\begin{lemma}\label{truncate}
Let $f:M\to N$ be homomorphism between indecomposable modules. If $f$ is neither a monomorphism nor an epimorphism, then up to isomorphism $f$ can be uniquely decomposed as a composition: $f=i_N\circ f'\circ p_M$.	
\end{lemma}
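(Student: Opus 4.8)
The plan is to work with the structure of indecomposable modules over a Nakayama algebra, where every such module $M$ is uniserial and is therefore determined up to isomorphism by its top $\topp M$ together with its length $\ell(M)$ (equivalently by its socle and length). The key observation is that an arbitrary homomorphism $f\colon M\to N$ between indecomposable modules factors through the image $\im f$, which is again indecomposable and uniserial: $f$ is the composition of the epimorphism $M\twoheadrightarrow \im f$ with the monomorphism $\im f\hookrightarrow N$. So first I would reduce to understanding these two canonical maps. Since $f$ is assumed to be neither mono nor epi, we have $0\neq \im f\subsetneq N$ and the surjection $M\to\im f$ is not an isomorphism, so $\ker f\neq 0$.

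Next I would identify the monomorphism $\im f\hookrightarrow N$ as a composition of maps of the form $i_{(-)}$. Because $N$ is uniserial, its submodules form a chain $0=\rad^{\ell(N)}N\subset\cdots\subset\rad^2 N\subset\rad N\subset N$, and $\im f=\rad^t N$ for a unique $t=\ell(N)-\ell(\im f)>0$ (strict since $f$ is not epi). The inclusion $\rad^t N\hookrightarrow N$ equals $i_N\circ i_{\rad N}\circ\cdots\circ i_{\rad^{t-1}N}$. In particular, writing $g\colon M\twoheadrightarrow \rad^{t}N$ for the corestriction of $f$, we get $f=\big(i_N\circ i_{\rad N}\circ\cdots\big)\circ g$, and since $t\geq 1$ the factor $i_N$ genuinely appears. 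Dually (or by the same uniseriality argument applied to quotients of $M$), the epimorphism $g\colon M\to\rad^t N$ factors as $g = \bar g\circ p_M\circ\cdots\circ p_{\rad^{s-1}M}$ where $s=\ell(M)-\ell(\im f)>0$ (strict since $f$ is not mono) and $\bar g\colon M/\rad^s M\xrightarrow{\ \sim\ }\rad^t N$ is an isomorphism; here $p_M$ appears because $s\geq 1$. Composing, $f = i_N\circ\big(\text{iterated }i\big)\circ\bar g\circ\big(\text{iterated }p\big)\circ p_M$, and collecting the middle maps into a single morphism $f'\colon M/\soc M \to \rad N$ — which makes sense because $\rad^s M\supseteq\soc M$ forces $M/\soc M\twoheadrightarrow M/\rad^s M$, and $\rad^t N\subseteq\rad N$, so all the intermediate $p$'s and $i$'s can be absorbed into $f'$ — we obtain the desired decomposition $f=i_N\circ f'\circ p_M$.

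For uniqueness, I would argue that $\im f$ is intrinsically determined by $f$, hence so are the lengths $s$ and $t$, hence so is the isomorphism $\bar g$ up to the automorphisms of $M/\rad^s M\cong\rad^t N$; since $\End_\Lambda$ of an indecomposable module is local and scalar multiples are the only relevant freedom, and since $i_M,p_M$ are themselves unique up to isomorphism by the remark preceding the lemma, the factorization is unique up to isomorphism in the evident sense. The main obstacle, and the only place needing genuine care, is making the bookkeeping of the iterated $i$'s and $p$'s precise — specifically checking that $\soc M\subseteq\rad^s M$ and $\rad^t N\subseteq\rad N$ (both immediate from $s,t\geq 1$) so that $f'$ is well defined as a map $M/\soc M\to\rad N$ rather than between some intermediate subquotients; once this is set up, commutativity of the resulting diagram is forced by uniseriality since any two parallel maps with the same image between uniserial modules agree up to the local endomorphism rings.
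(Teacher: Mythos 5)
Your proposal is correct and follows essentially the same route as the paper: factor $f$ through its image, observe that $\im f\subsetneq N$ forces the monomorphism to factor through $i_N$ (since a proper submodule of the uniserial module $N$ lies in $\rad N$) and that $\ker f\neq 0$ forces the epimorphism to factor through $p_M$ (since a nonzero submodule of the uniserial module $M$ contains $\soc M$), then collect the middle into $f'$ and deduce uniqueness from the uniqueness of $i_N$ and $p_M$. The extra bookkeeping with the radical filtration is harmless but not needed; the paper's proof is the same argument stated more briefly.
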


\begin{proof}
Let $f:M\stackrel{\pi}\to \im f \stackrel{\rho}\to N$ be the canonical epi-mono decomposition. Since $f$ is not an epimorphism, $\rho$ factors through $\rad N$. i.e. $\rho=i_N\circ \rho'$. Since  $f$ is not a monomorphism either, $\pi$ factors through $M/\soc M$. i.e. $\pi=\pi'\circ p_M$. Hence set $f'=\rho'\circ\pi'$, it follows that $f=\rho\circ\pi= i_N\circ \rho'\circ \pi'\circ p_M=i_N\circ f'\circ p_M$. The uniqueness of the decomposition follows immediately from the fact that the monomorphism $i_N$ and the epimorphism $p_M$ are unique.
\end{proof}

Notice that if $M$ is a minimal injective or $N$ is a minimal projective, any non-isomorphism $f:M\to N$ is neither a monomorphism nor an epimorphism.

\begin{proposition}
For each minimal projective module $X$, there exists a unique minimal injective module $Y$ such that $Y/\soc Y\cong \rad X$. Furthermore, such a correspondence $X\mapsto Y$ is a bijection between $\cX$ and $\cY$.
\end{proposition}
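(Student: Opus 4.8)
The statement asserts that $X \mapsto Y$, where $Y$ is the unique minimal injective with $Y/\soc Y \cong \rad X$, gives a well-defined bijection $\cX \to \cY$. The plan is to use the structural descriptions $\cX = \{P(\tau^- T) \mid T \in \cT\}$ and $\cY = \{I(\tau S) \mid S \in \cS\}$ from Proposition \ref{XY}, together with Proposition \ref{prop:quadrilateral} (especially parts about $\rad P$, $I/\soc I$ of projective-injectives) and Proposition \ref{lem_defect_general_1}. First I would establish existence and uniqueness: given a minimal projective $X$, write $X = P(\tau^- T)$ for a unique $T \in \cT$; by Proposition \ref{prop:quadrilateral}(4) the module $P(T)$ is projective-injective, and by Proposition \ref{lem_defect_general_1}(1) the module $\rad X$ equals $E/\soc E$ for some minimal injective $E$. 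To see $E$ is unique with this property: if $E'/\soc E' \cong \rad X = E/\soc E$ with $E'$ minimal injective, then $E, E'$ have a common quotient, so by uniseriality (Lemma \ref{lem:uniseriality}(2)) one is a quotient of the other; comparing lengths (both are $\ell(\rad X)+1$) forces $E \cong E'$. So $Y$ is well-defined, and by Proposition \ref{XY} $Y \in \cY$.

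Next I would show the map $X \mapsto Y$ is injective. Suppose $X_1, X_2 \in \cX$ map to the same $Y \in \cY$. Then $\rad X_1 \cong Y/\soc Y \cong \rad X_2$. Both $X_1, X_2$ are then indecomposable modules with a common submodule $\rad X_1 \cong \rad X_2$ (nonzero, since if it were zero then $X_1 \cong X_2$ is simple, handled separately), so by uniseriality one is a submodule of the other; but they have the same length $\ell(\rad X_1) + 1$, hence $X_1 \cong X_2$. For surjectivity: given $Y \in \cY$, by Proposition \ref{lem_defect_general_1}(1) applied in the reverse direction, $Y/\soc Y \cong \rad P$ for some minimal projective $P$, and then $Y$ is the minimal injective associated to $X = P$. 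One should double-check via Proposition \ref{XY}: write $Y = I(\tau S)$ with $S \in \cS$; then $S = \soc Q$ for a projective-injective $Q$, and Proposition \ref{prop:quadrilateral}(4) (or Proposition \ref{lem_defect_general_1}(2)) identifies the minimal projective $P(\tau^{-1}(\topp Q)) \in \cX$ whose radical is $E/\soc E$ for the minimal injective quotient $E$ of $Q$; one then checks this $E$ is $Y$ by tracking socles. This matches the counting in Proposition \ref{numerical}, which gives $|\cX| = |\cY| = \#\rel\Lambda$, so a well-defined injection between finite sets of equal cardinality is automatically a bijection — that gives an alternative, cleaner finish once injectivity and well-definedness are in hand.

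I expect the main obstacle to be bookkeeping the socles and tops correctly through the chain of identifications in Proposition \ref{XY} and Proposition \ref{lem_defect_general_1}, i.e. verifying that the minimal injective $Y$ obtained abstractly from "$Y/\soc Y \cong \rad X$'' really is the same as $I(\tau S)$ with $S = \soc(P(T))$ where $X = P(\tau^- T)$ — this amounts to checking $\soc Y \cong \tau\bigl(\tau^{-}\soc X\bigr)$-type relations using Proposition \ref{prop:quadrilateral}(1) on the relevant non-split sequences. Because every indecomposable is uniserial and determined by its socle (or top) together with its length, all the injectivity/surjectivity arguments reduce to length comparisons once the socle (resp. top) is pinned down; the only genuine content is Proposition \ref{lem_defect_general_1}(1), which is already available. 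So after the well-definedness check, the bijection follows either by the direct injective+surjective argument or, more slickly, by combining well-definedness, injectivity, and the equality $|\cX| = |\cY|$ from Proposition \ref{numerical}.
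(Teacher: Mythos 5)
Your argument is correct and complete. It is worth noting, though, that the paper does not actually prove this proposition itself: its ``proof'' is a citation to Ringel's Appendix C of \cite{rin2}. So your write-up is not so much a different route as the only route on display. What you supply is a self-contained derivation from ingredients already in the paper: existence of $Y$ comes from Proposition \ref{lem_defect_general_1}(1), uniqueness and injectivity are length comparisons after invoking the Uniseriality Lemma \ref{lem:uniseriality} (a nonzero common quotient, respectively a common nonzero submodule, forces one module to dominate the other, and equal lengths then force isomorphism), and the bijection is finished either by running Proposition \ref{lem_defect_general_1}(1) in the other direction for surjectivity or, more cheaply, by the count $|\cX|=|\cY|=\#\rel\Lambda$ from Proposition \ref{numerical}. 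The edge cases you flag ($\rad X=0$, simple minimal injectives) do not occur for a cyclic Nakayama algebra, since every indecomposable projective and injective there has length at least two, so your parenthetical hedges can be discharged rather than ``handled separately.'' Your closing worry about reconciling the abstract $Y$ with the description $I(\tau S)$ from Proposition \ref{XY} is also not needed for the statement as given: $\cY$ is by definition the set of minimal injectives, so the map lands in $\cY$ automatically. In short, no gaps; your proof could stand in place of the external citation.
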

\begin{proof}
See \cite{rin2} Appendix C.
\end{proof}

\begin{definition}
Define $\tilde\tau:\cX\to\cY$ a bijection as the following: for any minimal projective X, $\tilde\tau X$ is the unique module in $\cY$ such that $\tilde\tau X/\soc \tilde\tau X\cong \rad X$.
\end{definition}

\begin{lemma}\label{conn equiv}
The map $\tilde\tau$ induces an equivalence $F: \add\cX\to\add\cY$.
\end{lemma}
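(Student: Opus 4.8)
The plan is to promote the bijection $\tilde\tau:\cX\to\cY$ from a bijection of objects to an equivalence of additive categories by showing it is compatible with morphisms, and the bridge between the morphism spaces is exactly the ``truncation'' decomposition of Lemma \ref{truncate}. First I would recall that since $\add\cX$ and $\add\cY$ are additive categories with finitely many indecomposables (the minimal projectives and minimal injectives, respectively), it suffices to define $F$ on objects by $F(X)=\tilde\tau X$ on indecomposables and extend additively, and then to produce, for every pair $X,X'\in\cX$, a $\Kk$-linear isomorphism $F_{X,X'}:\Hom_\Lambda(X,X')\to\Hom_\Lambda(\tilde\tau X,\tilde\tau X')$ which is functorial, i.e. compatible with composition and sends identities to identities. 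Since everything is $\Kk$-linear and the Hom-spaces between indecomposable uniserial modules are at most one-dimensional, the whole verification reduces to tracking what happens to a single generating non-isomorphism in each nonzero Hom-space.

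The key step is the morphism correspondence. Fix indecomposable $X,X'\in\cX$ and a nonzero $f:X\to X'$. If $f$ is an isomorphism then $X\cong X'$, $\tilde\tau X\cong \tilde\tau X'$, and we send $f$ to the corresponding isomorphism (using that $\tilde\tau$ is a well-defined bijection). If $f$ is not an isomorphism, then because $X'$ is a minimal projective, $f$ is neither mono nor epi, so by Lemma \ref{truncate} it decomposes uniquely as $f=i_{X'}\circ f'\circ p_X$ with $f':X/\soc X\to \rad X'$. Now $X/\soc X$ and $\rad X'$ carry intrinsic meaning on the injective side: by the defining property of $\tilde\tau$ we have $\rad X'\cong \tilde\tau X'/\soc\tilde\tau X'$, and I would check similarly that $X/\soc X\cong \rad \tilde\tau X$ — this is the dual half of the statement quoted from \cite{rin2} Appendix C, and it is exactly where minimality of $X$ (resp. $\tilde\tau X$) gets used, since $\rad X$ and $X/\soc X$ both reconstruct the ``core'' uniserial module shared by $X$ and $\tilde\tau X$. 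Granting this, $f'$ is also a morphism $\rad\tilde\tau X\to \tilde\tau X'/\soc\tilde\tau X'$, and since $\tilde\tau X$ is a minimal injective, any non-isomorphism $\tilde\tau X\to\tilde\tau X'$ is again neither mono nor epi and hence, by Lemma \ref{truncate} applied on the injective side, decomposes uniquely as $i_{\tilde\tau X'}\circ g'\circ p_{\tilde\tau X}$. I would therefore \emph{define} $F(f)$ to be the unique morphism with $g'=f'$, i.e. $F(f):=i_{\tilde\tau X'}\circ f'\circ p_{\tilde\tau X}$. Bijectivity of $F_{X,X'}$ is immediate from uniqueness of the two decompositions; the content is functoriality.

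For functoriality I would verify $F(g\circ f)=F(g)\circ F(f)$ by cases on whether $f,g$ are isomorphisms. The only substantive case is $f:X\to X'$, $g:X'\to X''$ both non-isomorphisms with $g\circ f\ne 0$; here I compute $g\circ f = i_{X''}\circ g'\circ p_{X'}\circ i_{X'}\circ f'\circ p_X$, and the middle composite $p_{X'}\circ i_{X'}:\rad X'\to X'/\soc X'$ is the canonical map whose image is $\rad X'/\soc X'$ — a fixed intrinsic map depending only on the core module, hence matching the analogous middle composite $p_{\tilde\tau X'}\circ i_{\tilde\tau X'}$ on the injective side under the identifications above. So the ``amputated'' composites on both sides are literally the same morphism between the same subquotients, giving $F(g\circ f)=F(g)\circ F(f)$; the edge cases where one factor is an isomorphism or the composite vanishes are routine (a composite of two non-isos vanishes iff the corresponding core composite vanishes, and the same criterion applies verbatim on the injective side). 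Finally $F$ is essentially surjective and injective on objects because $\tilde\tau$ is a bijection, and full and faithful by the $F_{X,X'}$ being isomorphisms, so $F$ is an equivalence. I expect the main obstacle to be the bookkeeping in the identification $X/\soc X\cong\rad\tilde\tau X$ together with checking that the canonical subquotient maps $p\circ i$ are transported correctly — in other words, making the ``uniqueness of truncation'' argument of Lemma \ref{truncate} do genuine work rather than hand-waving — but no hard new idea beyond Lemma \ref{truncate} and the Appendix C correspondence should be needed.
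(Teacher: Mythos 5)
Your overall architecture matches the paper's — decompose a non-isomorphism $f:X\to X'$ via Lemma \ref{truncate}, transport the middle factor $f'$ to the injective side, and get bijectivity of the Hom-maps from uniqueness of the decomposition. But the transport step as you state it fails, and it fails exactly at the point you flagged as ``bookkeeping''. The identification $X/\soc X\cong\rad\tilde\tau X$ is false: by definition $\tilde\tau X/\soc\tilde\tau X\cong\rad X$, so $\tilde\tau X$ is the uniserial module of length $\ell(X)$ with top $\tau(\topp X)$, whence $\rad\tilde\tau X$ has top $\tau^2(\topp X)$ while $X/\soc X$ has top $\topp X$; in fact $\rad\tilde\tau X\cong\tau^2(X/\soc X)$, and already for the self-injective Nakayama algebra with three simples and radical square zero these are non-isomorphic. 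Consequently your formula $F(f):=i_{\tilde\tau X'}\circ f'\circ p_{\tilde\tau X}$ does not typecheck: the middle factor of a non-isomorphism $\tilde\tau X\to\tilde\tau X'$ must lie in $\Hom_\Lambda(\tilde\tau X/\soc\tilde\tau X,\rad\tilde\tau X')$, and the correct identifications are $\tilde\tau X/\soc\tilde\tau X\cong\rad X\cong\tau(X/\soc X)$ and $\rad\tilde\tau X'\cong\tau(\rad X')$ — both source and target are shifted by the Auslander--Reiten translate relative to where your $f'$ lives. The same mismatch infects your functoriality check, since the ``intrinsic'' subquotient map $p_{\tilde\tau X'}\circ i_{\tilde\tau X'}$ is not literally the same map as $p_{X'}\circ i_{X'}$ but its $\tau$-translate.

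The missing ingredient — the one genuine idea in the paper's proof — is to apply $\tau$ to the middle morphism: since $X/\soc X$ and $\rad X'$ are neither projective nor injective, $\tau$ induces an isomorphism $\Hom_\Lambda(X/\soc X,\rad X')\to\Hom_\Lambda(\tau(X/\soc X),\tau(\rad X'))=\Hom_\Lambda(\tilde\tau X/\soc\tilde\tau X,\rad\tilde\tau X')$, and one defines $F(f):=i_{\tilde\tau X'}\circ\tau(f')\circ p_{\tilde\tau X}$. With that correction the remainder of your plan — faithfulness via $\im F(f)=\tau\,\im f$, fullness via $\tau^{-}$ applied to the middle factor of a given $g$, and the composition check (which ultimately only needs that Hom-spaces between indecomposable uniserial modules are at most one-dimensional) — goes through essentially as in the paper.
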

\begin{proof}
Let $F: \add\cX\to\add\cY$ be an additive functor, which on the objects $F(X)=\tilde\tau X$. Suppose there is a homomorphism $f: X_1\to X_2$, where $X_1$ and $X_2$ are minimal projectives. If $f$ is an isomorphism, then define $F(f):F(X_1)\to F(X_2)$ to be an isomorphism.  Otherwise, $f$ is neither a monomorphism nor an epimorphism. According to Lemma \ref{truncate}, there exists a unique decomposition $f=i_{X_2}\circ f'\circ p_{X_1}$. Here $f'\in\Hom_\Lambda(X_1/\soc X_1, \rad X_2)$ and both $X_1/\soc X_1$ and $\rad X_2$ are neither projective nor injective. Hence the Auslander-Reiten translation $\tau$ induces a bijection: $$\xymatrix{\Hom_\Lambda(X_1/\soc X_1, \rad X_2)\ar[r] &\Hom_\Lambda(\tau(X_1/\soc X_1), \tau(\rad X_2))\ar@{=}[d] \\&\Hom_\Lambda(FX_1/\soc FX_1, \rad FX_2)}$$
 Therefore, we can define $F(f)=i_{FX_2}\circ \tau f' \circ p_{FX_1}$.

$$
\begin{tikzpicture}
\node (1a) at (0,1) {$FX_1$};
\node (1c) at (2,1) {$X_1$};	
\node (2a) at (6,-1) {$FX_2$};	
\node (2c) at (8,-1) {$X_2$};	
\node (3a) at (4,-3) {$\im F(f)$};
\node (3b) at (6,-3) {$\im f$};

\draw (1a)--(1,0)--(1c);
\draw (2a)--(7,-2)--(2c);

\draw (1,0)--(3a)--(2a);
\draw (1c)--(3b)--(7,-2);
\end{tikzpicture}
$$

Next we want to show that $F$ is fully-faithful.

Let $f:X_1\to X_2$ be a non-isomorphism, where $X_1$ and $X_2$ are minimal projectives.
Assume $F(f)=0$. In particular, $f$ is not an isomorphism. Due to the definition, $F(f)=i_{FX_2}\circ \tau f' \circ p_{FX_1}$. So $0=\im F(f)=\im \tau f'= \tau\im f'= \tau \im f$, which implies $f=0$.

Let $g:FX_1\to FX_2$ be a homomorphism between minimal injectives. If $g$ is an isomorphism, $g=F(f)$ for some isomorphism $f: X_1\to X_2$. Otherwise, there is a decomposition $g=i_{FX_2}\circ g'\circ p_{FX_1}$, where $g':FX_1/\soc FX_1\to \rad FX_2$ and both $FX_1/\soc FX_1$ and $\rad FX_2$ are neither projective nor injective. Hence we can take $f=i_{X_2}\circ \tau^-g'\circ p_{X_1}$ and it is easy to check that $F(f)=g$.
\end{proof}

\begin{theorem}\label{dual thm}
Let $\Lambda$ be a cyclic Nakayama algebra and $\bm\varepsilon(\Lambda)$, $\eta(\Lambda)$ be its syzygy filtered algebra and cosyzygy filtered algebra. Then we have the following algebra isomorphism:
$$\bm\varepsilon(\Lambda)\cong\eta(\Lambda)^{\op}.$$
	
\end{theorem}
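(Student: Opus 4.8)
The plan is to combine Corollaries \ref{eps inj} and \ref{eta pr} with the equivalence $F:\add\cX\to\add\cY$ of Lemma \ref{conn equiv}. By Corollary \ref{eta pr}, $\eta(\Lambda)\cong\End_\Lambda\bigl(\bigoplus_{X\in\cX}X\bigr)^{\op}$, so $\eta(\Lambda)^{\op}\cong\End_\Lambda\bigl(\bigoplus_{X\in\cX}X\bigr)$. By Corollary \ref{eps inj}, $\bm\varepsilon(\Lambda)\cong\End_\Lambda\bigl(\bigoplus_{Y\in\cY}Y\bigr)$. Thus it suffices to produce an algebra isomorphism $\End_\Lambda\bigl(\bigoplus_{X\in\cX}X\bigr)\cong\End_\Lambda\bigl(\bigoplus_{Y\in\cY}Y\bigr)$, and this is exactly what an equivalence $\add\cX\xrightarrow{\ \sim\ }\add\cY$ delivers: an equivalence between additive categories carries the endomorphism ring of an object to the endomorphism ring of its image, so applying $F$ to $M:=\bigoplus_{X\in\cX}X$ gives $FM\cong\bigoplus_{Y\in\cY}Y$ (since $F$ sends each $X$ to $\tilde\tau X$ and $\tilde\tau$ is a bijection $\cX\to\cY$) and a ring isomorphism $\End_\Lambda(M)\to\End_\Lambda(FM)$, $f\mapsto F(f)$.

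Concretely, the steps I would carry out are: (1) rewrite both algebras as honest endomorphism algebras of sums of minimal projectives resp. minimal injectives using Corollaries \ref{eps inj} and \ref{eta pr}, being careful about the $\op$ bookkeeping so that $\eta(\Lambda)^{\op}=\End_\Lambda\bigl(\bigoplus_{X\in\cX}X\bigr)$; (2) invoke Lemma \ref{conn equiv} to get the additive equivalence $F$ with $F(X)=\tilde\tau X$; (3) observe that $F$ being a (dense, full, faithful, additive) functor means that for any $M\in\add\cX$ the map $\End_\Lambda(M)\to\End_\Lambda(FM)$, $f\mapsto F(f)$, is a ring isomorphism — fullness and faithfulness give bijectivity, functoriality gives multiplicativity, additivity gives additivity; (4) apply this with $M=\bigoplus_{X\in\cX}X$ and note $FM\cong\bigoplus_{X\in\cX}\tilde\tau X=\bigoplus_{Y\in\cY}Y$ because $\tilde\tau\colon\cX\to\cY$ is a bijection; (5) chain the isomorphisms: $\bm\varepsilon(\Lambda)\cong\End_\Lambda\bigl(\bigoplus_{Y\in\cY}Y\bigr)\cong\End_\Lambda\bigl(\bigoplus_{X\in\cX}X\bigr)\cong\eta(\Lambda)^{\op}$.

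The only genuinely delicate point — and the place I'd expect the main obstacle — is the $\op$ accounting in step (1)/(3). The cosyzygy filtered algebra was defined with an extra $\op$ precisely because the relevant equivalence in Theorem \ref{cofilt equiv} is contravariant ($D\Hom(-,\hat I)$), whereas $\bm\varepsilon$ comes from a covariant Hom; one must check that after passing through Corollary \ref{eta pr} the remaining $\op$ is disposed of correctly so that $\eta(\Lambda)^{\op}$, and not $\eta(\Lambda)$, is the endomorphism algebra of $\bigoplus_{X\in\cX}X$ (as opposed to of its opposite). Once the variances are lined up, the rest is the standard fact that an additive equivalence induces isomorphisms on endomorphism rings, applied to the generator of $\add\cX$; no computation beyond unwinding definitions is needed. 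I would also remark that $F(M)$ is only naturally isomorphic, not equal, to $\bigoplus_{Y\in\cY}Y$, but composing with that isomorphism does not affect the isomorphism class of the endomorphism algebra, so the conclusion stands.
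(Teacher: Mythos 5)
Your proposal is correct and follows exactly the same route as the paper's proof: both chain the isomorphisms $\bm\varepsilon(\Lambda)\cong\End_\Lambda\bigl(\bigoplus_{Y\in\cY}Y\bigr)\cong\End_\Lambda\bigl(\bigoplus_{X\in\cX}X\bigr)\cong\eta(\Lambda)^{\op}$ via Corollary \ref{eps inj}, Lemma \ref{conn equiv}, and Corollary \ref{eta pr}. The extra care you take with the $\op$ bookkeeping and the standard fact that an additive equivalence induces endomorphism-ring isomorphisms is exactly what the paper leaves implicit.
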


\begin{proof}
 $$\bm\varepsilon(\Lambda)\stackrel{Corollary \ref{eps inj}}\cong\End\left( \bigoplus\limits_{Y\in\cY}Y\right)\stackrel{ Lemma \ref{conn equiv}}\cong \End\left( \bigoplus\limits_{X\in\cX}X\right)\stackrel{Corollary \ref{eta pr}}\cong\eta(\Lambda)^{\op}.$$
\end{proof}

\subsection{A further question}
We should point out that the $\bm\varepsilon(\Lambda)$-$\eta(\Lambda)$ duality may exist in a more general context. Let $\Lambda$ be a finite dimensional algebra and $\hat Q$ the (basic) additive generator of projective-injective modules.  When $\domdim \Lambda\geq 1$, $\cT=\Gen(\hat Q)$ is a torsion class. Its corresponding wide subcategory (or thick subcategory) $\cU= \{X\in \cT: \forall (g:Y\to X)\in \cT,\, \ker(g)\in \cT\}$ is an abelian category and hence is equivalent to $\eta(\Lambda)\modd$ for some finite dimensional algebra $\eta(\Lambda)$. Such algebras have been studied by Asai \cite{A}. At the same time, $\cF=\Cogen(\hat Q)$ is a torsion-free class. Its corresponding wide subcategory $\cV= \{X\in \cF: \forall (g:X\to Y)\in \cF,\, \coker(g)\in \cF\}$ is an abelian category and hence is equivalent to $\epsilon(\Lambda)\modd$ for some finite dimensional algebra $\epsilon(\Lambda)$. It is still an open question to us whether $\bm\varepsilon(\Lambda)\cong\eta(\Lambda)^{\op}.$

\bibliographystyle{plain}

\end{document}